
%
\documentclass[11pt]{amsart}
\usepackage[mathscr]{euscript}
\usepackage{marginnote,bm,amscd}
\usepackage{amssymb}
\usepackage[foot]{amsaddr}
\usepackage{color}
\usepackage{graphicx}
\usepackage{hyperref}
\usepackage{varwidth}

\usepackage[most]{tcolorbox}

\tcbset{colback=yellow!10!white, colframe=red!50!black, 
        highlight math style= {enhanced, 
            colframe=red,colback=red!10!white,boxsep=0pt}
        }








\newtheorem*{thm*}{Theorem}

\newtheorem*{prop*}{Proposition}

\newtheorem*{lem*}{Lemma}




\newtheorem{exercise}{Exercise}
\newtheorem{theorem}{Theorem}[section]
\newtheorem{lemma}[theorem]{Lemma}

\newtheorem{proposition}[theorem]{Proposition}
\newtheorem{corollary}[theorem]{Corollary}
\newtheorem{definition}[theorem]{Definition}
\newtheorem{remark}[theorem]{Remark}

\newcommand\jwf[1]{\textcolor{red}{#1}}







\numberwithin{equation}{section}

\newcounter{CurrentSection}
\newcounter{CurrentTheorem}
\newcounter{CounterSectionImprintingMethod}
\newcounter{CounterTheoremImprintingMethod}
\newcounter{CounterSectionMetrizabilityConditions}
\newcounter{CounterTheoremMetrizabilityConditions}

%
%
%
%




\renewcommand{\epsilon}{\varepsilon}

\newcommand{\wt}{\widetilde}

\newcommand{\cl}{\operatorname{cl}}

\providecommand{\ker}[1]{$\text{ker}\ {#1}$}

\makeindex





\title[Polyfold Constructions]{Polyfold and SFT Notes I:\\
A Primer on Polyfolds and Construction Tools}
\author{ J. W. Fish \and H. Hofer}

\email{hofer@ias.edu}
\email{joel.fish@umb.edu}

\date{\today}                                           

\begin{document}
\maketitle
\tableofcontents

\newpage
These notes are essentially the first few chapters from the upcoming book
  \cite{FH-poly}.
As such, the authors would request that citations and references to these
  notes also be directed at the forthcoming book.

\vspace{1cm}
\begin{center}
{\bf J. W. Fish and H. Hofer, \\
Polyfold Constructions: Tools, Techniques, and Functors}\\
\end{center}
\vspace{1cm}
\noindent which we make  available for the upcoming workshop\\

\begin{center}
{\bf Workshop on Symplectic Field Theory IX:}\\
\vspace{0.5cm}
{\bf POLYFOLDS FOR SFT}\\
\vspace{0.5cm}
Augsburg, Germany\\
 Monday, 27 August 2018 - Friday, 31 August 2018\\
\vspace{0.5cm}
A Pre-course takes place on the preceding weekend:\\

Saturday, 25 August 2018 - Sunday, 26 August 2018
\end{center} 
\vspace{9cm}
\pagebreak

\section{Introduction}

\subsection{General Context and Goals}
The  so-called polyfold theory introduced in a series of papers,
  \cite{HWZ2,HWZ3,HWZ3.5,H2014,HWZ5}, describes a pairing of a
  generalization of differential geometry and a generalized nonlinear
  functional analysis.
The aim of the overall theory is to provide a framework in which  moduli
  spaces can be constructed as they arise naturally within specific fields
  of mathematics, for example in symplectic geometry.
In this context and from an analytical viewpoint, the study of moduli
  spaces is the study of solutions of families of nonlinear
  elliptic differential equations on varying domains with possibly even
  varying targets up to a notion of isomorphism.
Varying domains arise through bubbling-off phenomena which implies
  compactness problems. 
Moreover, the fact that a family can be isomorphic to itself in different
  ways implies in general the occurrence of transversality issues.
The reference volume \cite{HWZ2017} gives a comprehensive description of
  the abstract theory which can be employed once a problem has been lifted
  into the abstract framework.
In the current note  we describe the underlying (abstract) theory for the
  construction concrete  M-polyfolds by using  a building-block type system,
  \cite{Lego}.

\subsection{Warm-up Exercises for the Reader}
The first three exercises test the knowledge about basic manifold theory
  including the implicit function theorem. It helps to focus the ideas in
  the direction we are going to exploit further.
The first exercise gives a different approach to smooth manifolds.\\

\begin{exercise} 
  \label{EXERC1}\label{EX_1}
  \hfill\\
Assume that $U\subset {\mathbb R}^n$ is an open subset, $Y$ a set, and
  $\oplus:U\rightarrow Y$ a surjective map.
Associated to \(\oplus\), we define \(\mathcal{T}_{\oplus}\) to be the
  finest topology on \(Y\) for which \(\oplus\) is continuous; that is,
  \(\mathcal{T}_{\oplus}\) is the quotient topology on \(Y\) associated to
  \(\oplus\).
Assume that \(\mathcal{T}_\oplus\) is metrizable. 
Suppose further that for every point $y\in Y$ there exists $V_y\in
  {\mathcal T}_\oplus$ and a map $H_y:V_y\rightarrow U$ such that 
  \begin{itemize}
    \item[(i)] $\oplus\circ H_y=Id_{V_y}$.
    \item[(ii)] $H_y\circ\oplus:\oplus^{-1}(V_y)\rightarrow U$ is a smooth
      map.
    \end{itemize}
Show that the metrizable space $(Y,{\mathcal T}_\oplus)$ has the structure
  of a smooth manifold uniquely characterized by the following two
  properties.
\begin{itemize}
  \item[(1)] $\oplus:U\rightarrow Y$ is a smooth map.
  \item[(2)] Every map  $H:V\rightarrow U$, $V\in {\mathcal T}_{\oplus}$,
  satisfying (i) and (ii) is smooth.
  \end{itemize}
\end{exercise}
%

If stuck with Exercise \ref{EX_1}, try to find inspiration in H. Cartan's
  last mathematical theorem, see \cite{Cartan}.  
As a consequence of the exercise we obtain a method to define new
  manifolds from old ones.\\

\begin{exercise}
  \label{EX_2}
  \hfill\\
Assume that $M$ is a smooth manifold, $Y$ a set, and $\oplus:M\rightarrow
  Y$ a surjective map.
Assume that the associated quotient topology ${\mathcal T}_\oplus$ on $Y$
  is metrizable.
Suppose further that for every point $y\in Y$ there exists $V_y\in
    {\mathcal T}_\oplus$ and a map $H_y:V_y\rightarrow M$ such that
  \begin{itemize}
    \item[(i)] $\oplus\circ H_y=Id_{V_y}$.
    \item[(ii)] $H_y\circ\oplus:\oplus^{-1}(V_y)\rightarrow M$ is a smooth
      map.
    \end{itemize}
Show that the metrizable space $(Y,{\mathcal T}_\oplus)$ has the structure
  of a smooth manifold uniquely characterized by the following two
  properties.
\begin{itemize}
  \item[(1)]
  $\oplus:M\rightarrow Y$ is a smooth map.
  \item[(2)] 
  The maps $H:V\rightarrow M$, $V\in {\mathcal T}_\oplus$, satisfying (i)
  and (ii) are smooth.
  \end{itemize}
\end{exercise}
%

These two exercises prompt the following definition.\\

\begin{definition}[manifold imprinting]
  \label{DEF_manifold_imprinting}
  \hfill\\
A surjective map $\oplus:M\rightarrow Y$ from a smooth manifold \(M\) to a
  set $Y$ is called a \emph{manifold imprinting}\index{manifold
  imprinting}, provided the quotient topology ${\mathcal T}_\oplus$ on $Y$
  is metrizable and for every $y\in Y$ there exists $V_y\in {\mathcal
  T}_\oplus$ and a map $H_y:V_y\rightarrow M$ such that 
  \begin{itemize}
    \item[(1)] 
      $\oplus\circ H_y=Id_{V_y}$
    \item[(2)] 
      The map $H_y\circ\oplus:\oplus^{-1}(V_y)\rightarrow M$ is smooth.
    \end{itemize}
\end{definition}
%
\begin{remark}
Henceforth, given \(\oplus:M\to Y\) we will let \(\mathcal{T}_\oplus\)
  denote the associated quotient topology on \(Y\).
\end{remark}
%

The following exercise shows the naturality  of manifold imprintings.\\

\begin{exercise}
  \label{EX_3}
  \hfill \\
Assume that $\oplus_1:M\rightarrow X$ is a manifold imprinting. 
Let $\oplus_2:X\rightarrow Y$ be a surjective map between two sets. 
The following two statements are equivalent:
  \begin{itemize}
  \item[(1)] 
    $\oplus_2\circ\oplus_1:M\rightarrow Y$  is a manifold imprinting.
  \item[(2)] 
    With $X$ equipped with the $\oplus_1$-structure the map
    $\oplus_2:X\rightarrow Y$ is a manifold imprinting.
  \end{itemize}
Moreover in the case that  (1) or (2), and therefore both hold, the two
  structures induced on $Y$ are the same.
\end{exercise}
%

The next exercise relates the previous discussion to the standard
  treatment of manifolds.\\

\begin{exercise}
  \label{EX_4}
  \hfill\\
Let $M$ be a smooth connected manifold. 
Then there exists an open subset $O$ of some ${\mathbb R}^N$ and an
  $\oplus$-construction $\oplus:O\rightarrow M$, where $M$ is just
  considered as a set, with the following properties.
\begin{itemize}
  \item[(1)] 
  A map $f:M\rightarrow N$  to a smooth manifold $N$ is smooth if and only
  if $f\circ\oplus:O\rightarrow N$ is smooth.
  \item[(2)] 
  A map $g:N\rightarrow M$ from the smooth manifold $N$ to the topological
  space \((M, \mathcal{T}_\oplus)\)  is smooth if and only if  it is
  continuous and for each $m\in M$ with $U=U(m)\in {\mathcal T}_\oplus$
  and  map $H_m:U\rightarrow O$  as given in Definition
  \ref{DEF_manifold_imprinting}, the map $H_m\circ g: g^{-1}(U)\rightarrow
  O$ is smooth.
  \end{itemize}
\end{exercise}
%

Next we transfer the knowledge gained through the four exercises to the
  case of Banach spaces and Banach manifolds.
This follows from the familiar fact that finite-dimensional calculus can
  be generalized to Banach spaces based on the notion of Fr\'echet
  differentiability.\\

\begin{exercise}
  \label{EX_5}
  \hfill \\
Show that Exercises 1--3 hold if we replace ${\mathbb R}^n$ by a Banach
  space, $M$ by a Banach manifold, and generalize smoothness of a map as
  smooth Fr\'echet differentiability.
We leave the precise formulations to the reader.
Generalizing Exercise 4 might be somewhat tricky and perhaps not even
  true. 
Questions about the (non-)existence of smooth partitions of unity on
  Banach spaces enter the picture.
See \cite{Fry,GTWZ}  for the problems associated to questions about smooth
  functions on Banach spaces.
A generalization to Hilbert spaces might be possible.
\end{exercise}
%

In the context of Banach spaces there are other notions of
  differentiability, particularly if we allow additional structures on
  them.\\

\begin{definition}[sc-sctructure]
  \label{DEF_sc_structure}
  \hfill\\
Let $E$ be a Banach space. 
A \emph{sc-structure} on $E$ is a nested sequence of linear subspaces
  $\ldots \subset E_{i+1}\subset \ldots \subset E_1\subset E_0=E$, such that
  each $E_i$ is equipped with a Banach space structure so that the following
  holds.
\begin{itemize}
  \item[(i)] 
  The inclusion operator $E_{i+1}\rightarrow E_i$ is a compact operator
  for every $i=0,1,...$
  \item[(ii)] 
  $E_{\infty}=\bigcap_{i=0}^{\infty} E_i$ is dense in every $E_i$.
  \end{itemize}
\end{definition}
%

We note that a finite-dimensional vector space has a unique sc-structure.
We can define a new notion of smoothness, called sc-smoothness using
  this auxiliary structure.\\

\begin{definition}[sc-differentiable]
  \label{DEF_sc_differentiable}
  \hfill\\
Assume the Banach spaces $E$ and $F$ are equipped with sc-structures and
  $U\subset E$ is an open subset.
A map $f:U\rightarrow F$ is said to be sc$^1$, i.e. one times
  sc-differentiable, provided:
  \begin{itemize}
    \item[(1)] 
    For every $i$ the map $f:U\cap E_i\rightarrow F_i$ is well-defined and
    continuous, where $U\cap E_i$ is equipped with the topology induced
    from $E_i$.
    \item[(2)] 
    For every $x\in U\cap E_1$ there exists a bounded linear operator
    denoted $Df(x):E_0\rightarrow F_0$ such that
    \begin{align*}                                                        
      \lim_{\substack{|h|_1\rightarrow 0\\ x+h\in U\cap E_1}}
      \frac{|f(x+h)-f(x)-Df(x)h|_0}{|h|_1} =0.
      \end{align*}
    \item[(3)] 
    The map $Tf$ defined for $x\in U\cap E_1$, $h\in E_0$ by
    \begin{align*}                                                        
      Tf(x,h):=(f(x),Df(x)(h))
      \end{align*}
    maps $U_{1+i}\times E_i$ to $F_{1+i}\times
    F_i$, and for each \(i\in \mathbb{N}\) the associated map is
    continuous as a map $U_{1+i}\times E_i\rightarrow F_{1+i}\times F_i$.
    \end{itemize}
\end{definition}
%

Note that a map only satisfying (1) above is called sc\(^0\) or {\bf
  sc-continuous}.\index{sc-continuous}\index{sc$^0$}
Additionally, We note that we can view $Tf :U_1\times E\rightarrow
  F_1\times F$.
We can consider $U_1\times E$ as an open subset of $E_1\times E$ equipped
  with the sc-structure given by $E_{1+i}\times E_i$ and similarly for
  $F_1\times F$.
We shall define $TU:= U_1\times E \subset E_1\times E$.
Then if $f$ is sc$^1$ we obtain an sc$^0$ map $Tf:TU\rightarrow TF$. 
We say that $f$ is sc$^2$ provided $Tf$ is sc$^1$. 
This way we can define inductively what it means that a map is  sc$^k$ or
  even sc-smooth. 
In the finite-dimensional case sc-differentiability is precisely classical
  differentiability.
Looking at property (2) in the previous definition we see that we view the
  maps as going from level $1$ to level $0$.
Of course, this makes it a priori doubtful if the notion of
  sc-differentiability allows for the chain rule. \\

\begin{exercise}
  \label{EOOPI6}\label{EX_6}
  \hfill\\
Assume that $E,F$ and $G$ are are Banach spaces equipped with
  sc-structures and $U\subset E$, $V\subset F$ are open subsets.
Assume that $f:U\rightarrow F$ and $g:V\rightarrow G$ are sc$^1$ such that
  $f(U)\subset V$.
Show that $g\circ f:U\rightarrow G$ is sc$^1$ and $T(g\circ f) =(Tg)\circ
  (Tf)$.
With other words the chain rule holds.
The same conclusion holds if the maps are sc$^k$ or sc-smooth.
\end{exercise}
%

Exercise \ref{EX_6} is nontrivial and the proof utilizes strongly the fact
  that the inclusion operators are compact operators.\\
 
Consider the Banach space $E:=C^0(S^1,{\mathbb R}^n)$ of continuous maps
  defined on the circle with image in ${\mathbb R}^n$.
Here $S^1={\mathbb R}/{\mathbb Z}$ with the usual smooth manifold structure. 
We have a canonical smooth map ${\mathbb R}\rightarrow S^1$.\\
 
\begin{exercise}
  \label{EX_7}
  \hfill \\
The Banach space map $\Phi:{\mathbb R}\times E\rightarrow E$ defined by $
  \Phi(c,u)(t):= u(t+c)$ is nowhere Fr\'echet differentiable. 
The same holds if instead of $E=C^0(S^1,{\mathbb R}^n)$ we take
  $E=C^m(S^1,{\mathbb R}^n)$.
\end{exercise}
%
 
The standard sc-structure on $E=C^0(S^1,{\mathbb R}^n)$ is given by
  $E_i=C^i(S^1,{\mathbb R}^n)$, the Banach space of  $i$-times
  continuously differentiable maps.
We equip the Banach space ${\mathbb R}\times E$ with the sc-structure
  $({\mathbb R}\times E_i)_{i\in \mathbb{N}}$. \\

\begin{exercise}
  \label{EX_8}
  \hfill\\
The map $\Phi:{\mathbb R}\times E\rightarrow E$ defined by $
  \Phi(c,u)(t):= u(t+c)$ is sc-smooth.
\end{exercise}
%

Now we are ready for the final two exercises. 
First though, we note that by making use of the novel notion of
  sc-differentiability, we can generalize the above notion of a manifold
  imprinting as follows.\\
It is worth mentioning that the following definition will be fundamental
  idea to be further developed and employed throughout these notes.

\begin{definition}[preliminary M-polyfold imprinting]
  \hfill\\
A surjective map \(\oplus:U\to Y\) defined on an open subset \(U\) of a
  Banach space \(E\) equipped with  an sc-structure, with target being a
  set \(Y\) will be called an M-polyfold imprinting, provided the quotient
  topology \(\mathcal{T}_\oplus\) on \(Y\) is metrizable and for every
  \(y\in Y\) there exists \(V_y \in \mathcal{T}_\oplus\) and a map
  \(H_y:V_y\to U\) such that
  \begin{enumerate}                                                       
    \item \(\oplus\circ H_y = Id_y.\)
    \item \(H_y\circ \oplus: \oplus^{-1}(V_y)\to U\) is an sc-smooth map.
    \end{enumerate}
%
%
%
\end{definition}
%

The following exercise shows that something new happens with an unlikely
  space having some kind of smooth structure.\\

\begin{exercise}
  \label{EX_9}\label{EXC8}
  \hfill\\
Consider the subset $\Sigma$ of ${\mathbb R}^2$ defined by
  \begin{align*}                                                          
    \Sigma :=\left \{(x,y)\in {\mathbb R}^2\ |\ x^2+y^2<1\right \} \bigcup
    \left\{(x,y)\in {\mathbb R}^2\ |\ (x-1)^2 +y^2 =1\right\},
    \end{align*}
  see Figure \ref{FIG100}.
There exists an infinite-dimensional  Hilbert space $H$,  equipped with an
  sc-structure using Hilbert spaces $H_i$, and an $\oplus$-construction
\begin{align*}                                                            
  \oplus:U\rightarrow \Sigma, 
  \end{align*}
   where $U$ is an open subset of $H$, such that the quotient topology is
   precisely the metrizable subspace topology induced from ${\mathbb
   R}^2$, and moreover $\oplus(U\cap H_i)=\Sigma$ for every $i$.   

\end{exercise}
%

\begin{figure}[h!]
  \label{FIG100}\label{FIG_01}
  \includegraphics[width=6.5cm]{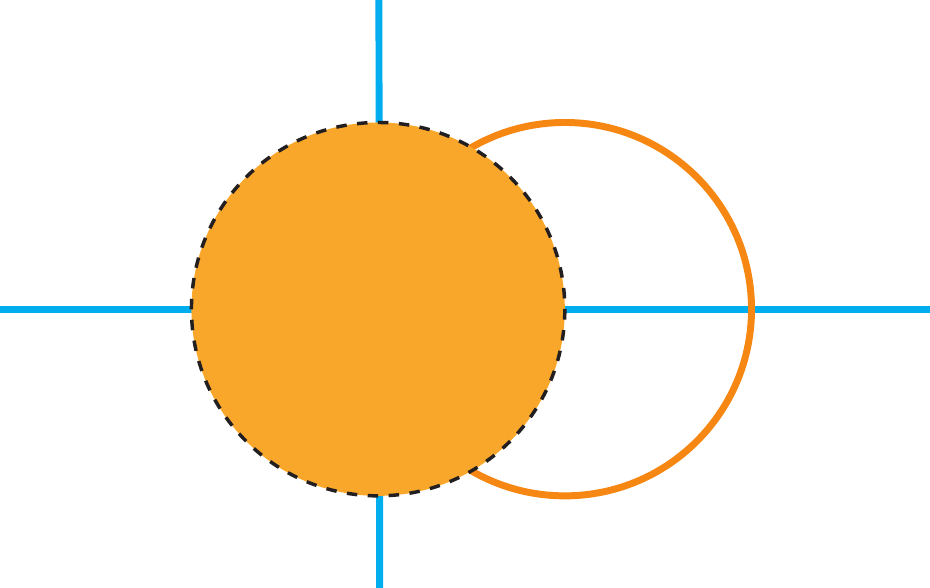}
  \caption{The metrizable space $\Sigma$.}
  \label{FIG_mylabel}
\end{figure}

From Exercise \ref{EX_9} we derive that $\oplus$ equips $\Sigma$ with some
  kind of smooth structure. This can be viewed as the starting point of a
  systematic study of a new kind of smooth structure on topological spaces. 
We shall explain this shortly, however for the moment we simply mention
  that the function spaces that contain the breaking/bubbling-off phenomena
  that arise naturally in moduli problems in symplectic geometry can be
  given precisely this sc-smooth structure.
This is the contents of these notes, as well as the book \cite{FH-poly}. 

\begin{exercise}
  \label{EX_10}
  \hfill\\
Show that the Hilbert space $H$ in Exercise \ref{EXC8} can never be picked
  finite-dimensional.
\end{exercise}
%

Even with sufficient mathematical background it presumably will take a
  while to do the exercises without consulting the literature or starting to
  read further in this book.
Trying to do them might, however, be ultimately the fastest way for
  entering the field.

\section{A Primer on M-Polyfolds}
  \label{APP7}\label{SEC_primer_on_m_polyfolds}
The theory of polyfolds has been developed in a series of papers,
  \cite{HWZ2,HWZ3,HWZ3.5,HWZ8.7}, and its main purpose is to generalize
  differential geometry and nonlinear functional analysis in a manner
  suitable for studying families of nonlinear elliptic differential
  equations, particularly in the case in which solutions are isomorphism
  classes of maps between manifolds, in which the domains are allowed to
  vary geometrically and change topology abruptly.
In particular, key components of the theory are a nonlinear Fredholm
  theory, an implicit function theorem, and an abstract perturbation
  algorithm robust enough to guarantee that after generic perturbation,
  certain compactified moduli spaces of solutions are cut out transversely,
  and are akin to smooth submanifolds with their topology and smooth
  structure inherited from the polyfold structure on the ambient space of
  maps.
In this section, we will rapidly recall the most salient definitions and
  results of the polyfold theory upon which we will build in later sections.

%
\subsection{Sc-Structures and Sc-Calculus}  \label{qsec1.1}
  \label{SEC_sc_structures_calculus}
An {\bf sc-structure}\index{sc-structure} on a Banach space \(E\) is a
  sequence of nested linear subspaces
  \begin{align*}                                                          
    \cdots \subset E_{i+1}\subset E_i\subset \cdots \subset E_1 \subset
    E_0=E,
    \end{align*}
  each equipped with a Banach space structure, and  with the
  property that for each \(i\in \mathbb{N}=\{0,1,2, \ldots\}\) the
  inclusion operator \(E_{i+1}\rightarrow E_i\) is compact and the
  intersection \(E_\infty:= \bigcap_{i=0}^\infty E_i\) is dense in every
  \(E_i\).
If \(E\) is equipped with such a structure we shall refer to it as an
  {\bf sc-Banach space}\index{sc-Banach space}.
Note that as a consequence of the requirements a finite-dimensional vector
  space can have precisely one sc-structure, namely the constant
  sc-structure:  \mbox{\(E_0 = E_1 = E_2 = \cdots\)}.

It is worth mentioning that scales of Banach spaces are familiar
  objects in interpolation theory, see \cite{Tr}, however our perspective
  is somewhat different.
Indeed, we regard \((E_i)_{i=0}^\infty\) as a given structure on \(E\),
  and it will become clear that from this perspective the sc-structure
  \((E_i)_{i=0}^\infty\) can be understood as a type of smooth structure on
  \(E\) for which a seemingly novel generalized calculus can be developed
  which has surprising properties.
It should be noted that scales have been used in geometric settings
  before; see for example the work by D. Ebin,  \cite{Ebin},  and
  H. Omori, \cite{Omori}.
The latter of these is somewhat closer to the viewpoint in our paper,
  however Omori does not use \emph{compact} scales, and this turns out
  to be a crucial condition for our applications.
For example, without the compactness assumption one does not obtain the
  new local models for a generalized differential geometry in which 
  bubbling-off and trajectory-breaking is a smooth phenomenon.

Returning to our definitions, we note that elementary concepts from
  linear algebra, such as direct sums, subspaces, and linear complements,
  have natural extension to sc-Banach spaces.
For example, given two sc-Banach spaces \(E\) and \(F\) we can form
  their {\bf sc-direct sum}\index{sc-direct sum} which is defined to be
  the direct sum \(E\oplus F\) equipped with sc-structure  \((E\oplus
  F)_i:=E_i\oplus F_i\).
Similarly, a subspace \(F\) of  an sc-Banach space \(E\) is called an {\bf
  sc-subspace} \index{sc-subspace} provided \(F\) is closed and the sequence
  \((F_i)_{i=0}^\infty\) given by \(F_i = F\cap E_i\) defines an
  sc-structure on \(F\).
An sc-subspace \(F\) of an sc-Banach space \(E\) has an {\bf
  sc-complement}\index{sc-complement} provided there exists an algebraic
  complement \(G\) of \(F\) in \(E\) which is an sc-subspace for which
  \(E_i = F_i\oplus G_i\) for each \(i\in \mathbb{N}\); we call \(G\) the
  sc-complement of \(F\) in \(E\).

Given sc-Banach spaces \(E\) and \(F\), a linear operator
  \mbox{\(T:E\rightarrow F\)} is called an {\bf
  sc-operator}\index{sc-operator} provided that for each \(i\in
  \mathbb{N}\) we have \mbox{\(T(E_i)\subset F_i\)} and \(T:E_i\rightarrow
  F_i\) is continuous.
A {\bf sc-isomorphism}\index{sc-isomorphism} is a linear bijection
  \(T:E\rightarrow F\) for which \(T\) and \(T^{-1}\) are sc-operators.
Additionally, there is a very important class of so-called
  sc\(^+\)-operators, which play the role of compact operators in the
  polyfold theory.
To define them we say a linear map \(S:E\rightarrow F\) is an {\bf
  sc\(^+\)-operator} \index{sc\(^+\)-operator} provided that for each \(i\in
  \mathbb{N}\), we have \(S(E_i)\subset F_{i+1}\) and \(S:E_i\rightarrow
  F_{i+1}\) is continuous.
Observe then that a consequence of requiring the inclusion operators,
  \(F_{i+1}\rightarrow F_i\), to be compact is that the level-wise
  restriction of an sc\(^+\)-operator \(S\) to a map \(S:E_i\rightarrow
  F_i\) is a compact operator for each \(i\in \mathbb{N}\).
Finally, a {\bf linear sc-Fredholm operator}\index{sc-Fredholm operator}
  is a linear sc-operator \(T:E\rightarrow F\) for which \(\ker(T)\) and
  \(R(T)\) are sc-subspaces of \(E\) and \(F\) respectively, such that each
  has an sc-compliment, and \(\ker(T)\) and \(F/R(T)\) are each
  finite-dimensional.

Given one sc-Banach space \(E\), we can construct another in the
  following manner.
For each \(j\in \mathbb{N}\), we define the sc-Banach space \(E^j\) to
  be the Banach space \(E_j\) equipped with the sc-structure \({(E^j)}_i:=
  E_{j+i}\), and we say that \(E^j\) is obtained from \(E\) with the {\bf
  index raised}\index{index raising} by \(j\).
Note that \(E^0=E\). 
Also note that \(S:E\rightarrow F\) is an sc\(^+\)-operator provided it
  induces an sc-operator \(E\rightarrow F^1\).

In order to lay the groundwork for manifolds (or more generally
  M-polyfolds) with boundary and corners, we must first establish their
  model analogues in sc-Banach spaces.
To that end, we define a {\bf partial quadrant}\index{partial quadrant}
  (or sector)\index{sector} \(C\) in an sc-Banach space \(E\) to be a closed
  convex subset with the property that there exists an sc-Banach space
  \(W\), a suitable \(n\in \mathbb{N}\), and an sc-isomorphism \(T:{\mathbb
  R}^n \oplus W\rightarrow E\) for which \(T([0,\infty)^n\oplus W)=C\).
In order to define the degree of a corner, we recall that there is
  a well-defined map \(d_C:C\rightarrow {\mathbb N}\), called the {\bf
  degeneracy index}\index{degeneracy index}, which is defined by
  \(d_C(e)=k\), where \(k\) is the number of indices \(i\in \{1,..,n\}\)
  for which \(r_i=0\), where \(T(r,w)=e\) is as described above.
Note that this definition does not depend on the actual choice of \(T\).

Next we move on to review a calculus on sc-Banach spaces and on partial
  quadrants contained therein; in other words, the sc-calculus.
Let \(E\) and \(F\) be sc-Banach spaces and \(C\subset E\) a partial quadrant. 
Assume that \(U\subset C\) is a relatively open subset and \(f:U\rightarrow
  F\) a map.
We say that \(f\) is {\bf sc-continuous}\index{sc-continuous} provided
  that  for each \(i\in \mathbb{N}\), both \(f(U_i)\subset F_i\) and the map
  \(f:U_i\rightarrow F_i\) is continuous; here \(U_i=E_i\cap U\).
Alternatively, we shall also say that \(f\) is an {\bf
  sc\(^0\)-map}\index{sc\(^0\)-map}.

Given a relatively open subset \(U\subset C\), we call the filtration
  \((U_i)_{i=0}^\infty\) the sc-structure on \(U\).
In the same way that we raised the index on sc-Banach spaces, so too
  can we raise the index of \(U\) by \(j\) to obtain \(U^j\).
The {\bf tangent}\index{tangent} of \(U\) is the relatively open subset
  \(TU:=U_1\oplus E_0\) of \(C_1\oplus E_0\subset E_1\oplus E_0\) and it
  shall always be equipped with the filtration \((U_{1+i}\oplus
  E_i)_{i=0}^\infty\).
Alternatively we can define  \(TU:=U^1\oplus E\subset  E^1\oplus E\).
The first crucial definition is in regards to differentiability, and
  is as follows.

\begin{definition}[sc differentiable]
  \label{def:sc1}\label{DEF_sc_diff}\index{sc\(^1\)-map}
  \hfill\\
Let \(E\) and \(F\) be sc-Banach spaces and \(C\subset E\) a partial
  quadrant, and \(U\subset C\) a relatively open subset.
A map \mbox{\(f:U\rightarrow F\)} is said to be {\bf sc}\(^{\bm{1}}\)
  provided the following holds.
\begin{itemize}
  \item[(1)] 
    \(f\) is sc\(^0\).
  \item[(2)] 
    For each \(x\in U_1\) there exists a bounded linear operator
  \begin{align*}                                                          
    Df(x):E_0\rightarrow F_0
    \end{align*}
  such that
  \begin{equation*}                                                       
    \lim_{\substack{|h|_1\rightarrow 0\\ x+h\in U_1}}
    \frac{\big|f(x+h)-f(x)-Df(x)h\big|_0}{|h|_1}=0
    \end{equation*}
  \item[(3)] 
    The map \(Tf:TU\rightarrow TF\) given by \(
    Tf(x,h):=(f(x),Df(x)(h))\) is sc\(^0\).
  \end{itemize}
\end{definition}
%
In the case that \(f\) is sc\(^1\), the associated map \(Tf:TU\rightarrow
  TF\), defined in (3) above, is called the {\bf tangent} of \(f\).
The following non-trivial result essentially states that the chain
  rule\index{chain rule} holds for compositions of sc\(^1\) maps.

\begin{theorem}[sc chain rule]
  \label{thm_sc_chain_rule}
  \hfill\\
Let \(E\), \(F\), and \(G\) be sc-Banach spaces and \(C\subset E\) and
  \(D\subset F\) partial quadrants.
Suppose \(U\subset C\) and \(V\subset D\) are relatively open and
  \(f:U\rightarrow F\) and \(g:V\rightarrow G\) are sc\(^1\) with the
  property that \(f(U)\subset V\).
Then \(g\circ f:U\rightarrow G\) is sc\(^1\) and \(T(g\circ f) =(Tg)\circ
  (Tf)\).
\end{theorem}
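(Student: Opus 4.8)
The plan is to verify the three defining properties of an sc\(^1\) map for \(\phi:=g\circ f\), the only substantial one being differentiability with the expected derivative \(D\phi(x)=Dg(f(x))\circ Df(x)\). Property (1), that \(\phi\) is sc\(^0\), is immediate: since \(f\) and \(g\) are sc\(^0\) and \(f(U)\subset V\), we have \(f(U_i)\subset V_i\) and \(g(V_i)\subset G_i\) with each level-wise map continuous, so \(\phi(U_i)\subset G_i\) and \(\phi:U_i\to G_i\) is continuous as a composition. Property (3), that \(T\phi\) is sc\(^0\), will then follow for free: once (2) is known, the formula \(D\phi(x)=Dg(f(x))Df(x)\) gives \(T\phi=Tg\circ Tf\) as maps \(TU\to TG\), and since \(Tf:TU\to TF\) and \(Tg:TV\to TG\) are sc\(^0\) with \(Tf(TU)\subset TV\), their composite is sc\(^0\). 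So everything reduces to property (2).

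Fix \(x\in U_1\) and set \(y:=f(x)\in V_1\). For \(k\in E_1\) with \(x+k\in U_1\) put \(\ell:=f(x+k)-f(x)\). Since \(f\) is sc\(^0\) we have \(\ell\in F_1\) with \(\ell\to 0\) in \(F_1\) as \(|k|_1\to 0\), and since \(f\) is differentiable at \(x\) we may write \(\ell=Df(x)k+\rho\) with \(|\rho|_0=o(|k|_1)\), whence also \(|\ell|_0=O(|k|_1)\). Inserting \(\pm Dg(y)\ell\) yields the identity
\begin{align*}
\phi(x+k)-\phi(x)-Dg(y)Df(x)k=\big[g(y+\ell)-g(y)-Dg(y)\ell\big]+Dg(y)\rho.
\end{align*}
The second term is \(o(|k|_1)\) in \(G_0\) because \(Dg(y):F_0\to G_0\) is bounded and \(|\rho|_0=o(|k|_1)\). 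Thus the task is to show that \(R_g(y,\ell):=g(y+\ell)-g(y)-Dg(y)\ell\) is \(o(|k|_1)\) in \(G_0\). For \(|k|_1\) small the whole segment \(\{y+t\ell:t\in[0,1]\}\) lies in \(V_1\) (relative openness of \(V_1\) together with convexity of the quadrant), and \(g\) is differentiable from level \(1\) to level \(0\) along it with \(t\mapsto Dg(y+t\ell)\ell\) continuous in \(G_0\) (this is \(Tg\) being sc\(^0\)), so the fundamental theorem of calculus gives
\begin{align*}
R_g(y,\ell)=\int_0^1\big[Dg(y+t\ell)-Dg(y)\big]\ell\,dt.
\end{align*}

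Here is the crux, and the one genuinely delicate point. The elementary estimate \(|R_g(y,\ell)|_0\le\epsilon|\ell|_1\) coming from differentiability of \(g\) is useless, since \(f\) is only level-wise continuous and so the ratio \(|\ell|_1/|k|_1\) can blow up as \(k\to 0\); I expect this level mismatch to be the main obstacle, and it is resolved precisely by the compactness of the embeddings. I would argue by contradiction: if (2) failed there would be \(\eta>0\) and a sequence \(k_n\to 0\) in \(E_1\) (with \(x+k_n\in U_1\)) such that \(|R_g(y,\ell_n)|_0\ge\eta\,|k_n|_1\), where \(\ell_n:=f(x+k_n)-f(x)\). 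Setting \(w_n:=\ell_n/|k_n|_1\) and dividing the integral representation by \(|k_n|_1\) gives
\begin{align*}
\frac{|R_g(y,\ell_n)|_0}{|k_n|_1}\le\int_0^1\big|[Dg(y+t\ell_n)-Dg(y)]\,w_n\big|_0\,dt.
\end{align*}
Although \(w_n\) need not be bounded in \(F_1\), it does converge in \(F_0\): writing \(v_n:=k_n/|k_n|_1\), the unit sphere of \(E_1\) is precompact in \(E_0\) by compactness of \(E_1\hookrightarrow E_0\), so after passing to a subsequence \(v_n\to v_*\) in \(E_0\), and then \(w_n=Df(x)v_n+\rho_n/|k_n|_1\to Df(x)v_*=:w_*\) in \(F_0\). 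Since also \(y+t\ell_n\to y\) in \(F_1\) uniformly in \(t\), the joint continuity of \((y',m)\mapsto Dg(y')m\) on \(V_1\times F_0\) (again \(Tg\) sc\(^0\)) forces \(\sup_t|[Dg(y+t\ell_n)-Dg(y)]w_n|_0\to 0\), so the integral tends to \(0\), contradicting the lower bound \(\eta\). This establishes \(D\phi(x)=Dg(f(x))Df(x)\) and, via the reduction above, completes the proof. The only points requiring care beyond this are the verification that the segment lies in \(V_1\) and the legitimacy of the fundamental theorem of calculus for a map differentiable only from level \(1\) to level \(0\); both are routine consequences of \(Tg\) being sc\(^0\).
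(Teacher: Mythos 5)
Your proof is correct, and it is essentially the argument from \cite{HWZ2} to which the paper defers (the paper itself only cites the reference): the same integral representation of the remainder, the same use of compactness of \(E_1\hookrightarrow E_0\) to extract an \(F_0\)-convergent subsequence of the normalized increments \(\ell_n/|k_n|_1\), and the same appeal to the joint continuity of \((y,h)\mapsto Dg(y)h\) coming from \(Tg\) being sc\(^0\). The only difference is a cosmetic one in how the remainder is split (you isolate \(Dg(y)\rho\) rather than keeping \(\rho\) inside the integral), which changes nothing essential.
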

%
\begin{proof}
See \cite{HWZ2,HWZ2017}.
\end{proof}
For each fixed \(i\geq 2\), a map \(f:U\rightarrow F\) is said to be of
  class sc\(^i\), provided \(Tf\) is of class \(sc^{i-1}\).
If \(f\) is of class sc\(^i\) for all \(i\in \mathbb{N}\), then we say that
  \(f\) is  sc-smooth or alternatively of class sc\(^\infty\).
\begin{remark}\hfill\\
Consider a vector \(e\in E\) and the associated affine map
  \(\phi:E\rightarrow E:x\rightarrow x+e\).
We note that this map is sc\(^0\) if and only if \(e\in E_\infty\), however
  in this case the map is also sc\(^\infty\).
  
\end{remark}
%

%
\subsection{Sc-Smooth Models and M-Polyfolds}
  \label{sec:models and M-polyfolds}\label{qsec1.2}
  \label{SEC_sc_smooth_models}
Of particular interest are sc-smooth maps \(r:U\rightarrow U\), where
  \(U\subset C\) is relatively open in a partial quadrant \(C\subset E\),
  and which satisfy \(r\circ r=r\).
Such a map is called an {\bf sc-smooth retraction}\index{sc-smooth
  retraction}, and the associated image, \(O=r(U)\), is called an {\bf
  sc-smooth retract}\index{sc-smooth retract} (with respect to \((E,C)\)).
\begin{definition}[sc smooth model]
  \label{def:sc smooth model}\label{DEF_sc_smooth_model}
  \hfill\\
A \emph{sc-smooth model} \index{sc-smooth model} is a tuple \((O,C,E)\),
  where \(C\subset E\) is a partial quadrant in the sc-Banach space \(E\),
  and \(O=r(U)\) for some sc-smooth retraction \(r:U\rightarrow U\) where
  \(U\subset C\) is relatively open and \(O\subset U\).
\end{definition}
%
Given an sc-smooth model \((O,C,E)\) it may happen that \(O=r(U)\)
  and \(O=s(V)\) for relatively open subsets \(U,V\subset C\) and two
  different sc-smooth retractions \(r\) and \(s\).
At the level of point-set topology, the fact that a local model can
  be defined by two different retractions is a non-issue, since both
  retractions preserve levels of the sc-structure and have identical images.
However, since sc-smooth models are meant to provide the local models for
  a differential geometry, there may be preliminary concern that different
  sc-smooth retractions may yield different sc-differentiable structures
  on \((O, C, E)\); we address this at present. 
First, we consider two sc-smooth models \((O,C,E)\) and \((O',C',E')\),
  and then we say that a map \mbox{\(f:O\rightarrow O'\)} is {\bf
  sc-smooth}\index{sc-smooth map} provided \(f\circ r:U\rightarrow E'\) is
  sc-smooth for a suitable \(r:U\rightarrow U\) with \(r(U)=O\).
Next we note that it is not difficult to verify that this definition
  does not depend on the choice of sc-smooth retraction \(r\).
Furthermore, one can easily verify that \(Tr(TU)=Ts(TV)\) so that one
  can define the {\bf tangent}\index{tangent of a sc-smooth model}
  \(T(O,C,E)\) of \((O,C,E)\) by
  \begin{equation*}                                                       
    T(O,C,E) := (TO,TC,TE):= (Tr(TU),C^1\oplus E,E^1\oplus E).
    \end{equation*}
From this we see that neither the notion of sc-differentiability of
  maps between models nor the notion of a tangent depends on the choice
  of retraction which defines a given sc-smooth model.
Consequently, one can use these sc-smooth models as the local models
  for a generalized differential geometry.

Given a topological space \(X\) and a point \(x\in X\) a {\bf M-polyfold
  chart}\index{M-polyfold chart} around \(x\) is given by a tuple
  \((U,\phi,(O,C,E))\), where \((O,C,E)\) is an sc-smooth model, \(U\) an
  open neighborhood of \(x\) and \( \phi:U\rightarrow O\) is a
  homeomorphism.
We say that two charts \((U,\phi,(O,C,E))\) and \((U',\phi',(O',C',E'))\)
  are {\bf sc-smoothly compatible}\index{sc-smooth compatibility} provided
  \begin{equation*}                                                       
    \phi'\circ \phi^{-1}:\phi(U\cap U')\rightarrow \phi'(U\cap U') 
    \end{equation*}
  is sc-smooth, and
  \begin{equation*}                                                       
    \phi\circ \phi'^{-1}:\phi'(U\cap U')\rightarrow \phi(U\cap U')
    \end{equation*}
  is sc-smooth. 
We note that if \(Q\subset O\) is an open subset then also \((Q,C,E)\) is
  an sc-smooth model, and hence \((\phi(U\cap U'),C,E)\) and \((\phi'(U\cap
  U'),C',E')\) are sc-smooth models.

Analogous to the classical differential geometry, we define an {\bf
  sc-smooth atlas}\index{sc-smooth atlas} \({\mathcal A}\) for a Hausdorff
  paracompact topological space \(X\) to consist of a set of sc-smoothly
  compatible charts for which the domains cover \(X\).
Two atlases are said to be {\bf equivalent}\index{equivalent atlases} if
  their union is a sc-smooth atlas.

\begin{definition}[M-polyfold]
  \label{DEF_m_polyfold}
  \hfill\\
A M-polyfold\index{M-polyfold} is a Hausdorff paracompact topological
  space \(X\) equipped with an equivalence class of sc-smooth atlases.
\end{definition}
%

\begin{remark}\hfill\\
It is straightforward to show that an M-polyfold has an underlying
  metrizable topology; see \cite{HWZ2017}.
\end{remark}
%
With the above definitions established, one can develop a
  generalized differential geometry in which M-polyfolds are
  generalizations of manifolds with boundary and corners; see
  \cite{HWZ2017}.
This definition is too broad however, as it allows for M-polyfolds with a
  boundary which is too badly behaved for our applications.
For example, even locally the boundary of an M-polyfold (defined
  as above) does not inherit an M-polyfold structure from the ambient
  M-polyfold, which as we shall see later is an important property for the
  examples we have in mind.
Consequently, one must impose additional assumptions on retractions
  defined on relatively open subsets in partial quadrants. 
In generality this will be dealt with by introducing the notion of tame
  retractions and the associated class of M-polyfolds, see \cite{HWZ2017}.
If \(C\subset E\) is a partial quadrant in a sc-Banach space \(E\) and
  \(x\in C\)  we can consider closed linear subspaces \(F\) of \(E\) so that
  \(x + B_{\varepsilon}^F(0)\subset C\) for a suitable \(\varepsilon>0\).
One can show that there is a maximal closed subspace of this kind which is
  denoted by \(E_x\)\index{\(E_x\)}.
For example, if \(x\) is an interior point of \(X\) we have that \(E_x=E\).

\begin{definition}[$E_x$]
  \label{E-xxxx-EE}\label{DEF_Ex}
  \hfill\\
Consider the sc-Banach space \(E:={\mathbb R}^n\oplus W\) with partial
  quadrant \(C=[0,\infty)^n \oplus W\).
For each \(x\in C\) we define the subset \((x)\subset \{1,....,n\}\) to consist
  of all indices \(i\) such that \(x_i>0\).
Then we denote by \({\mathbb R}^{(x)}\) the linear subspace of \({\mathbb
  R}^n\) consisting of vectors \(h\) with \(h_i=0\) for \(i\in
  \{1,\ldots,n\}\setminus(x)\) and define
  \begin{align*}                                                          
    E_x={\mathbb R}^{(x)}\oplus W. 
    \end{align*}
\end{definition}
%

Next we are in the position to define a sc-smooth retract as well as a
  sc-smooth retraction.

\begin{definition}[tame sc-retraction]
  \label{DEF_tame_sc_retraction}
  \hfill\\
Let  \(r :U \rightarrow U\) be a sc-smooth retraction defined on a
  relatively open subset  \(U\) of a partial quadrant \(C\) in the sc-Banach
  space \(E\). 
The sc-smooth retraction \(r\)  is called a {\bf tame
  sc-retraction}\index{tame sc-retraction}, if the following two conditions
  are satisfied.
\begin{itemize}
  \item[(1)]  
  \(d_C(r(x)) = d_C(x)\) for all \(x \in U\).
  \item[(2)]  
  At every smooth point \( x\in  O:= r(U)\), there exists a sc-subspace
  \(A\subset E\), such that  \(E=T_0O \oplus A\) and \(A\subset E_x\).
  \end{itemize}
A sc-smooth retract \((O,C,E)\) is called a {\bf tame sc-smooth
  retract}\index{tame sc-smooth retract}, if \(O\) is the image of a
  sc-smooth tame retraction.
\end{definition}
%

A {\bf tame M-polyfold}\index{tame M-polyfold} is a metrizable space
  equipped with a sc-smooth atlas consisting of charts using tame models
  \((O,C,E)\).
 
The degeneracy index, \(d_C\), defined above on partial quadrants,
  has an analogue on M-polyfolds.  
A precise notion is provided by Definition 2.13 in Section 2.3 of
  \cite{HWZ2017}, however the essential idea is as follows.
If \(X\) is an M-polyfold, then \(X\) is locally modeled on sc-smooth
  retracts.
Consequently, for each fixed \(x\in X\), there exists a class
  \(\mathcal{L}\) of local models of the form \(\Phi : U\to  O\subset C
  \subset E \), where  \((O, C, E)\) is an sc-smooth retract, \(U\subset X\)
  is open and contains \(x\), and \(\Phi\) is an sc-smooth diffeomorphism.
We then define the degeneracy index for M-polyfolds by
  \begin{align*}                                                          
    d_X(x) := \min_{(\Phi, U, (O,C,E))\in \mathcal{L}} d_C(\Phi(x)).
    \end{align*}
We see immediately that degeneracy index\index{degeneracy index
  \(d_X\)} is a well defined map \(d_X:X\rightarrow {\mathbb N}\) which
  again measures the degree of the corner.
The following two results are will be important, and proofs can
  be found in \cite{HWZ2017}.

\begin{proposition}[sc-diffeomorphisms preserve degeneracy index]
  \label{PROP_sc_diffeo_preserves_degen_ind}
  \hfill\\
Let \(X\) and \(Y\) be M-polyfolds, and let \(U\subset X\) and \(V\subset
  Y\) be open subsets.
If \(f:U\to V \) is an sc-diffeomorphism and \(x\in U\), then 
  \begin{align*}                                                          
    d_X(x) = d_Y \big(f(x)\big).
    \end{align*}
\end{proposition}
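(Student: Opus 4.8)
The plan is to exploit the fact that the degeneracy index \(d_X(x)\) is defined intrinsically as a minimum over \emph{all} local models around \(x\), and to observe that an sc-diffeomorphism sets up a value-preserving bijection between the local models of \(X\) at \(x\) and those of \(Y\) at \(f(x)\). Concretely, writing
\[
  d_X(x) = \min_{(\Phi,W,(O,C,E))\in\mathcal{L}} d_C(\Phi(x)),
\]
and similarly for \(d_Y(f(x))\), the whole proposition will follow once I produce, from each chart of \(X\) around \(x\), a chart of \(Y\) around \(f(x)\) realizing the \emph{same} value of the partial-quadrant degeneracy index \(d_C\), and symmetrically.

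First I would make a harmless reduction: in computing \(d_X(x)\) it suffices to range over charts \(\Phi:W\to O\) whose domain satisfies \(W\subset U\). Indeed, any chart may be restricted to a smaller open neighborhood of \(x\) contained in \(U\); if \(Q\subset O\) is the (open) image of the restriction, then \((Q,C,E)\) is again an sc-smooth model, the restricted map is still an sc-diffeomorphism onto it, and the value \(d_C(\Phi(x))\) at the point \(x\) is unchanged. Hence the minimum is unaffected. Next comes the main construction: fix such a chart \(\Phi:W\to O\subset C\subset E\) around \(x\) with \(W\subset U\), and set
\[
  \Psi := \Phi\circ f^{-1}:f(W)\longrightarrow O.
\]
Since \(f\) is a homeomorphism, \(f(W)\subset V\) is open and contains \(f(x)\), and \(\Psi\) is a homeomorphism onto \(O\). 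That \(\Psi\) is an sc-smooth diffeomorphism onto the model \((O,C,E)\) follows from the sc chain rule (Theorem \ref{thm_sc_chain_rule}) applied in charts to the compositions \(\Psi=\Phi\circ f^{-1}\) and \(\Psi^{-1}=f\circ\Phi^{-1}\), using that \(f,f^{-1}\) are sc-smooth and that \(\Phi,\Phi^{-1}\) are sc-smooth. Thus \(\Psi\) is a legitimate element of the corresponding class \(\mathcal{L}\) of local models for \(Y\) at \(f(x)\), built on the \emph{same} partial quadrant \(C\), and crucially \(\Psi(f(x)) = \Phi(f^{-1}(f(x))) = \Phi(x)\), so \(d_C(\Psi(f(x))) = d_C(\Phi(x))\).

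Finally I would assemble the inequalities. The construction gives \(d_Y(f(x)) \le d_C(\Psi(f(x))) = d_C(\Phi(x))\) for every chart \(\Phi\) of the reduced family, and minimizing over \(\Phi\) yields \(d_Y(f(x)) \le d_X(x)\). Running the identical argument with the sc-diffeomorphism \(f^{-1}:V\to U\) in place of \(f\) gives \(d_X(x)\le d_Y(f(x))\), and equality follows.

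The only genuinely nontrivial ingredient is that the composite \(\Phi\circ f^{-1}\) (and its inverse) is again sc-smooth; that is, the main obstacle is precisely the appeal to the sc chain rule, together with unwinding the definition of sc-smoothness of maps between models and M-polyfolds so that the M-polyfold-level composition reduces to the partial-quadrant statement of Theorem \ref{thm_sc_chain_rule}. Everything else is bookkeeping about domains, plus the invariance of \(d_C\) under restriction to open subsets of a model.
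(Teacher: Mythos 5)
Your argument is correct. Note, though, that the paper does not actually prove this proposition: its ``proof'' is a one-line citation to Proposition 2.7 in Section 2.3 of \cite{HWZ2017}, so there is no in-text argument to compare against step by step. What you have written is the natural proof that falls directly out of the paper's definition of $d_X(x)$ as a minimum of $d_C(\Phi(x))$ over the class $\mathcal{L}$ of local models at $x$: the sc-diffeomorphism $f$ induces, by composition, a $d_C$-value-preserving correspondence $\Phi\mapsto \Phi\circ f^{-1}$ between local models at $x$ (first shrunk so their domains lie in $U$) and local models at $f(x)$, which yields $d_Y(f(x))\le d_X(x)$, and the reverse inequality follows by running the same construction with $f^{-1}$. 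The two points of substance are both handled correctly: the preliminary restriction step uses that an open subset $Q\subset O$ of a retract again gives an sc-smooth model $(Q,C,E)$, so shrinking a chart does not change the achieved value of $d_C$ at the point; and the admissibility of $\Phi\circ f^{-1}$ as a local model for $Y$ (i.e.\ its sc-smooth compatibility with the atlas of $Y$) reduces, after unwinding the definition of sc-smoothness between retracts via retractions, to the sc chain rule of Theorem \ref{thm_sc_chain_rule}.
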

%
\begin{proof}
This is a restatement of Proposition 2.7 in Section 2.3 of \cite{HWZ2017}.
\end{proof}
%

\begin{proposition}[equality of degeneracy indices]
  \label{HOFER11X}\label{PROP_eq_of_degen_index}
  \hfill\\
Consider the tame sc-smooth retract \((O,C,E)\) and view \(O\) as an
  abstract M-polyfold with degeneracy index \(d_O\). 
Then the following identity holds.
\begin{align*}                                                            
  d_C(x)=d_O(x)\ \ \text{for all}\ x\in O. 
  \end{align*}
\end{proposition}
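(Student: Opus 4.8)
The plan is to establish the two inequalities \(d_O(x)\le d_C(x)\) and \(d_C(x)\le d_O(x)\) at every \(x\in O\), where \(d_O\) is the M-polyfold degeneracy index of \(O\) regarded as an abstract M-polyfold and \(d_C\) is the ambient partial-quadrant index. The first inequality is immediate from the definition of \(d_O\): the triple \((\mathrm{id}_O, O, (O,C,E))\) is itself a local model for \(O\), since \(\mathrm{id}:O\to O\) is an sc-smooth diffeomorphism of the open set \(O\) onto the tame sc-smooth retract \((O,C,E)\). As \(d_O(x)\) is the minimum of \(d_{C'}(\Phi(x))\) over all local models \((\Phi,V,(O',C',E'))\in\mathcal{L}\), evaluating on this tautological model yields \(d_O(x)\le d_C(x)\).

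The content of the proposition is the reverse inequality, and I would reduce it to the following invariance claim: \emph{if \((O_1,C_1,E_1)\) and \((O_2,C_2,E_2)\) are tame sc-smooth retracts and \(\Phi:O_1\to O_2\) is an sc-diffeomorphism between relatively open subsets, then \(d_{C_1}(x)=d_{C_2}(\Phi(x))\).} Granting the claim, fix any local model \((\Phi,V,(O',C',E'))\in\mathcal{L}\). Since an open subset of a tame retract is again a tame retract, \(\Phi\) is an sc-diffeomorphism between the tame retracts \((V,C,E)\) and \((O',C',E')\), so the claim gives \(d_{C'}(\Phi(x))=d_C(x)\). Every term in the minimum defining \(d_O(x)\) therefore equals \(d_C(x)\), whence \(d_O(x)=d_C(x)\). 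I note that the M-polyfold-level invariance recorded in Proposition~\ref{PROP_sc_diffeo_preserves_degen_ind} is not by itself enough to close this gap, precisely because it compares the intrinsic indices \(d_O\) and \(d_{O'}\) rather than the ambient indices \(d_C\) and \(d_{C'}\); the quadrant-level invariance above is the genuinely new input.

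The main obstacle is thus the invariance claim, and this is exactly where tameness is indispensable. I would argue at the level of tangent spaces. At a smooth point \(x\in O_1\), tameness condition (2) furnishes an sc-splitting \(E_1=T_xO_1\oplus A\) with \(A\subset (E_1)_x\); since \((E_1)_x\) is the lineality space of the tangent cone of \(C_1\) at \(x\), this places \(T_xO_1\) in good position, so that the tangent cone of \(C_1\) at \(x\) induces a partial quadrant on \(T_xO_1\) whose degeneracy index at the origin is exactly \(d_{C_1}(x)\); here tameness condition (1), \(d_{C_1}\circ r_1=d_{C_1}\), is what guarantees the retraction does not distort this corner count. The crux is then to show that the sc-isomorphism \(T_x\Phi:T_xO_1\to T_{\Phi(x)}O_2\) carries the induced partial quadrant on \(T_xO_1\) onto that on \(T_{\Phi(x)}O_2\); since such an sc-isomorphism preserves the degeneracy index of the origin, this produces \(d_{C_1}(x)=d_{C_2}(\Phi(x))\).

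I expect this last step to be the hard part: \(\Phi\) is defined only on the retract and not on the full quadrant, so the interaction between the finite-dimensional corner directions \(\mathbb{R}^{(x)}\) and the differential of \(\Phi\) must be controlled by hand, using both tameness conditions together with the explicit finite-dimensional corner model from Definition~\ref{DEF_Ex} to break any apparent circularity. Finally, the passage from smooth points, where condition (2) applies directly, to arbitrary points of \(O\) would be handled by a density and upper-semicontinuity argument for the integer-valued function \(d_C\), again exploiting that the retraction preserves \(d_C\).
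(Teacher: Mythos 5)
The paper itself does not prove this proposition — it is quoted from \cite{HWZ2017} — so there is no in-text argument to measure yours against; I therefore assess the proposal on its own terms. Your easy inequality $d_O(x)\le d_C(x)$ via the tautological chart is correct, and your remark that Proposition \ref{PROP_sc_diffeo_preserves_degen_ind} compares the intrinsic rather than the ambient indices is exactly right. The problem is the reduction of the hard inequality. The class $\mathcal{L}$ over which the minimum defining $d_O(x)$ is taken consists of \emph{all} local models $(\Phi,V,(O',C',E'))$ with $(O',C',E')$ an sc-smooth retract; nothing forces these to be tame. Your invariance claim is formulated, and your tangent-space mechanism designed, for a pair of \emph{tame} retracts, and when you write that ``$\Phi$ is an sc-diffeomorphism between the tame retracts $(V,C,E)$ and $(O',C',E')$'' you are silently assuming tameness of the target chart, which is unjustified. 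What is actually required is the asymmetric statement $d_{C'}(\Phi(x))\ge d_C(x)$ for an sc-diffeomorphism from a tame retract onto an \emph{arbitrary} retract, and your argument does not obviously deliver it: without tameness of $(O',C',E')$ the set $T_{\Phi(x)}O'\cap C'_{\Phi(x)}$ need not be a partial quadrant in $T_{\Phi(x)}O'$, the difference-quotient argument only gives that $T_x\Phi$ maps the induced quadrant \emph{into} the tangent cone of $C'$ (not onto an induced quadrant), and an injective linear map can carry a corner of order $k$ into a corner of strictly smaller order — e.g.\ $(a,b)\mapsto(a+b,a-b)$ maps $[0,\infty)^2$ into $[0,\infty)\times\mathbb{R}$ — so the step ``an sc-isomorphism preserves the degeneracy index of the origin'' is not available.

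Second, even in the symmetric tame case you explicitly defer the crux — that $T_x\Phi$ carries the induced partial quadrant on $T_xO_1$ \emph{onto} the one on $T_{\Phi(x)}O_2$ — labelling it ``the hard part'' without supplying an argument. That step is where essentially all of the content of the proposition lives (it is the sc-analogue of the fact that diffeomorphisms of manifolds with corners preserve the corner stratification), so as written the proposal is a plan with the decisive lemma missing rather than a proof. The portions you do argue in full — the tautological chart, the good position of $T_xO$ forced by tameness condition (2), and the reduction from arbitrary points to smooth points via density, $d_C\circ r=d_C$, and upper semicontinuity (of $d_O$ as well as $d_C$) — are sound, but the two items above must be repaired before the argument closes.
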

%

With the degeneracy index established, we can then define the {\bf boundary
  points}\index{boundary points} of \(X\) precisely as set of the points
  \(x\in X\) with \(d_X(x)>0\).
Finally, we note that a partial quadrant \(C\) can be viewed as an
  M-polyfold and that the previously defined \(d_C\) coincides with the
  degeneracy index of \(C\) viewed as a M-polyfold, however this is a
  nontrivial result, even if easier than Proposition
  \ref{PROP_eq_of_degen_index}.
For details, we refer the reader to \cite{HWZ2017}. 
Another useful result proved in \cite{HWZ2017} is Proposition
  \ref{PROP_consequences_of_tame_m_polyfolds} below.

\begin{proposition}[consequences of tame M-polyfolds]
  \label{PROP_consequences_of_tame_m_polyfolds}
  \hfill\\
The following results hold.
\begin{itemize}
  \item[(1)]  
  Let \(X\) and \(Y\) be two tame M-polyfolds.  Then \(X\times Y\) is a tame
  M-polyfold and \(d_{X\times Y}(x,y)=d_X(x)+d_Y(y)\).
  \item[(2)]  
  If \(X\) is a ssc-manifold\footnote{The notion of an ssc-manifold is
  provided in Definition \ref{DEF_ssc_manifold_atlas} below.}  with
  boundary with corners then the underlying M-polyfold is tame.
  \end{itemize}
\end{proposition}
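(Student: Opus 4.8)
The plan is to build charts for $X\times Y$ from products of charts for $X$ and $Y$, show the resulting models are tame, and then read off the degeneracy formula from the behavior of the partial-quadrant index under sc-direct sums. For Part~(1), fix charts $\phi:U\to O=r(U_0)$ and $\psi:V\to O'=s(V_0)$ around $x$ and $y$, where $r:U_0\to U_0$ and $s:V_0\to V_0$ are tame sc-retractions on relatively open subsets of partial quadrants $C\subset E$ and $C'\subset E'$. The candidate product chart is $\phi\times\psi:U\times V\to O\times O'$ with model $(O\times O',\,C\oplus C',\,E\oplus E')$. First I would check that $C\oplus C'$ is a partial quadrant in the sc-direct sum $E\oplus E'$: composing the defining sc-isomorphisms ${\mathbb R}^n\oplus W\to E$ and ${\mathbb R}^m\oplus W'\to E'$ with the obvious reshuffling sc-isomorphism ${\mathbb R}^{n+m}\oplus(W\oplus W')\cong({\mathbb R}^n\oplus W)\oplus({\mathbb R}^m\oplus W')$ exhibits $C\oplus C'$ as the image of $[0,\infty)^{n+m}\oplus(W\oplus W')$. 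The map $r\times s$ is sc-smooth (products of sc-smooth maps being sc-smooth) and idempotent with image $O\times O'$, so $(O\times O',C\oplus C',E\oplus E')$ is an sc-smooth model; sc-smooth compatibility of two such product charts reduces to sc-smoothness of each factor transition, and Hausdorffness and paracompactness pass to products, so the product atlas equips $X\times Y$ with an M-polyfold structure.

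Next I would verify that $r\times s$ is tame in the sense of Definition~\ref{DEF_tame_sc_retraction}. For condition~(1), additivity of the partial-quadrant index, $d_{C\oplus C'}(a,b)=d_C(a)+d_{C'}(b)$, is immediate from the coordinate count of Definition~\ref{DEF_Ex} once $C\oplus C'$ is in standard form, and it yields $d_{C\oplus C'}((r\times s)(a,b))=d_C(r(a))+d_{C'}(s(b))=d_C(a)+d_{C'}(b)$. For the splitting condition~(2), at a smooth point $(a,b)\in O\times O'$ I would take $A=A_1\oplus A_2$, with $A_1\subset E$ and $A_2\subset E'$ the sc-complements furnished by tameness of $r$ and $s$; using $T_{(a,b)}(O\times O')=T_aO\oplus T_bO'$ (which follows since $D(r\times s)(a,b)=Dr(a)\oplus Ds(b)$) together with the identity $(E\oplus E')_{(a,b)}=E_a\oplus E'_b$ from Definition~\ref{DEF_Ex}, the decomposition $E\oplus E'=T_{(a,b)}(O\times O')\oplus A$ with $A\subset(E\oplus E')_{(a,b)}$ follows. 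Hence the product model is tame and $X\times Y$ is a tame M-polyfold.

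The degeneracy formula then drops out: by Proposition~\ref{PROP_eq_of_degen_index} the M-polyfold index computed in a tame chart agrees with the partial-quadrant index, and by Proposition~\ref{PROP_sc_diffeo_preserves_degen_ind} it is chart-independent, so evaluating in the product chart gives $d_{X\times Y}(x,y)=d_{C\oplus C'}(\phi(x),\psi(y))=d_C(\phi(x))+d_{C'}(\psi(y))=d_X(x)+d_Y(y)$. For Part~(2), I would observe that a relatively open subset $U_0$ of a partial quadrant $C$ is itself a tame sc-smooth retract via the identity retraction $\id:U_0\to U_0$: idempotency and sc-smoothness are trivial, condition~(1) is an equality, and since $T_aO=E$ at every point one may take $A=\{0\}\subset E_a$ in condition~(2). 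Thus each chart of the ssc-manifold atlas (Definition~\ref{DEF_ssc_manifold_atlas}) is already an M-polyfold chart using a tame model, and sc-smoothness of the ssc-transition maps is precisely sc-smooth compatibility, so the underlying space is a tame M-polyfold.

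The genuinely delicate points I expect to be bookkeeping rather than conceptual, namely justifying the two identities $T_{(a,b)}(O\times O')=T_aO\oplus T_bO'$ and $(E\oplus E')_{(a,b)}=E_a\oplus E'_b$. The tangent identity requires unwinding the definition of the tangent of a model through $T(r\times s)=Tr\times Ts$ and checking that it splits as claimed; the $E_x$ identity requires that the maximal subspace $F$ with $(a,b)+B_{\varepsilon}^F(0)\subset C\oplus C'$ is exactly the product of the corresponding maximal subspaces for $a$ and $b$, which is transparent in standard coordinates but must be stated carefully to avoid circularity with the cited propositions. Everything else is a routine transfer of the product construction from ordinary manifolds to the sc-smooth retract setting.
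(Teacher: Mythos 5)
The paper does not prove this proposition itself; it defers to the reference \cite{HWZ2017}, and your argument is essentially the standard one given there: product charts built from tame retractions $r\times s$, additivity of $d_{C\oplus C'}$ in standard coordinates, and the splittings $T_{(a,b)}(O\times O')=T_aO\oplus T_bO'$ and $(E\oplus E')_{(a,b)}=E_a\oplus E'_b$ for the tameness condition, together with Propositions \ref{PROP_sc_diffeo_preserves_degen_ind} and \ref{PROP_eq_of_degen_index} to convert the chart computation into the statement about $d_{X\times Y}$; Part (2) via the identity retraction on a relatively open subset of a partial quadrant is likewise correct. One small repair: paracompactness does \emph{not} pass to products of topological spaces in general, so you should instead invoke the fact that an M-polyfold is metrizable (see the remark after Definition \ref{DEF_m_polyfold}), whence $X\times Y$ is metrizable and therefore Hausdorff and paracompact. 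With that substitution the proof is complete.
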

%

Given an M-polyfold \(X\), we then have a filtration \(X_{i+1}\subset
  X_i\subset\ldots\subset X_0=X\), and one can show that \(X_1\) has an
  M-polyfold structure induced from \(X\).
We denote this M-polyfold by \(X^1\), and we can define \(X^j\)
  similarly.\index{\(X^1\) and \(X^j\)}
A notion which will be important to us is that of a sub-M-polyfold,
  given below.
\begin{definition}[sub M-polyfold]
  \label{def:sub M polyfold}\label{DEF_sub_m_polyfold}
  \hfill\\
Let \(X\) be a M-polyfold and \(A\) a subset. 
We say \(A\) is a \emph{sub-M-polyfold} \index{sub-M-polyfold}provided that
  for every point \(a\in A\) there exists an open neighborhood
  \(U=U(a)\subset X\) and an sc-smooth map \(r:U\rightarrow U\) such that
  \(r\circ r=r\) and \(r(U)=U\cap A\).
\end{definition}
%
It is elementary to prove the following lemma; see \cite{HWZ2017}.
\begin{lemma}[structures sub-M-polyfolds inherit]
  \label{lem:sub poly}\label{LEM_structures_sub_poly_inherit}
  \hfill\\
A sub-M-polyfold \(A\) of \(X\) inherits a natural M-polyfold structure
  for which the inclusion \(j:A\rightarrow X\) is sc-smooth, and the local
  maps \(U\rightarrow A\) given by  \( u\rightarrow j^{-1}\circ r(u)\) are
  sc-smooth as well, where \(A\) is equipped with this M-polyfold structure.
\end{lemma}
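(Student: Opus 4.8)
The plan is to produce, around each point of $A$, an M-polyfold chart whose model is an sc-smooth retract, by composing the given sub-M-polyfold retraction with the retraction defining the ambient chart model; everything else then follows from the fact that sc-smoothness of a map between retracts is tested by precomposition with a defining retraction, together with the sc chain rule (Theorem~\ref{thm_sc_chain_rule}). First I would record a basic fact: if $r:U\to U$ is as in Definition~\ref{DEF_sub_m_polyfold}, then $r$ fixes $U\cap A$ pointwise, since every $a\in U\cap A=r(U)$ has the form $a=r(u)$, whence $r(a)=r(r(u))=r(u)=a$. Now fix $a\in A$ and a chart $(V,\phi,(O,C,E))$ of $X$ with $a\in V\subset U$, where $O=\rho(W)$ for an sc-smooth retraction $\rho:W\to W$ on a relatively open $W\subset C$. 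Because $r(a)=a\in V$ and $r$ is sc-continuous, after replacing $V$ by $V\cap r^{-1}(V)$ (and shrinking $O$ and $W$ correspondingly, which keeps $(O,C,E)$ an sc-smooth model, since an open subset of a retract is again a retract) I may assume $r(V)\subset V$. Transporting $r$ to the model gives $s:=\phi\circ r\circ\phi^{-1}:O\to O$, which is sc-smooth, satisfies $s\circ s=s$, and has $s(O)=\phi(V\cap A)=:O_A$, using that $r(V)=V\cap A$ because $r$ retracts onto $A$ and fixes it.

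The key step is to set $q:=s\circ\rho:W\to W$ and check that $q$ is an sc-smooth retraction with image $O_A$. It is sc-smooth because $\rho:W\to O$ is sc-smooth and $s:O\to O$ is sc-smooth, so $s\circ\rho$ is sc-smooth by the very definition of sc-smoothness on the model $O$ (precomposition with $\rho$) and the chain rule. Its image is $s(\rho(W))=s(O)=O_A$. Finally $q\circ q=q$: for $w\in W$ we have $\rho(w)\in O$, hence $s(\rho(w))\in O_A\subset O$; since $\rho$ fixes $O$ pointwise, $\rho(s(\rho(w)))=s(\rho(w))$, and then $s(s(\rho(w)))=s(\rho(w))$ by $s\circ s=s$, giving $q(q(w))=s(\rho(w))=q(w)$. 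Thus $(O_A,C,E)$ is an sc-smooth retract, and $(V\cap A,\,\phi|_{V\cap A},\,(O_A,C,E))$ is an M-polyfold chart around $a$ whose underlying topology is the subspace topology inherited from $X$.

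It remains to assemble these charts and verify the three assertions. Compatibility of two such charts is the restriction/corestriction to the sub-retracts of the corresponding sc-smooth transition of $X$: a transition $g$ of $X$-charts carries $A$-points to $A$-points, and its restriction is sc-smooth because precomposing with the retraction $q$ onto the source sub-retract recovers $g\circ q$, a composition of sc-smooth maps. Since $X$ is metrizable, the subspace $A$ is metrizable, hence Hausdorff and paracompact, so $A$ is an M-polyfold. For sc-smoothness of $j:A\to X$, the local representative in the charts above is the inclusion $\iota:O_A\hookrightarrow O$, and $\iota\circ q=q$ is sc-smooth, so $\iota$ is sc-smooth as a map of models. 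For the local maps $u\mapsto j^{-1}\circ r(u)$, the local representative (using an $X$-chart around $u_0$ and the $A$-chart constructed above around $r(u_0)$, with the source shrunk so that $r$ maps it into the target chart) is precisely the chart representative of the sc-smooth self-map $r$ of $X$, corestricted to the sub-retract $O_A$; corestriction is legitimate because $r$ has image in $A$, and sc-smoothness follows once more by precomposition with a defining retraction.

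I expect the only genuine obstacle to be the composition step, namely recognizing that $q=s\circ\rho$ is the correct retraction and verifying both $q\circ q=q$ and $q(W)=O_A$. Once $A$ has been exhibited locally as an sc-smooth retract, the compatibility statement and the two sc-smoothness claims are routine, reducing in each case to the principle that sc-smoothness of a map between retracts is detected after precomposition with a defining retraction, combined with Theorem~\ref{thm_sc_chain_rule}.
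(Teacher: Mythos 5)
Your proof is correct, and it is essentially the standard argument that the paper itself omits (it defers to the reference \cite{HWZ2017}): the key step of forming $q=s\circ\rho$, checking $q\circ q=q$ via the fact that $\rho$ fixes $O$ pointwise, and then reducing all sc-smoothness claims to precomposition with a defining retraction plus the chain rule is exactly the intended construction. The preliminary observations you record — that $r$ fixes $U\cap A$ pointwise and that one may shrink $V$ to arrange $r(V)\subset V$ with $r(V)=V\cap A$ — are the right bookkeeping and are handled correctly.
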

%

This result has the following corollary, which can be viewed as the
  starting point for quite far-reaching generalizations, which are of utmost
  importance later on.
For example the \(\oplus\)-method introduced in Section
  \ref{SEC_imprinting_method} can be viewed as having its roots in this
  corollary.

\begin{corollary}[induced sub-M-polyfold structures]
  \label{corr18}\label{COR_induced_sub_m_poly}
  \hfill\\
Assume \(X\) and \(Y\) are M-polyfolds and \(g:Y\rightarrow X\) is an
  sc-smooth map so that there exists an open neighborhood \(U\) of \(g(Y)\)
  and an sc-smooth map \(f:U\rightarrow Y\) satisfying \(f\circ g = Id_Y\).
Then \(A:=g(Y)\) is a sub-M-polyfold of \(X\) and for the induced structure
  the map \(g:Y\rightarrow A\) is an sc-diffeomorphism.
\end{corollary}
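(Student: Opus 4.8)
The plan is to produce the retraction demanded by Definition~\ref{DEF_sub_m_polyfold} directly from the pair \((f,g)\), and then to read off the sc-diffeomorphism statement from Lemma~\ref{LEM_structures_sub_poly_inherit} together with the sc-chain rule. First I would set \(r := g\circ f : U\to X\). Since \(f(U)\subset Y\) and \(g(Y)=A\subset U\), the image \(r(U)\) lies in \(A\subset U\), so \(r\) is in fact a map \(U\to U\), and it is sc-smooth as a composition of sc-smooth maps. The retraction identity is immediate from the hypothesis \(f\circ g=\Id_Y\): indeed \(r\circ r = g\circ(f\circ g)\circ f = g\circ\Id_Y\circ f = g\circ f = r\).

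Next I would pin down that \(r(U)=U\cap A\). The inclusion \(r(U)\subset A\) is clear, and since \(A\subset U\) we have \(U\cap A=A\); for the reverse inclusion I would note that any \(a=g(y)\in A\) is fixed by \(r\), because \(r(a)=g(f(g(y)))=g(y)=a\), again using \(f\circ g=\Id_Y\). Hence \(A\subset r(U)\), so \(r(U)=A=U\cap A\). This single neighborhood \(U\) and retraction \(r\) serve simultaneously for every \(a\in A\), so Definition~\ref{DEF_sub_m_polyfold} is satisfied and \(A\) is a sub-M-polyfold. By Lemma~\ref{LEM_structures_sub_poly_inherit}, \(A\) then carries the induced M-polyfold structure for which the inclusion \(j:A\to X\) and the local map \(\pi:=j^{-1}\circ r:U\to A\) are both sc-smooth.

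It remains to show that \(g:Y\to A\) is an sc-diffeomorphism. Bijectivity is formal: \(g\) is surjective onto \(A=g(Y)\) by definition, and injective because \(f\circ g=\Id_Y\) forces \(g(y_1)=g(y_2)\Rightarrow y_1=y_2\); the inverse is \(g^{-1}=f|_A\). For sc-smoothness of \(g\) into \(A\), I would regard \(g\) as a map \(Y\to U\) (legitimate since \(g(Y)=A\subset U\)) and observe the factorization \(\pi\circ g = g\) as maps \(Y\to A\): for \(y\in Y\), \(\pi(g(y))=j^{-1}(r(g(y)))=j^{-1}(g(y))\) because \(r\) fixes \(A\), and \(j^{-1}(g(y))\) is just \(g(y)\) viewed in \(A\). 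As \(\pi\) is sc-smooth by the lemma and \(g:Y\to U\) is sc-smooth, the chain rule gives that \(g:Y\to A\) is sc-smooth. Dually, \(g^{-1}=f|_A = f\circ j\) is a composition of the sc-smooth inclusion \(j:A\to U\) with the sc-smooth map \(f:U\to Y\), hence sc-smooth. Thus \(g\) and \(g^{-1}\) are both sc-smooth, so \(g:Y\to A\) is an sc-diffeomorphism.

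I expect the only genuine subtlety to be bookkeeping rather than analysis: the real content is the identity \(r(U)=A\), which hinges on the fact that the one-sided inverse \(f\circ g=\Id_Y\) makes every point of \(A\) a fixed point of \(r\); everything else follows mechanically from the chain rule (Theorem~\ref{thm_sc_chain_rule}, in its M-polyfold form) and the inherited-structure statement of Lemma~\ref{LEM_structures_sub_poly_inherit}. The one point to handle with care is that sc-smoothness of maps into and out of \(A\) must be phrased in terms of the \emph{induced} structure, which is precisely why I would route the arguments through the sc-smooth maps \(\pi\) and \(j\) supplied by the lemma rather than appealing to \(r\) or the ambient map \(g:Y\to X\) directly.
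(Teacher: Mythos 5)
Your proposal is correct and follows essentially the same route as the paper: both proofs take \(r:=g\circ f\), verify \(r\circ r=r\) and \(r(U)=g(Y)\) (you via the fixed-point identity \(r(g(y))=g(y)\), the paper via surjectivity of \(f\)), and then identify \(g^{-1}\) with \(f\) restricted to \(A\). Your treatment of the sc-smoothness of \(g:Y\to A\) and of \(g^{-1}=f\circ j\) through the maps \(j\) and \(j^{-1}\circ r\) supplied by Lemma~\ref{LEM_structures_sub_poly_inherit} is if anything slightly more explicit than the paper's direct check that \(g^{-1}\circ s\) is sc-smooth for an arbitrary local retraction \(s\) defining \(A\).
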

%
\begin{proof}
Define \(r:U\rightarrow U\) by \(r(u)= g\circ f(u)\). This is an sc-smooth
  map and \(r\circ r = g\circ f\circ g \circ f = g\circ Id_Y\circ f= r\).
Using that \(f\) is surjective it follows that  \(r(U) = g(Y)=A\).
Hence \(g(Y)\) is a sub-M-polyfold. 
Moreover \(g^{-1}\circ r:U\rightarrow Y\) is precisely the sc-smooth
  map \(f\).
If \(a\in A=g(Y)\) and \(s:V\rightarrow V\) a local sc-smooth retraction with
  \(s(V)=A\cap V\) we observe that on \(V\cap U\) it holds that \(g^{-1}\circ s
  = g^{-1}\circ r\circ s\) which shows that \(g^{-1}\circ s\) is sc-smooth.
Hence  \(g^{-1}\) is sc-smooth for the induced structure on \(A\).
\end{proof}

\begin{proposition}[degeneracy index inequality of sub-M-polyfolds]
  \label{PROP_degen_ind_ineq_sub_m_poly}
  \hfill\\
If \(X\) is an M-polyfold and \(A\subset X\) is a sub-M-polyfold of \(X\),
  then
  \begin{align*}                                                          
    d_A(a) \leq d_X(a)
    \end{align*}
  for all \(a\in A\).
\end{proposition}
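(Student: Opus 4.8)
The plan is to exploit the fact that the degeneracy index is defined as a \emph{minimum} over admissible local models, so that to bound $d_A(a)$ from above it suffices to produce a single local model for $A$ near $a$ whose partial quadrant and image point already witness $d_X(a)$. First I would fix $a\in A$ and choose an M-polyfold chart $\Phi:U\to O\subset C\subset E$ for $X$ around $a$ realizing the minimum, i.e.\ with $d_C(\Phi(a))=d_X(a)$ (the minimum is attained, being a minimum of a nonempty subset of $\mathbb{N}$). Here $(O,C,E)$ is an sc-smooth model, say $O=\rho(V)$ for an sc-smooth retraction $\rho:V\to V$ on a relatively open $V\subset C$. After shrinking $U$ I may assume the sub-M-polyfold retraction of Definition \ref{DEF_sub_m_polyfold} is defined on $U$, yielding an sc-smooth $r:U\to U$ with $r\circ r=r$ and $r(U)=U\cap A$.

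Next I would transport $r$ into the model and compose it with $\rho$. Setting $\tilde r:=\Phi\circ r\circ \Phi^{-1}:O\to O$, this is an sc-smooth retraction on the model with image $\Phi(U\cap A)$, and I define
\[
  q:=\tilde r\circ \rho:V\to V.
\]
The key computation is that $q$ is again an sc-smooth retraction with $q(V)=\Phi(U\cap A)$. Idempotence follows from $\rho|_O=\mathrm{id}_O$ together with $\tilde r\circ \tilde r=\tilde r$: for $x\in V$ one has $\tilde r(\rho(x))\in O$, hence $\rho(\tilde r(\rho(x)))=\tilde r(\rho(x))$, so $q(q(x))=\tilde r(\tilde r(\rho(x)))=q(x)$. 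Sc-smoothness of $q$ is immediate from the definition of sc-smoothness for maps between sc-smooth models, which says precisely that $\tilde r\circ\rho$ is sc-smooth. Consequently $(\Phi(U\cap A),C,E)$ is a bona fide sc-smooth model.

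Finally I would observe that $\Phi$ restricts to a homeomorphism $\Phi|_{U\cap A}:U\cap A\to \Phi(U\cap A)=q(V)$, and that this is exactly a chart for the natural M-polyfold structure that $A$ inherits by Lemma \ref{LEM_structures_sub_poly_inherit}. Thus $\Phi|_{U\cap A}$ is an admissible local model for $A$ at $a$, built on the \emph{same} partial quadrant $C$ and sending $a$ to the \emph{same} point $\Phi(a)$ as the original $X$-chart. Since $d_A(a)$ is the minimum of $d_{C'}(\Phi'(a))$ over all such models, this one gives
\[
  d_A(a)\le d_C(\Phi(a))=d_X(a),
\]
which is the claim.

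The hard part will be the middle step, namely verifying cleanly that the composite $q=\tilde r\circ\rho$ is an idempotent and sc-smooth map, and then confirming that the resulting retract $q(V)$ is precisely the local model furnished by the induced structure of Lemma \ref{LEM_structures_sub_poly_inherit}, so that $\Phi|_{U\cap A}$ genuinely lies in the class of local models over which $d_A(a)$ is minimized. I would take care that no tameness is invoked anywhere, since a general sub-M-polyfold need not be tame and Proposition \ref{PROP_eq_of_degen_index} is therefore unavailable.
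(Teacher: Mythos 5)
Your argument is correct. Note that the paper itself does not prove this proposition; it simply cites Lemma 2.3 of the reference volume \cite{HWZ2017}, so there is no in-text proof to compare against, and your write-up supplies exactly the kind of self-contained argument the citation outsources. The strategy is the right one: since \(d_A(a)\) and \(d_X(a)\) are each defined as minima over classes of local models, it suffices to exhibit one admissible model for \(A\) at \(a\) living in the \emph{same} partial quadrant \(C\) and sending \(a\) to the \emph{same} point \(\Phi(a)\) as an optimal \(X\)-chart, and the composite \(q=\tilde r\circ\rho\) does precisely that. Your idempotence computation (using \(\rho|_O=\mathrm{id}_O\)) and your appeal to the definition of sc-smoothness of maps between retracts (so that \(\tilde r\circ\rho\) is sc-smooth by definition, independently of the choice of \(\rho\)) are both sound, and you are right that no tameness enters, so Proposition \ref{PROP_eq_of_degen_index} is correctly avoided. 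Two routine points deserve one sentence each in a final write-up: first, when you shrink \(U\) so that both the chart and the retraction \(r\) are defined there, you must also arrange \(r(U')\subset U'\) (replace \(U'\) by \(U'\cap r^{-1}(U')\), which works because \(r\) fixes \(U\cap A\) pointwise and is idempotent); second, the claim that \(\Phi|_{U\cap A}\) lies in the class over which \(d_A(a)\) is minimized is essentially true by construction, since the induced structure of Lemma \ref{LEM_structures_sub_poly_inherit} is built from exactly these charts, but it is worth saying so explicitly since the definition of \(d_A\) quantifies over the models compatible with that particular structure.
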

%
\begin{proof}
This is a restatement of Lemma 2.3 in Section 2.3 of \cite{HWZ2017}.
\end{proof}

A M-polyfold \(X\) sometimes admits sub-M-polyfolds for which their
  induced M-polyfold structure admits an equivalent finite-dimensional
  manifold atlas.

\begin{definition}[finite dimensional submanifold]
  \label{def:fin dim submanifold}\label{DEF_fin_dim_submanifold}
  \hfill\\
A subset \(A\) of a  M-polyfold \(X\) is called a \emph{finite-dimensional
  submanifold} \index{finite-dimensional submanifold} provided for every
  point \(a\in A\) there exists an open neighborhood \(U=U(a)\) in \(X\) and
  an sc-smooth retraction \(r:U\rightarrow U\) having the following
  properties:
\begin{itemize}
  \item[(1)] 
    \(r(U)=U\cap A\).
  \item[(2)] 
    \(r:U\rightarrow U^1\) is well-defined, i.e. \(r(U_i)\subset U_{i+1}\),
    and sc-smooth.
  \end{itemize}
\end{definition}
%
We note that (2) implies that \(A\subset X_\infty\) so that \(A\) has at a
  every point a tangent space. The retraction satisfying (1) and (2) is
  called a {\bf sc\(^+\)-retraction}\index{sc\(^+\)-retraction}.
The following holds, see \cite{HWZ2017}.

\begin{lemma}[submanifolds inherit manifold structure]
  \label{lem:submanifold}\label{LEM_submanifold_inherit}
  \hfill\\
Let \(A\) be a finite-dimensional submanifold of an M-polyfold \(X\). 
Then for each point \(a\in A\) which satisfies \(d_X(a)=0\), there exists
  a neighborhood \(V\subset X\) such that \(V\cap A\) is equipped with
  a natural smooth manifold structure.
Moreover every point  \(a\in A\) has a well-defined  tangent
  space.
\end{lemma}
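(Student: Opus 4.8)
The plan is to linearize the sc\(^+\)-retraction at a fixed point, recognize its linearization as a finite-rank projection, and then use the sc-smooth implicit function theorem to realize the retract locally as the graph of an sc-smooth map over the finite-dimensional image of that projection. First I would fix \(a\in A\) and choose \(U\) and \(r\) as in Definition \ref{DEF_fin_dim_submanifold}, so that \(r:U\to U\) is an sc\(^+\)-retraction with \(r(U)=U\cap A=:O\). Since \(a\in\operatorname{im}r\) and \(r\circ r=r\), we have \(r(a)=a\), and applying the sc-chain rule (Theorem \ref{thm_sc_chain_rule}) to \(r\circ r=r\) at the fixed point \(a\) shows that \(P:=Dr(a)\) satisfies \(P^2=P\), i.e.\ \(P\) is a bounded linear projection of \(E_0\). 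Because \(r\) raises the index by one, \(P\) is an sc\(^+\)-operator, so \(P:E_0\to E_0\) factors through the compact inclusion \(E_1\hookrightarrow E_0\) and is compact; a compact projection has finite rank. Hence \(N:=\operatorname{im}P\) is finite-dimensional, and iterating \(N=P(E)\subset E_1\) through \(P=P^2\) gives \(N\subset E_\infty\). Setting \(T_aA:=N\) and invoking the retraction-independence of the tangent of an sc-smooth model (the discussion preceding Definition \ref{DEF_m_polyfold}) establishes the final sentence of the lemma at \emph{every} \(a\in A\), with no hypothesis on \(d_X(a)\).

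Next, assuming \(d_X(a)=0\), I would pass to a local model in which \(d_C(a)=0\) lets me take the partial quadrant to be all of \(E\) near \(a\); thus \(r\) is defined on a genuinely open subset of the sc-Banach space \(E\), with no corner to obstruct the implicit function theorem. Using \(A\subset X_\infty\) (noted after Definition \ref{DEF_fin_dim_submanifold}), I may translate by \(a\in E_\infty\) and assume \(a=0\), and write \(E=N\oplus K\) with \(K:=\ker P\); this is an sc-splitting since \(N\subset E_\infty\) is a finite-dimensional sc-subspace and \(P\) restricts to the sc-projection onto \(N\). Writing \(\pi_N,\pi_K\) for the associated sc-projections, I would consider the sc-smooth map
\[
\Phi(\nu,\kappa):=\kappa-\pi_K\big(r(\nu+\kappa)\big),\qquad \nu\in N,\ \kappa\in K,
\]
near \((0,0)\). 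Because \(P|_K=0\), one has \(D_\kappa\Phi(0,0)=\mathrm{Id}_K\), which is invertible, so the sc-smooth implicit function theorem (\cite{HWZ2017}) yields an sc-smooth \(h:\Omega\to K\) on a neighborhood \(\Omega\) of \(0\) in \(N\), with \(h(0)=0\), whose graph \(S:=\{\nu+h(\nu):\nu\in\Omega\}\) is exactly the local solution set of \(\pi_K(u-r(u))=0\). Since every \(u\in O\) satisfies \(u=r(u)\), we get \(O\subset S\).

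The hard part will be the reverse inclusion \(S\subset O\), i.e.\ ruling out that the graph over-counts the fixed-point set. To close this I would study the induced map \(\mu(\nu):=\pi_N\big(r(\nu+h(\nu))\big)\) on \(\Omega\subset N\). Since \(r(\nu+h(\nu))\in\operatorname{im}r=O\subset S\), it equals \(\mu(\nu)+h(\mu(\nu))\), and feeding \(r\circ r=r\) through this identity gives \(\mu\circ\mu=\mu\). A short computation with \(Dh(0)=0\) and \(P|_N=\mathrm{Id}_N\) gives \(D\mu(0)=\mathrm{Id}_N\), so \(\mu\) is a local diffeomorphism of the finite-dimensional space \(N\) near \(0\); an idempotent local diffeomorphism must be the identity, whence \(\mu=\mathrm{Id}\) near \(0\). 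This forces \(r(\nu+h(\nu))=\nu+h(\nu)\), so \(S\subset\Fix(r)=O\). I expect this step—rather than the linearization or the bare application of the implicit function theorem—to be the genuine obstacle, since it is precisely where the global idempotency of \(r\) must be converted into the statement that the IFT graph coincides with the fixed-point set.

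Granting \(O=S\) locally, the graph is a finite-dimensional sc-smooth submanifold and \(\pi_N|_O\) is an sc-diffeomorphism onto the open set \(\Omega\subset N\). As \(N\) is finite-dimensional, each such \(\pi_N|_O\) is an ordinary smooth chart and transition maps between two of them are sc-smooth between finite-dimensional sets, hence classically smooth; so a neighborhood \(V\cap A\) of \(a\) inherits a smooth manifold structure. Its naturality—independence of the chosen retraction and local model—follows from the retraction-independence of the sc-smooth model and its tangent already recorded in the text, since any two such constructions produce sc-diffeomorphic, and therefore diffeomorphic, finite-dimensional structures.
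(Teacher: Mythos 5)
The paper itself offers no proof of this lemma --- it defers entirely to \cite{HWZ2017} --- so there is no in-text argument to measure you against. Your outline is essentially the standard one (Cartan's retraction argument transported to the sc-setting), and the step you flag as the genuine obstacle, the inclusion \(S\subset O\), is in fact handled correctly: \(\mu\circ\mu=\mu\) together with local injectivity of \(\mu\) (from \(D\mu(0)=\mathrm{Id}_N\)) forces \(\mu(\nu)=\nu\) for \(\nu\) small, hence \(S\subset\Fix(r)=O\). The linearization step is also sound: \(P=Dr(a)\) is an idempotent sc\(^+\)-operator, hence compact, hence of finite rank with image in \(E_\infty\), and \(\ker P\) gives an sc-complement.

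The genuine weak point is the sentence ``the sc-smooth implicit function theorem yields an sc-smooth \(h\).'' There is no implicit function theorem for general sc-smooth maps with invertible partial differential: the loss of one level in Definition \ref{DEF_sc_diff}(2) destroys the classical contraction argument, and circumventing this is the central difficulty the entire sc-Fredholm machinery is built around. The IFT of \cite{HWZ2017} applies only to germs already in contraction-germ normal form \(\kappa\mapsto\kappa-B(\nu,\kappa)\) with \(B\) level-wise small-Lipschitz in \(\kappa\). Your \(\Phi\) does have that shape, but only because \(r\) is an sc\(^+\)-retraction: \(B=\pi_K\circ r\circ(\nu+\kappa)\) raises the index, and it is the compactness of the inclusions \(E_{m+1}\hookrightarrow E_m\) together with \(D_\kappa B(0,0)=\pi_K\circ P\big|_K=0\) that lets one verify the contraction property on every level after shrinking the neighborhood. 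You must make this explicit, because this is precisely where condition (2) of Definition \ref{DEF_fin_dim_submanifold} enters: for a merely sc-smooth retraction the step fails outright, and it must fail, since images of sc-smooth retractions are exactly the general local models for M-polyfolds (e.g.\ the space \(\Sigma\) of Exercise \ref{EX_9}) and are certainly not manifolds. A second, smaller repair: the \(r\) of Definition \ref{DEF_fin_dim_submanifold} is defined on an open subset of \(X\), i.e.\ in a chart on a relatively open subset of a retract \(O'=s(V)\), not on an open subset of \(E\); before linearizing you should replace \(r\) by \(r\circ s\), which is still idempotent (since \(s\circ r=r\)), still sc\(^+\) (since \(s\) preserves levels and \(r\) raises them), and has the same image.
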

%

In light of Lemma \ref{LEM_submanifold_inherit}, it is natural to ask about
  points \(a\) in a finite dimensional submanifold \(A\) of an M-polyfold
  \(X\) for which \(d_X(a)>0\).
In this case, whether or not a neighborhood of such an \(a\in A\)
  locally has the structure of a smooth manifold with boundary and corners
  depends on the position of \(T_aA\) with respect  to \(\partial X\), and
  the properties of \(\partial X\).
This question has been studied in \cite{HWZ2017}. 
Of related importance, is the discussion of \emph{tame} M-polyfolds, laid
  out in depth in \cite{HWZ2017}, which is the relevant generalization
  of a manifold with boundary with corners to an M-polyfold with boundary
  and corners.

Here we will provide the following important example of a tame local model
  which is featured prominently in applications.
Define \(\wt{C}=[0,\infty)^k\times {\mathbb R}^{n-k}\), and let \(E\)
  be an sc-Banach space.
Consider a family \(a\mapsto \pi_a\) of linear sc-projections
  parameterized by \(a\in V\), where \(V\subset \wt{C}\) is open, so that
  the map \(\pi_{(\cdot)} :V\times E\rightarrow E\) given by
  \((a,e)\rightarrow \pi_a(e)\) is sc-smooth.
Then an important type of sc-smooth retraction \(r:V\times E\rightarrow
  V\times E\) is given by
  \begin{equation*}                                                       
    r(a,e)=(a,\pi_a(e)),
    \end{equation*}
  where the local sc-model is given by the tuple \mbox{\((O,\wt{C}\times
  E,{\mathbb R}^n\times E)\)}, where \(r(V\times E)=O\), and
  of course \((O',\wt{C}\times E,{\mathbb R}^n\times E)\) for open subsets
  \(O'\) of \(O\). 
These specific retractions are called {\bf splicings}\index{splicings} and
  were introduced in \cite{HWZ2}.
As mentioned previously, the tuples \((O',\wt{C}\times E,{\mathbb
  R}^n\times E)\) are examples of tame local models.
Furthermore, if \((a,e)\in O'\) and \(a=(a_1,..,a_n)\), then the degeneracy
  index \(d_{O'}(a,e)\) is precisely the number of indices \(i\) such that
  \(a_i=0\).
This is not true in general for an sc-smooth model \((O,C,E)\), however
  tameness is a key condition that guarantees that \(d_O=d_C|O\).   
  We leave it to the reader to consult the general theory of tame local
  models in \cite{HWZ2017} for additional details and we just point out that
  M-polyfolds built on splicings have nice boundaries which can be stratified
  by M-polyfolds. 

We take a moment to discuss several classes of sc-smooth objects.
If \(E\) is an sc-Banach space, and \(C\subset E\) is a partial quadrant,
  and \(U\subset C\) is a relatively open subset, then we can define an
  sc-manifold (with boundary and corners) to be a metrizable topological
  space with charts which are homeomorphisms \(\psi:V\rightarrow U\),
  for which compatibility and an atlas is defined as before.
Hence we can have the following kind of charts:
\begin{itemize}
  \item[(1)]  
    {\bf M-polyfold charts} \((V, \psi,(O,C,E))\), where we require \(O\)
    to be an sc-smooth retract.
      \item[(2)] 
    {\bf Tame M-polyfold charts} \((V, \psi,(O,C,E))\), where we require
    in addition that \((O,C,E)\) is an sc-smooth, tame retract (e.g. the
    image of a splicing).
  \item[(3)] 
    {\bf Sc-Manifold charts} \((V, \psi,(O,C,E))\), where \(O\subset C\)
    is open.
  \end{itemize}
The resulting atlases respectively define M-polyfolds, tame M-polyfolds,
  and sc-manifolds, provided we require the transition maps to be
  sc-smooth.
When considering sc-manifolds, it may occasionally happen that we
  find a compatible sc-manifold atlas for which the transition maps are
  level-wise \emph{classically} smooth.
We shall call this an ssc-manifold structure; here the additional `s'
  stands for strong.
\begin{definition}[ssc-manifold atlas and manifold]
  \label{def:ssc-manifold atlas}\label{DEF_ssc_manifold_atlas}
  \hfill\\
An {atlas} \({\mathcal A}\), for the metrizable space \(X\), consisting of
  sc-manifold charts which are level-wise classically smooth, is said to
  define a {\bf strong sc-manifold structure}\index{strong sc-manifold
  structure} on \(X\).
We shall call it a {\bf ssc-manifold atlas}\index{ssc-atlas}.
A topological space equipped with an ssc-manifold atlas is an
  ssc-manifold.\index{ssc-manifold}
\end{definition}
%
We observe that a finite-dimensional ssc-manifold is the same as a
  finite-dimensional smooth manifold possibly with boundary with corners.
Note that there is a forgetful chain of properties:
\begin{eqnarray*}                                                         
 & \text{\bf ssc-manifold}  \Rightarrow \text{\bf sc-manifold}
  \Rightarrow \text{\bf tame M-polyfold}&\\
 &  \Rightarrow \text{\bf M-polyfold}.&
  \end{eqnarray*}
Given that M-polyfolds are locally modelled on the image of an sc-smooth
  retraction, it is natural to ask what sort of structure the image
  of a classically smooth retraction has, or, more importantly for
  our applications, what sort of structure the image of an ssc-smooth
  retraction has.
As such, we present the following result, which can be regarded as a
  version of Cartan's last mathematical theorem, \cite{Cartan}.
The proof is left to the reader.

\begin{proposition}[ssc retracts yield ssc-submanifolds]
  \label{PROPD1.13}\label{PROP_ssc_retracts}
  \hfill\\
Let \(E\) be an sc-Banach space and \(U\) an open subset. 
Assume that \(r:U\rightarrow U\) is ssc-smooth and \(r\circ r=r\).
Then \(O= r(U)\) is an ssc-submanifold of \(E\),  i.e. every point \(o\in O\)
  has an open neighborhood which is ssc-diffeomorphic to a product.
\end{proposition}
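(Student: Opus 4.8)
The plan is to recognize $O$ as the fixed-point set of $r$ and to straighten it near each point by a single ssc-smooth chart whose linearization at the base point is the identity. First I would record the elementary but crucial observation that, on $U$, one has $O=\Fix(r):=\{u\in U: r(u)=u\}$: indeed every $r(u)$ satisfies $r(r(u))=r(u)$, and conversely every point of $O$ is a fixed point. Thus $O$ is the zero set of $N:=\id-r$. Since $r$ is ssc-smooth, for each level $i$ the restriction $r\colon U_i\to U_i$ is a classically smooth idempotent of the Banach space $E_i$, so $O_i:=O\cap E_i=\Fix(r|_{U_i})$, and it suffices to produce, near a fixed $o\in O$, one chart of $E$ that simultaneously straightens all the $O_i$.

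Next I would do the linear algebra. Let $P:=Dr(o)$; differentiating $r\circ r=r$ at the fixed point $o$ gives $P^2=P$, so $P$ is a projection and, because $r$ preserves levels, $P$ is an sc-projection inducing an sc-splitting $E=X\oplus Y$ with $X=\operatorname{im}P=T_oO$ and $Y=\operatorname{ker}P$. I would then introduce the straightening map
\[
\Phi\colon U\to E,\qquad \Phi(u):=u-(\id-P)\,r(u),
\]
which is ssc-smooth (note it carries no additive constant, so the sc-continuity obstruction for translations never arises) and satisfies $D\Phi(o)=\id-(\id-P)P=\id$. By the inverse function theorem, available in the ssc-category since each level is classically smooth, $\Phi$ restricts to an ssc-diffeomorphism of a neighborhood $W$ of $o$ onto an open subset of $E=X\oplus Y$. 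For $u\in O$ one has $r(u)=u$, hence $\Phi(u)=Pu\in X$, so $\Phi(O\cap W)\subset X\oplus\{0\}$. The reverse inclusion is the one point requiring an estimate: if $(\id-P)\,N(u)=0$ then $w:=u-r(u)=PN(u)\in X$, and writing $\xi:=r(u)\in O$ and using $r(\xi)=\xi$ together with the mean value theorem gives $0=r(\xi+w)-r(\xi)=\big(\int_0^1 Dr(\xi+tw)\,dt\big)w$. Since $w=Pw$ and $\|Dr(\cdot)-P\|<\tfrac12$ on a small enough $W$ by continuity of the level derivative, this forces $\|w\|\le\tfrac12\|w\|$, i.e. $w=0$ and $u\in O$. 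Hence $\Phi$ carries $W\cap O$ exactly onto $\Phi(W)\cap(X\oplus\{0\})$, exhibiting $W$ as ssc-diffeomorphic to a product with $O$ appearing as a factor.

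The main obstacle is not this classical skeleton but the sc/ssc bookkeeping needed to run it uniformly over all levels. Two points need care. First, the projection $P$ must genuinely be an sc-operator and the splitting $E=X\oplus Y$ an sc-splitting (sc-subspaces with sc-complement); this is immediate from $Dr(o)$ being an sc-operator when $o\in E_\infty$, but is delicate at a general $o\in O$, since an ssc-retraction need not raise indices and so $O$ need not lie in $E_\infty$, and then the level-$0$ classical derivative $D_0r(o)$ need not preserve levels. I would circumvent this by first treating smooth base points $o\in O\cap E_\infty$, which are dense in each $O_i$ because $U_\infty$ is dense and $r$ is sc-continuous; for such $o$ the operator $P=Dr(o)$ is a bona fide sc-projection and the construction above goes through verbatim. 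Second, one must check that the inverse function theorem and the estimate $\|Dr-P\|<\tfrac12$ can be arranged on a common neighborhood $W$ at every level at once, using the levelwise nesting $T_oO_{i+1}=T_oO_i\cap E_{i+1}$ and the compactness of the inclusions to pass the product charts to a general point of $O$. This compatibility across levels is exactly where the ssc-hypothesis, as opposed to mere sc-smoothness, is essential, and it is the step I expect to demand the most care.
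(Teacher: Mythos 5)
The paper does not actually contain a proof of this proposition: it is stated with the proof ``left to the reader'' (and reappears as an exercise), with only a pointer to Cartan's note on retractions. So I can only judge your proposal on its own terms. Your classical skeleton is exactly the intended Cartan argument and is correct as far as it goes: $O=\operatorname{Fix}(r)$, the derivative $P=Dr(o)$ at a fixed point is a projection, the straightening map $\Phi=\operatorname{id}-(\operatorname{id}-P)\circ r$ satisfies $D\Phi(o)=\operatorname{id}$ and maps $O$ into $\operatorname{im}P$, and the mean-value estimate gives the reverse inclusion at level $0$. Your reduction to smooth base points $o'\in O\cap E_\infty$ (using density of $r(U_\infty)$ in each $O_i$) is also the right move, for exactly the reasons you give: at a non-smooth point $D_0r(o)$ need not preserve levels, and one cannot even translate such a point to the origin sc-continuously.

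The gap is the step you flag at the end and then do not carry out, and it is not bookkeeping --- it is the entire sc-theoretic content of the proposition. A chart domain $W$ must be a single $E_0$-open neighborhood of $o'$, and then $W\cap E_i$ contains points of arbitrarily large $E_i$-norm; continuity of $u\mapsto D_ir(u)$ only controls $\left\|D_ir(u)-P\right\|_{\mathcal{L}(E_i)}$ on an $E_i$-ball about $o'$, not on $W\cap E_i$. Consequently neither the invertibility of $D_i\Phi(u)=\operatorname{id}_{E_i}-(\operatorname{id}-P)D_ir(u)$ throughout $W\cap E_i$ (needed for $\Phi|_{W\cap E_i}$ to be a diffeomorphism onto $\Phi(W)\cap E_i$), nor the level-preservation of $\Phi^{-1}$ (i.e.\ $\Phi(W)\cap E_i=\Phi(W\cap E_i)$), follows from what you wrote; invertibility of $D_0\Phi(u)$ on $E_0$ gives injectivity but not surjectivity of its restriction to $E_i$. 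Your appeal to ``the inverse function theorem, available in the ssc-category'' presupposes precisely the uniform-in-levels statement that has to be proved, and the closing reference to $T_oO_{i+1}=T_oO_i\cap E_{i+1}$ together with compactness of the inclusions is a list of plausible ingredients rather than an argument. One simplification you are entitled to: the level-$i$ mean-value estimate is not needed at all, since once $\Phi|_W$ is a level-preserving bijection with level-preserving inverse, the level-$0$ identity $\Phi(W\cap O)=\Phi(W)\cap X$ restricts to every level automatically. So the single thing you must actually establish --- and have not --- is that $\Phi$ (or a better-chosen chart map, e.g.\ one built from the local inverse of $P|_O$, whose linearization is invertible at every point of its domain) is a level-wise diffeomorphism on one $E_0$-open set. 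Until then the proof is incomplete.
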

%

\begin{exercise}
  \label{EX_11}
Prove Proposition \ref{PROPD1.13}.
\end{exercise}
%

%
\subsection{Strong Bundles}\label{qsec1.3}
The above constructions can be adapted to define a generalized notion
  of a vector bundle over an M-polyfold, however we will be specifically
  interested in the class of so-called strong bundles.
Given two sc-Banach spaces \(E\) and \(F\), we can define \(E\triangleleft
  F\)\index{\(E\triangleleft F\)} to be the space \(E\times F\) equipped
  with the double filtration
  \begin{equation*}                                                       
    (E\triangleleft F)_{m,k}:= E_m\times F_k \qquad \text{for}\qquad
    0\leq k\leq m+1.
    \end{equation*}
 By forgetting some of the structure we can consider \(E\oplus F\)
  with the diagonal filtration and \(E\oplus F^1\) also with the diagonal
  filtration, where we note that \((E\triangleleft F)_{m,m}=(E\oplus F)_m\)
  and \((E\triangleleft F)_{m,m+1}=(E\oplus F^1)_m\).

We shall require that maps \(\Phi:E\triangleleft F\rightarrow
  G\triangleleft H\) respect the double filtration and will be linear on
  the second factor; in other words for
  \begin{equation*}                                                       
    \Phi(u,h)=(f(u),\phi(u)(h)),
    \end{equation*}
  we require that \(\phi(u):F\rightarrow H\) is linear.
In particular, such a map induces maps \(E_m\oplus F_{m+i}\rightarrow
  G_m\oplus H_{m+i}\) for each \(m\in \mathbb{N}\) and \(i\in \{0, 1\}\);
  or more succinctly, it induces sc-continuous maps \(E\oplus F^i\rightarrow
  G\oplus H^i\) for \(i\in \{0, 1\}\) provided the level-wise maps are all continuous.
\begin{remark}\hfill\\
Observe that a map \(\Phi:E\oplus F\rightarrow G\oplus H\) which
  sends \(E\oplus F^1\) to \(G\oplus H^1\)  will be a map \(E\triangleleft
  F\rightarrow G\triangleleft H\); in other words, if \(\Phi\) preserves
  the above two diagonal filtrations, then it also preserves the above
  double filtration.
Indeed, to prove this, first assume \((u,h)\in E_m\oplus F_k\) and \(0\leq
  k\leq m\); we will discuss the \(k=m+1\) case momentarily.
Regard \((u,h)\) as an element in \(E_k\oplus F_k\).
Hence \(\Phi(u,h)\in G_k\oplus H_k\) and therefore \(\phi(u)(h)\in H_k\).
On the other hand \((u,0)\) is in \(E_m\oplus F_{m}\) implying \(\Phi(u,0)\in
  G_m\oplus H_m\), and therefore \(f(u)\in G_m\).
This implies \((f(u),\phi(u)h)\in G_m\oplus F_k\).
If \(k=m+1\) we can draw a similar conclusion by using \(E\oplus F^1\).

\end{remark}

We shall call a map \(\Phi:E\triangleleft F\rightarrow G\triangleleft
  H\) {\bf sc\(_\triangleleft\)-continuous}\index{sc\(_\triangleleft\)
  -continuity} if the induced maps \(E\oplus F^i\rightarrow G\triangleleft
  H^i\) are sc\(^0\) for \(i=\{0, 1\}\).
Similarly we call such a map {\bf
  sc\(_\triangleleft\)-smooth}\index{sc\(_\triangleleft\)-smoothness}
  provided the induced maps \(E\oplus F^i\rightarrow G\oplus H^i\) are
  sc-smooth.
The ideas can be immediately extended to the situation where \(C\subset
  E\) is a partial quadrant and \(U\subset C\) is relatively open, so that
  we can consider \(\Phi:U\triangleleft F\rightarrow V\triangleleft H\).

Of particular interest are the sc\(_\triangleleft\)-smooth maps
  \(R:U\triangleleft F\rightarrow U\triangleleft F\) satisfying \(R\circ R=R\),
  where \(U\subset C\) is open.
Observe that such maps cover an sc-smooth retraction \(r:U\rightarrow U\),
  so that by taking \(K=R(U\triangleleft F)\) and \(O=r(U)\) we obtain
  \begin{equation*}                                                       
    p:K\rightarrow O.
    \end{equation*}
This is the {\bf local model for a strong bundle}\index{local strong
  bundle model} and we refer the reader to \cite{HWZ2017} for more
  details.
Note that the fibers \(p^{-1}(o)\) are Banach spaces, but not necessarily
  sc-Banach spaces.
Indeed, a fiber only has the structure of an sc-Banach space if \(o\in
  O_\infty\).
On the other hand, if \(o\in O_m\) then the fiber \(K_o=p^{-1}(o)\) only
  has a well-defined finite grading \(k=0,\ldots,m+1\).

Having defined local strong bundles, which we denote by \(p:K\rightarrow
  O\), we can define strong bundles \(P:W\rightarrow X\).
Here \(X\) and \(W\) are paracompact Hausdorff spaces, where \(P\) is a
  surjective map and the fibers are equipped with vector space structures.
We can define strong bundle charts and strong bundle atlases in the
  obvious way and leave the details to the reader.
Precise definitions can be found in \cite{HWZ2017}.

Observe that a strong bundle \(P:W\rightarrow X\) has a double filtration
  of \(W\) and underlying M-polyfolds \(W[0]\) and \(W[1]\) where the
  filtration is given by \(W[i]_m:=W_{m,m+i}\) for \(i\in \{0, 1\}\).
An {\bf sc-smooth  section}\index{sc-smooth section} \(s\) of \(P\) is a
  map \(s:X\rightarrow W\) for which \(P\circ s=Id_X\) and
  \(s:X\rightarrow W[0]\) is sc-smooth.
We denote the vector space of sc-smooth sections by \(\Gamma(P)\).
A {\bf sc\(^+\)-section}\index{sc\(^+\)-section} \(s\) of \(P\) is an
  sc-smooth section of \(P\) which is also sc-smooth as a map
  \(s:X\rightarrow W[1]\).
The vector space of sc\(^+\)-sections is denoted by \(\Gamma^+(P)\).

We note that we can have various types of strong bundles related to
  the forgetful chain of properties previously exhibited. 
We leave the details to the reader. 
In our applications we shall use strong bundles over tame M-polyfolds
  and strong ssc-bundles over ssc-manifolds.

%
\subsection{Submersion Property}\label{APPA3}\label{SEC_submsersion_property}
The following result is very useful in concrete constructions which
  frequently involve fibered products.

\begin{definition}[submersion property]
  \label{DEFNXA.12}\label{DEF_submersion_property}
  \hfill\\
Assume that \(p:X\rightarrow Y\) is an sc-smooth map between M-polyfolds.
We say that \(p\) has the \emph{submersion property}\index{submersion
  property} provided that \(p\) is surjective and the following holds for
  the graph of \(p\) denoted by \(\text{Gr}(p)=\{(x,p(x))\ |\ x\in X\}\).
For \((x_0,p(x_0))\in \text{Gr}(p)\) there exists an open neighborhood
  \(U=U(x_0,p(x_0))\) in \(X\times Y\) and an sc-smooth map
  \(\rho:U\rightarrow U\) of the form \(\rho(x,y)=(\bar{\rho}(x,y),y)\) and
  \(\rho\circ\rho=\rho\) such that \(\rho(U)  = U\cap \text{Gr}(p)\).
\end{definition}
%

\begin{remark}
  \label{REM_submersion_retract}
  \hfill\\
Because \(\rho\circ \rho = \rho\) and \(\rho(x,y) = (\bar{\rho}(x,y),
  y)\), we have
  \begin{align*}                                                          
    \bar{\rho}(x,y) = \bar{\rho}\big(\bar{\rho}(x,y), y\big) \in X
    \end{align*}
Because \(\rho\) retracts onto \({\rm Gr}(p)\) and \(\rho(x,y) =
  \big(\bar{\rho}(x,y), y\big)\), we must have
  \begin{align*}                                                          
    \bar{\rho}(x,y) \in p^{-1}(y),
    \end{align*}
  and of course \(\bar{\rho}(x, p(x)) = x\).
\end{remark}
%

The obvious example of a submersive sc-smooth map is the projection onto a
  factor in  a product.

\begin{proposition}[natural projection of product is submersive]
  \label{PROP_natural_projection_submersive}
  \hfill\\
Let \(X\) and \(Y\) be M-polyfolds and \(p:X\times Y\rightarrow X\) the
  projection onto the first factor.
Then \(p\) is submersive.
\end{proposition}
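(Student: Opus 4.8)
The plan is to exhibit a single, globally defined sc-smooth retraction onto the graph of \(p\), which verifies the submersion property at every point at once. Write a generic point of \((X\times Y)\times X\) as \(((x,y),x')\) with \((x,y)\in X\times Y\) and \(x'\in X\), so that in the notation of Definition \ref{DEF_submersion_property} the target factor — the coordinate that must be fixed by \(\bar\rho\) — is \(x'\). The graph is
\[
\text{Gr}(p)=\{((x,y),p(x,y)) : (x,y)\in X\times Y\}=\{((x,y),x') : x'=x\},
\]
i.e. those points whose target coordinate agrees with the first source coordinate. Surjectivity of \(p\) is immediate, since any \(x\in X\) is the image of \((x,y)\) for any \(y\in Y\).

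The natural candidate is
\[
\rho((x,y),x'):=((x',y),x'),
\]
which overwrites the first source coordinate with the target coordinate while leaving \(x'\) fixed. I would then check the three defining conditions. It has the required form \(\rho=(\bar\rho,\operatorname{id})\) with \(\bar\rho((x,y),x')=(x',y)\); it is idempotent, since \(\rho(\rho((x,y),x'))=\rho((x',y),x')=((x',y),x')\); and its image is exactly \(\{((a,y),a):a\in X,\,y\in Y\}=\text{Gr}(p)\), with \(\rho\) fixing each point of the graph. For sc-smoothness I would use that a product of M-polyfolds is again an M-polyfold and that the coordinate projections are sc-smooth, together with the fact that a map into a product is sc-smooth iff its components are: every component of \(\rho\) (namely \(x'\), then \(y\), then \(x'\) again) is one of these projections, so \(\rho\) is sc-smooth.

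Finally I would address the choice of neighborhood. Since \(\rho\) is defined and sc-smooth on all of \((X\times Y)\times X\), one may simply take \(U=(X\times Y)\times X\), which is an open neighborhood of any graph point \(((x_0,y_0),x_0)\); then \(\rho(U)=\text{Gr}(p)=U\cap\text{Gr}(p)\) and all conditions of Definition \ref{DEF_submersion_property} hold. If a genuinely local neighborhood is preferred, the only subtlety — and the one place a naive choice fails — is that \(U\) must be \(\rho\)-invariant: a product \(A\times B\times A'\) with \(A,A'\subset X\) and \(B\subset Y\) is carried into itself by \(\rho\) only when \(A\subset A'\), because \(\rho\) deposits \(x'\) into the first slot. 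Choosing the symmetric neighborhood \(U=A\times B\times A\) around \(((x_0,y_0),x_0)\) repairs this, and then \(\rho(U)=\{((a,y),a):a\in A,\,y\in B\}=U\cap\text{Gr}(p)\). I do not expect any serious obstacle here; the content is simply recognizing that the ``fill in the target coordinate'' map is an sc-smooth idempotent onto the graph, and keeping careful track of which coordinate is held fixed.
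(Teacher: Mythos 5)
Your proposal is correct and coincides with the paper's proof: both use the same global retraction \(\rho((x,y),x')=((x',y),x')\) on all of \((X\times Y)\times X\), which is sc-smooth, idempotent, and retracts onto \(\mathrm{Gr}(p)\). The additional remark on choosing \(\rho\)-invariant local neighborhoods is a harmless extra observation not needed for the argument.
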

%
\begin{proof}
Define \(\rho: (X\times Y) \times X\to (X\times Y)\times X\) by 
  \begin{align*}                                                          
    \rho\big((x,y), x' \big) = \big(\bar{\rho}((x,y), x'), x'\big)
    \end{align*}
  where
  \begin{align*}                                                          
    \bar{\rho}((x,y), x')  = (x', y).
    \end{align*}
In other words, \(\rho((x,y), x') = ((x', y), x')\).
Then \(\rho\) is a (global) sc-smooth retraction and \(\rho((X\times
  Y)\times X) = \text{Gr}(p)\).
\end{proof}

The following result is easy to prove and is left as an exercise.

\begin{lemma}[strong bundles are submersive]
  \label{LEMM1.16}\label{LEM_strong_bundles_submersive}
  \hfill\\
The projection to the base in a strong bundle always have the submersion
  property.
\end{lemma}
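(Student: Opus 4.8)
The plan is to reduce to a single strong bundle chart and exhibit an explicit retraction onto the graph built directly from the local bundle retraction. First note that $P:W\to X$ is surjective because every fiber is a vector space, hence nonempty; concretely the zero section $0_X:X\to W$ satisfies $P\circ 0_X=\mathrm{Id}_X$. It therefore remains to verify the retraction condition near an arbitrary point $(w_0,P(w_0))\in\mathrm{Gr}(P)\subset W\times X$. Since the submersion property is local around such a point, it suffices to work in a strong bundle chart around $w_0$, which covers a chart around $P(w_0)$ in $X$.

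In such a chart the total space is modeled by $K=R(U\triangleleft F)$ and the base by $O=r(U)$, where $U\subset C$ is relatively open, $R:U\triangleleft F\to U\triangleleft F$ is the sc$_\triangleleft$-smooth bundle retraction, and $r:U\to U$ is the sc-smooth retraction it covers. Writing $R(u,h)=(r(u),\hat R(u)h)$ with $\hat R(u):F\to F$ linear, the identity $R\circ R=R$ forces $\hat R(r(u))\hat R(u)=\hat R(u)$; in particular $\hat R(o)$ is a linear projection for every $o\in O$, and $R$ restricts to the identity on $K$. The local projection $p:K\to O$ is $(o,\xi)\mapsto o$, with graph $\mathrm{Gr}(p)=\{((o,\xi),o):(o,\xi)\in K\}\subset K\times O$.

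Then I would define $\rho:K\times O\to K\times O$ by
\[
\rho\big((o,\xi),o'\big)=\big(R(o',\xi),\,o'\big),
\]
where $\xi$ denotes the fiber component of the total space point and we use that $R(o',\xi)=(o',\hat R(o')\xi)\in K$ since $r(o')=o'$ for $o'\in O$. This has the required form $\rho(x,y)=(\bar\rho(x,y),y)$ with the base variable $o'$ held fixed. Idempotency follows from $\hat R(o')^2=\hat R(o')$; the image is exactly $\mathrm{Gr}(p)$ as $(o',\xi)$ ranges over $O\times F$; and on the graph $o'=o$, so $\rho((o,\xi),o)=(R(o,\xi),o)=((o,\xi),o)$ because $R$ fixes $K$ pointwise, giving the identity there. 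Transporting $\rho$ back through the chart yields the retraction demanded by Definition \ref{DEF_submersion_property}.

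Finally, sc-smoothness of $\rho$ is inherited from that of $R$: the map $((o,\xi),o')\mapsto R(o',\xi)$ is a composition of the sc-smooth coordinate projections with $R$ viewed as an sc-smooth map $W[0]\to W[0]$, which is precisely what sc$_\triangleleft$-smoothness of the bundle retraction supplies on the zeroth diagonal filtration. The one point requiring care — and the only (modest) obstacle — is the bookkeeping: checking that $\hat R$ really is a well-defined sc-smooth function of the base point $o'\in O$ and that the formula lands in the local model $K\times O$ rather than merely in the ambient $(U\triangleleft F)\times U$. Both follow from $r(o')=o'$ on $O$ together with $R(U\triangleleft F)=K$.
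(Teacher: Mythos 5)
The paper does not supply a proof of this lemma---it is explicitly left as an exercise---so there is nothing to compare against line by line; your argument is correct and is the natural intended one. It is the bundle analogue of the proof of Proposition \ref{PROP_natural_projection_submersive}: there $\bar{\rho}((x,y),x')=(x',y)$ simply swaps the base point, while here the fiberwise projection $\widehat{R}(o')$ is exactly what is needed to transport $\xi$ into the (possibly different) fiber over $o'$, with $R\circ R=R$ giving both $\widehat{R}(o')^2=\widehat{R}(o')$ on $O$ and the fact that $\rho$ fixes the graph pointwise. Your closing points---that $\bar{\rho}$ lands in $K$ because $r(o')=o'$ on $O$, and that sc-smoothness is read off from the $i=0$ diagonal filtration $W[0]$ on which $P$ is an sc-smooth map of M-polyfolds---are precisely the two places where care is needed, and you have handled both.
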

%

For the notion of bundle see \cite{HWZ2017}. 
These spaces are similar to strong bundles and have only the diagonal
  filtration.\\

\begin{exercise}\hfill\\
Prove Lemma \ref{LEM_strong_bundles_submersive}, and extend the result to
  the case that the strong bundle is in fact an ssc-bundle.
\end{exercise}
%

An immediate consequence of the definition is given by the following
  straight forward proposition.

\begin{proposition}[submersions and implied diffeomorphisms]
  \label{FirstCOnc}\label{PROP_submersions_implied_diffeo}
  \hfill\\ 
Assume that \(f:X\rightarrow Y\) is a sc-smooth submersive map between
  M-polyfolds.
Given a smooth point \(x_0\in X\) there exists an open neighborhood
  \(V_{y_0}\subset Y\), \(y_0=f(x_0)\), a sub-M-polyfold
  \(\Sigma_{x_0}\subset X\) containing \(x_0\), and  a sc-diffeomorphism
  \begin{align*}                                                          
    \psi:V_{y_0}\rightarrow \Sigma_{x_0}
    \end{align*}
  such that 
\begin{enumerate}                                                         
  \item 
  \(f\circ \psi: V_{y_0}\rightarrow Y\) is the identity map on \(V_{y_0}\)
  \item 
  \(\psi\circ f :f^{-1}(V_{y_0})\rightarrow Y\) has its image in
  \(f^{-1}(V_{y_0})\)
  \item 
  \(\psi\circ f :f^{-1}(V_{y_0})\rightarrow Y\) is a sc-smooth retraction
  defining \(\Sigma_{x_0}\).
  \end{enumerate}
\end{proposition}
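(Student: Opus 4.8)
The plan is to read a local section of $f$ directly off the retraction supplied by the submersion property, and then to invoke Corollary~\ref{COR_induced_sub_m_poly} to promote its image to a sub-M-polyfold on which all the maps in sight become sc-diffeomorphisms. Applying Definition~\ref{DEF_submersion_property} at the graph point $(x_0,y_0)\in \mathrm{Gr}(f)$ yields an open neighborhood $U$ of $(x_0,y_0)$ in $X\times Y$ together with an sc-smooth retraction $\rho(x,y)=(\bar{\rho}(x,y),y)$, $\rho\circ\rho=\rho$, satisfying $\rho(U)=U\cap\mathrm{Gr}(f)$. The first step is to define
\[ V_{y_0}:=\{\,y\in Y : (x_0,y)\in U\,\},\qquad \psi(y):=\bar{\rho}(x_0,y). \]
Since $U$ is open and $y\mapsto(x_0,y)$ is continuous, $V_{y_0}$ is an open neighborhood of $y_0$. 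By Remark~\ref{REM_submersion_retract} we have $\bar{\rho}(x_0,y)\in f^{-1}(y)$, so $f\circ\psi=\mathrm{Id}$ on $V_{y_0}$, which is assertion~(1); and $\psi(y_0)=\bar{\rho}(x_0,f(x_0))=x_0$, placing $x_0$ in $\Sigma_{x_0}:=\psi(V_{y_0})$.

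The crux of the argument — and the one place where the hypothesis that $x_0$ is a \emph{smooth} point is genuinely used — is that $\psi$ is sc-smooth. I would factor $\psi=\bar{\rho}\circ\iota$, where $\iota:V_{y_0}\to X\times Y$ is $\iota(y)=(x_0,y)$. The map $\bar{\rho}$ is sc-smooth by construction, so by the sc chain rule (Theorem~\ref{thm_sc_chain_rule}) it suffices to verify that $\iota$ is sc-smooth. The $Y$-component of $\iota$ is the identity and is harmless; the $X$-component is the constant map $y\mapsto x_0$, which in local coordinates is constant at a fixed vector, and a constant map is sc-smooth precisely when that vector lies in $X_\infty$, i.e.\ precisely when $x_0$ is a smooth point (exactly as the affine map $x\mapsto x+e$ is sc$^0$ only when $e\in E_\infty$). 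This is the only delicate step; I expect the verification that constant-in-the-base maps fail to be sc-smooth at non-smooth points to be the conceptual obstacle one must not overlook. Once it is in place, $\psi$ is an sc-smooth local section of $f$.

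With $\psi$ in hand I would apply Corollary~\ref{COR_induced_sub_m_poly} with $g:=\psi:V_{y_0}\to X$, taking the required open neighborhood of $\psi(V_{y_0})$ to be $f^{-1}(V_{y_0})$ (open because $f$ is sc-continuous, and containing $\Sigma_{x_0}$ since $f(\psi(y))=y$) and the retracting map to be $f$ restricted to $f^{-1}(V_{y_0})$, for which $f\circ\psi=\mathrm{Id}_{V_{y_0}}$ already holds. The corollary then delivers that $\Sigma_{x_0}=\psi(V_{y_0})$ is a sub-M-polyfold of $X$ and that $\psi:V_{y_0}\to\Sigma_{x_0}$ is an sc-diffeomorphism, which is the main content of the statement.

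Assertions~(2) and~(3) fall out of the same construction. For $x\in f^{-1}(V_{y_0})$ one computes $(\psi\circ f)(x)=\psi(f(x))\in f^{-1}(f(x))\subset f^{-1}(V_{y_0})$, which is~(2). Moreover $\psi\circ f$ is sc-smooth, and
\[ (\psi\circ f)\circ(\psi\circ f)=\psi\circ(f\circ\psi)\circ f=\psi\circ f, \]
so $\psi\circ f$ is an sc-smooth retraction; its image is $\psi\big(f(f^{-1}(V_{y_0}))\big)=\psi(V_{y_0})=\Sigma_{x_0}$, using that $f$ is surjective (hence $f(f^{-1}(V_{y_0}))=V_{y_0}$), which is~(3). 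This completes the plan.
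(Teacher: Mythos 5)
Your proposal is correct and follows essentially the same route as the paper: extract \(\psi(y)=\bar{\rho}(x_0,y)\) from the retraction supplied by the submersion property, verify \(f\circ\psi=\mathrm{Id}_{V_{y_0}}\), and use Corollary \ref{COR_induced_sub_m_poly} to identify \(\Sigma_{x_0}=\psi(V_{y_0})\) as a sub-M-polyfold with \(\psi\circ f\) the defining sc-smooth retraction. Your explicit observation that the sc-smoothness of \(y\mapsto(x_0,y)\) is precisely where the smooth-point hypothesis on \(x_0\) enters is a detail the paper's proof leaves implicit, but it is not a different argument.
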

%
\begin{proof}
The submersion property guarantees the existence of an open neighborhood
  \(W\) of \((x_0,y_0)\in X\times Y\) and a sc-smooth retraction
  \(\rho:W\rightarrow W\) of the form \(\rho(x,y)=(\bar{\rho}(x,y),y)\)
  satisfying \(\rho(W)=W\cap \text{Gr}(f)\).
Pick an open neighborhood \(V_{y_0}\subset Y\) satisfying \(\{x_0\}\times
  V_{y_0}\subset W\).
Then we define the sc-smooth 
  \begin{align}
    &\psi: V_{y_0}\rightarrow X\\
    &\psi(y)=\bar{\rho}(x_0,y).
    \end{align}
Since \(\rho\) retracts onto the graph it follows that \(f\circ\psi(y)=y\)
  for \(y\in V_{y_0}\).
Define \(U_{x_0}=f^{-1}(V_{y_0})\) and consider \(\psi\circ f:
  U_{x_0}\rightarrow X\).
Let us first show that the image is contained in \(U_{x_0}\). Define
  \(x_1=\psi\circ f(x)\) for \(x\in U_{x_0}\).
Then \(x_1=\bar{\rho}(x_0,f(x))\) and by the properties of \(\rho\) it
  follows that \(f(x_1)=f(x)\). Hence \(x_1\in f^{-1}(V_{y_0})=U_{x_0}\).
Define \(\tau:U_{x_0}\rightarrow U_{x_0}\) by \(\tau(x)=\psi\circ f(x)\).  
Then \(\tau\circ \tau=\tau\) and 
  \begin{align}
    \tau(U_{x_0})= \psi(V_{y_0}).
    \end{align}
We define \(\Sigma_{x_0}=\psi(V_{y_0})\) and by the previous discussion
  this is a sub-M-polyfold.
This completes the proof.
\end{proof}

The next proposition studies pull-back diagrams involving submersive
  diagrams.

\begin{proposition}[fibered products: projections and submersions]
  \label{PROP_fibered_products_projection_submersions}
  \hfill\\
Suppose \(p:X\rightarrow Y\) has the submersion property and
  \(f:Z\rightarrow Y\) is an sc-smooth map.
Then the  fibered product \index{fibered product} 
\begin{align*}                                                            
  X{_{p}\times_f}Z= \{(x,z)\in X\times Z\ |\ p(x)=f(z)\}
  \end{align*}
  is a sub-M-polyfold of \(X\times Z\) and
  \(X{_{p}\times_f}Z\xrightarrow{{\rm pr}_2} Z\) has the submersion
  property.
\end{proposition}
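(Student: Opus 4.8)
The plan is to transport the submersion data for $p$ to $X\times Z$ along the sc-smooth map $F:X\times Z\to X\times Y$, $F(x,z)=(x,f(z))$, and to build every retraction by precomposing the local retraction $\rho$ furnished by the submersion property of $p$ with $F$.

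\emph{The fibered product is a sub-M-polyfold.} Fix $(x_0,z_0)\in X{_{p}\times_f}Z$, so $p(x_0)=f(z_0)=:y_0$ and $(x_0,y_0)\in\text{Gr}(p)$. The submersion property of $p$ at $(x_0,y_0)$ yields an open neighborhood $U$ of $(x_0,y_0)$ in $X\times Y$ and an sc-smooth retraction $\rho(x,y)=(\bar\rho(x,y),y)$ with $\rho(U)=U\cap\text{Gr}(p)$. Since $F$ is continuous and $F(x_0,z_0)=(x_0,y_0)\in U$, the set $F^{-1}(U)$ is an open neighborhood of $(x_0,z_0)$ in $X\times Z$, and I would set $R(x,z):=(\bar\rho(x,f(z)),z)$ on it. This $R$ is sc-smooth as a composition. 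Using the identities of Remark \ref{REM_submersion_retract} — that $\bar\rho(x,y)\in p^{-1}(y)$, that $\bar\rho(\bar\rho(x,y),y)=\bar\rho(x,y)$, and that $\bar\rho(x,p(x))=x$ — I would verify that $R$ maps $F^{-1}(U)$ into itself (since $(\bar\rho(x,f(z)),f(z))=\rho(x,f(z))\in U$), that $R\circ R=R$, and that $R\big(F^{-1}(U)\big)=F^{-1}(U)\cap\big(X{_{p}\times_f}Z\big)$ (the inclusion ``$\subseteq$'' from $p(\bar\rho(x,f(z)))=f(z)$, and ``$\supseteq$'' because every point with $p(x)=f(z)$ is a fixed point of $R$). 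By Definition \ref{DEF_sub_m_polyfold} this makes $X{_{p}\times_f}Z$ a sub-M-polyfold of $X\times Z$, and by Lemma \ref{LEM_structures_sub_poly_inherit} it inherits an M-polyfold structure for which the inclusion, hence ${\rm pr}_2$, is sc-smooth.

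\emph{The projection has the submersion property.} Surjectivity of ${\rm pr}_2$ is immediate: given $z\in Z$, surjectivity of $p$ provides $x$ with $p(x)=f(z)$, so $(x,z)$ projects to $z$. For fixed $(x_0,z_0)\in X{_{p}\times_f}Z$ I would reuse the same $\rho$ and define, on a suitable neighborhood of $\big((x_0,z_0),z_0\big)$ in $\big(X{_{p}\times_f}Z\big)\times Z$,
\[
\sigma\big((x,z),z'\big):=\big((\bar\rho(x,f(z')),z'),z'\big).
\]
This has the form required by Definition \ref{DEF_submersion_property}, since its last coordinate is $z'$; its first component lies in $X{_{p}\times_f}Z$ because $p(\bar\rho(x,f(z')))=f(z')$; it is idempotent by the cancellation $\bar\rho(\bar\rho(x,y),y)=\bar\rho(x,y)$; and it retracts onto $\text{Gr}({\rm pr}_2)$, its image consisting of points $\big((a,z'),z'\big)$ with $(a,z')\in X{_{p}\times_f}Z$, while any graph point satisfies $z'=z$ and $p(x)=f(z)=f(z')$, so $\bar\rho(x,f(z'))=x$ and the point is fixed.

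The step I expect to cost the most care is checking that $\sigma$ is sc-smooth for the \emph{induced} sub-M-polyfold structure on $X{_{p}\times_f}Z$, not merely into the ambient $X\times Z$. I would dispatch this via the ambient extension $\Sigma\big((x,z),z'\big)=\big((\bar\rho(x,f(z')),z'),z'\big)$, defined on an open subset of $(X\times Z)\times Z$, which is sc-smooth as a composition of $\bar\rho$, $f$, and projections. Its image lies in the sub-M-polyfold $\big(X{_{p}\times_f}Z\big)\times Z$ (a sub-M-polyfold of $(X\times Z)\times Z$ via $R\times\mathrm{id}_Z$), and a map into a sub-M-polyfold that is sc-smooth into the ambient is sc-smooth into the sub-M-polyfold — one postcomposes with the sc-smooth local maps $u\mapsto j^{-1}\circ r(u)$ of Lemma \ref{LEM_structures_sub_poly_inherit}. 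Hence $\sigma$ is sc-smooth for the inherited structure, completing the verification of the submersion property. No such care is needed for $R$ in the first part, where the codomain is already the ambient product.
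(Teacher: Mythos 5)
Your proposal is correct and follows essentially the same route as the paper: the paper's retraction $\sigma(x,z)=(\bar{\rho}(x,f(z)),z)$ on $V=\{(x,z):(x,f(z))\in U\}$ is your $R$ on $F^{-1}(U)$, and its retraction $\delta((x,z),z')=((\bar{\rho}(x,f(z')),z'),z')$ for ${\rm pr}_2$ is your $\sigma$, with the same verifications via Remark \ref{REM_submersion_retract}. Your extra paragraph justifying sc-smoothness for the induced sub-M-polyfold structure via an ambient extension is a welcome refinement of a point the paper dismisses as self-evident.
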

%
\begin{proof}
Let \((x_0,z_0)\) with \(p(x_0)=f(z_0)\). 
Because \(p\) has the submersion property, there exists an open
  neighborhood \(U\) of \((x_0,p(x_0))\) in \(X\times Y\) and an sc-smooth
  map \(\rho:U\rightarrow U\) with the form
  \(\rho(x,y)=(\bar{\rho}(x,y),y)\), with \(\rho\circ\rho=\rho\) and
  \(\rho(U)=\text{Gr}(p)\cap U\).
Define an open neighborhood \(V=V(x_0,z_0)\) in \(X\times Z\) by
  \begin{align}
    V=\{(x,z)\in X\times Z\ |\ (x,f(z))\in U\}.
    \end{align}
We define an sc-smooth map 
  \begin{align*}                                                          
    &\sigma:V\rightarrow X\times Z\\
    &\sigma(x,z)=\big(\bar{\rho}(x,f(z)),z\big)
    \end{align*}
First we want to show that \(\sigma:V\to V\). 
To that end, observe that if \((x,z)\in V\), then \((x, f(z))\in U\).
Also observe that 
  \begin{align*}                                                          
    \big(\bar{\rho}(x, f(z)), f(z)\big)=\rho(x, f(z)\big)\in U
    \end{align*}
  which guarantees that \(\sigma(x, z)=\big(\bar{\rho}(x, f(z)), x\big)
  \in V\), and thus indeed we have \(\sigma:V\to V\).
To see that \(\sigma\circ\sigma = \sigma\), we compute
  \begin{align*}                                                          
    \sigma\big(\sigma(x,z)\big) &= \sigma\big(\bar{\rho}(x, f(z)), z\big)
    \\
    &=\big(\bar{\rho}\big(\bar{\rho}(x, f(z)), f(z)\big), z \big)
    \\
    &=\big(\bar{\rho}(x, f(z)), z\big)
    \\
    &=\sigma(x, z).
    \end{align*}
Next we want to show \(\sigma(V) = V\cap X{_p\times_f} Z\).
We begin by showing the containment \(\sigma(V)\subset V\cap X{_p\times_f}
  Z \).
To that end, let \((x, z)\in \sigma(V)\subset V\).
Then \(\sigma(x, z) = (x, z) = (\bar{\rho}(x, f(z)), z)\), and thus
  \(x=\bar{\rho}(x, f(z))\).
However, because \(\rho\circ \rho = \rho\), and \(\rho(x,y) =
  (\bar{\rho}(x, y), y)\), and \(\rho(U) = {\rm Gr}(p)\cap U\), it follows
  from Remark \ref{REM_submersion_retract}, that
  \begin{align}\label{EQ_rho_p}                                           
    \bar{\rho}(x, y) \in p^{-1}(y).
    \end{align}
Combining this with the fact that \(x=\bar{\rho}(x, f(z))\), we have
  \(x\in p^{-1}(f(z))\), or in other words \(p(x)=f(z)\), so that
  \((x,z)\in X_{p\times_f} Z\), and hence \(\sigma(V)\subset V\cap
  X{_p\times_f} Z\).

Next we aim to establish that \(V\cap X{_p\times_f}Z \subset \sigma(V)\).
To that end, we let \((x, z)\in V\cap X{_p\times_f} Z\).
From this, it follows that \(p(x)=f(z)\).
Recalling that \(\rho\) is a retraction onto \({\rm Gr}(p)\), we then have
  \begin{align*}                                                          
    (x, f(z)) &= (x, p(x)) = \rho(x, p(x)) = \rho(x, f(x))\\
    &=(\bar{\rho}(x, f(z)), f(z)), 
    \end{align*}
so that \(x=\bar{\rho}(x, f(z))\).
But then, \(\sigma(x, z) = \big(\bar{\rho}(x, f(z)), z\big) = (x, z)\) and
  hence \((x, z)\in \sigma(V)\), and thus \(V\cap X{_p\times_f} Z\subset
  \sigma(V)\).  We conclude that \(X{_p\times_f}Z\) is a sub-M-polyfold.

Note that \({\rm pr}_2: X\times Z\) is sc\(^\infty\), so that \({\rm
  pr}_2: X{_p\times_f} Z\to Z\) is sc\(^\infty\) as well.
Surjectivity of the latter follows from surjectivity of \(p:X \to Y\).

To complete the proof of Proposition
  \ref{PROP_fibered_products_projection_submersions}, all that remains is to
  prove that the map \({\rm pr}_2:X{_p\times_f} Z\to Z\) has the submersion
  property.
We have already established that \({\rm pr}_2: X{_p\times_f} Z\to Z\) is
  sc\(^\infty\) and surjective, and thus it remains to show that for each
  \(((x_0, z_0), z_0)\in {\rm Gr}({\rm pr}_2)\subset(X{_p\times_f} Z)\times
  Z\) there exists an open set \(W \subset (X{_p\times_f} Z)\times Z\) and
  an sc\(^\infty\) retraction \(\delta\) satisfying
  \begin{align*}                                                          
    \delta:W\to W \qquad\text{and}\qquad \delta(W) = {\rm Gr}({\rm pr}_2).
    \end{align*}
To that end, we define
  \begin{align*}                                                          
    W = \big\{ \big((x, z), z'\big) \in (X{_p\times_f} Z)\times Z: (x,
    f(z')) \in U \big\}
    \end{align*}
  and
  \begin{align*}                                                          
    &\delta:W \to (X{_p\times_f} Z)\times Z\\
    &\delta\big((x, z), z'\big) = \big( (\bar{\rho}(x, f(z')), z'), z').
    \end{align*}
To see that \(\delta\) is well defined, we need
\begin{align*}                                                            
  p\big(\bar{\rho}(x, f(z'))\big) = f(z').
  \end{align*}
However, again by equation (\ref{EQ_rho_p}) we have \(\bar{\rho}(x, f(z')
  )\in p^{-1}(f(z'))\) which shows that \(\delta\) is indeed well-defined.

Next we show \(\delta:W\to W\).  To that end, let \(((x, z), z'\big)\in
  W\).
But then \((x, f(z'))\in U\).
But this implies \(\rho(x, f(z'))\in U\) since \(\rho:U\to U\).
But this in turn implies
\begin{align*}                                                            
  \big(\bar{\rho}(x, f(z')), f(z')\big) \in U 
  \end{align*}
  by definition of \(\rho\).
Consequently, we have shown that the tuple given by
  \(\big(\big(\bar{\rho}(x, f(z')), z'\big), z'\big)\) then satisfies
  \begin{enumerate}                                                       
    \item \(p\big(\bar{\rho}(x, f(z'))\big)=f(z')\)
    \item \(\big(\bar{\rho}(x, f(z')), f(z')\big)\in U\)
    \end{enumerate}
  so that
  \begin{align*}                                                          
    \big(\big(\bar{\rho}(x, f(z')), z'\big), z'\big) \in W.
    \end{align*}
Thus we have shown that \(\delta:W\to W\).
Moreover
  \begin{align*}                                                          
    \big(\big(\bar{\rho}(x, f(z')), z'\big), z'\big) \in {\rm Gr}({\rm
    pr}_2),
    \end{align*}
  so that \(\delta: W\to W\cap {\rm Gr}({\rm pr}_2)\).

Note that \(\delta\circ \delta = \delta\) follows from a straightforward
  computation which makes use of the fact that
  \begin{align*}                                                          
    \bar{\rho}\big(\bar{\rho}(x, f(z')\big), f(z')\big) = \bar{\rho}(x,
    f(z')\big).
    \end{align*}
It then follows that \(\delta(W) = W\cap {\rm Gr}({\rm pr}_2)\).
Finally, we note that sc-smoothness of \(\delta\) is self-evident from its
  definition.
Thus we have shown that \(\delta:W\to W\) is an sc\(^\infty\) retraction
  for which \(\delta(W)= W\cap {\rm Gr}({\rm pr}_2).\)
This establishes the submersion property, and completes the proof of
  Proposition \ref{PROP_fibered_products_projection_submersions}.
\end{proof}

There are other consequences of the submersion property.

\begin{proposition}[fibers of submersions are sub-M-polyfolds]
  \label{PROP_fibers_submersions_are_sub_m_polyfolds}
  \hfill\\
Assume that \(f:X\rightarrow Y\) is a submersive sc-smooth map between
  M-polyfolds.
Then for every smooth point  \(y_0\) the fiber \(f^{-1}(y_0)\) is a
  sub-M-polyfold and consequently the fiber-wise degeneracy index
  \(d_{f^{-1}(y_0)}:f^{-1}(y_0)\rightarrow {\mathbb N}\) is well-defined for
  every \(y_0\in Y_\infty\).
\end{proposition}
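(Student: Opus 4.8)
The plan is to verify Definition \ref{DEF_sub_m_polyfold} directly by using the submersion property to manufacture, near each point of the fiber, an sc-smooth retraction of an ambient neighborhood onto the fiber. Fix a smooth point \(y_0\in Y_\infty\) and a point \(a\in f^{-1}(y_0)\), so that \((a,y_0)=(a,f(a))\in \text{Gr}(f)\). Applying the submersion property (Definition \ref{DEF_submersion_property}) at \((a,f(a))\), I obtain an open neighborhood \(U\) of \((a,y_0)\) in \(X\times Y\) and an sc-smooth retraction \(\rho:U\to U\) of the form \(\rho(x,y)=(\bar\rho(x,y),y)\) with \(\rho(U)=U\cap \text{Gr}(f)\). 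Set
\begin{align*}
  V:=\{x\in X:(x,y_0)\in U\},
\end{align*}
which is an open neighborhood of \(a\) in \(X\) since it is the preimage of \(U\) under the continuous insertion \(x\mapsto(x,y_0)\), and define \(r:V\to X\) by \(r(x):=\bar\rho(x,y_0)\).

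The next step is to check that \(r\) is the required retraction onto \(V\cap f^{-1}(y_0)\). By Remark \ref{REM_submersion_retract} we have \(\bar\rho(x,y_0)\in f^{-1}(y_0)\) for every \(x\in V\), so \(r\) lands in the fiber; and since \(\rho\) maps \(U\) to \(U\), the point \((r(x),y_0)=\rho(x,y_0)\) lies in \(U\), whence \(r(x)\in V\) and \(r:V\to V\). Comparing first components in \(\rho\circ\rho=\rho\) gives \(\bar\rho(\bar\rho(x,y_0),y_0)=\bar\rho(x,y_0)\), i.e.\ \(r\circ r=r\). The inclusion \(r(V)\subseteq V\cap f^{-1}(y_0)\) is immediate from the above. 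For the reverse inclusion, take \(x\in V\cap f^{-1}(y_0)\); then \((x,y_0)=(x,f(x))\in \text{Gr}(f)\cap U=\rho(U)\), so \(\rho\) fixes this point, forcing \(\bar\rho(x,y_0)=x\) and hence \(x=r(x)\in r(V)\). Thus \(r(V)=V\cap f^{-1}(y_0)\), and \(f^{-1}(y_0)\) is a sub-M-polyfold.

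The crux of the argument is the sc-smoothness of \(r\). Writing \(r=\bar\rho\circ\iota_{y_0}\), where \(\iota_{y_0}:V\to U\), \(x\mapsto(x,y_0)\), the sc chain rule (Theorem \ref{thm_sc_chain_rule}) reduces the claim to sc-smoothness of \(\bar\rho=\mathrm{pr}_1\circ\rho\), which is clear, and of \(\iota_{y_0}\). The latter is exactly where the hypothesis enters: the insertion \(\iota_{y_0}\) has identity first component and constant second component, and a constant map at \(y_0\) is sc\(^0\) (hence sc-smooth) if and only if \(y_0\in Y_\infty\), in the spirit of the remark on affine maps following Definition \ref{DEF_sc_diff}. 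This is precisely why the statement is restricted to smooth points, and for a non-smooth \(y_0\) the construction genuinely breaks down. Finally, once \(f^{-1}(y_0)\) is known to be a sub-M-polyfold, Lemma \ref{LEM_structures_sub_poly_inherit} equips it with a natural M-polyfold structure, and every M-polyfold carries a well-defined degeneracy index; this produces the fiber-wise degeneracy index \(d_{f^{-1}(y_0)}:f^{-1}(y_0)\to\mathbb{N}\) for each \(y_0\in Y_\infty\), completing the proof.
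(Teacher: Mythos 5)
Your proof is correct and follows essentially the same route as the paper: apply the submersion property at \((a,y_0)\), restrict \(\bar\rho\) along the insertion \(x\mapsto(x,y_0)\) to get a local sc-smooth retraction onto \(V\cap f^{-1}(y_0)\), and verify the two inclusions. Your extra observation that sc-smoothness of the insertion \(\iota_{y_0}\) is exactly where \(y_0\in Y_\infty\) is needed is a point the paper's proof leaves implicit, and it is a welcome clarification.
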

%
\begin{proof}
Fix \(y_0\in Y_\infty\) and pick \(x_0\in f^{-1}(y_0)\).  
Because \(f\) is submersive, there exists an open neighborhood
  \(W=W(x_0,y_0)\) in \(X\times Y\) and a sc-smooth retraction
  \(\rho:W\rightarrow W\) of the form
  \(\rho(x,y)=(\bar{\rho}(x,y),y)\) satisfying
  \begin{align}
    \rho(W) = W\cap \text{Gr}(f).
    \end{align}
Denote by \(V=V(x_0)\subset X\) the open neighborhood consisting of all
  points \(x\) such that \((x,y_0)\in W\).
We define the sc-smooth map \(\tau:V\rightarrow X\) by
  \(\tau(x):=\bar{\rho}(x,y_0)\).
Observe that with \((x,y_0)\in W\) we also have \(\rho(x,y_0)\in W\) and
  \(\rho(x, y_0) = (\bar{\rho}(x,y_0), y_0)\) implying that
  \(\tau(V)\subset V\).
Since \(\rho(x,y_0) =(\tau(x),y_0)\) belongs to \(\text{Gr}(f)\) it
  follows that \(f(\tau(x))=y_0\).
Hence
  \begin{align}
    \tau(V)\subset V\cap f^{-1}(y_0).
    \end{align}
Moreover, if \(x\in V\cap f^{-1}(y_0)\) then \((x, y_0)\in W\) and
  \(f(x)=y_0\), by definition of \(V\).
Equivalently, \((x, y_0)\in W\cap {\rm Gr}(f)\).
But the retraction \(\rho\) satisfies \(\rho(W)=W \cap {\rm Gr}(f)\), so
  \(\rho(x, y_0) = (x, y_0)\).
But then we have
  \begin{align*}                                                          
    \rho(x, y_0) = \big(\bar{\rho}(x, y_0), y_0\big) = \big(\tau(x),
    y_0\big)
    \end{align*}
  so \(x=\tau(x)\), and thus \(x\in \tau(V)\).
Hence we have shown that \(\tau(V)=V\cap f^{-1}(y_0)\). 
This shows that the fibers \(f^{-1}(y_0)\) are sub-M-polyfolds, and
  therefore \(d_{f^{-1}(y)}\) is defined for every \(y\in Y\).
\end{proof}
In the case discussed in the previous proposition it is also possible to
  estimate the degeneracy indices.

\begin{proposition}[degeneracy inequality and submersions]
  \label{propller1}\label{PROP_degeneracy_inequality_submersions}
  \hfill\\
Assume that \(f:X\rightarrow Y\) is a submersive sc-smooth map between
  M-polyfolds.
Then we have the inequality \(d_X(x)\geq d_Y(f(x))\) for all smooth points
  \(x_0\in X\).
\end{proposition}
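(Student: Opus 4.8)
The plan is to reduce the desired inequality to the two degeneracy-index results already established by producing, near a given smooth point of $X$, a sub-M-polyfold through that point which is sc-diffeomorphic to a neighborhood of its image in $Y$. This local section is exactly the object manufactured by Proposition~\ref{FirstCOnc}, so the proof amounts to chaining together Propositions~\ref{FirstCOnc},~\ref{PROP_sc_diffeo_preserves_degen_ind}, and~\ref{PROP_degen_ind_ineq_sub_m_poly}.

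First I would fix a smooth point $x\in X$ and put $y=f(x)$. Since $f$ is submersive and $x$ is smooth, Proposition~\ref{FirstCOnc} supplies an open neighborhood $V_y\subset Y$, a sub-M-polyfold $\Sigma_x\subset X$ containing $x$, and an sc-diffeomorphism $\psi:V_y\rightarrow \Sigma_x$ with $f\circ\psi=\mathrm{id}_{V_y}$. In the construction there one has $\psi(y')=\bar\rho(x,y')$, and because $\bar\rho(x,f(x))=x$ by Remark~\ref{REM_submersion_retract}, it follows that $\psi(y)=x$. Next I would transport the degeneracy index along $\psi$: viewing $\Sigma_x$ as an abstract M-polyfold with its induced sub-M-polyfold structure, $\psi$ is an sc-diffeomorphism between the open set $V_y$ of $Y$ and $\Sigma_x$, so Proposition~\ref{PROP_sc_diffeo_preserves_degen_ind} gives
\begin{align*}
  d_Y(f(x)) = d_Y(y) = d_{\Sigma_x}\big(\psi(y)\big) = d_{\Sigma_x}(x).
\end{align*}
Finally, since $\Sigma_x$ is a sub-M-polyfold of $X$ and $x\in\Sigma_x$, Proposition~\ref{PROP_degen_ind_ineq_sub_m_poly} yields $d_{\Sigma_x}(x)\leq d_X(x)$, and combining this with the display gives $d_Y(f(x))\leq d_X(x)$, as claimed.

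The conceptual work has already been absorbed into Proposition~\ref{FirstCOnc}, so I do not anticipate any genuine analytic obstacle; the one point demanding care is keeping straight which degeneracy index is computed where. Specifically, $\Sigma_x$ carries two a priori different degeneracy indices --- the intrinsic one $d_{\Sigma_x}$ coming from its induced M-polyfold structure, and the one it would inherit pointwise from the ambient $X$ --- and the precise role of Propositions~\ref{PROP_sc_diffeo_preserves_degen_ind} and~\ref{PROP_degen_ind_ineq_sub_m_poly} is to relate $d_Y$ to the former and the former to $d_X$, respectively. It is also worth noting that the smoothness of $x$ is invoked only to apply Proposition~\ref{FirstCOnc}; the remaining steps are level-independent.
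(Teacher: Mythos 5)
Your proof is correct and follows essentially the same route as the paper: both invoke Proposition~\ref{FirstCOnc} to produce the local section $\psi:V_{y}\to\Sigma_{x}$, transport $d_Y$ to $d_{\Sigma_{x}}$ via the sc-diffeomorphism, and then compare $d_{\Sigma_{x}}$ with $d_X$ using the sub-M-polyfold inequality. Your version is in fact slightly more careful than the paper's, since you explicitly verify $\psi(y)=x$ and cite Propositions~\ref{PROP_sc_diffeo_preserves_degen_ind} and~\ref{PROP_degen_ind_ineq_sub_m_poly} where the paper asserts these steps as immediate.
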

%
\begin{proof}
In view of Proposition \ref{PROP_submersions_implied_diffeo} we find a
  sc-diffeomorphism of the form \(\psi:U(y_0)\rightarrow \Sigma_{x_0}\),
  where \(\Sigma_{x_0}\subset X\) is a sub-M-polyfold.
We must have the identity
  \begin{align}
    d_{Y}(y)=d_{\Sigma_{x_0}}(\psi(y))\ \ \text{for}\ y\in U(y_0).
    \end{align}
It is trivially true that \(d_{X}(x)\geq d_{\Sigma_{x_0}}(x)\) for all
  \(x\in\Sigma_{x_0}\).
Hence 
  \begin{align}
    d_X(x_0)\geq d_{\Sigma_{x_0}}(x_0)=d_{Y}(y_0).
    \end{align}
Hence we have proved for all \(x\in X_\infty\) the inequality
  \(d_Y(f(x))\leq d_X(x)\).
\end{proof}

There is an obvious corollary which shall be useful later on.

\begin{corollary}[degeneracy inequality and submersions over manifolds]
  \label{COR_degen_inequality_submersions_manifolds}
  \hfill\\
Let \(X\) be a M-polyfold and \(V\) a smooth finite-dimensional manifold
  with boundary with corners and \(f:X\rightarrow V\) an sc-smooth
  submersive map.
Then we have for all \(x\in X\) the inequality \(d_X(x)\geq d_{V}(f(x))\).
\end{corollary}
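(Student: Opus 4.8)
The plan is to deduce this from Proposition \ref{PROP_degeneracy_inequality_submersions} by first disposing of the smooth points and then extending to all of $X$ by a limiting argument. First I would observe that a finite-dimensional smooth manifold with boundary and corners is an ssc-manifold, hence an M-polyfold through the forgetful chain, and that being finite-dimensional it satisfies $V=V_\infty$, so that every point of $V$ is smooth; moreover its degeneracy index as an M-polyfold agrees with the classical corner degree (using that a finite-dimensional ssc-manifold is just a smooth manifold with corners, together with the already-noted coincidence of $d_C$ with the M-polyfold degeneracy index). Thus Proposition \ref{PROP_degeneracy_inequality_submersions} applies and gives $d_X(x)\geq d_V(f(x))$ for every $x\in X_\infty$.

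The remaining task is to remove the smoothness hypothesis. I would fix an arbitrary $x_0\in X$, set $v_0=f(x_0)$ and $k=d_V(v_0)$, and use that $f$ is submersive to choose an open neighborhood $W$ of $(x_0,v_0)$ in $X\times V$ together with an sc-smooth retraction $\rho(x,v)=(\bar{\rho}(x,v),v)$ satisfying $\rho(W)=W\cap\text{Gr}(f)$, as in Definition \ref{DEF_submersion_property}; recall from Remark \ref{REM_submersion_retract} that $\bar{\rho}(x,v)\in f^{-1}(v)$ and $\bar{\rho}(x_0,v_0)=x_0$. Since $X_\infty$ is dense in $X$, I would select smooth points $x_n\in X_\infty$ with $x_n\to x_0$ and $(x_n,v_0)\in W$, and set $x_n':=\bar{\rho}(x_n,v_0)$.

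The point of this construction is that each $x_n'$ is a \emph{smooth} point lying in the \emph{same} fiber as $x_0$. Indeed $(x_n,v_0)\in X_\infty\times V_\infty=(X\times V)_\infty$, and since sc-smooth maps carry smooth points to smooth points (immediate from the sc$^0$ property), we get $x_n'\in X_\infty$; moreover $f(x_n')=v_0$ because $\bar{\rho}(x_n,v_0)\in f^{-1}(v_0)$, whence $d_V(f(x_n'))=k$. The smooth-point inequality already established then yields $d_X(x_n')\geq k$. Finally, continuity of $\bar{\rho}$ together with $\bar{\rho}(x_0,v_0)=x_0$ shows $x_n'\to x_0$, and the upper semicontinuity of the degeneracy index $d_X$, equivalently the closedness of $\{x:d_X(x)\geq k\}$ (see \cite{HWZ2017}), forces $d_X(x_0)\geq k=d_V(f(x_0))$.

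I expect the only genuine subtlety, and the step I would be most careful about, to be this extension off the dense set of smooth points: Proposition \ref{PROP_submersions_implied_diffeo}, which underlies Proposition \ref{PROP_degeneracy_inequality_submersions}, really does require a smooth base point, since the constant map $y\mapsto x_0$ is sc-smooth only when $x_0\in X_\infty$, so one cannot simply rerun that argument at a non-smooth $x_0$. The retraction-based production of nearby smooth points in the same corner stratum of $V$, combined with upper semicontinuity of $d_X$, is exactly what bridges this gap; everything else is a direct application of the cited results.
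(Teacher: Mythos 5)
Your proposal is correct and follows essentially the same route as the paper's proof: apply Proposition \ref{PROP_degeneracy_inequality_submersions} on the dense set of smooth points, then use the submersion retraction to produce smooth points \(\bar{\rho}(x_k,v_0)\) in the fiber over \(v_0\) converging to \(x_0\), and conclude via the local upper bound \(d_X(x)\leq d_X(x_0)\) near \(x_0\). Your write-up is in fact slightly more careful than the paper's (explicitly checking \(\bar{\rho}(x_0,v_0)=x_0\), the convergence \(x_n'\to x_0\), and that sc-smooth maps preserve smooth points), but the argument is the same.
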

%
\begin{proof}
We already know that the inequality holds for all \(x\in X_\infty\) in
  view of Proposition \ref{propller1}.
Take \(x_0\in X\) and define \(v_0=f(x_0)\). 
The latter is a smooth point.
Since \(f\) is submersive we find an sc-smooth retraction
  \(\rho:W\rightarrow W\), where \(W\) is an open neighborhood of
  \((x_0,v_0)\in X\times V\) such that \(\rho(W)= W\cap \text{Gr}(f)\) and
  \(\rho(x,v)=(\bar{\rho}(x,v),v)\).
Take a sequence of smooth points \(x_k\in X\) with \(x_k\rightarrow x_0\)
  and consider the sequence of smooth points \(\bar{\rho}(x_k,v_0)\).
Because 
  \begin{align*}                                                          
    \big(\bar{\rho}(x_k, v_0), v_0\big)=\rho(x_k, v_0) \in {\rm Gr}(f)
    \end{align*}
  we must have \(f(\bar{\rho}(x_k, v_0))=v_0\) and hence
  \(d_X( \bar{\rho}(x_k,v_0))\geq d_V(v_0)\).
It is straightforward and elementary to show that  there exists an open
  neighborhood \(U=U(x_0)\) such that
  \(d_X(x)\leq d_X(x_0)\) for all \(x\in U(x_0)\).
Consequently we infer for large \(k\)  that \(d_X(x_0)\geq d_X(
  \bar{\rho}(x_k,v_0))\geq d_V(v_0)\).
The proof is complete.
\end{proof}
%

%
\section{The Imprinting Method}\label{APP777}\label{SEC_imprinting_method}
We derive several results which are extremely useful for constructing
  M-polyfolds and which will be used throughout this text.

%
\subsection{Basic Results}\label{qsec2.1}\label{SEC_basic_results}

We being with a restatement of Definition \ref{DEF_imprinting_m_poly},
  which will be our basic tool for constructing polyfolds.
\begin{definition}[imprinting]
  \label{DEFNR1.1}\label{DEF_imprinting_m_poly}
  \hfill\\
Let \(Y\) be a set, let \(X\) be an M-polyfold, and suppose \(\oplus:X\to
  Y\) is a surjective map with the following additional properties.
  \begin{itemize}
    \item[(1)] 
    The quotient topology\index{quotient topology} \({\mathcal T}_\oplus\) on
    \(Y\), i.e. the finest topology for which \(\oplus\) is continuous, is
    metrizable.
    \item[(2)] 
    For every \(y\in Y\) there exists an open \({\mathcal
    T_\oplus}\)-neighborhood \(V =V(y)\) and a map \(H:V\rightarrow X\)
    with the following properties.
    \begin{itemize}
      \item[(a)]  
      \(\oplus\circ H=Id_{V}\).
      \item[(b)] 
      \(H\circ\oplus :\oplus^{-1}(V)\rightarrow X\) is sc-smooth.
      \end{itemize}
    \end{itemize}
We shall refer to  \(\oplus:X\rightarrow Y \) as an (M-polyfold) imprinting,
  or as an M-polyfold construction for \(Y\).
\end{definition}
%

The following theorem holds.

\setcounter{CounterSectionImprintingMethod}{\value{section}}
\setcounter{CounterTheoremImprintingMethod}{\value{theorem}}
\begin{theorem}[imprinting method]
  \label{oplusmethod}\label{THM_imprinting_method}
  \hfill\\
Let \(\oplus:X\rightarrow Y\) be an imprinting.
Then there exists a unique M-polyfold structure on \(Y\) characterized by
  the following properties.
\begin{itemize}
  \item[(1)] 
  \(\oplus:X\rightarrow Y\) is sc-smooth.
  \item[(2)] 
  For every \(y\in Y\) there exists an open neighborhood \(V=V(y)\) and a
  sc-smooth map \(H:V\rightarrow X\)
  satisfying \(\oplus\circ H=Id_{V}\).
  \end{itemize}
Moreover the M-polyfold structure has the following additional properties.
\begin{itemize}
  \item[(a)] 
  A  map \(f:Z\rightarrow Y\), where \(Z\) is an M-polyfold is sc-smooth
  if and only if \(f\) is continuous and for every \(z\in Z\) there exists
  an open neighborhood \(V=V(y)\), \(y=f(z)\),  and a sc-smooth map
  \(H:V\rightarrow X\) such that \(H\circ f:f^{-1}(V)\rightarrow X\)
  is sc-smooth and \(\oplus\circ H=Id_{V}\).
  \item[(b)] 
  If \(g:Y\rightarrow Z\) is a map into a M-polyfold, then \(g\) is
  sc-smooth if and only if \(g\circ \oplus:X\rightarrow Z\) is sc-smooth.
  \end{itemize}
\end{theorem}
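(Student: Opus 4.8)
The plan is to manufacture M-polyfold charts on $Y$ directly from the local sections $H_y$, exploiting the fact that each $H_y$ produces an sc-smooth retraction whose image is a sub-M-polyfold of $X$. First I would fix, for each $y\in Y$, a neighborhood $V_y$ and a section $H_y:V_y\to X$ as in Definition \ref{DEFNR1.1}, and set $r_y:=H_y\circ\oplus:\oplus^{-1}(V_y)\to X$. The identity $\oplus\circ H_y=Id_{V_y}$ gives $r_y\circ r_y=H_y\circ(\oplus\circ H_y)\circ\oplus=H_y\circ\oplus=r_y$, and since $\oplus$ is surjective one checks $r_y(\oplus^{-1}(V_y))=H_y(V_y)=:O_y\subset\oplus^{-1}(V_y)$. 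Thus $r_y$ is an sc-smooth retraction of the open set $\oplus^{-1}(V_y)\subset X$ onto $O_y$, so by Definition \ref{DEF_sub_m_polyfold} and Lemma \ref{LEM_structures_sub_poly_inherit} the image $O_y$ is a sub-M-polyfold of $X$ carrying a natural M-polyfold structure for which the inclusion $O_y\hookrightarrow X$ and the local retraction map $u\mapsto r_y(u)$ are sc-smooth.

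Next I would transport these structures to $Y$. The retraction identities show $\oplus|_{O_y}:O_y\to V_y$ is a bijection with inverse $H_y$; it is continuous because $\oplus$ is, and $H_y$ is continuous because $\oplus$ restricts to a quotient map over $V_y$ while $H_y\circ\oplus=r_y$ is continuous. Hence $\oplus|_{O_y}$ is a homeomorphism, and composing the M-polyfold charts of $O_y$ with $H_y$ yields charts on $V_y\subset Y$; these cover $Y$. The crucial step is \emph{compatibility}: on an overlap $V_y\cap V_{y'}$ the transition passing from the $O_y$-model to the $O_{y'}$-model is the restriction of $r_{y'}=H_{y'}\circ\oplus$ to the sub-M-polyfold $O_y$, which is sc-smooth by Lemma \ref{LEM_structures_sub_poly_inherit} (the inclusion $O_y\hookrightarrow X$ is sc-smooth and $r_{y'}$ lands in the sub-M-polyfold $O_{y'}$), with the reverse transition sc-smooth by symmetry. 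Since the chart homeomorphisms recover exactly $\mathcal{T}_\oplus$, metrizability supplies the Hausdorff and paracompactness requirements, and $Y$ becomes an M-polyfold. In these charts $\oplus$ reads as $r_y$ and $H_y$ reads as the inclusion $O_y\hookrightarrow X$, so properties (1) and (2) hold.

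For the functorial characterizations I would argue chart-wise. Statement (b) in the forward direction is the chain rule (Theorem \ref{thm_sc_chain_rule}) applied to $g\circ\oplus$ with $\oplus$ sc-smooth; conversely, if $g\circ\oplus$ is sc-smooth then $g$ is continuous (as $\oplus$ is a quotient map) and, read in the section-chart near $y$, $g$ becomes $(g\circ\oplus)|_{O_y}$, which is sc-smooth because $O_y\hookrightarrow X$ is. Statement (a) is analogous: if $f$ is sc-smooth then $H_y\circ f$ is sc-smooth by the chain rule, while conversely the supplied section $H$ furnishes, via $H\circ\oplus$, a chart compatible with the atlas above in which $f$ reads as the given sc-smooth map $H\circ f$. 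Uniqueness then follows because the argument for (b) uses only properties (1) and (2); thus any structure satisfying them satisfies (b), and applying (b) to $Id:(Y,\mathcal{S}_1)\to(Y,\mathcal{S}_2)$ together with property (1) for $\mathcal{S}_2$ shows $Id$ and its inverse are sc-smooth, so the two structures coincide.

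I expect the main obstacle to be the compatibility and atlas verification: one must confirm that a map into the sub-M-polyfold $O_{y'}$ which is sc-smooth into the ambient $X$ is sc-smooth for the \emph{inherited} structure (this is exactly where Lemma \ref{LEM_structures_sub_poly_inherit} and Corollary \ref{COR_induced_sub_m_poly} are indispensable), and that the chart topology genuinely agrees with $\mathcal{T}_\oplus$ rather than some strictly finer topology. The retraction-of-a-retraction bookkeeping needed to present $O_y$ as an honest sc-smooth retract in a partial quadrant, so that the transported charts are literally M-polyfold charts in the sense of Definition \ref{DEF_sc_smooth_model}, is the most delicate routine part.
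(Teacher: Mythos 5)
Your proposal follows essentially the same route as the paper's proof: you build the retraction $r_y=H_y\circ\oplus$ on $\oplus^{-1}(V_y)$, identify $O_y=H_y(V_y)$ as its retract, show $H_y:V_y\to O_y$ is a homeomorphism using that $\oplus$ is a quotient map, compute the transition maps as $H_{y'}\circ H_y^{-1}=r_{y'}|_{O_y}$ to get sc-smooth compatibility, and then derive (a), (b), and uniqueness from the chain rule exactly as the paper does. The only cosmetic difference is that you phrase the charts via the sub-M-polyfold lemma rather than directly via the local retract models, which amounts to the same thing.
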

%
\begin{proof}
The proof is given in Section \ref{SEC_proofs_of_theorems}.
\end{proof}
%

\begin{remark}
  \hfill\\
Given an imprinting \(\oplus:X\rightarrow Y\), the theorem
  says that a map denoted by \(h:Y\rightarrow Z\), where \(Z\) is a
  M-polyfold, is sc-smooth if and only if  \(h\circ\oplus\) is sc-smooth.
We also gave another criterion which gives a characterization of
  sc-smoothness for a map \(g:Z\rightarrow X\). 
Namely the (local) maps \(H\circ g\) have to be sc-smooth. One should view
  these compositions as local lifts with respect the structure map
  \(\oplus:X\rightarrow Y\).
If we are only interested in verifying that \(g:Z\rightarrow Y\) is
  sc-smooth the following criterion is very often more practical.
Namely a map \(g:Z\rightarrow Y\) is sc-smooth, where \(Z\) is a
  M-polyfold, provided for every \(z\in Z\) there exists an open
  neighborhood \(V=V(z)\) and a sc-smooth map \(\wt{g}:V\rightarrow X\)
  such that \(\oplus\circ \wt{g} = g|V\).
Since \(\oplus\) is sc-smooth for the M-polyfold structure on \(Y\) the
  statement is trivially true by the chain rule.
In many of our applications \(X\) will be a ssc-smanifold and therefore
  has a locally simple structure.
In these applications the local lifts \(\wt{g}\) are generally sc-smooth
  but not ssc-smooth.
\end{remark}
%

A result along the line of thought as described in the remark is given in
  the following  proposition.

\begin{proposition}[sc-smooth maps between imprintings]
  \label{prop-oplusx}\label{PROP_sc_smooth_between_imprintings}
  \hfill\\
Assume that \(\oplus:X\rightarrow Y\) and \(\oplus':X'\rightarrow Y'\) are
  two M-polyfold imprintings and assume that
  \(h:Y\rightarrow Y'\) is a map.
Then \(h\) is sc-smooth provided for each \(x\in X\) there exists an open
  neighborhood \(U=U(x)\subset X\), and a sc-smooth map
  \(\wt{h}:U\rightarrow X'\) such that the following diagram is
  commutative
\begin{align}
  \begin{CD}
  U@>\oplus >> Y\\
  @V \wt{h} VV   @V h VV\\
  X' @>\oplus' >> Y'.
  \end{CD}
  \end{align}
\end{proposition}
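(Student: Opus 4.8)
The plan is to reduce sc-smoothness of $h$ to a local lifting criterion and then to invoke Theorem~\ref{THM_imprinting_method}(b) applied to $\oplus'$. The crucial observation is part (b) of the imprinting theorem: a map $g:Z\to Y'$ into an M-polyfold target is sc-smooth if and only if $g\circ\oplus':X'\to Z$ is sc-smooth — but here we want the dual formulation. More useful is the remark immediately following the theorem: a map $g:Z\to Y'$ (with $Z$ an M-polyfold) is sc-smooth provided that for every point of $Z$ there is a neighborhood on which $g$ lifts through $\oplus'$ via an sc-smooth map into $X'$. I would take $Z=X$ and $g=h\circ\oplus$, so the target becomes verifying sc-smoothness of $h\circ\oplus:X\to Y'$, and then apply Theorem~\ref{THM_imprinting_method}(b) to $\oplus:X\to Y$ to conclude sc-smoothness of $h$ itself.

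First I would fix $x\in X$ and use the hypothesis to obtain the open neighborhood $U=U(x)\subset X$ together with the sc-smooth lift $\wt{h}:U\to X'$ making the square commute, i.e.\ $\oplus'\circ\wt{h}=h\circ\oplus$ on $U$. The commutativity of the diagram says precisely that $\wt{h}$ is an sc-smooth local lift of $h\circ\oplus|_U$ through $\oplus'$. Since the points $x$ range over all of $X$ and the $U(x)$ cover $X$, the remark following Theorem~\ref{THM_imprinting_method} applies with $Z=X$ and $g=h\circ\oplus$: every point of $X$ has a neighborhood on which $h\circ\oplus$ admits an sc-smooth lift into $X'$ through $\oplus'$. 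Hence $h\circ\oplus:X\to Y'$ is sc-smooth.

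Next I would apply property (b) of Theorem~\ref{THM_imprinting_method}, this time for the imprinting $\oplus:X\to Y$: a map $h:Y\to Z'$ into an M-polyfold $Z'$ is sc-smooth if and only if $h\circ\oplus:X\to Z'$ is sc-smooth. Taking $Z'=Y'$ (which carries an M-polyfold structure by the imprinting theorem applied to $\oplus'$), the sc-smoothness of $h\circ\oplus$ established in the previous step yields sc-smoothness of $h$. This completes the argument.

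The step I expect to be the main obstacle is the first one, namely confirming that the hypothesis furnishes exactly the data required by the remark following Theorem~\ref{THM_imprinting_method}. The remark demands, for each $z\in Z=X$, a single sc-smooth lift $\wt{g}$ on a neighborhood with $\oplus'\circ\wt{g}=g$ restricted to that neighborhood; the hypothesis supplies this directly via the commutative square, so the only real care needed is bookkeeping: checking that $U(x)$ is genuinely open in $X$ (it is, by hypothesis), that the equality $\oplus'\circ\wt{h}=h\circ\oplus$ holds on all of $U(x)$ and not merely at $x$, and that the continuity of $h$ required implicitly is either part of the hypothesis or follows because $h\circ\oplus$ is continuous and $\oplus$ is a quotient map. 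This last point — deducing continuity of $h$ from continuity of $h\circ\oplus$ — uses that $\mathcal{T}_\oplus$ is the quotient topology, so $h$ is continuous precisely when $h\circ\oplus$ is, and thus no separate continuity hypothesis is needed.
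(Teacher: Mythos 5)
Your argument is correct and is essentially the paper's own proof: the paper likewise observes that $\oplus'\circ\wt{h}\big|_U = h\circ\oplus\big|_U$ is sc-smooth by the chain rule (which is exactly the content of the remark you invoke), covers $X$ by the neighborhoods $U(x)$ to get sc-smoothness of $h\circ\oplus$ on all of $X$, and then concludes via Theorem \ref{THM_imprinting_method}(b) applied to $\oplus$. Your closing remarks on continuity are a harmless elaboration of why the lifting criterion needs no separate continuity hypothesis.
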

%
\begin{proof}
With \(Y\) and \(Y'\) equipped with the M-polyfold structures determined
  by \(\oplus\) and \(\oplus'\) we know that \(\oplus\) and \(\oplus'\) are
  sc-smooth.
By assumption it follows that \(\oplus'\circ\wt{h}\big|_U\) is sc-smooth
  and consequently
  \begin{align*}                                                          
    h\circ \oplus\big|_U :U\rightarrow Y' 
    \end{align*}
  is sc-smooth.  
Hence \(h\circ \oplus:X\rightarrow Y'\) is sc-smooth by commutativity, and
  thus by Theorem \ref{THM_imprinting_method}(b) it follows that \(h\) is
  sc-smooth.
\end{proof}
%

\begin{remark}
The drawback is that we have to run a test for every \(x\in X\) in order
  to check the smoothness property of \(h\), despite the fact that there are
  usually many points \(x\) mapped to the same point in \(Y\).
Under an additional property less tests are necessary, see Exercise
  \ref{EX_homogeneity}.
\end{remark}
%

The above Theorem \ref{THM_imprinting_method} has a refinement which will
  be important to us.
Note that in this theorem we have as an assumption that the quotient
  topology \({\mathcal T}_\oplus\) on \(Y\) is metrizable.
Of course, whenever we have  a map of the form \(f:X\rightarrow Y\), where
  \(X\) is a topological space and \(Y\) is a set, the quotient topology
  \({\mathcal T}_f\) on \(Y\)  is defined.
We shall give a criterion which automatically implies that \({\mathcal
  T}_f\) is metrizable.

\setcounter{CounterSectionMetrizabilityConditions}{\value{section}}
\setcounter{CounterTheoremMetrizabilityConditions}{\value{theorem}}
\begin{theorem}[metrizablity conditions]
  \label{THMKLP1.3}\label{THM_metrizability_conditions}
  \hfill\\
Let \(X\) be a M-polyfold and \(\oplus:X\rightarrow Y\) a surjective map
  onto a set. 
Assume the following.
\begin{itemize}
  \item[(1)] 
  The topology \({\mathcal T}_X\) on \(X\)  is second countable.
  \item[(2)] 
  The quotient topology \({\mathcal T}_\oplus\) has the following
    property.
  For each open set \(W\in \mathcal{T}_\oplus\) and each \(z\in W\) there
    exists an open \({\mathcal T}_\oplus\)-neighborhood \(L\) of \(z\) such
    that \(\cl_Y(L)\subset W\)
  \item[(3)] 
  For every \(x\in X\) there exists an open neighborhood
    \(U(\oplus(x))\in{\mathcal T}_\oplus\) and a map
    \(H:(U(\oplus(x)),\oplus(x))\rightarrow (X,x)\) such that
  \begin{itemize}
    \item[(a)] 
    \(\oplus\circ H=Id_{U(\oplus(x))}\).
    \item[(b)] 
    \(H\circ\oplus: \oplus^{-1}(U(\oplus(x)))\rightarrow X\) is
    continuous.
    \end{itemize}
  \item[(4)]
  For every \(y\in Y\) there exists \(U=U(y)\in {\mathcal T}_\oplus\) and
    a map \(H:U\rightarrow X\) such that
  \begin{itemize}
    \item[(a)] 
    \(\oplus\circ H=Id_U\).
    \item[(b)] 
    The map \(H\circ\oplus:\oplus^{-1}(U)\rightarrow X\) is sc-smooth.
    \end{itemize}
  \end{itemize}
Then the quotient topology \({\mathcal T}_\oplus\) is metrizable and consequently
  \(\oplus:X\rightarrow Y\) is an imprinting.
\end{theorem}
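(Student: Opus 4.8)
The plan is to prove metrizability of $(Y,\mathcal{T}_\oplus)$ by verifying the hypotheses of the Urysohn metrization theorem: I will show that $Y$ is second countable, $T_1$, and regular, the last of which is handed to us directly as hypothesis~(2). The engine for everything is the following local observation about the section maps appearing in (3) and (4). Suppose $U\in\mathcal{T}_\oplus$ is open and $H:U\to X$ satisfies $\oplus\circ H=\mathrm{Id}_U$ with $H\circ\oplus:\oplus^{-1}(U)\to X$ continuous. Since $\oplus^{-1}(U)$ is a saturated open subset of $X$, the restriction $\oplus|_{\oplus^{-1}(U)}:\oplus^{-1}(U)\to U$ is itself a quotient map, so continuity of $H\circ\oplus$ forces $H$ to be continuous by the universal property of quotient maps. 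Together with $\oplus\circ H=\mathrm{Id}_U$ and the resulting identity $H\circ\oplus|_{H(U)}=\mathrm{Id}_{H(U)}$, this shows $H:U\to H(U)\subset X$ is a homeomorphism onto its image, with inverse $\oplus|_{H(U)}$. Hence every such $U$ is homeomorphic to a subspace of the M-polyfold $X$.

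First I would establish second countability. Using the neighborhoods from (3), the sets $\oplus^{-1}(U(\oplus(x)))$ form an open cover of $X$ as $x$ ranges over $X$; since $X$ is second countable (hypothesis~(1)) it is Lindel\"of, so a countable subfamily $\{\oplus^{-1}(U_k)\}_{k\in\mathbb{N}}$ already covers $X$, and applying $\oplus$ shows $\{U_k\}$ covers $Y$. By the local observation each $U_k$ is homeomorphic to a subspace of $X$, hence second countable; choosing a countable base $\mathcal{B}_k$ of open sets for each $U_k$ (these are open in $Y$ because $U_k$ is), the countable family $\bigcup_k\mathcal{B}_k$ is a base for $Y$: given open $W\ni y$, pick $k$ with $y\in U_k$ and a base element of $\mathcal{B}_k$ inside $W\cap U_k\subset W$. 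Thus $Y$ is second countable.

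Next I would check the $T_1$ axiom. Given $y_1\neq y_2$, choose via (4) a neighborhood $U=U(y_1)$ homeomorphic to a subspace of $X$. If $y_2\notin U$, then $U$ is an open set containing $y_1$ but not $y_2$; if $y_2\in U$, then since $X$ is Hausdorff its subspace $U$ is $T_1$, producing an open $V\subset U$ (hence open in $Y$) with $y_1\in V$ and $y_2\notin V$. Exchanging the roles of $y_1$ and $y_2$ gives the reverse separation, so $Y$ is $T_1$. Combined with the regularity supplied by hypothesis~(2) and the second countability just proved, Urysohn's theorem yields a metric inducing $\mathcal{T}_\oplus$. Finally, hypothesis~(4) is precisely condition~(2) of Definition~\ref{DEF_imprinting_m_poly}, so once metrizability is known $\oplus$ is an imprinting. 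I expect the genuine content to lie in the local homeomorphism observation and in correctly passing from local to global second countability via the Lindel\"of property; the separation step is then comparatively routine, and regularity is essentially assumed outright in~(2).
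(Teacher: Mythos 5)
Your proof is correct, and although it terminates in the same place as the paper's --- an appeal to Urysohn's metrization theorem --- it verifies the hypotheses by a genuinely different and in places more economical route. The local homeomorphism $H:U\to H(U)$ you obtain from the universal property of quotient maps restricted to the saturated open set $\oplus^{-1}(U)$; the paper instead verifies this by hand, importing the computation from the proof of Theorem \ref{THM_imprinting_method} (same content, but your derivation is slicker). For second countability you use the Lindel\"of property of $X$ to extract countably many chart domains $U_k$ and then take the union of countable bases of the locally embedded pieces; the paper instead first proves that $\oplus$ is an \emph{open} map (this is where it genuinely uses the pointed form of hypothesis (3), namely that $H$ can be arranged to send $\oplus(x)$ back to the prescribed $x$) and then pushes forward a countable base of $X$. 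Your argument never needs openness of $\oplus$ and correspondingly never uses the pointing property, only the existence of a cover by chart domains. The largest divergence is in the separation axioms: the paper proves Hausdorffness directly and then establishes \emph{complete} regularity by explicitly constructing a continuous $[0,1]$-valued separating function from condition (2) together with the metrizability of the chart domains, including a sequential-continuity argument; you instead observe that condition (2) is verbatim the standard neighborhood characterization of regularity, supply $T_1$ from the charts, and invoke the regular-plus-$T_1$-plus-second-countable form of Urysohn's theorem. This bypasses the most laborious portion of the paper's proof at no cost in rigor (the equivalence of the neighborhood formulation with point--closed-set separation requires no auxiliary separation axioms, and regular $T_1$ spaces are Hausdorff). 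What the paper's longer route buys is the explicit Urysohn function and the slightly stronger intermediate conclusion of complete regularity; what yours buys is brevity and a cleaner accounting of which hypotheses are actually used where.
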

%
\begin{proof}
The proof is given later in Subsection \ref{SSSEC2.3}.\end{proof}
%

\begin{remark}
  \hfill\\
Conditions (1)-(3) imply that the quotient topology is metrizable.  
Then  together with (4) we see that we have an imprinting.
\end{remark}
%

\begin{exercise}
  \label{homogeneity}\label{EX_homogeneity}
  \hfill\\
Consider an imprinting \(\oplus:X\rightarrow Y\). 
We say that \(\oplus\) has the \emph{homogeneity
  property}\index{homogeneity property} provided that for each pair of
  points \(x_1, x_2\) with \(\oplus(x_1)=\oplus(x_2)\), there exist open
  neighborhoods \(U_1 = U_1(x_1)\) and \(U_2=U_2(x_2)\) of \(x_1\) and
  \(x_2\) respectively, and there exists an
  sc-diffeomorphism \(K:U_1\rightarrow U_2\) such that \(\oplus\circ
  K=\oplus \) on \(U_1\).
Assume that \(\oplus':X'\rightarrow Y'\) is a second
  imprinting and that \(h:Y\rightarrow Y'\) is  a map.
Show that \(h\) is sc-smooth for the defined structures provided for each
  \(y\in Y\) there exists a point \(x\in X\) with \(\oplus(x)=y\), an open
  neighborhood \(U(x)\) and a sc-smooth map \(\wt{h}:U(x)\rightarrow X'\)
such that \(\oplus'\circ \wt{h}(z)=h\circ\oplus(z)\) for \(z\in U(x)\).
Compare this with Proposition \ref{PROP_sc_smooth_between_imprintings}.
\end{exercise}
%

%
\subsection{The Example of Gluing}
  \label{GLUINg}\label{qsec2.2}\label{SEC_example_gluing}
Here and throughout, we will use the following standard notation
  \begin{align*}                                                          
    \mathbb{R}^+ = [0, \infty), \qquad \mathbb{R}^- = [-\infty,
    0), \qquad\text{and}\qquad S^1 = \mathbb{R}/\mathbb{Z}.
    \end{align*}
For each fixed \(\delta_0>0\), we let \(H^{3,\delta_0}({\mathbb R}^+\times
  S^1,{\mathbb R}^N)\) and \(H^{3,\delta_0}({\mathbb R}^-\times S^1,{\mathbb
  R}^N)\) respectively denote the Hilbert spaces of functions \(u^\pm:
  \mathbb{R}^\pm \times S^1\to \mathbb{R}^N\) determined by the
  property that for each multi-index \(\alpha\) with \(|\alpha|\leq 2\) the
  function \((s,t)\mapsto e^{\delta_0 |s|}u^\pm(s,t)\) is in  \(L^2({\mathbb
  R}^\pm\times S^1,{\mathbb R}^N)\).
We then define the Hilbert spaces
  \begin{align*}                                                          
    H_c^{3, \delta_0}(\mathbb{R}^\pm \times S^1, \mathbb{R}^N) =
    \mathbb{R}^N \oplus H^{3, \delta_0}(\mathbb{R}^\pm \times S^1,
    \mathbb{R}^N);
    \end{align*}
   that is, each \(u^\pm \in H_c^{3, \delta_0}(\mathbb{R}^\pm \times S^1,
   \mathbb{R}^N)\) can be written as \(u^\pm = c^\pm + r^\pm\) where
   \(c^\pm \in \mathbb{R}^N\) and \(r^\pm\in H^{3,
   \delta_0}(\mathbb{R}^\pm\times S^1, \mathbb{R}^N)\).
Given a weight sequence \(\delta = (\delta_0, \delta_1, \delta_2, \ldots
  )\) with
  \begin{align}
    0<\delta_0<\delta_1<....<\delta_i<\delta_{i+1}<\ldots
    \end{align}
  we can equip \(H_c^{3, \delta_0}(\mathbb{R}^\pm \times S^1,
  \mathbb{R}^N)\) with an sc-structure where level \(m\)
  corresponds to \(H^{3+m,\delta_m}_c\). 
We shall let \(H^{3,\delta}_c({\mathbb R}^\pm\times S^1,{\mathbb R}^N)\)
  denote these affine sc-Hilbert spaces.
We denote by 
  \begin{align}
    E\subset H^{3,\delta}_c({\mathbb R}^+\times S^1,{\mathbb R}^N)\oplus
    H^{3,\delta}_c({\mathbb R}^-\times S^1,{\mathbb R}^N)
    \end{align}
  the sc-subspace consisting of all \((u^+,u^-)=(c^+ +r^+, c^-+r^-) \) with
  \(c^+=c^-\).

%
\subsubsection{Cylinder Gluing}\label{SEC_cylinder_gluing}
Denote by \(\varphi:(0,1]\rightarrow [0,\infty)\) the diffeomorphism
  defined by
  \begin{align}
    \varphi(s)=e^{\frac{1}{s}}-e.
    \end{align}
It is called the {\bf exponential gluing profile}\index{exponential gluing
  profile}.
We define \({\mathbb B}\) by
  \begin{align*}                                                          
    \mathbb{B} = \big\{ z\in \mathbb{C}: |z|<\textstyle{ \frac{1}{4}}\big\}
    \end{align*}
  and call them {\bf gluing parameters}. \index{gluing parameters}
We define \(Z_0\) by 
  \begin{align}
    Z_0= ({\mathbb R}^+\times S^1)\sqcup ({\mathbb R}^-\times S^1)
    \end{align}
  and for \(a\neq 0\) in \({\mathbb B}\) written as \(a=|a|\cdot e^{2\pi
  i\theta}\) we define with \(R=\varphi(|a|)\)
  \begin{align*}
    Z_a=\left\{\{(s,t),(s',t')\}\subset [-R, R]\times S^1 \ | \  s=s'+R,\
    t=t'+\theta \right\};
    \end{align*}
  observe that as a consequence of this definition, for any \(\{(s,t),
  (s',t')\}\in Z_a\) we necessarily have \(s\in [0, R]\) and \(s'\in [-R,
  0]\).
We consider two natural bijections for \(a\in {\mathbb B}\), \(a\neq 0\)
  \begin{align}
    [0,R]\times S^1\xleftarrow{c^+_a} Z_a \xrightarrow{c^-_a} [-R,0]\times
    S^1,
    \end{align}
  defined by 
\begin{align*}                                                            
  &c_a^+\big(\{(s, t), (s', t')\}\big) = (s,t)\\
  &c_a^-\big(\{(s, t), (s', t')\}\big) = (s', t').
  \end{align*}
We note the following trivial fact.

\begin{lemma}[Smooth structures on $Z_a$]
  \label{LEM_smooth_structure_Za}
  \hfill\\
The smooth structures on \(Z_a\) making \(c^+_a\) or \(c^-_a\)
  diffeomorphisms coincide.
\end{lemma}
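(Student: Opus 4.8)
The plan is to reduce the claim to a single explicit transition computation. By definition, declaring \(c_a^+\) to be a diffeomorphism endows \(Z_a\) with the smooth structure pulled back from \([0,R]\times S^1\), and declaring \(c_a^-\) to be a diffeomorphism endows \(Z_a\) with the smooth structure pulled back from \([-R,0]\times S^1\). These two smooth structures on the same underlying set coincide precisely when the identity map of \(Z_a\) is a diffeomorphism between them, which is equivalent to the transition map \(c_a^-\circ(c_a^+)^{-1}\colon [0,R]\times S^1\to[-R,0]\times S^1\) being a diffeomorphism of manifolds with boundary. So the whole lemma comes down to identifying this transition map.

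First I would record that \(c_a^+\) and \(c_a^-\) are genuine bijections. Since \(a\in\mathbb{B}\) with \(a\neq 0\) forces \(R=\varphi(|a|)>0\), the two entries of any pair \(\{(s,t),(s',t')\}\in Z_a\) are distinct, and the relation \(s=s'+R\) together with \(s,s'\in[-R,R]\) forces \(s\in[0,R]\) and \(s'\in[-R,0]\), as already noted in the construction. Thus within each unordered pair the two points are unambiguously separated by the range of their first coordinate, so \(c_a^+\) and \(c_a^-\) are well defined and invertible. In particular \((c_a^+)^{-1}(s,t)=\{(s,t),(s-R,t-\theta)\}\), where \(t-\theta\) is taken in \(S^1=\mathbb{R}/\mathbb{Z}\).

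Composing, I would then compute that \(c_a^-\circ(c_a^+)^{-1}\) sends \((s,t)\mapsto(s-R,t-\theta)\); this is simply translation by \((-R,-\theta)\). Such an affine translation is smooth, its inverse \((s',t')\mapsto(s'+R,t'+\theta)\) is smooth, and it carries the boundary circles \(\{0\}\times S^1\) and \(\{R\}\times S^1\) to the boundary circles \(\{-R\}\times S^1\) and \(\{0\}\times S^1\), so it is a diffeomorphism of manifolds with boundary. Hence the transition map and its inverse are smooth, and the two candidate smooth structures on \(Z_a\) agree.

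The only point requiring any care — and the reason the statement is not entirely vacuous — is the bookkeeping in the previous paragraph: one must check that the unordered pairs in \(Z_a\) really do admit an unambiguous labeling of their two points, so that \(c_a^+\) and \(c_a^-\) are bijections and \((c_a^+)^{-1}\) is well defined. Once that is in place, the transition map is manifestly a translation and there is no further obstacle.
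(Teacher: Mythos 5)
Your argument is correct: the transition map \(c_a^-\circ(c_a^+)^{-1}\) is indeed the translation \((s,t)\mapsto(s-R,t-\theta)\), which is a diffeomorphism of manifolds with boundary, and this is exactly why the paper labels the lemma a ``trivial fact'' and omits the proof entirely. Your write-up supplies the intended computation, including the one genuinely non-vacuous point that the unordered pairs admit an unambiguous labeling so that \((c_a^+)^{-1}\) is well defined.
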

%

%
\subsubsection{Gluing}
Pick a smooth map, called a {\bf cut-off model}\index{cut-off model},
  \(\beta:{\mathbb R}\rightarrow [0,1]\) satisfying
  \begin{align*}                                                          
     \beta(s)=1 &\ \text{for} \ s\leq -1.\\
    \beta(s)=0& \ \text{for}\ s\geq 1\nonumber \\
    \beta(s)+\beta(-s)=1 &\ \text{for all}\ s\in{\mathbb R}.\nonumber
    \end{align*}
With  \(u=(u_x,u_y)\in E\) we can define a glued map 
  \begin{align}
    \oplus(a,u):Z_a\rightarrow {\mathbb R}^N
    \end{align}
  as follows.
If \(a=0\) we just recover \(u\),
  i.e. \(\oplus(0,u)=u\).
If \(0<|a|<1/4\) write \(a=|a|\cdot e^{2\pi i\theta}\) and define with
  \(R=\varphi(|a|)\)
  \begin{align*}                                                            
    & \oplus(a,u)(\{(s,t),(s',t')\}_a)&\\
    &\beta(|s|-R/2)\cdot u_x(s,t)+\beta(|s'|-R/2)\cdot u_y(s',t')&\nonumber
    \end{align*}
Here \(t\in S^1={\mathbb R}/{\mathbb Z}\) and \(t\in [0,R]\).
We shall refer to \(\oplus(a,u)\) as a {\bf glued map}.\index{glued map}

Here is an example of an imprinting-construction. 
Introduce the set
  \(X^{3,\delta_0}_{\varphi}:=X^{3,\delta_0}_{\varphi}({\mathbb R}^N)\) as
  follows
  \begin{align}\label{EQ_example_construction}
    X^{3,\delta_0}_{\varphi}= E\coprod \left(\coprod_{a\in {\mathbb
    B}\setminus\{0\}} H^3(Z_a,{\mathbb R}^N)\right).
    \end{align}
Here \(H^3(Z_a,{\mathbb R}^M)\) for \(0<|a|<1/4\) is the usual Sobolev
  space.
We see immediately that 
  \begin{align}
    \oplus:{\mathbb B}\times E \rightarrow X^{3,\delta_0}_{\varphi}
    \end{align}
  is a surjective map. 
\begin{theorem}[\(\oplus\) is an imprinting]
  \label{THM_oplus_is_imprinting}
  \hfill\\
The map \(\oplus\) is an imprinting.
The M-polyfold  structure on \(X^{3,\delta_0}_{\varphi}\) does not depend
  on the specific choice of \(\beta\).
\end{theorem}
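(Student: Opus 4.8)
The plan is to obtain metrizability of $\mathcal{T}_\oplus$ from Theorem \ref{THM_metrizability_conditions}, whose conclusion then also yields the imprinting property; this reduces the first assertion to verifying the four hypotheses of that theorem for the domain $X=\mathbb{B}\times E$, which is an open subset of the sc-Hilbert space $\mathbb{C}\oplus E$ and hence an M-polyfold. Hypothesis (1) is immediate: $\mathbb{B}$ is a second countable open subset of $\mathbb{C}$ and each level of $E$ is a separable Hilbert space, so $\mathbb{B}\times E$ is second countable. Hypotheses (3) and (4) both demand local sections $H$ of $\oplus$ with, respectively, continuity and sc-smoothness of $H\circ\oplus$; these form the core of the argument and are addressed below. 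Hypothesis (2), regularity of $\mathcal{T}_\oplus$, I would obtain from an explicit description of convergence in the quotient topology---a sequence of glued maps $v_n\in H^3(Z_{a_n})$ tends to $u\in E$ exactly when $a_n\to 0$ together with the appropriate weighted convergence of the unglued pieces---from which shrinking closed $\mathcal{T}_\oplus$-neighborhoods are constructed directly.

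For the sections I would distinguish two cases. If $y\in H^3(Z_a,\mathbb{R}^N)$ for a fixed $a\neq 0$, then $a$ is pinned down by the component label, and it suffices to unglue: restrict $v$ to $[0,R]\times S^1$ and to $[-R,0]\times S^1$ and extend each half off $Z_a$ along the full half-cylinder with exponential decay to a common asymptotic constant $c$ (e.g. the mean of $v$ over the central circle $\{s=R/2\}$), giving $\mathrm{unglue}_a(v)=(u_x,u_y)\in E$; using $\beta(\sigma)+\beta(-\sigma)=1$ one checks $\oplus_a(\mathrm{unglue}_a(v))=v$, so $H(y):=(a,\mathrm{unglue}_a(v))$ is a section. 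The delicate case is $y\in E$, i.e. the stratum $a=0$, where a single neighborhood $V(y)$ already meets infinitely many components $H^3(Z_a)$ with $a$ near $0$; the same unglue formula, regarded now as a family in $a$ through $a=0$ with $\mathrm{unglue}_0=\mathrm{id}$, supplies the section. For the \emph{pointed} section of hypothesis (3) through a prescribed $x_0=(a_0,u_0)$, I would instead unglue the difference $v-\oplus_a(u_0)$ and add it back to $u_0$, arranging $H(\oplus(x_0))=x_0$ on the nose.

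The main obstacle is the sc-smoothness in hypothesis (4): one must show that $H\circ\oplus$, which sends $(a,u)\mapsto(a,\mathrm{unglue}_a(\oplus_a(u)))$ on $\oplus^{-1}(V)$, is sc-smooth \emph{jointly in $a$, including across $a=0$}. This is exactly where the exponential gluing profile $\varphi(s)=e^{1/s}-e$ and the compactness of the sc-structure are indispensable. Differentiating the family $a\mapsto \mathrm{unglue}_a\circ\oplus_a$ in the gluing parameter produces error terms supported on the neck and governed by the derivatives of $R=\varphi(|a|)$; the rapid growth of these terms as $a\to 0$ is precisely offset by the decay gained from the strictly increasing weights $\delta_0<\delta_1<\cdots$, so that each differentiation in $a$ maps one sc-level lower with the continuity that sc-differentiability requires. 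I expect this estimate---the genuinely hard, and classical, part of the statement---to dominate the proof, with everything else being essentially formal.

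For independence of the M-polyfold structure on $\beta$, let $\oplus^{0}$ and $\oplus^{1}$ be the imprintings built from two cut-offs $\beta_0,\beta_1$. They have the same underlying set $X^{3,\delta_0}_\varphi$, and their quotient topologies coincide since the convergence description used for hypothesis (2) is insensitive to $\beta$; thus both define M-polyfold structures on the same space via Theorem \ref{THM_imprinting_method}. I would then show the identity map between the two structures is an sc-diffeomorphism: by Theorem \ref{THM_imprinting_method}(b) (equivalently Proposition \ref{PROP_sc_smooth_between_imprintings}) it suffices to check that $\oplus^{0}$ is sc-smooth into the $\beta_1$-structure, and the required sc-smooth local lift is $(a,u)\mapsto(a,\mathrm{unglue}^{1}_a(\oplus^{0}_a(u)))$, whose sc-smoothness is furnished by the very same gluing estimate and which satisfies $\oplus^{1}\circ(\mathrm{lift})=\oplus^{0}$. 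By symmetry the inverse is sc-smooth as well, so the two structures are identical.
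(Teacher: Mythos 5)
Your proposal is correct in outline and shares with the paper the essential core: everything reduces to producing a section $H$ of $\oplus$ with $H\circ\oplus$ sc-smooth, and the only genuinely hard step --- sc-smoothness of the glue-then-unglue retraction jointly in the gluing parameter across $a=0$, powered by the exponential profile together with the strictly increasing weights --- is deferred in both treatments (the paper cites \cite{HWZ8.7} and \cite{FH-poly} for exactly this). Where you diverge is in how metrizability of $\mathcal{T}_\oplus$ is obtained. You verify the four hypotheses of Theorem~\ref{THM_metrizability_conditions}, which forces you to check the regularity condition (2) by hand via a description of convergence of glued maps; the paper instead constructs a single \emph{global} section $H:X^{3,\delta_0}_{\varphi}\to\mathbb{B}\times E$, so that $O=r(\mathbb{B}\times E)$ with $r=H\circ\oplus$ is an sc-smooth retract, $\oplus\big|_O$ is a bijection with inverse $H$, and the quotient topology is carried over from the subspace topology on $O\subset\mathbb{B}\times E$ --- metrizability is then automatic and Theorem~\ref{THM_metrizability_conditions} is never invoked. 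Your local-section route is legitimate but strictly more work, and the regularity check is the least substantiated part of your write-up. Two smaller remarks: your restrict-and-extend ungluing must be arranged so that the extension is $H^3$ across $s=R$, lands in $H^{3,\delta}_c$ on every level, and depends on $(a,v)$ in an sc-smooth way; the cleaner device, implicit in \cite{HWZ8.7}, is that the total gluing $(\oplus_a,\ominus_a)$ is an sc-isomorphism, with $H(v)$ obtained by inverting it on $(v,0)$. Finally, your argument for independence of $\beta$ (the identity map is sc-smooth in both directions by Theorem~\ref{THM_imprinting_method}(b), using the mixed lift $(a,u)\mapsto(a,\mathrm{unglue}^1_a(\oplus^0_a(u)))$) is sound and in fact supplies a proof of a claim the paper states without argument.
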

%
We shall denote the set \(X^{3,\delta_0}_{\varphi}\) equipped with this
  M-polyfold structure by \(X^{3,\delta}_{\varphi}({\mathbb R}^N)\).
A proof is given in \cite{FH-poly}. Using results in \cite{HWZ8.7} it is
  not difficult to prove the theorem. In fact one can construct a global
  map \(H: X^{3,\delta_0}_{\varphi}\rightarrow {\mathbb B}\times E \) such
  that
  \begin{itemize}
    \item[(1)] 
    \(\oplus\circ H=\text{id}_{X^{3,\delta_0}_{\varphi}}\).
    \item[(2)] 
    \(r:=H\circ \oplus: X^{3,\delta_0}_{\varphi}\rightarrow
    X^{3,\delta_0}_{\varphi}\) is sc-smooth.
    \end{itemize}
Then \(O=r(X^{3,\delta_0}_{\varphi})\) is an sc-smooth retract and as a
  subset of a metrizable space metrizable.  
Further
  \begin{align}
    \oplus|O: O\rightarrow X^{3,\delta_0}_{\varphi}
    \end{align}
  is a bijection with inverse \(H\).  
Equip \(X^{3,\delta_0}_{\varphi}\) with the topology as well as the
  M-polyfold structure making \(\oplus|O\) a sc-diffeomorphism.
One easily proves that the topology is the quotient topology. 

Clearly the construction can be transferred to abstract nodal disk pairs
  instead of using the standard cylinders via holomorphic polar coordinates.

We have a canonical map \(p_{\mathbb B}:
  X^{3,\delta_0}_{\varphi}\rightarrow {\mathbb B}\) which for given \(u\)
  extracts the gluing parameter \(a(u)\) of the underlying domain.
We leave the following proposition to the reader. 

\begin{proposition}[properties of \(p_{\mathbb B}\)]
  \label{PROP_properties_of_pB}
  \hfill\\
The map \(p_{\mathbb B}\) is sc-smooth, surjective, and submersive.
\end{proposition}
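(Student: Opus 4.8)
The plan is to exploit the global section \(H:X^{3,\delta_0}_\varphi\to\mathbb{B}\times E\) constructed in the discussion following Theorem \ref{THM_oplus_is_imprinting}: it satisfies \(\oplus\circ H=\text{id}\), and since it is the inverse of the sc-diffeomorphism \(\oplus|O:O\to X^{3,\delta_0}_\varphi\) it is itself sc-smooth. Write \(H(x)=(a(x),v(x))\) with \(a(x)\in\mathbb{B}\) and \(v(x)\in E\), where \(v=\text{pr}_E\circ H\). The basic observation is that \(\oplus(a,u)\) carries gluing parameter \(a\) by construction, so \(p_{\mathbb{B}}\circ\oplus\) is exactly the product projection \(\text{pr}_{\mathbb{B}}:\mathbb{B}\times E\to\mathbb{B}\), \((a,u)\mapsto a\); in particular the first component of \(H\) is \(a(\cdot)=p_{\mathbb{B}}(\cdot)\).

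First I would dispatch sc-smoothness and surjectivity. Because \(p_{\mathbb{B}}\circ\oplus=\text{pr}_{\mathbb{B}}\) is sc-smooth, Theorem \ref{THM_imprinting_method}(b) yields at once that \(p_{\mathbb{B}}\) is sc-smooth; and since \(\text{pr}_{\mathbb{B}}=p_{\mathbb{B}}\circ\oplus\) is surjective, so is \(p_{\mathbb{B}}\).

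The substance is the submersion property of Definition \ref{DEF_submersion_property}. Here I would build a single \emph{global} retraction on \(U:=X^{3,\delta_0}_\varphi\times\mathbb{B}\) by re-gluing the unglued data with a prescribed parameter:
\[
\rho(x,a)=\big(\bar\rho(x,a),a\big),\qquad \bar\rho(x,a):=\oplus\big(a,v(x)\big).
\]
This has the required form \(\rho(x,a)=(\bar\rho(x,a),a)\), and it is sc-smooth because its first component \((x,a)\mapsto\oplus\big(a,v(x)\big)\) is the composition of the sc-smooth maps \(v=\text{pr}_E\circ H\) and the sc-smooth imprinting map \(\oplus\). It maps into the graph, since \(p_{\mathbb{B}}(\bar\rho(x,a))=p_{\mathbb{B}}(\oplus(a,v(x)))=a\); and every graph point is fixed, since \(\bar\rho(x,p_{\mathbb{B}}(x))=\oplus(a(x),v(x))=\oplus(H(x))=x\) by \(\oplus\circ H=\text{id}\). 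Hence \(\rho(U)=\text{Gr}(p_{\mathbb{B}})=U\cap\text{Gr}(p_{\mathbb{B}})\).

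The one computation requiring care — and the step I expect to be the main obstacle — is idempotency \(\rho\circ\rho=\rho\), equivalently \(\bar\rho(\bar\rho(x,a),a)=\bar\rho(x,a)\). Since the first component of \(H\) is \(p_{\mathbb{B}}\) and \(p_{\mathbb{B}}(\oplus(a,v(x)))=a\), applying \(H\) to \(\oplus(a,v(x))\) returns \((a,v')\) with \(v'=v(\oplus(a,v(x)))\); that is, the gluing parameter is unchanged. Then \(\oplus\circ H=\text{id}\) gives \(\oplus(a,v')=\oplus\big(H(\oplus(a,v(x)))\big)=\oplus(a,v(x))\), whence \(\bar\rho(\bar\rho(x,a),a)=\oplus(a,v')=\oplus(a,v(x))=\bar\rho(x,a)\). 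With idempotency established the submersion property follows, completing the proof. Conceptually, \(\rho\) is the transfer through the section \(H\) and the surjection \(\oplus\) of the standard submersion retraction for the product projection \(\text{pr}_{\mathbb{B}}\), cf.\ Proposition \ref{PROP_natural_projection_submersive}.
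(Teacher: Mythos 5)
Your argument is correct, and it takes a more direct route than the one the paper intends. The paper leaves this proposition to the reader, but the machinery it sets up points to the following proof: since \(p_{\mathbb B}\circ\oplus=\mathrm{pr}_{\mathbb B}\) is sc-smooth and submersive by Proposition \ref{PROP_natural_projection_submersive}, the triple \(\mathbb{B}\times E\xrightarrow{\oplus}X^{3,\delta_0}_\varphi\xrightarrow{p_{\mathbb B}}\mathbb{B}\) is an imprinting-submersion, and Theorem \ref{THM_imprinting_submersion_yield_submersions} then delivers sc-smoothness and submersivity of \(p_{\mathbb B}\) in one stroke (the paper spells out exactly this pattern in the discussion following that theorem, for diagrams \(V\times X\to Y\to V\)). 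You instead build the retraction \(\rho(x,a)=(\oplus(a,v(x)),a)\) by hand; this is essentially what the proof of Theorem \ref{THM_imprinting_submersion_yield_submersions} produces when specialized here, except that the existence of the \emph{global} section \(H\) (asserted after Theorem \ref{THM_oplus_is_imprinting}) lets you take the neighborhood to be all of \(X^{3,\delta_0}_\varphi\times\mathbb{B}\) and collapses the bookkeeping of open sets to nothing. Your idempotency computation — using that the first component of \(H\circ\oplus(a,v(x))\) is again \(a\) and that \(\oplus\circ H=\mathrm{id}\) identifies \(\oplus(a,v')\) with \(\oplus(a,v(x))\) even though \(v'\neq v(x)\) in general — is exactly the right point to be careful about, and you handle it correctly. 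The trade-off: your proof is self-contained and shorter but leans on the global \(H\), whereas the general-machinery route needs only the local sections guaranteed by the definition of an imprinting and therefore generalizes to situations where no global section exists.
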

%

%
\subsection{Proof of the Basic Theorems}
  \label{SSSEC2.3}\label{SEC_proofs_of_theorems}

We first prove Theorem \ref{THM_imprinting_method} and also provide some
  additional results.

\setcounter{CurrentSection}{\value{section}}
\setcounter{CurrentTheorem}{\value{theorem}}
\setcounter{section}{\value{CounterSectionImprintingMethod}}
\setcounter{theorem}{\value{CounterTheoremImprintingMethod}}
\begin{theorem}[imprinting method]
  \hfill\\
Let \(\oplus:X\rightarrow Y\) be an imprinting.
Then there exists a unique M-polyfold structure on \(Y\) determined by
  \(\oplus\) that is characterized by the following properties.
\begin{itemize}
  \item[(1)] 
  \(\oplus:X\rightarrow Y\) is sc-smooth.
  \item[(2)] 
  For each \(y\in Y\) there exists an open neighborhood \(V=V(y)\in
  \mathcal{T}_\oplus\) and a
  sc-smooth map \(H:V\rightarrow X\) satisfying \(\oplus\circ
  H=Id_{V}\).
  \end{itemize}
Moreover the M-polyfold structure has the following additional properties.
\begin{itemize}
  \item[(a)] 
  A  map \(f:Z\rightarrow Y\), where \(Z\) is a M-polyfold is sc-smooth if
  and only if \(f\) is continuous and for every \(z\in Z\) there exists an
  open neighborhood \(V\), \(y=f(z)\),  and a sc-smooth map
  \(H:V\rightarrow X\) such that \(H\circ f:f^{-1}(V)\rightarrow X\)
  is sc-smooth and \(\oplus\circ H=Id_{V}\).
  \item[(b)] 
  If \(g:Y\rightarrow Z\) is a map into a M-polyfold, then \(g\) is
  sc-smooth if and only if \(g\circ \oplus:X\rightarrow Z\) is sc-smooth.
  \end{itemize}
\end{theorem}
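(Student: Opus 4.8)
The plan is to build the M-polyfold structure on \(Y\) by transporting, chart-by-chart, the sub-M-polyfold structures carried by the local images of the section maps \(H_y\), and then to read off the four listed properties together with uniqueness from the interplay between \(\oplus\) and these sections.

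First I would observe that for each \(y\) the map \(r_y := H_y\circ\oplus\), defined on the open set \(\oplus^{-1}(V_y)\subset X\), is an sc-smooth retraction. Using \(\oplus\circ H_y = Id_{V_y}\) one checks that \(\oplus\circ H_y\circ\oplus = \oplus\) on \(\oplus^{-1}(V_y)\), whence \(r_y\circ r_y = r_y\); moreover \(O_y:=r_y(\oplus^{-1}(V_y)) = H_y(V_y)\) is contained in \(\oplus^{-1}(V_y)\), so \(r_y\) maps this open set into itself. By Definition \ref{DEF_sub_m_polyfold} this exhibits \(O_y\) as a sub-M-polyfold of \(X\), and hence, by Lemma \ref{LEM_structures_sub_poly_inherit}, \(O_y\) carries a natural M-polyfold structure with sc-smooth inclusion \(j_y:O_y\hookrightarrow X\). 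I would then note that \(\oplus|_{O_y}:O_y\to V_y\) is a bijection with inverse \(H_y\), and---using that \(\oplus\) restricts to a quotient map on the saturated open set \(\oplus^{-1}(V_y)\), so that \(H_y\) is continuous because \(H_y\circ\oplus=r_y\) is---that \(H_y:V_y\to O_y\) is a homeomorphism for the topology \(\mathcal{T}_\oplus\). Pulling the M-polyfold charts of \(O_y\) back along \(H_y\) yields candidate charts on \(Y\) whose underlying topology is exactly \(\mathcal{T}_\oplus\); since the latter is metrizable by the imprinting hypothesis, \(Y\) is Hausdorff and paracompact.

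The technical heart is chart compatibility, and this is the one step where I expect to have to be careful. On an overlap, the transition from the \(y_1\)-chart to the \(y_2\)-chart is \(p\mapsto H_{y_2}(\oplus(p)) = r_{y_2}(p)\) for \(p\in O_{y_1}\). Since \(r_{y_2}\) is sc-smooth on \(\oplus^{-1}(V_{y_2})\) and the inclusion \(j_{y_1}\) is sc-smooth, the chain rule (Theorem \ref{thm_sc_chain_rule}) gives sc-smoothness as a map into \(X\); and because \(r_{y_2}\) is precisely the retraction defining the sub-M-polyfold \(O_{y_2}\), the ``local maps'' clause of Lemma \ref{LEM_structures_sub_poly_inherit} upgrades this to sc-smoothness as a map into \(O_{y_2}\) for its induced structure. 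The reverse transition is symmetric, so the charts form an sc-smooth atlas. The subtlety to track throughout is exactly that a map which is sc-smooth into the ambient \(X\) and happens to land in a sub-M-polyfold is sc-smooth for the induced structure.

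Finally I would verify the stated properties in these charts. Property (1) holds because in the \(y\)-chart \(\oplus\) is represented by \(r_y\), which is sc-smooth; property (2) holds because \(H_y\) is there represented by the sc-smooth inclusion \(j_y\) (as \(r_y\) fixes \(O_y\)). For (b), the forward direction is the chain rule applied to the sc-smooth \(\oplus\), while the backward direction writes \(g|_{V_y} = (g\circ\oplus)\circ H_y\) and invokes property (2). Property (a) follows the same pattern: for ``if'' one writes \(f = \oplus\circ(H\circ f)\) locally, and for ``only if'' one takes \(H=H_y\) and applies the chain rule to \(H_y\circ f\). The key observation for uniqueness is that the argument for (b) uses \emph{only} properties (1)--(2), so (b) must hold for any M-polyfold structure on \(Y\) satisfying them. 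Thus if \(\mathcal{S}\) and \(\mathcal{S}'\) both satisfy (1)--(2), applying (b) for \(\mathcal{S}\) to the identity map \(Y\to(Y,\mathcal{S}')\)---together with the fact that \(\oplus:X\to(Y,\mathcal{S}')\) is sc-smooth by (1) for \(\mathcal{S}'\)---shows \(\mathrm{id}\) is sc-smooth, and symmetrically so is its inverse, whence \(\mathcal{S}=\mathcal{S}'\).
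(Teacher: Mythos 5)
Your proposal is correct and follows essentially the same route as the paper's proof: both hinge on the retraction \(r_y=H_y\circ\oplus\), the fact that \(H_y:V_y\to O_y=H_y(V_y)\) is a homeomorphism onto an sc-smooth retract, and the computation of the transition maps as \(H_{y_2}\circ H_{y_1}^{-1}=r_{y_2}\big|_{O_{y_1}}\). The only differences are cosmetic --- you package the local models via the sub-M-polyfold lemma and the quotient-map universal property rather than verifying the homeomorphism by hand, and your uniqueness argument (deriving (b) from (1)--(2) and applying it to the identity) is a clean, slightly more direct justification of the ``characterized by'' clause than the paper's chart-compatibility remark.
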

%
\begin{proof}
\setcounter{section}{\value{CurrentSection}}
\setcounter{theorem}{\value{CurrentTheorem}}
We begin by noting that we must establish the existence of an M-polyfold
  atlas, and that the associated equivalence class there of is unique.
To that end, we consider \(H:V\rightarrow X\) and define \(O=H(V)\), and
  outline our task as follows.
\begin{enumerate}                                                         
  \item Show \(H:V\to O\) is a bijection.
  \item Show \(H:V\to O\) is a homeomorphism.
  \item Use \(H\) and \(\oplus\) to construct a local sc-retraction \(r\)
    which yields the local sc-retract models.
  \item Deduce the remaining properties.
  \end{enumerate}
We now proceed through these steps.
Since \(V\in {\mathcal T}_\oplus\) the set \(U=\oplus^{-1}(V)\) is open in
  \(X\).
We define the sc-smooth  map \(r:U\rightarrow X\) by \(r= H\circ
  \oplus\).
One easily verifies 
  \begin{itemize}
    \item[(i)]  
    \(r(U)\subset U\).
    \item[(ii)] 
    \(r\circ r=r\).
    \item[(iii)] 
    \(r(U)=O\).
    \item[(iv)] 
    \(H:V\rightarrow O\) is a bijection. 
    \end{itemize}
This establishes \(O\subset X\) as a local sc-retract.
Then, equipping \(O\) with the subspace topology, we next show that
  \(H:V\rightarrow O\) is a homeomorphism.
An open subset of \(O\) has the form \(O\cap W\), where \(W\) is open in
  \(X\).
First, we show that \(H^{-1}(O\cap W)\) belongs to \({\mathcal
  T}_\oplus\) which establishes continuity of \(H\).
This is equivalent to showing that \(\oplus^{-1}(H^{-1}(O\cap W))\) is
  open in \(X\). 
We can write this set as follows
  \begin{align*}                                                          
    \oplus^{-1}(H^{-1}(O\cap W))
    &= (H\circ\oplus)^{-1}(O\cap W)\\
    &=\{x\in U\ :\ r(x)\in O\, \text{ and }\, r(x)\in W\}\\
    &=\{x\in U\ :\ r(x)\in W\},\qquad\text{because }r(U)=O,
    \end{align*}
  which is open since \(r\) is sc-smooth and in particular continuous.

To finish showing that \(H\) is a homeomorphism, we show that
  \(H:V\rightarrow O\) is an open map into \(O\).
Let \(Q\subset V\) be an open subset and define the open
  \(P=\oplus^{-1}(Q)\).
We note that \(P\subset U\) and now \(r(P)\subset P\).

We now show that \(r(P)\subset P\).
Let \(p\in P\).
Because \(p\in P=\oplus^{-1}(Q)\), we have
  \begin{align}\label{EQ_op_in_Q}                                         
    \oplus(p)\in Q.
    \end{align}
Because \(r=H\circ \oplus\), we assume
  \begin{align}\label{EQ_contra}                                          
    H\circ \oplus(p)\notin P
    \end{align}
  and derive a contradiction.
Because \(P=\oplus^{-1}(Q)\), we have from equation (\ref{EQ_contra}) that
  \begin{align*}                                                          
    \oplus\circ H\circ \oplus(p)\notin Q.
    \end{align*}
But \(\oplus\circ H = Id\).  
So \(\oplus(p)\notin Q\).
This contradicts equation (\ref{EQ_op_in_Q}) as desired.
Thus \(r(p) = H\circ\oplus(p)\in P\), and \(r(P)\subset P\).

We claim that \(H(Q)= O\cap P\), which would prove our assertion.
First we note that \(H(Q)= H\circ\oplus (\oplus^{-1}(Q)) =r(P)\subset
  O\cap P\), and hence \(H(Q)\subset O\cap P.\)
To show that \(O\cap P\subset H(Q)\), we fix \(x\in O\cap P\) and we find
  a unique \(z\in O\) with \(H(z)=x\) and we must have
  \(\oplus(x)=\oplus\circ H(z)=z\) so that \(z\in \oplus(P)=Q\).
Hence \(x\in H(Q)\), and thus \(O\cap P\subset H(Q)\) so that indeed
  \(H(Q) = O\cap P\).

At this point we have shown that \(H:V\rightarrow O\) is a
  homeomorphism.
Assume that \(H':V'\rightarrow O'\) is a second construction of this
  kind.
We consider
  \begin{align*}                                                          
    H'\circ H^{-1} : H(V\cap V')\rightarrow H'(V\cap V'), 
    \end{align*}
  which is a homeomorphism between open subsets of \(O\) and \(O'\),
  respectively.
These open subsets are also sc-smooth retracts as discussed above.
The transition map \(H'\circ H^{-1}\) can be computed as 
  \begin{align*}                                                          
    H'\circ H^{-1}(o)
    &=  H'\circ H^{-1} \circ r(o)\\
    &=  H'\circ H^{-1}\circ H\circ\oplus(o)\\
    &=  H'\circ \oplus(o)\\
    &=  r'(o)
    \end{align*}
  which is sc-smooth.   
Hence we have shown that all occurring transitions are sc-smooth.
Hence \((Y,{\mathcal T}_\oplus)\) is a metrizable topological space
  equipped with an sc-smooth M-polyfold atlas.

At this point it is worth mentioning that given a map \(\oplus:X\to Y\)
  for which \(\mathcal{T}_\oplus\) is metrizable, and given any collection
  \(\{(V_y, H_y)\}_{y\in Y}\) for which the following hold
  \begin{enumerate}                                                       
    \item \(\{V_y\}_{y\in Y}\) is an open cover for \(Y\)
    \item \(\oplus\circ H_y = Id\) for each \(y\in Y\)
    \item \(H_y\circ \oplus\) is sc-smooth,
    \end{enumerate}
  the above argument shows that the \(\{(V_y, H_y)\}_{y\in Y}\) yields an
  sc-smooth atlas.
More importantly however, is that any additional \((V', H')\) which is not
  an element of \(\{(V_y, H_y)\}_{y\in Y}\), but satisfies \(\oplus\circ
  H' = Id\) and \(H'\circ \oplus\) is sc-smooth will yield and additional
  chart which is compatible with the previous atlas.
In this way, an imprinting \(\oplus:X\to Y\) determines a \emph{unique}
  M-polyfold structure on \(Y\), and in fact an imprinting is not depend
  upon the collection \(\{(V_y, H_y)\}_{y\in Y}\) provided that such a
  collection exists.
We also note that the above construction immediately guarantees that the
  maps \(H:V\to H(V)\) are sc-diffeomorphisms.

That \(\oplus:X\rightarrow Y\) is sc-smooth follows by taking a point
  \(x_0\in X\) and defining \(y_0=\oplus(x_0)\).
For \(V=V(y_0)\) we then find \(H:V\rightarrow X\) with the usual
  properties so that
  \begin{align*}                                                          
    H\circ\oplus :\oplus^{-1}(V)\rightarrow X  
    \end{align*}
  is sc-smooth. 
Since \(x_0\in \oplus^{-1}(V)\) and \(H\) is a local
  sc-diffeomorphism we see that \(\oplus\) is sc-smooth near \(x_0\).

We have now proved the first part of Theorem \ref{THM_imprinting_method},
  and it remains to prove properties (a) and (b).
The proof of the assertions in (a) is just the usual definition of being an
  sc-smooth map into the M-polyfold \(Y\) using the fact that the \(H\)
  are sc-diffeomorphisms, i.e. essentially charts.
Concerning the proof of (b) it is clear that the sc-smoothness of \(g\circ
  \oplus:X\rightarrow Z\) implies the sc-smoothness of \(g:Y\rightarrow Z\).
If the latter is sc-smooth we use the fact, that
  \(\oplus:X\rightarrow Y\) for the M-polyfold structure on \(Y\) is
  sc-smoothness and the assertion follows from the chain rule.

This completes the proof of Theorem \ref{THM_imprinting_method}.
\end{proof}

Next we shall proof Theorem \ref{THM_metrizability_conditions}.

\setcounter{CurrentSection}{\value{section}}
\setcounter{CurrentTheorem}{\value{theorem}}
\setcounter{section}{\value{CounterSectionMetrizabilityConditions}}
\setcounter{theorem}{\value{CounterTheoremMetrizabilityConditions}}
\begin{theorem}[metrizabliity conditions]
  \hfill\\
Let \(X\) be a M-polyfold and \(\oplus:X\rightarrow Y\) a surjective map
  onto a set.
Assume the following.
\begin{itemize}
  \item[(1)] 
  The topology \({\mathcal T}_X\) on \(X\) is second countable.
  \item[(2)] 
  The quotient topology \({\mathcal T}_\oplus\) has the following
    property.
  For each  \(W\in {\mathcal T}_\oplus\) and \(z\in W\) there exists an
    open neighborhood \(L\) of \(z\) such that \(\text{cl}_Y(L)\subset
    W\).
  \item[(3)] 
  For every \(x\in X\) there exists an open neighborhood
  \(V=V(\oplus(x))\in{\mathcal T}_\oplus\) and a map
  \(H:(V,\oplus(x))\rightarrow (X,x)\) such that
  \begin{itemize}
    \item[(a)] 
    \(\oplus\circ H=Id_{V}\).
    \item[(b)] 
    \(H\circ\oplus: \oplus^{-1}(V)\rightarrow X\) is continuous.
    \end{itemize}
  \item[(4)] 
  For every \(y\in Y\) there exists \(V=V(y)\in {\mathcal T}_\oplus\) and
  a map \(H:V\rightarrow X\) such that
  \begin{itemize}
    \item[(a)] 
    \(\oplus\circ H=Id_V\).
    \item[(b)] 
    The map \(H\circ\oplus:\oplus^{-1}(V)\rightarrow X\) is sc-smooth.
    \end{itemize}
  \end{itemize}
Then the quotient topology \({\mathcal T}_\oplus\) is metrizable and
  consequently \(\oplus:X\rightarrow Y\) is an imprinting.
\end{theorem}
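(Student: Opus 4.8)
The plan is to recognize hypotheses (1)--(3) as exactly the ingredients of the Urysohn metrization theorem, and then to read the imprinting property off condition (4). Concretely, I would show that \((Y,\mathcal{T}_\oplus)\) is second countable, \(T_1\), and regular; since a second-countable regular \(T_1\) space is metrizable, this yields the metrizability assertion, and (4) is then precisely the local-section clause in Definition \ref{DEF_imprinting_m_poly}. The observation underlying everything is that each local section is a topological embedding: given \(H:V\to X\) with \(\oplus\circ H=\Id_V\) and \(r:=H\circ\oplus\) continuous on \(U:=\oplus^{-1}(V)\), the map \(\oplus|_U:U\to V\) is a quotient map onto the open saturated set \(V\), so continuity of \(r\) forces continuity of \(H\); as \(\oplus\circ H=\Id_V\), the map \(H\) is injective with continuous inverse \(\oplus|_{H(V)}\), hence a homeomorphism of \(V\) onto a subspace of \(X\).

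First I would establish second countability. For each \(x\in X\) take the neighborhood and section from (3) and set \(U_x:=\oplus^{-1}(V(\oplus(x)))\); these cover \(X\), and since \(X\) is second countable by (1) it is Lindel\"of, so countably many \(U_{x_n}\) suffice. Applying \(\oplus\) and using surjectivity shows that the sets \(V_n:=V(\oplus(x_n))\) cover \(Y\). By the embedding observation each \(V_n\) is homeomorphic to a subspace of the second-countable space \(X\), hence is second countable; as each \(V_n\) is open in \(Y\), a union of countable bases for the \(V_n\) is a countable base for \(\mathcal{T}_\oplus\). I expect this globalization step to be the main obstacle, since the sections only give that \(Y\) is \emph{locally} homeomorphic to a subspace of \(X\), and the Lindel\"of reduction is what upgrades this to a single countable base.

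Next I would show that \(Y\) is \(T_1\) by proving that each fiber \(\oplus^{-1}(y)\) is closed in \(X\). Fix \(x_0\in\cl_X(\oplus^{-1}(y))\) and choose from (3) a neighborhood \(U_0=\oplus^{-1}(V_0)\) of \(x_0\) with injective \(H_0\) and continuous \(r_0:=H_0\circ\oplus\). Since \(U_0\) meets \(\oplus^{-1}(y)\) we get \(y\in V_0\), and injectivity of \(H_0\) gives \(r_0^{-1}(\{H_0(y)\})=\oplus^{-1}(y)\cap U_0\), which is closed in \(U_0\); because \(x_0\) lies in the \(U_0\)-closure of this set it belongs to it, so \(\oplus(x_0)=y\). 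Hence \(\oplus^{-1}(y)\) is closed in \(X\), so \(\{y\}\) is closed in the quotient topology.

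Finally, regularity is immediate, since condition (2) is precisely the statement that every point of \(Y\) has a neighborhood base of closed sets. With \(T_1\) and regularity established, \(Y\) is Hausdorff and the Urysohn metrization theorem applies, so \(\mathcal{T}_\oplus\) is metrizable. The sc-smooth sections supplied by (4) are then exactly the data required by Definition \ref{DEF_imprinting_m_poly}, and therefore \(\oplus:X\to Y\) is an imprinting.
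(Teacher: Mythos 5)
Your proof is correct, and while it shares the paper's backbone---the key observation that each local section $H$ is a homeomorphism onto its image, followed by an appeal to Urysohn's metrization theorem---it verifies the hypotheses of that theorem by a genuinely different route. For second countability the paper first proves that $\oplus$ is an open map and then pushes a countable base of $X$ forward; you instead invoke the Lindel\"of property of $X$ to extract a countable cover of $Y$ by open sets $V_n$, each second countable because it embeds into $X$, and assemble a base from these. The larger divergence is in the separation axioms: the paper proves the Hausdorff property directly and then spends the bulk of its effort on complete regularity, explicitly constructing a separating function $f:Y\to[0,1]$ from condition (2) together with the metrizability of the local charts, and checking its continuity by a sequential argument. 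You bypass all of this by observing that condition (2) is verbatim the neighborhood-base formulation of regularity, and you supply the missing $T_1$ axiom by showing each fiber $\oplus^{-1}(y)$ is closed in $X$ via the identity $r_0^{-1}(\{H_0(y)\})=\oplus^{-1}(y)\cap U_0$ (which is valid, using that $X$ is metrizable so singletons are closed, and that $\cl_X(\oplus^{-1}(y))\cap U_0$ lies in the $U_0$-closure of $\oplus^{-1}(y)\cap U_0$). Since second countable, regular, and $T_1$ is exactly the hypothesis of Urysohn's theorem, your argument is shorter and avoids constructing bump functions altogether; what the paper's version buys in exchange is the openness of $\oplus$ as an explicit by-product and the stronger conclusion of complete regularity.
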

%
\begin{proof}
\setcounter{section}{\value{CurrentSection}}
\setcounter{theorem}{\value{CurrentTheorem}}
The key assertion, which is basic for our claim, is the following 
  \begin{itemize}
    \item[(i)] 
    The maps \(H:(V,\oplus(x))\rightarrow (H(V),x)\)
    are homeomorphisms, where \(H(V)\) is equipped with the
    topology induced from \(X\).
    \end{itemize}
The proof of this is precisely as in the proof of Theorem
  \ref{THM_imprinting_method}.
An immediate consequence of this is that 
  \begin{itemize}
    \item[(ii)] 
    \(\oplus:(X,{\mathcal T}_X)\rightarrow (Y,{\mathcal T}_\oplus)\) is
      open.
    \end{itemize}
To see this let \(U\subset X\) be open and pick \(y\in \oplus(U)\). 
Pick \(x\in U\) with \(\oplus(x)=y\) and for \(V=V(y)\) take
  \(H:(V,y)\rightarrow (X,x)\).
This establishes (ii).
We now claim the following.
  \begin{itemize}
    \item[(iii)] 
    \(\mathcal{T}_\oplus\) is second-countable.
    \end{itemize}
To see this, first note that since \(H\) is a homeomorphism onto its image
  we may assume that \(V=V(y)\) is so small that \(H(V)\subset U\).
Then
  \begin{align}
    V(y) =\oplus\circ H(V(y))\subset \oplus(U).
    \end{align}
Since \(X\) is metrizable and second countable we can take a countable
  basis \({(U_i)}_{i=1}^\infty\) for \({\mathcal T}_X\) and define
  \({(O_i)}_{i=1}^{\infty} \subset {\mathcal T}_\oplus\) by
  \begin{align}
    O_i=\oplus(U_i).
  \end{align}
Then by the continuity of \(\oplus\) we see that
  \({(O_i)}_{i=1}^{\infty}\) is a basis for \({\mathcal T}_\oplus\). 
Hence \({\mathcal T}_\oplus\) is second countable.
  \begin{itemize}
    \item[(iv)]  \({\mathcal T}_\oplus\) is Hausdorff.
    \end{itemize}
For this let \(y,y'\in Y\) be two different points. 
Pick \(x,x'\in X\) with \(\oplus(x)=y\) and \(\oplus(x')=y'\). 
Since \(X\) is metrizable we can fix a metric. 
We claim that for \(\varepsilon>0\) small enough
  \begin{align}
    \oplus(B_{\varepsilon}(x))\cap \oplus(B_{\varepsilon}(x'))=\emptyset.
    \end{align}
Of course, otherwise we would conclude by continuity of \(\oplus\) that
  \(y=\oplus(x)=\oplus(x')=y'\).
Thus \(\mathcal{T}_\oplus\) is Hausdorff.
\begin{itemize}
  \item[(v)] 
  The topology \({\mathcal T}_\oplus\) is completely regular. 
  \end{itemize}
For this take a closed \(A\subset Y\) and a point \(a\not\in A\).  
Define \(B=\oplus^{-1}(A)\) and take \(b\in X\) with \(\oplus(b)=a\). 
The set \(B\) is closed.
Take an open neighborhood \(V=V(a)\) such that \(V\cap A=\emptyset\). 
We have the map \(H:(V,a)\rightarrow (X,b)\).
Then \(H:(V,a)\rightarrow (O,b)\) is a homeomorphism, where \(O=r(U)\),
  \(U=\oplus^{-1}(V)\), and \(r=H\circ\oplus :U\rightarrow U\).
We note that \(O\cap B=\emptyset\). By assumption we find an open
  neighborhood \(L\) of \(a\) such that \(\text{cl}_Y(L)\subset V\).
Since \(V\) is metrizable and homeomorphic to \(O\), we find a
  continuous map \(f':V\rightarrow [0,1]\) which on \(a\) takes the
  valued \(0\) and which on \(V\setminus L\) takes the value \(1\).
Finally we define  a map \(f:Y\rightarrow [0,1]\) as follows.
\begin{align}
  f(z)=\left[ \begin{array}{cc}
  1 & \text{for}\ z\in Y\setminus V\\
  f'(z)& \text{for}\ z\in V
  \end{array}
  \right.
  \end{align}
\begin{itemize}
  \item[(vi)] 
  The map \(f\) is continuous.
  \end{itemize}

Recall that we have shown that \(\mathcal{T}_\oplus\) is second countable.
Hence \(\mathcal{T}_\oplus\) is first countable, and hence
  \(\mathcal{T}_\oplus\) is a sequential space.
Consequently, to prove \(f\) is continuous it is sufficient to prove \(f\)
  is sequentially continuous.
To that end, let \(y\in Y\) and assume \(y_k\rightarrow y\).   
If \(y\in V\) it follows that \(f(y_k)\rightarrow f(y)\) since \(f'\)
  is continuous.
If \(y\in Y\setminus \text{cl}_Y(V)\) continuity follows trivially as
  well.
The final case is \(y\in \text{cl}_Y(V)\setminus V\).
Arguing by contradiction and perhaps taking a subsequence, we may assume 
  \begin{align}
    f(y)-\varepsilon=1-\varepsilon \geq f(y_k).
    \end{align}
This implies that \(y_k\in L\). Hence \(y\in \text{cl}_Y(L)\subset V\)
  which gives a contradiction.

Finally, using that \({\mathcal T}\) is second countable, Hausdorff, and
  completely regular,  we apply Urysohn's metrization theorem and the proof
  is complete.
\end{proof}
%

%
\subsection{Additional Results about the Imprinting Method}
The construction via the imprinting method has certain naturality
  properties.

\begin{theorem}[equivalence of composing imprintings]
  \label{THMP1.4}\label{THM_equiv_composing_imprintings}
  \hfill\\
Assume that \(X\) is a M-polyfold and \(Y\) and \(Z\) are sets and the
  maps  \(\oplus_1\) and \(\oplus_2\) in the diagram
  \begin{align}
    X\xrightarrow{\oplus_1} Y\xrightarrow{\oplus_2} Z
    \end{align}
  are surjective. 
Define \(\oplus:X\rightarrow Z\) by \(\oplus =\oplus_2\circ\oplus_1\).
Assume further that \(\oplus_1:X\rightarrow Y\) is an imprinting and
  assume that \(Y\) is equipped with the associated M-polyfold structure.
Then the following two statements are equivalent.
\begin{itemize}
  \item[(1)] 
  \(\oplus:X\rightarrow Z\) is an imprinting.
  \item[(2)] 
  \(\oplus_2:Y\rightarrow Z\) is an imprinting.
  \end{itemize}
Moreover the induced M-polyfold structures (and topology) on \(Z\) by both
  imprintings coincide.
\end{theorem}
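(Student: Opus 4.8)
The plan is to split the statement into its topological part and its smoothness part, and to reduce each to the characterizations in Theorem~\ref{THM_imprinting_method}. First I would record the purely topological fact that quotient topologies compose: for any $W\subseteq Z$ one has $\oplus^{-1}(W)=\oplus_1^{-1}\big(\oplus_2^{-1}(W)\big)$, and since $\mathcal{T}_{\oplus_1}$ is itself the quotient topology on $Y$, the set $\oplus_2^{-1}(W)$ is $\mathcal{T}_{\oplus_1}$-open precisely when $\oplus^{-1}(W)$ is open in $X$. Hence $\mathcal{T}_\oplus=\mathcal{T}_{\oplus_2}$ as topologies on $Z$, so in particular one is metrizable if and only if the other is. This simultaneously disposes of condition~(1) of Definition~\ref{DEF_imprinting_m_poly} for both $\oplus$ and $\oplus_2$, and it proves the topological half of the final ``moreover'' clause.

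For $(1)\Rightarrow(2)$, I would take a local section $H:V\to X$ of $\oplus$ furnished by the imprinting hypothesis and set $G:=\oplus_1\circ H:V\to Y$. Condition~(a) is immediate, since $\oplus_2\circ G=\oplus\circ H=\mathrm{Id}_V$. For condition~(b) I must show that $G\circ\oplus_2=\oplus_1\circ H\circ\oplus_2$ is sc-smooth on the open set $\oplus_2^{-1}(V)\subset Y$; by Theorem~\ref{THM_imprinting_method}(b) applied to the imprinting $\oplus_1$ (locally, since sc-smoothness is a local property), this is equivalent to sc-smoothness of $(G\circ\oplus_2)\circ\oplus_1=\oplus_1\circ(H\circ\oplus)$ on $\oplus^{-1}(V)$. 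The latter is a composition of the sc-smooth map $H\circ\oplus$ with the sc-smooth map $\oplus_1$, hence sc-smooth by the chain rule.

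The direction $(2)\Rightarrow(1)$ is where the real work lies. Given a local section $G:V\to Y$ of $\oplus_2$, I would first lift it into $X$: using that $\oplus_1$ is an imprinting, choose a section $H_1:V_1\to X$ of $\oplus_1$ defined near $G(z)\in Y$, and shrink $V$ so that $G(V)\subset V_1$. Here I rely on the fact, established inside the proof of Theorem~\ref{THM_imprinting_method}, that the section maps of an imprinting are homeomorphisms onto their (retract) images, so that $G$ is continuous and such shrinking is legitimate. Setting $H:=H_1\circ G:V\to X$ gives $\oplus\circ H=\oplus_2\circ(\oplus_1\circ H_1)\circ G=\oplus_2\circ G=\mathrm{Id}_V$, which is condition~(a). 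For condition~(b) I would use that $H_1:V_1\to X$ is itself sc-smooth (again by Theorem~\ref{THM_imprinting_method}(b), since $H_1\circ\oplus_1$ is sc-smooth), so that $H_1\circ(G\circ\oplus_2):\oplus_2^{-1}(V)\to X$ is sc-smooth; precomposing with the sc-smooth map $\oplus_1$ and restricting to $\oplus^{-1}(V)=\oplus_1^{-1}(\oplus_2^{-1}(V))$ yields $H\circ\oplus=\big(H_1\circ G\circ\oplus_2\big)\circ\oplus_1$, sc-smooth by the chain rule. I expect the bookkeeping of domains — keeping every composition inside the correct open preimage — together with the invocation of continuity of $G$ for the shrinking step, to be the only genuinely delicate points.

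Finally, for the coincidence of the two induced M-polyfold structures on $Z$ (under the hypothesis that the equivalent conditions hold, so both imprintings are available), I would show that the set-theoretic identity is an sc-diffeomorphism between them. Writing $Z_\oplus$ and $Z_{\oplus_2}$ for the two structures, the map $\oplus=\oplus_2\circ\oplus_1:X\to Z_{\oplus_2}$ is sc-smooth because $\oplus_1$ and $\oplus_2$ are, so Theorem~\ref{THM_imprinting_method}(b) for $\oplus$ gives that $\mathrm{Id}:Z_\oplus\to Z_{\oplus_2}$ is sc-smooth. Conversely, $\oplus_2:Y\to Z_\oplus$ is sc-smooth by Theorem~\ref{THM_imprinting_method}(b) for $\oplus_1$ (its composite with $\oplus_1$ is $\oplus$, which is sc-smooth into $Z_\oplus$), so Theorem~\ref{THM_imprinting_method}(b) for $\oplus_2$ gives that $\mathrm{Id}:Z_{\oplus_2}\to Z_\oplus$ is sc-smooth. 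Hence the two structures agree.
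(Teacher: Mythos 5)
Your proposal is correct and follows essentially the same route as the paper: the identification $\mathcal{T}_\oplus=\mathcal{T}_{\oplus_2}$ via $\oplus^{-1}(W)=\oplus_1^{-1}(\oplus_2^{-1}(W))$, the sections $\oplus_1\circ H$ and $H_1\circ G$ built by composition in the two directions, sc-smoothness checked through Theorem \ref{THM_imprinting_method}(b) and the chain rule, and the identity-map argument for the coincidence of the two structures on $Z$. Your explicit justification of the shrinking step via continuity of the section (a homeomorphism onto its retract image) is a point the paper passes over with ``by adjusting $W$ we may assume,'' but it is the same argument.
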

%
\begin{proof}
The quotient topology \({\mathcal T}_{\oplus}\) associated to \(\oplus\)
  and the quotient topology \({\mathcal T}_{\oplus_2}\) on \(Z\) associated
  to \(\oplus_2\) coincide.
To see this recall that \(Y\) is equipped with \({\mathcal
  T}_{\oplus_1}\), which by assumption is metrizable.
Let \(U\subset Z\) and note that 
  \begin{align}
    \oplus^{-1}(U) =\oplus_1^{-1}(\oplus_2^{-1}(U)).
    \end{align}
  implying that  \(\oplus^{-1}(U)\in {\mathcal T}_X\) if and only if
  \(\oplus_2^{-1}(U)\in {\mathcal T}_{\oplus_1}\).  
This shows that \({\mathcal T}_{\oplus}={\mathcal T}_{\oplus_2}\).

If (1) is an imprinting, then we find for every \(z\in Z\)
  an open neighborhood \(W=W(z)\) and a map \(H:W\rightarrow X\)
  such that \(H\circ\oplus:\oplus^{-1}(W)\rightarrow X\) is sc-smooth
  and \(\oplus\circ H= Id_{W}\).
Define \(H_2:W\rightarrow Y\) by
  \begin{align}
    H_2 = \oplus_1\circ H.
    \end{align}
Then \(\oplus_2\circ H_2=\oplus_2\circ\oplus_1\circ H=\oplus\circ H
  =Id_{W}\).
Moreover
  \begin{align}
    H_2\circ \oplus_2:\oplus_2^{-1}(W)\rightarrow Y
    \end{align}
  can be written as \(\oplus_1\circ H\circ \oplus_2\). 
By the definition of the M-polyfold structure on \(Y\) this map is
  sc-smooth precisely if \(\oplus_1\circ H\circ \oplus_2\circ\oplus_1=
  \oplus_1\circ H\circ\oplus\) defined on
  \(\oplus_1^{-1}(\oplus_2^{-1}(W))=\oplus^{-1}(W)\)
  is sc-smooth. 
The maps \(H\circ\oplus\) and \(\oplus_1\) are  sc-smooth as a consequence
  of the assumption.
Hence \(\oplus_2\) is an imprinting.

Next assume (2) holds.  
Then for each fixed \(z\in Z\) we find a map  \(H_2:W\rightarrow Y\) such
  that \(\oplus_2\circ H_2=Id_{W}\) and \(H_2\circ \oplus_2\) is
  sc-smooth.
Define \(y=H_2(z)\). 
By assumption for \(U=U(y)\) there exists \(H_1:U\rightarrow X\) with the
  obvious properties. 
By adjusting \(W\) we may assume that \(H_2(W)\subset U\).
We define 
  \begin{align*}
    &H:W\rightarrow X\\
    &H=H_1\circ H_2.
    \end{align*}
We compute \(\oplus\circ H=\oplus_2\circ\oplus_1\circ H_1\circ H_2 =
  Id_{W}\).
Further \(H\circ\oplus:\oplus^{-1}(W)\rightarrow X\) can be written as
  \begin{align}
    H_1\circ H_2\circ \oplus_2\circ \oplus_1.
    \end{align}
For the structure on \(Y\) the maps \(\oplus_1\) and \(H_1\) are sc-smooth
  and by assumption \(H_2\circ\oplus_2\) is sc-smooth.
Hence our expression is sc-smooth by the chain rule.  
At this point we have proved the equivalence of (1) and (2).

Assume that one and then both imprintings define a M-polyfold structure
  on \(Z\).
Denote \(Z\) equipped with the M-polyfold structures by \(Z_\oplus\) and
  \(Z_{\oplus_2}\).
We need to show that the identity maps \(Id:Z_\oplus\rightarrow
  Z_{\oplus_2}\) and \(Id:Z_{\oplus_2}\rightarrow Z_{\oplus}\) are
  sc-smooth.
For the first assertion we have to consider expressions \(H_2\circ Id\circ
  \oplus=H_2\circ\oplus\) and for the second assertion \(H\circ Id
  \circ\oplus_2=H\circ \oplus_2\).
We note that \(H_2\circ \oplus\)  has the form \(\oplus_1\circ
  H\circ\oplus\) which is sc-smooth by the chain rule since
  \(H\circ\oplus \) and \(\oplus_1\) are.  
The map \(H\circ\oplus_2\) is sc-smooth precisely when \(H\circ
  \oplus_2\circ\oplus_1=H\circ\oplus \) is sc-smooth, by Theorem
  \ref{THM_imprinting_method}(b).
This completes the proof.
\end{proof}

This  \(\oplus\)-method will be frequently employed in \cite{FH-poly}.
Here is another result which is  very important for our approach. 

\begin{theorem}[embeddings and induced imprintings]
  \label{THMOP10.2}\label{THM_embeddings_induced_imprintings}
  \hfill\\
Assume that $\oplus:X\rightarrow Y$ is $\oplus$-polyfold construction.
Let $X'$ be another M-polyfold and $e:X'\rightarrow X$ an
  sc-diffeomorphism onto a sub-M-polyfold, and $\phi:Y'\rightarrow Y$ an
  injection defined on the set $Y'$.
Further suppose that $\oplus':X'\rightarrow Y'$  is a surjective map and
  the data fits into the commutative diagram
  \begin{align*}                                                          
   \begin{CD}
    X @>\oplus >> Y\\
    @A e AA        @A \phi AA\\
    X'@>\oplus' >>  Y'
    \end{CD}
    \end{align*}
If we have the identity
  \begin{align*}                                                          
    e(X')=\oplus^{-1}(\phi(Y'))  
    \end{align*}
  then $\oplus'$ is a M-polyfold construction and for the
  $\oplus'$-structure on $Y'$ and the $\oplus$-structure on $Y$ the map $
  \phi:Y'\rightarrow Y$ is sc-smooth and a sc-diffeomorphism onto a
  sub-M-polyfold.
\end{theorem}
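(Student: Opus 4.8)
Write \(A := \phi(Y')\subseteq Y\) and \(B := e(X') = \oplus^{-1}(A)\); the standing hypotheses supply the commutativity \(\oplus\circ e = \phi\circ\oplus'\), the identity \(B=\oplus^{-1}(A)\), and the fact that \(B\) is a sub-M-polyfold of \(X\) with \(e:X'\to B\) an sc-diffeomorphism. The plan is to proceed in three stages: (i) identify the quotient topology \(\mathcal{T}_{\oplus'}\) with the subspace topology that \(A\) inherits from \((Y,\mathcal{T}_\oplus)\), thereby obtaining metrizability; (ii) produce local sections of \(\oplus'\) of the form \(e^{-1}\circ H\circ\phi\) to verify that \(\oplus'\) satisfies Definition \ref{DEF_imprinting_m_poly}; and (iii) use Theorem \ref{THM_imprinting_method}(b) together with a local retraction on \(Y\) to show \(\phi\) is an sc-diffeomorphism onto the sub-M-polyfold \(A\).

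For (i) the heart of the matter is the purely topological claim that for \(S\subseteq A\) one has \(S\) open in the subspace topology on \(A\) if and only if \(\oplus^{-1}(S)\) is open in \(B\). The forward implication is immediate from \(\oplus^{-1}(A\cap W)=B\cap\oplus^{-1}(W)\). For the reverse I would fix \(a\in S\) and an imprinting section \(H:V\to X\) for \(\oplus\) near \(a\), so that \(\oplus\circ H=Id_V\) and \(H:V\to O:=H(V)\) is a homeomorphism with inverse \(\oplus|_O\). The key observation is that \(H\) restricts to a homeomorphism \(V\cap A\to O\cap B\), because \(O\cap B=\{o\in O:\oplus(o)\in A\}=H(V\cap A)\); pulling the open set \(\oplus^{-1}(S)\cap(O\cap B)\) back through it yields exactly \(V\cap S\), which is therefore open in \(A\), and as \(a\) was arbitrary, \(S\) is open. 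Combining this with the identity \(\oplus^{-1}(\phi(V'))=e((\oplus')^{-1}(V'))\) (valid since \(\phi\) is injective and \(\oplus^{-1}(\phi(V'))\subseteq B\)) and the fact that \(e:X'\to B\) is a homeomorphism shows that \(\phi:(Y',\mathcal{T}_{\oplus'})\to(A,\text{subspace})\) is a homeomorphism. As \(A\subseteq Y\) is metrizable, \(\mathcal{T}_{\oplus'}\) is metrizable.

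For (ii), given \(y'\in Y'\) set \(y=\phi(y')\), take a section \(H:V\to X\) for \(\oplus\) near \(y\), put \(V':=\phi^{-1}(V)\) (open by the homeomorphism), and define \(H':=e^{-1}\circ H\circ\phi:V'\to X'\). This is well-defined because \(H(V\cap A)\subseteq O\cap B\subseteq e(X')\). A short computation from the commutativity gives \(\oplus'\circ H'=Id_{V'}\) and, more importantly, \(H'\circ\oplus'=e^{-1}\circ(H\circ\oplus)\circ e\) on \((\oplus')^{-1}(V')\). The right-hand side is a composition of sc-smooth maps: \(e\), then \(H\circ\oplus\) (which on the relevant domain has image in \(B\), so by Lemma \ref{LEM_structures_sub_poly_inherit} may be regarded as an sc-smooth map into \(B\)), then the sc-smooth \(e^{-1}\). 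Hence \(H'\circ\oplus'\) is sc-smooth, \(\oplus'\) is an imprinting, and Theorem \ref{THM_imprinting_method} equips \(Y'\) with its M-polyfold structure.

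Finally, for (iii): since \(\phi\circ\oplus'=\oplus\circ e\) is sc-smooth, Theorem \ref{THM_imprinting_method}(b) applied to \(\oplus'\) shows \(\phi:Y'\to Y\) is sc-smooth. To see that \(A\) is a sub-M-polyfold I would build, near each \(a\in A\), the map \(r:=\oplus\circ\rho\circ H\), where \(\rho\) is a local sub-M-polyfold retraction for \(B\) in \(X\); one checks \(r\) is sc-smooth, maps into \(A\), fixes \(A\) pointwise near \(a\), and after the routine shrinking of the domain (passing to the open set on which \(r\) maps back into its domain) becomes a genuine idempotent sc-smooth retraction onto a neighborhood of \(a\) in \(A\), establishing that \(A\) is a sub-M-polyfold. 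Then \(\phi:Y'\to A\) is a sc-smooth homeomorphism whose local lifts \(H'\circ\phi^{-1}=(e^{-1}\circ H)|_{V\cap A}\) are sc-smooth, so by Theorem \ref{THM_imprinting_method}(a) the inverse \(\phi^{-1}\) is sc-smooth and \(\phi\) is an sc-diffeomorphism onto the sub-M-polyfold \(A\). The main obstacle I expect is stage (i): showing that \(\mathcal{T}_{\oplus'}\) is exactly the subspace topology on \(A\) and not something strictly finer, since the naive argument yields only one containment and the reverse genuinely requires the local sections of \(\oplus\); a secondary recurring subtlety is that \(e^{-1}\) is sc-smooth only on \(B\), so every composition must be verified to land in \(B\) before \(e^{-1}\) is applied.
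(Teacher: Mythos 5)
Your proposal is correct and follows essentially the same route as the paper's proof: the identification of $\mathcal{T}_{\oplus'}$ with the subspace topology on $\phi(Y')$ via the local sections $H$ is the paper's continuity-plus-openness argument in equivalent form, the section $H'=e^{-1}\circ H\circ\phi$ with $H'\circ\oplus'=e^{-1}\circ(H\circ\oplus)\circ e$ is exactly the paper's construction, and the retraction $\oplus\circ\rho\circ H$ (with the same domain-shrinking step) is precisely how the paper exhibits $\phi(Y')$ as a sub-M-polyfold and verifies sc-smoothness of $\phi^{-1}$.
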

%
\begin{proof}
Since $\oplus:X\rightarrow Y$ is a M-polyfold construction the quotient
  topology ${\mathcal T}_{\oplus}$ is metrizable.
Equip $Y'$ with the topology ${\mathcal T}_{\oplus'}$.
We now break the proof in to several separate claims.

\noindent $\bullet$ \textbf{Claim: }For the topology ${\mathcal
  T}_{\oplus'}$ on $Y'$ the map $\phi:Y'\rightarrow Y$ is continuous.\\

Assume that $V\subset Y$ is open. 
By definition this means that the set $U=\oplus^{-1}(V)$ is open in $X$,
  which implies that $e^{-1}(U)$ is open in $X'$ by assumption.
From  $e^{-1}(U)= e^{-1}(\oplus^{-1}(V))= \oplus'^{-1}(\phi^{-1}(V))$ we
  infer that $\phi^{-1}(V)\in {\mathcal T}_{\oplus'}$.
This proves continuity.\\

\noindent $\bullet$ \textbf{Claim: } The map  $\phi$ is an open map onto
  its image.\\

For this let $B'\subset Y'$ be open and define $B=\phi(B')$.
Let  $v_0\in B$ and pick a suitable open neighborhood $V=V(v_0)$ in $Y$
  together with the map $H:V\rightarrow X$ such that $\oplus\circ H =
  Id_V$ and $H\circ \oplus:\oplus^{-1}(V)\rightarrow X$ is sc-smooth.
Consider the map $H: V\cap \phi(Y') \rightarrow X$. 
Applying $\oplus$ we obtain that
  \begin{align*}                                                          
    \oplus\circ H(V\cap \phi(Y'))= V\cap \phi(Y')  
    \end{align*}
  which implies that $H(V\cap\phi(Y')))\subset \oplus^{-1}(V\cap
  \phi(Y'))\subset \oplus^{-1}(\phi(Y'))= e(X')$. Hence we can consider
  the map
  \begin{align*}                                                          
    \Gamma:=\oplus'\circ e^{-1}\circ H :V\cap \phi(Y')\rightarrow Y'.
    \end{align*}
We note that this map is continuous (for the topology induced from
  ${\mathcal T}_{\oplus}$) by the assumption that $e:X'\rightarrow e(X')$
  is an sc-diffeomorphism onto the sub-M-polyfold $e(X')$.
Let $v'_0\in B'$ be the point with $\phi(v_0')=v_0$. 
Then
  \begin{align*}                                                          
    \Gamma(v_0)=\oplus'\circ e^{-1}\circ H(v_0) = \phi^{-1}\circ\oplus
    \circ H(v_0)= \phi^{-1}(v_0)=v_0'.
    \end{align*}
Since this map is continuous we find an open neighborhood $Q$ of $v_0$ in
  $Y$ such that $\Gamma(Q\cap \phi(Y'))\subset B'$.  This implies
  \begin{align*}                                                          
    \phi(B')& \supset \phi(\Gamma(Q\cap \phi(Y')))\\
    &=\phi \circ \oplus'\circ e^{-1}\circ H(Q\cap\phi(Y'))\\
    &=\phi \circ\phi^{-1}\circ \oplus\circ H(Q\cap\phi(Y'))\\
    &= Q\cap\phi(Y').
    \end{align*}
Hence the map $\phi$ is open onto its image. \\

\noindent $\bullet$ \textbf{Claim: }The topology ${\mathcal T}_{\oplus'}$
  is metrizable.\\

In view of the previous discussion the map $\phi:Y'\rightarrow \phi(Y)$,
  where $Y'$ is equipped with ${\mathcal T}_{\oplus'}$ and $\phi(Y)$ with
  the topology induced from ${\mathcal T}_{\oplus}$, is a homeomorphism.
Since $\phi(Y)$ is a subset of metrizable space and $\phi:Y'\rightarrow
  \phi(Y')$ a homeomorphism we deduce that $(Y',{\mathcal T}_{\oplus'})$ is
  metrizable.\\

At this point we know that $\oplus':X'\rightarrow Y'$ is a surjective map
  for which the quotient topology on $Y'$ is metrizable.
In order to show that $\oplus':X'\rightarrow Y'$ is an imprinting we need
  to find for given $y'\in Y'$ and open neighborhood $V'$ a map
  $H':V'\rightarrow X'$ such that $\oplus'\circ H'=Id_{V'}$ so that
  $H'\circ\oplus':\oplus'^{-1}(V')\rightarrow X'$ is sc-smooth.  
The verification of this statement will occupy the rest of the proof.\\

\noindent $\bullet$ \textbf{Claim: } $\oplus' :X'\rightarrow Y'$ is an
  imprinting.\\

Start with the point $y'\in Y'$ and consider $y=\phi(y')$. For this point
  we find $V=V(y)\in {\mathcal T}_{\oplus}$ and $H:V\rightarrow X$ with the
  usual properties.
Define $V':=\phi^{-1}(V)$ which belongs to ${\mathcal T}_{\oplus'}$.  
For $u'\in V'$ we consider $H\circ\phi(u')$ and show that it belongs to
  $e(X')$.  By the  theorem hypotheses $e(X')=\oplus^{-1}(\phi(Y'))$.
Hence
  \begin{align*}                                                          
    H\circ \phi(u') \in \oplus^{-1}(\oplus\circ H)\circ
    \phi(u')=\oplus^{-1}(\phi(u'))=e(X').
    \end{align*}
Consequently we can define the map
  \begin{align*}                                                          
    H':V'\rightarrow X': H':=e^{-1}\circ H\circ \phi.  
    \end{align*}
From the identity  $\oplus\circ e= \phi\circ\oplus'$ we infer on $e(X')$
  the identity $\phi^{-1}\circ \oplus = \oplus'\circ e^{-1}$.
We compute $\oplus'\circ H' = \oplus'\circ e^{-1}\circ H\circ \phi=
\phi^{-1}\circ\oplus\circ H\circ\phi =\text{Id}_{V'}$.
The other composition $H'\circ \oplus':{(\oplus')}^{-1}(V')\rightarrow X'$
  is sc-smooth.
In order to see this we compute  for $x'\in {(\oplus')}^{-1}(V')$
  \begin{align*}                                                          
    H'\circ \oplus'(x')= e^{-1}\circ H\circ \phi\circ\oplus'(x')=
    e^{-1}\circ H\circ \oplus\circ e(x'),
    \end{align*}
  which is the composition of sc-smooth maps. This shows that
  $\oplus':X'\rightarrow Y'$ is an imprinting.\\

\noindent $\bullet$ \textbf{Claim: } $\phi(Y')$ is a sub-M-polyfold of $Y$
  and $\phi:Y'\rightarrow \phi(Y')$ is a sc-diffeomorphism.\\

Equip $Y$ and $Y'$ with the M-polyfold structures from the imprinting and
  consider $\phi:Y'\rightarrow Y$.
This map is sc-smooth if and only if $\phi\circ\oplus'$ is sc-smooth. 
Since $\phi\circ\oplus'=\oplus\circ e$, where the right-hand side is
  sc-smooth this assertion follows.

Next take $y\in \phi(Y')$ and take the usual map $H:V(y)\rightarrow X$.
Define $x=H(y)$ which belongs to the sub-M-polyfold $e(X)$. We find an
  open neighborhood $W=W(x)$ in $X$ and a sc-smooth map $r:W\rightarrow W$
  such that $r\circ r=r$ and $r(W)=W\cap e(X)$.
By possibly shrinking $V=V(y)$ we may assume that
  \begin{align*}                                                            
    H(V)\subset W.  
    \end{align*}
Define $R:V\rightarrow Y$ by  $R(v) =  \oplus\circ r\circ H(v)$. 
This is an sc-smooth map for the M-polyfold structure on $Y$.
We first note that $R(V)\subset \phi(Y')$. 
Indeed for $u\in V$ we have that $r\circ H(v)\in e(X')$ implying
  $\oplus\circ r\circ H(v)\in e(X')\subset \oplus(e(X'))= \phi(Y')$.
Moreover, if $u\in V\cap \phi(Y')$, we compute using that $H(v)\in e(X')$
  \begin{align*}                                                          
    R(v) =\oplus\circ r\circ H(v) =\oplus\circ H(v)=v.  
    \end{align*}
We define  $\wt{V}=\{u\in V\ |\ R(v)\in V\}$ and note that
  $R:\wt{V}\rightarrow \wt{V}$. Hence we may assume without loss of
  generality that we already have the property
  $R:V\rightarrow V$.
Then $R$ is a sc-smooth retraction with $R(V)=V\cap e(Y')$.  
This shows that $\phi(Y')$ is a sub-M-polyfold of $Y$.  
It remains to show that $\phi^{-1}:\phi(Y')\rightarrow Y'$ is sc-smooth.  
For this it suffices to show that for every $y\in \phi(Y')$ the sc-smooth
  retraction $R$ as just constructed has the property that $\phi^{-1}\circ
  R:V\rightarrow Y'$ is sc-smooth.
We have the identity
 \begin{eqnarray*}
 \phi^{-1}\circ R(v)&=& \phi^{-1}\circ\oplus\circ r\circ H(v)\\
 &=&\oplus'\circ e^{-1}\circ r\circ H(v),
 \end{eqnarray*}
  which is a composition of sc-smooth maps. 
Hence $\phi^{-1}$ is sc-smooth.
The proof of Theorem \ref{THM_embeddings_induced_imprintings} is complete.
\end{proof}

There is an obvious corollary to the theorem.

\begin{corollary}[imprintings and sub-M-polyfolds]
  \label{CORTY2.8}\label{COR_imprtintings_sub_polyfolds}
  \hfill\\
Assume that \(\oplus:X\rightarrow Y\) is an imprinting, and that
  \(Y'\subset Y\).
If \(X':=\oplus^{-1}(Y')\) is a sub-M-polyfold of \(X\) then
  \(\oplus':X'\rightarrow Y'\) is an imprinting; here
  \(\oplus':=\oplus\big|_{X'}\).
Additionally, \(Y'\) is a sub-M-polyfold of \(Y\), and moreover the
  M-polyfold structure on \(Y'\) induced from the ambient space \(Y\)
  agrees with the M-polyfold structure on \(Y'\) induced from \(\oplus'\).
\end{corollary}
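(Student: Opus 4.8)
The plan is to deduce this directly from Theorem~\ref{THM_embeddings_induced_imprintings} by specializing the maps \(e\) and \(\phi\) appearing there to inclusions. First I would equip \(X'=\oplus^{-1}(Y')\) with the M-polyfold structure it inherits as a sub-M-polyfold of \(X\), as provided by Lemma~\ref{LEM_structures_sub_poly_inherit}; with this structure the inclusion \(e:X'\hookrightarrow X\) is by construction an sc-diffeomorphism onto the sub-M-polyfold \(e(X')=X'\subset X\). I would then let \(\phi:Y'\hookrightarrow Y\) denote the set-theoretic inclusion, which is trivially an injection, and take \(\oplus':=\oplus\big|_{X'}:X'\to Y'\), noting that this restriction is well-defined precisely because \(\oplus(X')=\oplus(\oplus^{-1}(Y'))\subset Y'\).

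Next I would verify the hypotheses of Theorem~\ref{THM_embeddings_induced_imprintings}. Commutativity of the square \(\oplus\circ e=\phi\circ\oplus'\) is immediate, since each of the four maps is either \(\oplus\) (or its restriction) or an inclusion. Surjectivity of \(\oplus'\) follows from surjectivity of \(\oplus\): given \(y'\in Y'\), any preimage \(x\in\oplus^{-1}(y')\) automatically lies in \(\oplus^{-1}(Y')=X'\), whence \(\oplus'(x)=y'\). Finally, the crucial identity \(e(X')=\oplus^{-1}(\phi(Y'))\) demanded by the theorem reduces in the present setting to \(X'=\oplus^{-1}(Y')\), which is exactly the defining property of \(X'\).

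With all hypotheses in place, Theorem~\ref{THM_embeddings_induced_imprintings} yields at once that \(\oplus'\) is an M-polyfold construction, i.e.\ an imprinting, and that \(\phi:Y'\to Y\) is sc-smooth and an sc-diffeomorphism onto a sub-M-polyfold. It then remains only to read this conclusion back through the fact that \(\phi\) is the inclusion: \(\phi(Y')=Y'\) as a subset of \(Y\), so \(Y'\) is a sub-M-polyfold of \(Y\); and the statement that \(\phi\) is an sc-diffeomorphism onto \(\phi(Y')\) says exactly that the M-polyfold structure on \(Y'\) determined by \(\oplus'\) coincides with the structure \(Y'\) inherits as a sub-M-polyfold of the ambient \(Y\).

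I do not anticipate a genuine obstacle here, which matches the result being flagged as an obvious corollary. The only point requiring care is the bookkeeping of the two candidate M-polyfold structures on \(Y'\)---the one coming from \(\oplus'\) and the one coming from the ambient imprinting \(\oplus\)---together with the observation that the theorem's assertion ``\(\phi\) is an sc-diffeomorphism onto a sub-M-polyfold,'' when \(\phi\) is literally the inclusion, is precisely the claim that these two structures agree.
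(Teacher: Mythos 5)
Your proposal is correct and is exactly the intended argument: the paper states this as an ``obvious corollary'' of Theorem~\ref{THM_embeddings_induced_imprintings} and gives no separate proof, the point being precisely to specialize $e$ and $\phi$ to the inclusions $X'=\oplus^{-1}(Y')\hookrightarrow X$ (with its sub-M-polyfold structure) and $Y'\hookrightarrow Y$, as you do. Your bookkeeping of the two structures on $Y'$ via the observation that $\phi=\mathrm{Id}$ being an sc-diffeomorphism onto $\phi(Y')$ is exactly the agreement of the two structures is the right way to read off the final assertion.
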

%

The imprinting method is well-behaved with certain operations. 

\begin{theorem}[Product]
  \label{product12.3}\label{THM_mpoly_product}
  \hfill\\
Let \(\oplus:X\rightarrow Y\) and \(\oplus':X'\rightarrow Y'\) be two
  imprintings.
Then \(\oplus\times\oplus':X\times X'\rightarrow Y\times Y'\) is an
  imprinting.
The induced M-polyfold structure on \(Y\times Y'\) is the product
  structure.
In particular the quotient topology \({\mathcal T}_{\oplus\times
  \oplus'}\) on \(Y\times Y'\) is the product topology \({\mathcal
  T}_{\oplus}\times {\mathcal T}_{\oplus'}\), which is the topology having
  as basis the products of open sets.
\end{theorem}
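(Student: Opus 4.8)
The plan is to verify the three assertions in turn: that $\boxplus:=\oplus\times\oplus'$ is an imprinting, that $\mathcal{T}_{\boxplus}$ is the product topology, and that the resulting M-polyfold structure is the product structure. Throughout write $\boxplus:X\times X'\to Y\times Y'$. Note first that $X\times X'$ is again an M-polyfold (product charts, product models $(O\times O',C\times C',E\times E')$ with retraction $r\times r'$) and is metrizable since $X$ and $X'$ are, and that $\boxplus$ is surjective because $\oplus$ and $\oplus'$ are. To produce the local sections required by Definition \ref{DEF_imprinting_m_poly}(2), fix $(y,y')\in Y\times Y'$, choose sections $H:V\to X$ and $H':V'\to X'$ from the two imprintings, and set $\mathbf{H}:=H\times H'$ on $V\times V'$. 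Since $\boxplus^{-1}(V\times V')=\oplus^{-1}(V)\times\oplus'^{-1}(V')$ is open in $X\times X'$, the set $V\times V'$ lies in $\mathcal{T}_{\boxplus}$; moreover $\boxplus\circ\mathbf{H}=\mathrm{Id}$ and $\mathbf{H}\circ\boxplus=(H\circ\oplus)\times(H'\circ\oplus')$ is a product of sc-smooth maps, hence sc-smooth. Thus condition (2) holds, and it remains only to show $\mathcal{T}_{\boxplus}$ is metrizable.

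The heart of the matter is the identity $\mathcal{T}_{\boxplus}=\mathcal{T}_{\oplus}\times\mathcal{T}_{\oplus'}$: since a product of two metrizable topologies is metrizable, this yields condition (1) and simultaneously proves the final topological claim of the theorem. I would resist arguing that a product of quotient maps is a quotient map (this is false in general point-set topology) and instead compare the two topologies locally. By the argument of Steps 1--2 in the proof of Theorem \ref{THM_imprinting_method}, applied to $\boxplus$ and $\mathbf{H}$ (using $V\times V'\in\mathcal{T}_{\boxplus}$ and sc-smoothness of $\mathbf{H}\circ\boxplus$), the map $\mathbf{H}$ is a homeomorphism from $(V\times V',\mathcal{T}_{\boxplus})$ onto $O\times O'$ carrying the subspace topology from $X\times X'$. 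On the other hand $H:V\to O$ and $H':V'\to O'$ are homeomorphisms for $\mathcal{T}_\oplus$ and $\mathcal{T}_{\oplus'}$, so the same bijection $\mathbf{H}=H\times H'$ is also a homeomorphism from $(V\times V',\mathcal{T}_\oplus\times\mathcal{T}_{\oplus'})$ onto $O\times O'$ carrying the product of the two subspace topologies, which is exactly the subspace topology of $O\times O'$ in $X\times X'$. Since $\mathbf{H}$ is one and the same bijection and the target topologies on $O\times O'$ agree, the two topologies on $V\times V'$ coincide.

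To globalize, observe that the sets $V\times V'$ are open in both $\mathcal{T}_{\boxplus}$ and $\mathcal{T}_\oplus\times\mathcal{T}_{\oplus'}$ and cover $Y\times Y'$; a routine gluing argument (a set is open iff its intersection with each $V\times V'$ is open, using that each $V\times V'$ is itself open in both topologies) promotes the local agreement to the global identity $\mathcal{T}_{\boxplus}=\mathcal{T}_\oplus\times\mathcal{T}_{\oplus'}$. This completes the verification that $\boxplus$ is an imprinting. For the M-polyfold structure I would then invoke the uniqueness clause of Theorem \ref{THM_imprinting_method}: the $\boxplus$-structure is the \emph{unique} M-polyfold structure on $Y\times Y'$ for which $\boxplus$ is sc-smooth and which admits local sc-smooth sections $\mathbf{H}$ with $\boxplus\circ\mathbf{H}=\mathrm{Id}$. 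It therefore suffices to check the product M-polyfold structure has these two properties: $\boxplus$ is sc-smooth for the product structures because $\oplus$ and $\oplus'$ are sc-smooth for theirs, and $\mathbf{H}=H\times H'$ is an sc-smooth section because each $H,H'$ is sc-smooth in the established structure. Hence the product structure coincides with the $\boxplus$-structure.

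I expect the only real obstacle to be the topological identity $\mathcal{T}_{\boxplus}=\mathcal{T}_\oplus\times\mathcal{T}_{\oplus'}$, precisely because the naive ``product of quotient maps'' reasoning is invalid; the device that circumvents it is using the local homeomorphisms $\mathbf{H}$, $H$, $H'$ to recognize both candidate topologies as the single subspace topology on $O\times O'$. Everything else — surjectivity, the section identities, sc-smoothness of products of sc-smooth maps, and the appeal to uniqueness — is routine.
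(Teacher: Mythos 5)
Your proof is correct, and for the one genuinely delicate point --- the identity \(\mathcal{T}_{\oplus\times\oplus'}=\mathcal{T}_\oplus\times\mathcal{T}_{\oplus'}\) --- you take a different route than the paper. The paper argues directly at the level of bases: the inclusion \(\mathcal{T}_{\oplus\times\oplus'}\supset\mathcal{T}_\oplus\times\mathcal{T}_{\oplus'}\) is immediate, and for the converse it fixes \(W\in\mathcal{T}_{\oplus\times\oplus'}\) and \((y,y')\in W\), shrinks \(V\) and \(V'\) until \(H(V)\times H'(V')\subset(\oplus\times\oplus')^{-1}(W)\) (using openness of the latter in \(X\times X'\) together with continuity of \(H,H'\)), and then applies \(\oplus\times\oplus'\) to conclude \(V\times V'\subset W\), exhibiting \(W\) as a union of basic product opens. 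You instead identify both candidate topologies on \(V\times V'\) with the subspace topology on the retract \(O\times O'\) via the single bijection \(H\times H'\), and then glue over the cover by the \(V\times V'\); this is valid (the homeomorphism argument from Steps 1--2 of Theorem \ref{THM_imprinting_method} indeed applies to \(\oplus\times\oplus'\) and \(H\times H'\) without presupposing metrizability of \(\mathcal{T}_{\oplus\times\oplus'}\), and the \(V\times V'\) are open in both topologies, so the gluing goes through). Both arguments ultimately rest on the same earlier fact that the local sections are homeomorphisms onto retracts; the paper's is a bit more economical, while yours is perhaps more conceptual and, importantly, you are explicit about the parts the paper leaves implicit: that metrizability of the product of two metrizable topologies closes condition (1), that \(H\times H'\) furnishes the sections for condition (2), and that the product M-polyfold structure coincides with the \(\oplus\times\oplus'\)-structure by the uniqueness clause of Theorem \ref{THM_imprinting_method}. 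You are also right to flag, as a pitfall to avoid, that a product of quotient maps need not be a quotient map in general.
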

%
\begin{proof}
The product \(\oplus\times \oplus':X\times X'\rightarrow Y\times
Y'\)\index{$\oplus\times \oplus'$} is a surjective map defined on a
  M-polyfold  with image a set.  
Let us first show that the quotient topology \({\mathcal
  T}_{\oplus\times\oplus'}\) is the product topology \({\mathcal
  T}_{\oplus}\times {\mathcal T}_{\oplus'}\).
It is clear that \({\mathcal T}_{\oplus\times\oplus'}\supset {\mathcal
  T}_{\oplus}\times {\mathcal T}_{\oplus'}\).  
Pick \(W\subset Y\times Y'\) belonging to \({\mathcal
  T}_{\oplus\times\oplus'}\) and pick \((y,y')\in W\).
There exist open neighborhoods and maps
  \begin{align}
    H:V(y)\rightarrow X\ \ \text{and}\ \ H':V(y')\rightarrow X'
    \end{align}
  with the usual properties with respect to \(\oplus\) and \(\oplus'\). 
By perhaps taking smaller \(V\) and \(V'\) we may assume that 
\begin{align}
  H(V)\times H'(V')\subset {(\oplus\times\oplus')}^{-1}(W)
  \end{align}
  since the latter expression is open for the product topology on
  \(X\times X'\).  
Then \(V\times V'\in{\mathcal T}_{\oplus}\times {\mathcal T}_{\oplus'}\)
  and we show that \(V\times V'\subset W\) which will prove that \(W\) can
  be written as a union of open product sets.

For  \((v,v')\in V\times V'\) it holds that  \((H(v),H'(v'))\in {
  (\oplus\times \oplus')}^{-1}(W)\).
Applying \(\oplus\times \oplus'\) we obtain
  \begin{align}
    (v,v')=(\oplus\times\oplus')(H(v),H'(v'))\subset
    (\oplus\times\oplus')({(\oplus\times \oplus')}^{-1}(W))= W.
    \end{align}
This completes the proof.
\end{proof}
%

\begin{theorem}[Disjoint Union]
  \label{sum12.3}\label{THM_mpoly_coproduct}
  \hfill\\
The disjoint union \(\oplus\sqcup\oplus'\) \index{$\oplus\sqcup\oplus'$}
  of two imprintings is an imprinting.
\end{theorem}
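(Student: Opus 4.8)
The plan is to verify the two conditions of Definition~\ref{DEF_imprinting_m_poly} directly for \(\oplus\sqcup\oplus':X\sqcup X'\to Y\sqcup Y'\), exploiting the fact that both source and target split into disjoint summands on which the two given imprintings act independently. First I would identify the quotient topology \(\mathcal{T}_{\oplus\sqcup\oplus'}\). For any \(W\subseteq Y\sqcup Y'\) one has \((\oplus\sqcup\oplus')^{-1}(W)=\oplus^{-1}(W\cap Y)\sqcup \oplus'^{-1}(W\cap Y')\), and this is open in \(X\sqcup X'\) if and only if both summands are open in their respective M-polyfolds. Hence \(W\in\mathcal{T}_{\oplus\sqcup\oplus'}\) exactly when \(W\cap Y\in\mathcal{T}_{\oplus}\) and \(W\cap Y'\in\mathcal{T}_{\oplus'}\); that is, \(\mathcal{T}_{\oplus\sqcup\oplus'}\) is precisely the disjoint-union topology, and in particular \(Y\) and \(Y'\) are clopen in \(Y\sqcup Y'\).

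Next I would establish metrizability, i.e.\ condition~(1). Since \(\oplus\) and \(\oplus'\) are imprintings, the spaces \((Y,\mathcal{T}_{\oplus})\) and \((Y',\mathcal{T}_{\oplus'})\) are metrizable; fix metrics \(d\), \(d'\) bounded by \(1\). One then defines a metric on \(Y\sqcup Y'\) equal to \(d\) on \(Y\times Y\), equal to \(d'\) on \(Y'\times Y'\), and equal to \(2\) for any pair of points lying in distinct summands. This metric induces exactly the disjoint-union topology identified above, so \(\mathcal{T}_{\oplus\sqcup\oplus'}\) is metrizable.

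Finally I would produce the local sections required in condition~(2). Fix \(y\in Y\sqcup Y'\); by symmetry assume \(y\in Y\). Since \(\oplus\) is an imprinting, pick \(V=V(y)\in\mathcal{T}_{\oplus}\) and \(H:V\to X\) with \(\oplus\circ H=Id_V\) and \(H\circ\oplus:\oplus^{-1}(V)\to X\) sc-smooth. As \(Y\) is clopen, \(V\) is open in \(Y\sqcup Y'\); regard \(H\) as a map into \(X\subset X\sqcup X'\). Because \(H(V)\subseteq X\), where \(\oplus\sqcup\oplus'\) restricts to \(\oplus\), we obtain \((\oplus\sqcup\oplus')\circ H=\oplus\circ H=Id_V\). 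Moreover \(V\subseteq Y\) forces \((\oplus\sqcup\oplus')^{-1}(V)=\oplus^{-1}(V)\subseteq X\), so on this set \(H\circ(\oplus\sqcup\oplus')=H\circ\oplus\), which is sc-smooth by hypothesis. This verifies~(2) and completes the argument.

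I expect no serious obstacle here: once the quotient topology is recognized as the disjoint-union topology, the remainder is essentially bookkeeping, since a neighborhood coming from one summand has its entire preimage inside that summand. The only step demanding slight care is metrizability, where one should exhibit an explicit metric inducing the disjoint-union topology rather than merely invoke closure of metrizability under finite disjoint unions.
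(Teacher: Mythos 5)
Your argument is correct and is precisely the routine verification the paper has in mind (its own proof is simply the remark that it is straightforward): the quotient topology is the disjoint-union topology, metrizability follows from the explicit two-summand metric, and the local sections $H$ from each summand serve unchanged because each $V$ has its entire preimage inside one clopen summand. Nothing further is needed.
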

%
\begin{proof}
The proof is straightforward.
\end{proof}
%

\begin{remark}
  \hfill\\
In what follows, we will only be interested in the disjoint union of an
  \emph{ordered} pair of sets.
In this way, we will define
  \begin{align*}                                                          
    A \sqcup B = \{0\}\times A \; \bigcup \; \{1\}\times B.
    \end{align*}
A downside of this definition is that, strictly speaking, \(A\sqcup B\neq
  B\sqcup A\), however the upside is that we have a rigorous definition of
  \(A\sqcup B\) without indexing the set \(\{A, B\}\), which is normally a
  requirement of the definition.
\end{remark}
%

Finally we consider the imprinting method in relationship to the
  submersion property.
\begin{definition}[imprinting-submersion]
  \label{DEF_imprinting-submersion}
  \hfill\\
Given a set \(Y\) together with M-polyfolds \(X\) and \(Z\), we say a pair
  of maps \(\oplus:X\to Y\) and \(f:Y\to Z\) is an
  \emph{imprinting-submersion}\index{imprinting-submersion}
  provided the following hold.
  \begin{align}
    \oplus: X\rightarrow Y\ \ \text{and}\ \ f:Y\rightarrow Z
    \end{align}
  so that the following holds.
\begin{itemize}
  \item[(1)]
  Each of \(\oplus\) and \(f\) are surjective.
  \item[(2)] 
  \(\oplus:X\rightarrow Y\) is an imprinting as in
  Definition \ref{DEF_imprinting_m_poly}.
  \item[(3)] 
  \(f\circ \oplus : X\rightarrow Z\) is sc-smooth and submersive as in
  Definition \ref{DEF_submersion_property}.
  \end{itemize}
\end{definition}
%

The definition is justified by the following theorem.
\begin{theorem}[imprinting-submersion yields submersions]
  \label{THmX2.15}\label{THM_imprinting_submersion_yield_submersions}
  \hfill\\
Let \(X\xrightarrow{\oplus} Y\xrightarrow{f} Z\) be an
  imprinting-submersion.
Then for the \(\oplus\)-structure on \(Y\) the map \(f:Y\rightarrow Z\)
  is sc-smooth and submersive.
\end{theorem}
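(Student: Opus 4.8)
The plan is to verify the two assertions separately, the first being immediate and the second requiring the real work. For sc-smoothness: since \(\oplus:X\to Y\) is an imprinting and \(Z\) is an M-polyfold, Theorem~\ref{THM_imprinting_method}(b) says that \(f:Y\to Z\) is sc-smooth if and only if \(f\circ\oplus:X\to Z\) is sc-smooth, and the latter holds by hypothesis~(3) of Definition~\ref{DEF_imprinting-submersion}. Surjectivity of \(f\) is hypothesis~(1). Thus it remains only to establish the submersion property of \(f\) in the sense of Definition~\ref{DEF_submersion_property}.

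To this end I would fix \((y_0,z_0)\in\mathrm{Gr}(f)\), so \(z_0=f(y_0)\). Using that \(\oplus\) is an imprinting, choose an open neighborhood \(V=V(y_0)\) and a section \(H:V\to X\) with \(\oplus\circ H=\mathrm{Id}_V\); recall from the proof of Theorem~\ref{THM_imprinting_method} that \(H:V\to H(V)\) is an sc-diffeomorphism onto an sc-smooth retract, so that \(H:V\to X\) is sc-smooth. Set \(x_0:=H(y_0)\) and note \(f(\oplus(x_0))=f(y_0)=z_0\), so \((x_0,z_0)\in\mathrm{Gr}(f\circ\oplus)\). Since \(f\circ\oplus\) is submersive, there is an open \(W\subset X\times Z\) containing \((x_0,z_0)\) and an sc-smooth retraction \(\sigma:W\to W\) of the form \(\sigma(x,z)=(\bar\sigma(x,z),z)\) with \(\sigma(W)=W\cap\mathrm{Gr}(f\circ\oplus)\). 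By continuity of \(H\) we may pick an open neighborhood \(U\) of \((y_0,z_0)\) in \(V\times Z\) with \((H(y),z)\in W\) for all \((y,z)\in U\), and define
\[
  \rho(y,z):=\big(\bar\rho(y,z),z\big),\qquad \bar\rho(y,z):=\oplus\big(\bar\sigma(H(y),z)\big).
\]
As a composition of the sc-smooth maps \((y,z)\mapsto(H(y),z)\), \(\bar\sigma\), and \(\oplus\), the map \(\rho\) is sc-smooth and already has the required form.

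I would then verify two facts. First, \(\rho(U)\subset\mathrm{Gr}(f)\): for \((y,z)\in U\) the pair \((\bar\sigma(H(y),z),z)=\sigma(H(y),z)\) lies in \(\mathrm{Gr}(f\circ\oplus)\), whence \(f(\oplus(\bar\sigma(H(y),z)))=z\), i.e. \(f(\bar\rho(y,z))=z\). Second, \(\rho\) fixes \(\mathrm{Gr}(f)\cap U\): if \(f(y)=z\) then \((H(y),z)\in\mathrm{Gr}(f\circ\oplus)\cap W=\sigma(W)\) is a fixed point of \(\sigma\), so \(\bar\sigma(H(y),z)=H(y)\) and hence \(\bar\rho(y,z)=\oplus(H(y))=y\). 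The step I expect to be the main obstacle is idempotency, and it is exactly here that the non-injectivity of \(\oplus\) (so that \(H\circ\oplus\neq\mathrm{Id}\)) must be handled with care: one cannot simply cancel an \(H\circ\oplus\) appearing inside \(\rho\circ\rho\). Instead, idempotency follows by combining the two facts above — \(\rho(y,z)\) lands in \(\mathrm{Gr}(f)\), and \(\rho\) fixes such points — once the composition is defined.

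Finally I would deal with the domain bookkeeping, exactly as in the proofs of Propositions~\ref{PROP_fibered_products_projection_submersions} and~\ref{PROP_fibers_submersions_are_sub_m_polyfolds}. Set \(\widetilde U:=\{(y,z)\in U:\rho(y,z)\in U\}\), which is open by continuity of \(\rho\) and contains \((y_0,z_0)\) since that point lies in \(\mathrm{Gr}(f)\cap U\) and is therefore fixed by \(\rho\). For \((y,z)\in\widetilde U\) we have \(\rho(y,z)\in\mathrm{Gr}(f)\cap U\), so \(\rho(\rho(y,z))=\rho(y,z)\); in particular \(\rho(\widetilde U)\subset\widetilde U\), \(\rho\circ\rho=\rho\) on \(\widetilde U\), and \(\rho(\widetilde U)=\widetilde U\cap\mathrm{Gr}(f)\). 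This exhibits the retraction required by Definition~\ref{DEF_submersion_property} and completes the verification that \(f:Y\to Z\) is submersive.
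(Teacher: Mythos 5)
Your proposal is correct and follows essentially the same route as the paper: both lift via \(H\), invoke the submersion property of \(f\circ\oplus\) upstairs, and push the resulting retraction down by the very same formula \((y,z)\mapsto\big(\oplus(\bar\sigma(H(y),z)),z\big)\). The only difference is organizational — the paper carves out its neighborhood \(\mathcal{U}\) by three explicit conditions and verifies idempotency by direct computation using \(\bar f\circ\tau=\bar f\), whereas you deduce idempotency formally from ``the image lies in \(\mathrm{Gr}(f)\)'' together with ``points of \(\mathrm{Gr}(f)\) are fixed'' and then shrink the domain afterwards; both are valid and your bookkeeping is, if anything, a little cleaner.
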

%
\begin{proof}
Consider \(y_0\in Y\) and define \(z_0=f(y_0)\).  
We find an open neighborhood \(V(y)\in {\mathcal T}_\oplus\) and
  \(H:V\rightarrow X\) such that \(\oplus\circ H =Id_V\) and
  \(H\circ\oplus: \oplus^{-1}(V)\rightarrow X\) is sc-smooth.
We define \(U:=\oplus^{-1}(V)\) and  know that the image of
  \(\tau:=H\circ\oplus:U\rightarrow X\) is contained in \(U\).
Hence 
  \begin{align}
    \tau:U\rightarrow U\ \ \text{is sc-smooth and}\ \  \tau=\tau\circ\tau.
    \end{align}
Define the sc-smooth retract \(O=\tau(U)\).  
Then \(\oplus\big|_O:O\rightarrow V\) is a sc-diffeomorphism inverse to
  \(H:V\rightarrow O\).
Define \(x_0:=H(y_0)\) and \(\bar{f}:=f\circ\oplus\). 
For \(x\in U\) we compute
  \begin{align}
    \bar{f}(\tau(x))=f\circ\oplus\circ H\circ \oplus(x)=f\circ\oplus(x)=
    \bar{f}(x),
    \end{align}
  which shows that \(\bar{f}\circ \tau=\bar{f}\) on \(U\).

Since by assumption  \(\bar{f}: X\rightarrow Z\) is sc-smooth and
  submersive and the point \((x_0,z_0)\in X\times Z\) belongs to
  \(\text{Gr}(\bar{f})\) we find an open neighborhood  \(W=W(x_0,z_0)\subset
  X\times Z\) and a sc-smooth map \(\rho:W\rightarrow W\) of the form
  \(\rho(x,z)=(\bar{\rho}(x,z),z)\) such that \(\rho\circ\rho=\rho\) and
  \(\rho(W) = W\cap \text{Gr}(\bar{f})\).

Define an open neighborhood \({\mathcal U}\) of \((y_0,z_0)\) in \(Y\times
  Z\) by the following requirements
  \begin{itemize}
    \item[(1)] \(y\in V\) and \((H(y),z)\in W\).
    \item[(2)] \(\bar{\rho}(H(y),z)\in U \).
    \item[(3)] \((\tau (\bar{\rho}(H(y),z)),z)\in W\).
    \end{itemize}
By the continuity of the maps involved the set \({\mathcal U}\) is open
  and one easily checks that \((y_0,z_0)\in {\mathcal U}\). 
Indeed \((H(y_0),z_0)=(x_0,z_0)\in W\) which shows (1). 
Since \(\bar{f}(H(y_0)) =\bar{f}(x_0)= f(\oplus(x_0))= f(y_0)=z_0\) it
  follows that \(\bar{\rho}(H(y_0),z_0)=H(y_0)\) showing (2).
Since 
  \begin{align}
   \tau(\bar{\rho}(H(y_0),z_0))=\tau(\bar{\rho}(x_0,z_0))=\tau(x_0)=H\circ
   \oplus(x_0)= H(y_0)=x_0
   \end{align} 
  it follows that \((\tau(\bar{\rho}(H(y_0),z_0)),z_0)\in W\). 

If \((x,z)\in W\) with \(\bar{f}(x)=z\) it follows that  \((x,z)\in W\cap
  \text{Gr}(\bar{f})\) and therefore \(\bar{\rho}(x,z)=x\).
If \((y,z)\in {\mathcal U}\) it follows that \((H(y),z)\in W\) so that
  \(\rho(H(y),z)\) is defined and belongs to \(W\).
Moreover, in view of (2) we see that \(\oplus\circ\bar{\rho}(H(y),z)\) is
  defined and belongs to \(V\). Hence we obtain the sc-smooth map
  \begin{align*}
    &\delta:{\mathcal U}\rightarrow V\times Z\\
    &\delta(y,z)= (\oplus(\bar{\rho}(H(y),z)),z),
    \end{align*}
  which can be written as \(\delta(y,z)=(\bar{\delta}(y,z),z)\). 
We now aim to show that \(\delta:\mathcal{U}\to \mathcal{U}\subset V\times
  Z\)
Define \((y_1,z_1)=\delta(y,z)\) and note that \(z_1=z\) and
  \(y_1=\oplus(\bar{\rho}(H(y),z))\).
By construction \(y_1\in V\) and we see that
  \((H(y_1),z)=(\tau(\bar{\rho}(H(y),z)),z) \in W\) in view of (3) which
  implies property (1) for \((y_1,z)\), i.e.
  \begin{align}
    (H(y_1),z)\in W.
    \end{align}
With \(y_1= \oplus(\bar{\rho}(H(y),z))\) applying \(f\) we obtain 
  \begin{align}\label{EQ_bar_f_stuff}
    \bar{f}(H(y_1))= f(y_1)= \bar{f}(\bar{\rho}(H(y),z))=z,
    \end{align}
  where the final equality follows from the fact that
  \begin{align*}                                                          
    \rho(x,z) = (\bar{\rho}(x, z), z) \in W\cap {\rm Gr}(\bar{f}).
    \end{align*}
Since we have already shown that \((H(y_1), z)\in W\), it then follows
  from equation (\ref{EQ_bar_f_stuff}) that \((H(y_1), z)\in W\cap {\rm
  Gr}(\bar{f})\).
Hence 
  \begin{align}
    \bar{\rho}(H(y_1),z)=H(y_1)\in U,
    \end{align}
  which shows (2). 
Finally using again that  \(\bar{f}(H(y_1))=z\) it follows that
  \((\bar{\rho}(H(y_1),z),z)=(H(y_1),z)\in W\).
Further
  \begin{align}
    \tau(\bar{\rho}(H(y_1),z))=\tau(H(y_1))= H\circ \oplus\circ H(y_1)
    =H(y_1),
    \end{align}
from which we deduce that \((\tau(\bar{\rho}(H(y_1),z)),z)\in W\) proving
  (3).

Thus we have shown that \(\delta:{\mathcal U}\rightarrow {\mathcal U}\).
Moreover 
  \begin{eqnarray}
    \delta\circ \delta (y,z)& =&\delta(\oplus\circ\bar{\rho}(H(y),z),z)\\
    &=& (\oplus\circ \bar{\rho}(H\circ \oplus\circ
    \bar{\rho}(H(y),z),z),z)\nonumber\\
    &=& (\oplus\circ \bar{\rho}(\tau(\bar{\rho}(H(y),z)),z),z).\nonumber
    \end{eqnarray}
The element \((\tau(\bar{\rho}(H(y),z)),z)\) belongs to the graph of
  \(\bar{f}\).
Indeed, we compute 
  \begin{align}
    \bar{f}(\tau(\bar{\rho}(H(y),z)))\notag
    &=f\circ \oplus\circ \tau(\bar{\rho}(H(y),z)))\notag\\
    &=f\circ \oplus(\bar{\rho}(H(y),z)))\notag\\
    &=z\label{EQ_f_bar_z}.
    \end{align}
Hence we obtain the equality
  \begin{align}
    \delta\circ \delta(y,z) = (\oplus\circ \tau (\bar{\rho}(H(y),z)),z)
    =(\oplus(\bar{\rho}(H(y),z)),z)=\delta(y,z).
    \end{align}
If \((y,z)\in {\mathcal U}\) belongs to \(\text{Gr}(f)\) it follows that
  \(z=f(y)= f(\oplus\circ H(y))=\bar{f}(H(y))\) so that
  \(\bar{\rho}(H(y),z)=H(y)\) implying that \(\delta(y,z)=(\oplus\circ
  H(y),z)=(y,z)\).
Hence \(\delta\) fixes \({\mathcal U}\cap \text{Gr}(f)\). 
Given any \((y,z)\in {\mathcal U}\) we define \((y_1,z):=\delta(y,z)\). 
We compute
  \begin{align}
    f(y_1) = f\circ \oplus (\bar{\rho}(H(y),z))=\bar{f}
    (\bar{\rho}(H(y),z)) = z,
    \end{align}
  where the final equality follows from equation (\ref{EQ_f_bar_z}).
Thus we have shown that \((y_1,z)\in {\mathcal U}\cap \text{Gr}(f)\).
Summarizing, we have shown that 
  \begin{align*}                                                          
    \delta:\mathcal{U}\to \mathcal{U}\cap {\rm Gr}(f) \qquad \text{and}
    \qquad \delta\big|_{\mathcal{U}\cap {\rm Gr}(f)} = Id,
    \end{align*}
  and hence \(f\) is submersive.
This completes the proof of Theorem
  \ref{THM_imprinting_submersion_yield_submersions}.
\end{proof}

A type of situation which arises quite frequently in applications is the
  following.
We are given a smooth manifold with boundary and corners \(V\), a
  ssc-manifold \(X\) and a set \(Y\) fitting into the commutative diagram
  \begin{align}
    \begin{CD}
    V\times X   @> \oplus >>  Y\\
    @V P VV        @V p VV\\
    V  @=    V,
    \end{CD}
    \end{align}
  where \(P(v,x)=v\), and we would like to show that \(p:Y\to V\) is
  submersive.
As a consequence of Theorem
  \ref{THM_imprinting_submersion_yield_submersions}, to show that \(V\times
  X\xrightarrow{\oplus} Y\xrightarrow {p} V\) is submersive it is sufficient
  to show that \(p\circ\oplus \) is sc-smooth which is
  obviously the case since \(P=p\circ\oplus\).
Moreover we need \(P:V\times X\rightarrow V\) to be submersive, which
  follows from Proposition \ref{PROP_natural_projection_submersive}.
%
%

The final result of this section is concerned with imprintings and the
  question of tameness.
First however, the following elementary result will be useful.

\begin{exercise} 
  \hfill\\
Assume that \(X\) is a tame M-polyfold and \(V\) a smooth
  finite-dimensional manifold with boundary with corners, and suppose that
  \(p:X\rightarrow V\) is a sc-smooth submersive map. 
Prove that \(p\) is an imprinting.
\end{exercise}
%

\begin{theorem}[tame imprintings and manifold submersions]
  \label{THMXX-2-12}\label{THM_tame_imprintings_manifold_submersions}
  \hfill\\
Assume that \(X\) is a tame M-polyfold and \(V\) a smooth
  finite-dimensional manifold with boundary with corners. 
Suppose that \(p:X\rightarrow V\) is a sc-smooth submersive map 
  and assume that the equality \(d_V(p(x))=d_X(x)\) holds for all \(x\in
  X\) and \(\oplus:X\rightarrow Y\) is an imprinting and
  \(p':Y\rightarrow V\) a surjective map fitting into the commutative
  diagram
  \begin{align}
    \begin{CD}
    X  @> \oplus >> Y\\
    @V p VV   @V p' VV\\
    V @= V
    \end{CD}
    \end{align}
Then the following are true.
\begin{enumerate}                                                         
  \item the induced M-polyfold structure on \(Y\) is tame, 
  \item \(p'\) is sc-smooth and submersive,
  \item \(d_V(p'(y))=d_Y(y)\) for all \(y\in Y\).
  \end{enumerate}
\end{theorem}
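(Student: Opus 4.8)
The plan is to establish the three assertions in the order (2), (3), (1), since each is used in the next.

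\smallskip
\noindent\textbf{Assertion (2).} First I would recognize the data $X\xrightarrow{\oplus}Y\xrightarrow{p'}V$ as an imprinting-submersion in the sense of Definition \ref{DEF_imprinting-submersion}: indeed $\oplus$ and $p'$ are surjective, $\oplus$ is an imprinting by hypothesis, and $p'\circ\oplus=p$ is sc-smooth and submersive by hypothesis. Theorem \ref{THM_imprinting_submersion_yield_submersions} then applies verbatim and yields that $p':Y\to V$ is sc-smooth and submersive for the $\oplus$-structure. No further work is needed here.

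\smallskip
\noindent\textbf{Assertion (3).} With $p'$ now known to be submersive onto the manifold-with-corners $V$, Corollary \ref{COR_degen_inequality_submersions_manifolds} immediately gives $d_Y(y)\ge d_V(p'(y))$ for all $y\in Y$. For the reverse inequality I would use the explicit local model produced in the proof of Theorem \ref{THM_imprinting_method}. Fix $y\in Y$ and take a neighborhood $\Omega=\Omega(y)$ with section $H:\Omega\to X$, so that $O:=H(\Omega)=r(U)$ (where $r=H\circ\oplus$ and $U=\oplus^{-1}(\Omega)$) is the sc-smooth retract serving as the local model of $Y$ and $H:\Omega\to O$ is an sc-diffeomorphism. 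Writing $x:=H(y)$, Proposition \ref{PROP_sc_diffeo_preserves_degen_ind} gives $d_Y(y)=d_O(x)$, Proposition \ref{PROP_degen_ind_ineq_sub_m_poly} gives $d_O(x)\le d_X(x)$ since $O$ is a sub-M-polyfold of $X$, the hypothesis gives $d_X(x)=d_V(p(x))$, and $p(x)=p'(\oplus(H(y)))=p'(y)$ by $\oplus\circ H=\mathrm{Id}$ and commutativity. Chaining these,
\[
  d_Y(y)=d_O(x)\le d_X(x)=d_V(p(x))=d_V(p'(y)),
\]
which together with the previous inequality yields the desired equality.

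\smallskip
\noindent\textbf{Assertion (1).} Tameness is the main obstacle. The equalities just obtained pin down $d_O(x)=d_X(x)$ for every $x$ in every local retract $O$ as above (every point of $O$ is of the form $H(y)$), and the plan is to upgrade each such $O$ to a \emph{tame} retract, so that $Y$ acquires a tame atlas and the uniqueness recorded in Theorem \ref{THM_imprinting_method} finishes the claim. Since tameness is local and $X$ is tame, I would first pass to a tame chart of $X$ around $x=H(y)$, transporting $O$ to a sub-retract of a tame model $(O_X,C,E)$; by Proposition \ref{PROP_eq_of_degen_index} the ambient degeneracy index there equals $d_C$. It then remains to verify the two conditions of Definition \ref{DEF_tame_sc_retraction} for a retraction with image $O$. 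Condition (1), preservation of $d_C$, should follow from the degeneracy equality $d_O=d_X|_O=d_C|_O$ together with the corresponding property of the ambient tame retraction of $X$. The genuine difficulty is condition (2): at a smooth point $x\in O$ one must produce an sc-subspace $A$ with $E=T_xO\oplus A$ and $A\subset E_x$. Tameness of $X$ supplies a complement of $T_xO_X$ lying in $E_x$, so the issue reduces to choosing the finite-dimensional complement of $T_xO$ inside $T_xO_X$ so that it too lies in $E_x$. I expect this to be exactly where the submersion $p$ and the matching $d_V(p(x))=d_X(x)$ are indispensable: the finite-dimensional slices $\Sigma$ furnished by Proposition \ref{PROP_submersions_implied_diffeo} for $p$ are manifolds-with-corners meeting the quadrant neatly, and it is this neatness that should force $T_xO$ to be tamely positioned relative to $E_x$. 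Carrying out this last splitting carefully is the crux of the argument.
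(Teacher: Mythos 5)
Your treatment of assertions (2) and (3) matches the paper's: (2) is exactly an application of Theorem \ref{THM_imprinting_submersion_yield_submersions}, and your chain $d_Y(y)=d_O(x)\le d_X(x)=d_V(p(x))=d_V(p'(y))$ combined with Corollary \ref{COR_degen_inequality_submersions_manifolds} is the same squeeze the paper performs (the paper also records, via $p\circ\tau=p$, that the local retraction $\tau=H\circ\oplus$ preserves $d_X$, a fact it reuses later). Up to this point the proposal is correct and follows essentially the same route.

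For assertion (1), however, there is a genuine gap: you correctly isolate condition (2) of Definition \ref{DEF_tame_sc_retraction} as the crux — producing, at a smooth point $o_0$ of the local retract $R$, an sc-complement $A$ of $T_{o_0}R$ with $A\subset E_{o_0}$ — but you then only state the expectation that the submersion and the hypothesis $d_V(p(x))=d_X(x)$ "should force" the right position of $T_{o_0}R$, and explicitly defer "carrying out this last splitting." That deferred step is the majority of the paper's proof and it does not follow from neatness of the slices alone. Concretely, the paper (i) builds the local section $f(v)=\bar\rho(o_0,v)$ of $p'$ furnished by the submersion, (ii) proves by a difference-quotient limit argument that $Df(v_0)$ maps the corner quadrant into the quadrant with $d_{\wt{C}}(Df(v_0)h)\ge d_{\bar{C}}(h)$, and proves the reverse inequality for $Dp(o_0)$, so that $Dp(o_0)\circ Df(v_0)=\operatorname{Id}$ forces exact degeneracy preservation; (iii) deduces from the resulting structure of the vectors $Df(v_0)e^i$ that $(Df(v_0)\mathbb{R}^d)+E_{o_0}=E$; and (iv) takes $A=(\operatorname{Id}-D\varepsilon(o_0))E$ from the canonical splitting $E=T_{o_0}R\oplus(\operatorname{Id}-D\varepsilon(o_0))E$ and shows $A\subset E_{o_0}$ by proving $D\varepsilon(o_0)E_{o_0}\subset E_{o_0}$, which in turn uses that the composed retraction $\varepsilon=\psi\circ\tau\circ\psi^{-1}\circ\gamma$ preserves $d_C$ along curves $o_0+tk$ with $k\in E_{o_0}$. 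Note in particular that the complement is \emph{not} chosen inside a tame complement for the ambient retract $P$ as your sketch suggests; it is extracted from the differential of $\varepsilon$ itself, and the whole point of steps (i)--(iii) is to show that this particular complement lands in $E_{o_0}$. Without these computations the tameness claim is unproven.
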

%
\begin{proof}
We already know that \(p':Y\rightarrow V\) is submersive, where \(Y\) is
  equipped with the M-polyfold structure induced from the imprinting
  \(\oplus\); see Theorem
  \ref{THM_imprinting_submersion_yield_submersions}.
We also know that by Corollary
  \ref{PROP_degeneracy_inequality_submersions} that the following
  inequality holds.
  \begin{align}
    d_Y(y)\geq d_V(p'(y))\ \ \text{for all}\ y\in Y.
    \end{align}
Pick \(y_0\in Y\) and define \(v_0=p'(y_0)\). 
We find \(H:U(y_0)\rightarrow X\) with the usual properties and define the
  open subset  \(Q= \oplus^{-1}(U(y_0))\) of \(X\) the sc-smooth retraction
  \begin{align}
    \tau: Q\rightarrow Q : \tau:=H\circ \oplus.
    \end{align}
We observe that for \(x\in Q\) we have the identity \(p\circ\tau=p\big|_Q\)
  which follows from the calculation
  \begin{align}
    p\circ\tau(x)= p'\circ \oplus \circ H\circ\oplus(x) = p'\circ
    \oplus(x)=p(x).
    \end{align}
For \(x\in Q\) we therefore compute using \(p\circ \tau= p\big|_Q\) that
  \begin{eqnarray}\label{REQUEST2.6}\label{EQ_degeneracy_equals}
    d_X(x) = d_V(p(x)) = d_V\big(p(\tau(x))\big) =   d_X(\tau(x));
    \end{eqnarray}
  note that the first and last equalities each follow from the hypotheses
  that \(d_V(p(x))=d_X(x)\) for all \(x\in X\).
In other words, in \(Q\subset X\) the retraction \(\tau\) preserves the
  degeneracy index on \(X\).
Define the sc-smooth retract \(O:=\tau(Q)\subset X\).  
Then 
  \begin{eqnarray}\label{meqn2.7}\label{EQ_H_diffeo}
    H:U(y_0)\rightarrow O\ \ \text{and}\ \ \oplus\big|_O:O\rightarrow U(y_0)
    \end{eqnarray}
  are  sc-diffeomorphisms, inverse to each other.  
Consequently, we find that for \(x\in X\) we have
  \begin{align*}
    d_X(x)&\geq d_O(x) &\text{by Proposition \ref{PROP_degen_ind_ineq_sub_m_poly}}
    \\
    &= d_Y(\oplus(x)) &\text{by Proposition \ref{PROP_sc_diffeo_preserves_degen_ind}}
    \\
    &\geq d_V(p'(\oplus(x))) &\text{by Corollary \ref{COR_degen_inequality_submersions_manifolds}}
    \\
    &= d_V(p(x)) &\text{because }p'\circ \oplus = p
    \\
    &=d_X(x), &\text{by equation \ref{EQ_degeneracy_equals}}
    \end{align*}
  which implies that all listed expressions are equal. 
Since the choice of \(y_0\in Y\) was arbitrary we have proved the
  following facts:
  \begin{itemize}
    \item[(1)] 
    \(d_Y(y)=d_V(p'(y))\ \ \text{for all}\ y\in Y.\)
    \item[(2)] 
    \(d_{V}(p(x))=d_X(x)=d_Y(\oplus(x))\) for all \(x\in X.\)
    \end{itemize}
With these facts established, we return to  (\ref{EQ_H_diffeo}), and
  consider \(\tau:Q\rightarrow Q\), \(H\), \(O\), and \(\oplus\big|_O\),
  where \(Q\) is the open neighborhood in \(X\) given by
  \(\oplus^{-1}(U(y_0))\).
Next, we fix \(x\in H(U(y_0)) = \tau(Q)\), and let \(Q'\subset Q\) be a
  open neighborhood of \(x\) for which their exists a chart \(\psi:Q'\to P\)
  with \((P,C, E)\) a tame sc-smooth retract.
Without loss of generality, we shall adjust the \(Q'\) to be smaller if
  needed.
Since \((P,C,E)\) is a tame sc-smooth retract we can pick an
  sc-smooth retraction
  \begin{eqnarray}
    \gamma:W\rightarrow W,
    \end{eqnarray}
  where \(W\subset C\) is open in \(C\) such that \(\gamma(W)=P\) and
  \(d_C(\gamma(w))=d_C(w)\) for \(w\in W\).
Recall that since \((P,C,E)\) is tame, there is an additional property our
  retraction can be chosen to have ( regarding sc-complements of \(T_oP\) at
  smooth points), but we shall not state it presently.
The basic result about tame retractions gives us the
  following nontrivial equality, see Proposition
  \ref{PROP_eq_of_degen_index}
  \begin{eqnarray}
    d_P(x)=d_C(x)\ \text{for all}\ x\in P.
    \end{eqnarray}
  We define \( \varepsilon:W\rightarrow W\) by 
  \begin{eqnarray}\label{EQ_eps}
    \varepsilon:= \psi \circ  \tau  \circ     \psi^{-1}\circ \gamma
    \end{eqnarray}
We note that \(\varepsilon\circ\varepsilon=\varepsilon\)  and
  \(\varepsilon(W) = \psi(O) \).
The data we produced so far fits into the following commutative diagram
  \begin{align}
    \begin{CD}
    Y @.\supset U(y_0) @>   \psi\circ H>>  \varepsilon(W)@.\subset W\\
     @ VV p' V      @.  @. @   V  p\circ\psi^{-1} VV\\
    V @. @=    @. V
    \end{CD}
    \end{align}
  where \(\psi\circ H\) is a sc-diffeomorphism and  \(\psi:Q'\rightarrow
  P\) maps the retract \(O\subset Q'\) sc-diffeomorphically onto the
  retract \(\epsilon(W)\).
Hence, we have the tame retract \((P,C,E)\) with associated sc-smooth
  retraction \(\tau:W\rightarrow W\) satisfying \(\tau(W)=P\) and for
  \(\varepsilon:W\rightarrow W\) with \(R= \varepsilon(W)\) we obtain the
  sc-smooth  retract \((R,C,E)\), where we note that \(R\subset P\).
We shall show that \(\varepsilon\) is an sc-smooth tame retraction.

We first show that \(\varepsilon:W\rightarrow W\) preserves \(d_C\). 
Indeed, using that \((P,C,W)\) is tame, \(\varepsilon(w)\in R\subset P\)
  and using (\ref{EQ_degeneracy_equals}) 
  \begin{align*}
    d_C(\varepsilon(w))&= d_P(\varepsilon(w)) &\text{by Proposition
    \ref{PROP_eq_of_degen_index}}
    \\
    &= d_Q( \tau  \circ \psi^{-1}\circ \gamma(w))&\text{by Proposition
    \ref{PROP_sc_diffeo_preserves_degen_ind}}
    \\
    &= d_X( \tau  \circ     \psi^{-1}\circ \gamma(w))
    \\
    &=d_X(\psi^{-1}\circ\gamma(w)) &\text{by equation
    (\ref{EQ_degeneracy_equals})}
    \\
    &= d_Q(\psi^{-1}\circ\gamma(w))
    \\
    &=d_P(\gamma(w))&\text{by Proposition
    \ref{PROP_sc_diffeo_preserves_degen_ind}}
    \\
    &= d_C(\gamma(w)) &\text{by Proposition \ref{PROP_eq_of_degen_index}}
    \\
    &= d_C(w) &\text{ by tameness}.
    \end{align*}
Hence we have shown that the sc-smooth retraction
  \(\varepsilon:W\rightarrow W\) preserves \(d_C\).
In particular, 
  \begin{align*}                                                          
    d_v(p\circ \oplus(x)) = d_v(p(x)) = d_X(x).
    \end{align*}
In order that \((R,C,E)\) is a tame retract we need to verify the
  additional property which distinguishes a tame retract.
The submersion property of \(p\) and \(p'\), respectively,  will be
  crucial for this argument.
We have the tame retract \((P,C,E)\) and the retract \((R,C,E)\) where the
  latter has the properties that \(R\subset P\) and that it admits
  a sc-smooth retraction \(\varepsilon:W\rightarrow W\) preserving \(d_C\).

The following considerations being local we assume that \(E={\mathbb
  R}^d\oplus W\) and \(C=[0,\infty)^d\oplus W\). 

For each \(o\in C\) we let  \(E_o\) denote the subspace introduced in
  Definition \ref{DEF_Ex}.
Let  \(o\in R\) be a smooth point and consider the tangent space \(T_oR\).
We need to find a sc-complement  \(A\) contained in \(E_o\)  satisfying
  \(A\oplus T_oR=E\).

The consideration being local we may consider the following diagram
  \begin{align}
    \begin{CD}
    P  @> \varepsilon >> R\\
    @V p VV    @ V p' VV\\
    V@= V,
    \end{CD}
    \end{align}
  where \(V\) is an open neighborhood of \(0\in [0,\infty)^d\)  and \(p\)
  and \(p'\) are submersive sc-smooth maps.
We have the already established property that 
  \begin{align}
    d_C(o)=d_P(o)= d_V(p'(o))= d_R(o)\ \ \text{for}\ o\in R.
    \end{align}
Pick a smooth \(o_0\in R\) and define \(v_0=p'(o_0)\).
Since \(p'\) is submersive we find an open neighborhood \(W\) of
  \((o_0,v_0)\in R\times [0,\infty)^d\) and an associated sc-smooth
  retraction \(\rho:W\rightarrow W\) of the usual form
  \(\rho(o,v)=(\bar{\rho}(o,v),v)\) such that \(\rho(W) =W\cap
  \text{Gr}(p')\).
For a suitable open neighborhood \(V'\) of \(v_o\in [0,\infty)^d\) we
  consider the map
  \begin{align*}
    &f:V'\rightarrow R\\
    &f(v)=\bar{\rho}(o_0,v).
    \end{align*}
As was shown in the proof of Proposition
  \ref{PROP_submersions_implied_diffeo}, the image \(\Sigma=f(V')\) is a
  submanifold of \(R\), and \(f:V'\to f(V')\) is a diffeomorphism.
We obtain the equalities
  \begin{align}
    d_{[0,\infty)^d}(v) = d_{[0,\infty)^d}(p'(f(v)))= d_R(f(v))=
    d_C(f(v)),
    \end{align}
  and we shall study \(f\) near \(v_0\).  
Since \(p'\circ f(v)=v\) for \(v\in V'\) we see that the differential
  \(Df(v_0)\) of \(f\) at \(v_0\) is a  injective  sc-operator
  \begin{eqnarray}
    Df(v_0):{\mathbb R}^d\rightarrow E.
    \end{eqnarray}
However, we can say more about \(Df(v_0)\). 
With \(o_0=f(v_0)\) we can write \(o_0=(r^0_1,...,r^0_d,w_0)\), where
  \(w_0\) is smooth.
With \(\bar{d}=d_C(f(v_0))\) we may assume without loss of generality that
  \(r^0_1=..=r^0_{\bar{d}}=0\) and \(0 <r^0_{i}\) for \(i\in
  \{\bar{d}+1,..,d\}\). Since \(d_V(v_0)=d_C(o_0)\)
  we may also assume without loss of generality that
  \(v_0=(v_1^0,...,v^0_d)\) satisfies \(v^0_1=...=v^0_{\bar{d}}=0\) and
  \(v^0_i>0\) for \(i\in \{\bar{d}_1,....d\}\).

For the following we shall introduce the partial quadrants \(\bar{C}:=
  [0,\infty)^{\bar{d}}\times{\mathbb R}^{d-\bar{d}}\) and
  \(\wt{C}=[0,\infty)^{\bar{d}}\oplus {\mathbb R}^{d-\bar{d}}\oplus W\).  
We take \(h\in \bar{C}=[0,\infty)^{\bar{d}}\times {\mathbb R}^{d-\bar{d}}\)
  and observe that for \(0<t\) small \(v_0+th\in [0,\infty)^d\).  
We see that the degeneracy index \(d_{\bar{C}}\) gives us 
  \begin{align}
    d_{\bar{C}}(v_0+th) = \sharp\{i\in \{1,...,\bar{d}\}\ |\ h_i=0\}.
    \end{align}
Then  \(f(v_0+th)\in C\).
We also observe that for the point \(f(v_0+th)\in C\), the coordinates
  with indices  \(i\in \{\bar{d}+1,d\}\) must be positive if \(|t|\) is
  small enough.
Hence for \(0<t\) small
  \begin{align}
    f(v_0+th)-f(v_0)\in \wt{C}.
    \end{align}
Moreover we compute using that \(f\) preserve the degeneracy indices
  (\(d_V\rightarrow d_C\))
  \begin{eqnarray}
    d_{\wt{C}}(f(v_0+th)-f(v_0)) = d_{\wt{C}}(h) = d_{\bar{C}}(h).
    \end{eqnarray}
Since for \(t>0\) 
  \begin{align*}
    d_{\wt{C}}(f(v_0+th)-f(v_0)) =
    d_{\wt{C}}\left(\frac{f(v_0+th)-f(v_0)}{t}\right)
    \end{align*}
  we find taking the limit  
  \begin{align*}
    d_{\wt{C}}(Df(v_0)h)\geq \text{lim}_{t\rightarrow }
    d_{\wt{C}}(f(v_0+th)-f(v_0)),
    \end{align*}
  which implies
  \begin{eqnarray} \label{RESULt1}\label{EQ_result_one}
    d_{\bar{C}}(h) =d_{\wt{C}}(h)\leq d_{\wt{C}}(Df(v_0)h)\ \text{for all}\
    h\in \wt{C}.
    \end{eqnarray}
Here we used the fact that a nonzero vector \(u_0\) in any partial
  quadrant \(D\) has an open neighborhood \(U(u_0)\) in this partial
  quadrant for which \(d_D(u)\leq d_D(u_0)\) for all \(u\in U(u_0)\).

Next consider  the smooth point \(o_0\in R\subset P\) which satisfies
  \(p(o_0)=p'(o_0)=v_0\).   We know that  \(d_C(o_0)=d_V(v_0)\).
We carry out the previous consideration for \(p:P\rightarrow
  [0,\infty)^d\).
Again we use that \(o_0=(0,...,0,r^0_{\bar{d}+1},..,r^0_d,w_0)\), where
  \(r^0_i>0\) for \(i\in \{\bar{d}+1,...,d\}\) and
  \(v_0=(0,...,0,v^0_{\bar{d}+1},..,v^0_d)\), where \(v^0_i>0\) for \(i\in
  \{\bar{d}_1,..,d\}\). 
We take a smooth \(k\in C\) with \(k_i\geq 0\) for \(i\in
  \{1,...,\bar{d}\}\).
Then \(o_0+tk\in \wt{C}\) for \(0<t\) small and \(p(o_0+tk) -p(o_0)\in
  \bar{C}\).
By the same limit argument as before and using that \(p\) preserves
  \((d_C,d_{[0,\infty)^d})\) we obtain  the following inequality first  for
  smooth \(k\) from which we conclude it holds for all \(k\in \wt{C}\).
  \begin{eqnarray}\label{RESULt2}\label{EQ_result_two}
    d_{\wt{C}}(k) \leq d_{\bar{C}}(Dp(o_0)(k))\ \text{for}\ \ k\in \wt{C}.
    \end{eqnarray}
Since \(p\circ f(v)=v\) we find that \(Dp(o_0)\circ Df(v_0)=
  \text{Id}_{{\mathbb R}^d}\).
Combining (\ref{RESULt1}) and (\ref{RESULt2}) we obtain
  \begin{align}
    d_{\bar{C}}(h)=d_{\bar{C}}(Dp(o_0)\circ Df(v_0)(h))\geq  d_{\wt{C}}
    (Df(v_0)h)\geq d_{\bar{C}}(h)
    \end{align}
  implying that all terms are equal. Summarizing we obtain 
  \begin{itemize}
    \item[(1)] 
    \(Df(v_0):{\mathbb R}^d\rightarrow C\) is injective and maps
    \(\bar{C}\) into \(\wt{C}\).
    \item[(2)] 
    \(d_{\wt{C}} (Df(v_0)h)= d_{\bar{C}}(h)\) for all \(h\in \bar{C}\).
    \end{itemize}
In order to show that \((R,C,E)\) is a tame retract we need to show that
  \(T_{o_0}R\) has a sc-complement in \(E_{o_0}= \{0\}^{\bar{d}}\times
  {\mathbb R}^{d-\bar{d}}\times W\).

Let us first show that the image of \(Df(v_0)\) together with \(E_{o_0}\)
  generates \(E\) in the sense
  \begin{eqnarray}\label{GOalP1}\label{EQ_goal_one}
    (Df(o_0){\mathbb R}^d)+ E_{o_0} =E.
    \end{eqnarray}
Denote by \(e^1,...,e^{\bar{d}}\) the first  \(\bar{d}\)-many standard
  basis vectors in \({\mathbb R}^d\).
We define for \(i\in \{1,...,\bar{d}\}\) the vector \(b^i=Df(v_0)e^i\).
Using that \(Df(v_0)\) is \((d_{\bar{C}},d_{\wt{C}})\)-preserving we see
  that all the coordinates of \(b^j\) with \(i\in \{1,..,\bar{d}\}\) are
  vanishing with the exception of one which has to be positive.  
Hence given a vector of the form \(((c_1,....,c_{\bar{d}},0,...,0),0)\in
  E\) there exists \(h\in {\mathbb R}^d\) such that
  \begin{align}
    Df(v_0)(h)-((c_1,....,c_{\bar{d}},0,...,0),0)
    \end{align}
  has the first \(\bar{d}\) components vanishing. 
However, this precisely means that the above element belongs to \(E_{o_0}\).  
We have established the intermediate step (\ref{GOalP1}).
In order to complete the proof we note that we have the obvious
  sc-decomposition
  \begin{align}
    E= (D\varepsilon(o_0)E)\oplus (Id-D\varepsilon(o_0))E = T_{o_0}R\oplus
    (Id-D\varepsilon(o_0))E,
    \end{align}
  where \(\varepsilon\) is provided in equation (\ref{EQ_eps}). 
We claim that \((Id-D\varepsilon(o_0)E\subset E_{o_0}\), which would
  complete the proof.
We have proved already that  \(E= (Df(v_0){\mathbb R}^d)+ E_{o_0}\) and
  know that \(Df(o_0)({\mathbb R}^d)\subset T_{o_0}R\).
Applying \(Id-D\varepsilon(o_0)\)  to the equality
  \begin{align}
    (D\varepsilon(o_0)E)\oplus (Id-D\varepsilon(o_0))E= (Df(o_0){\mathbb
    R}^d)+ E_{o_0}
    \end{align}
  we obtain
  \begin{align}
    (Id-D\varepsilon(o_0))(E) = (Id-D\varepsilon(o_0))(E_{o_0}).
    \end{align}
Hence it suffices to prove that \(D\varepsilon(o_0)(E_{o_0})\subset
  E_{o_0}\).
By assumption \(\varepsilon(o_0)=o_0\) and 
  \begin{align}
    o_0= (0,...,0,r^0_{\bar{d}+1},..,r^0_{d},w_0).
    \end{align}
Pick \(k=(0,...,0,k_{\bar{d}+1},..,k_d,w)\in E_{o_0}\).  
Then for small \(|t|\), we have \(o_0+tk\in C\) and 
  \begin{eqnarray}\label{inXXput-1}
    d_C(\varepsilon(o_0+tk))=d_C(o_0+tk)=d_C(o_0). 
    \end{eqnarray}
Since for \(|t|\) small the components of \(\varepsilon(o_0+tk)\) numbered
  \(\bar{d}+1,...d\) have to be positive by continuity, it follows
  from(\ref{inXXput-1}) that the components numbered \(1,..,\bar{d}\) have
  to vanish.
However this implies that \(\varepsilon(o_0+tk)\) for \(|t|\) belongs to
  \(E_{o_0}\). 
Consequently we have proved that
  \begin{align}
    D\varepsilon(o_0)E_{o_0}\subset E_{o_0}.
    \end{align}
The proof is complete.
\end{proof}
%

%
\section{Operations and the Imprinting Method}
  \label{Operations-oplus}\label{SEC_operations_and_imprinting}
We introduce ideas and results which are very useful in establishing
  a building bloc system which allows analytical results to be recycled.  
This even extends to the sc-Fredholm theory which will be discussed later.

%
\subsection{Operations}\label{qsec3.1} \label{SEC_operations}
Using the previously established results we can take the {\bf product}
  \(\oplus\times\oplus'\) and the {\bf disjoint union}
  \(\oplus\sqcup\oplus'\) of two given imprintings.
Consider an imprinting \(\oplus:X\to Y\), and an
  injective map \(\phi:Y'\to Y\).
Suppose that \(X'=\oplus^{-1}(\phi(Y'))\) is a sub-M-polyfold of \(X\).
Then the following diagram is commutative, and moreover by Theorem 3.12
  the map \(\oplus':=\phi^{-1}\circ \oplus \big|_{X'}\) is an imprinting.
  \begin{align}
    \begin{CD}
    X @>\oplus >>  Y\\
    @A\text{incl} AA   @A\phi AA\\
    X' @> \phi^{-1}\circ \oplus|X' >>   Y'
    \end{CD}
    \end{align}

\begin{definition}[admissible maps]
  \label{DEF_admissible_map}
  \hfill\\
Given an imprinting  \(\oplus:X\rightarrow Y\) and an injective
  map between sets \(\phi:Y'\rightarrow Y\), we say that \(\phi\) is
  \emph{admissible} provided
  \begin{align}
    X':=\oplus^{-1}(\phi(Y))
    \end{align}
  is a sub-M-polyfold of \(X\). In this case we define the {\bf pull-back}
  \(\phi^\ast\oplus \) by
  \begin{align*}
    &\phi^*\oplus: \oplus^{-1}(\phi(Y'))\rightarrow Y'\\
    &\phi^*\oplus=\phi^{-1}\circ\oplus\big|_{(\oplus^{-1}(\phi(Y')))}
    \end{align*}
\end{definition}
%
The following is an easy exercise.

\begin{lemma}[composition of admissible maps]
  \label{LEM_composition_admissible_maps}
  \hfill\\
Assume that \(\oplus:X\rightarrow Y\) is an imprinting and
  \(\phi:Y'\rightarrow Y\) is admissible so that
  \(\oplus'=\phi^{\ast}\oplus\)\index{$\oplus'=\phi^{\ast}\oplus$} is
  defined.
Suppose further \(\psi:Y''\rightarrow Y'\) is admissible for \(\oplus'\)
  defining \(\psi^\ast\oplus'\).
Then \(\phi\circ\psi\) is admissible for \(\oplus\) and naturally
  \((\phi\circ\psi)^\ast\oplus = \psi^\ast(\phi^\ast\oplus)\).
\end{lemma}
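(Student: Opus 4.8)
The plan is to split the statement into two essentially independent pieces: a set-theoretic bookkeeping identity, and the transitivity of the sub-M-polyfold relation, the latter being the only genuinely nontrivial point. First I would record that \(\phi\circ\psi\) is injective as a composite of injections, and then unwind all the relevant preimages. Writing \(X'=\oplus^{-1}(\phi(Y'))\) and \(\oplus'=\phi^{-1}\circ\oplus|_{X'}\), and using that \(\phi\) restricts to a bijection \(Y'\to\phi(Y')\) while \(\oplus(x)\in\phi(Y')\) for every \(x\in X'\), one sees that for \(x\in X'\) the condition \(\oplus'(x)\in\psi(Y'')\) is equivalent to \(\oplus(x)\in\phi(\psi(Y''))\). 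Since \(\phi(\psi(Y''))\subset\phi(Y')\) forces \(\oplus^{-1}(\phi(\psi(Y'')))\subset X'\), this produces the key identity
\[
(\oplus')^{-1}(\psi(Y'')) = \oplus^{-1}\big(\phi(\psi(Y''))\big) = \oplus^{-1}\big((\phi\circ\psi)(Y'')\big) =: X''.
\]
Thus the three candidate domains all coincide with a single set \(X''\).

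Admissibility of \(\psi\) for \(\oplus'\) says precisely that \(X''=(\oplus')^{-1}(\psi(Y''))\) is a sub-M-polyfold of \(X'\), while admissibility of \(\phi\) for \(\oplus\) says \(X'\) is a sub-M-polyfold of \(X\). Hence, to show \(\phi\circ\psi\) is admissible, i.e. that \(X''=\oplus^{-1}((\phi\circ\psi)(Y''))\) is a sub-M-polyfold of \(X\), it suffices to prove transitivity of the sub-M-polyfold relation, and I expect this to be the main obstacle. To carry it out, fix \(a\in X''\) and, via Definition \ref{DEF_sub_m_polyfold}, pick an open \(U\subset X\) with an sc-smooth retraction \(r:U\to U\), \(r(U)=U\cap X'\), together with an open \(V\subset X'\) and an sc-smooth retraction \(s:V\to V\), \(s(V)=V\cap X''\). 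Using that the M-polyfold topology of \(X'\) is the subspace topology from \(X\), write \(V=X'\cap W\) with \(W\subset U\) open in \(X\), and set \(\Omega=\{x\in U : r(x)\in W\}\), an open neighborhood of \(a\) (note \(r(a)=a\in W\)). On \(\Omega\) define \(\tilde r:=s\circ r\); since Lemma \ref{LEM_structures_sub_poly_inherit} guarantees that the corestriction \(r:U\to X'\) and the inclusion \(X'\hookrightarrow X\) are sc-smooth, \(\tilde r:\Omega\to X\) is sc-smooth by the chain rule. The routine verifications—that \(\tilde r\) maps \(\Omega\) into \(\Omega\), that \(\tilde r\circ\tilde r=\tilde r\) (using \(s\circ s=s\) and that \(r\) fixes \(U\cap X'\)), and that \(\tilde r(\Omega)=\Omega\cap X''\)—then exhibit \(X''\) as a sub-M-polyfold of \(X\), so \(\phi\circ\psi\) is admissible.

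Finally, the identity \((\phi\circ\psi)^\ast\oplus=\psi^\ast(\phi^\ast\oplus)\) is a direct computation on the common domain \(X''\) established above. Since \(\oplus'|_{X''}=(\phi^{-1}\circ\oplus)|_{X''}\) because \(X''\subset X'\), I would simply compute
\[
\psi^\ast(\phi^\ast\oplus) = \psi^{-1}\circ\oplus'|_{X''} = \psi^{-1}\circ\phi^{-1}\circ\oplus|_{X''} = (\phi\circ\psi)^{-1}\circ\oplus|_{X''} = (\phi\circ\psi)^\ast\oplus,
\]
which, together with the equality of domains from the first step, completes the argument. The whole difficulty is thus concentrated in the transitivity step; everything else is formal.
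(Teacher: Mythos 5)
Your proof is correct. The paper gives no argument for this lemma (it is dismissed as ``an easy exercise''), and your decomposition --- the set identity \((\oplus')^{-1}(\psi(Y''))=\oplus^{-1}((\phi\circ\psi)(Y''))\), transitivity of the sub-M-polyfold relation via the composite retraction \(s\circ r\) on \(\Omega=\{x\in U: r(x)\in W\}\), and the formal computation for the pull-back identity --- is exactly the intended argument, with the sc-smoothness of \(s\circ r\) correctly justified through Lemma \ref{LEM_structures_sub_poly_inherit} and the chain rule.
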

%
These are some basic operations which one can carry out to stay within the
  scope of imprintings. 
The playing field can be vastly extended by adding what we call
  restriction maps. 
This is done in the next subsection.

%
\subsection{Restrictions}  \label{qsec3.2}\label{SEC_restrictions}
We start with a definition adding an additional piece of structure to the
  imprinting method.

\begin{definition}[imprinting with restrictions]
  \label{DEF_imprinting_with_restrictions}
  \hfill\\
An \(\oplus\)-\emph{construction with restriction} is a pair
  \((\oplus,\bm{p})\), \index{\((\oplus,\bm{p})\)} where
  \(\oplus:X\rightarrow Y\) is a M-polyfold construction by the
  \(\oplus\)-method, and \(\bm{p}\) is a finite family of maps
 \(p_i:Y\rightarrow A_i\), \(i\in I\), where the \(A_i\) are M-polyfolds
 and the compositions  \(p_i\circ\oplus:X\rightarrow A_i\) are sc-smooth.
\end{definition}
%
We can use \(\oplus :{\mathbb B}\times E\rightarrow
  X^{3,\delta_0}_{\varphi}\) to construct an example.
For a nonzero \(a\in {\mathbb B}\) we have that \(\varphi(|a|)>51\).
Define \(\Sigma^+=[0,R]\times S^1\) and \(\Sigma^-=[-R,0]\times S^1\). 
We have natural inclusions
  \begin{eqnarray*}
    \Sigma^+\xrightarrow{i^+_a} Z_a\xleftarrow{i_a^-} \Sigma^-
    \end{eqnarray*}
  which for \(a=0\) are  obvious and for \(a\neq 0\) are given by
  \(i^+_a(s,t) =\{(s,t),(s',t')\}_a\) and
  \(i^-_a(s',t')=\{(s,t),(s',t')\}_a\).
Denote by 
  \begin{align}
    A^\pm= H^3(\Sigma^\pm,{\mathbb R}^N)
    \end{align}
  the sc-Hilbert spaces where level \(m\) corresponds to regularity
  \(H^{3+m}\).
We obtain the restriction maps
  \begin{align}
    A^+\xleftarrow{p^+} X^{3,\delta}_{\varphi}\xrightarrow{p^-} A^-.
    \end{align}

\begin{proposition}[cylinder gluing is an imprinting with restrictions]
  \label{PROP_cylinder_gluing_is_imprinting_w_rest}
  \hfill\\
The pair \((\oplus,\bm{p})\), where \(\oplus:{\mathbb B}\times
  E\rightarrow X^{3,\delta}_{\varphi}\) is gluing and \(\bm{p}=\{p^x,p^y\}\)
  is a \(\oplus\)-construction with restriction.
\end{proposition}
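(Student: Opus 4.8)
The plan is to check the two defining conditions from Definition \ref{DEF_imprinting_with_restrictions}. That \(\oplus:{\mathbb B}\times E\to X^{3,\delta}_\varphi\) is an imprinting is precisely the content of Theorem \ref{THM_oplus_is_imprinting}, and the targets \(A^\pm=H^3(\Sigma^\pm,{\mathbb R}^N)\) are sc-Hilbert spaces and hence M-polyfolds; thus the only substantive task is to prove that each composition \(p^\pm\circ\oplus:{\mathbb B}\times E\to A^\pm\) is sc-smooth.

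First I would exploit the uniform lower bound on the gluing length: since \(|a|<\frac{1}{4}\) we have \(R=\varphi(|a|)=e^{1/|a|}-e>e^4-e>51\) for every nonzero \(a\). As the cut-off profile \(\beta\) is locally constant outside \([-1,1]\) and transitions about \(|s|=R/2>25\), on any fixed finite collar \([0,L]\times S^1\) with \(L\leq 24\) the defining formula for the glued map gives \(\beta(|s|-R/2)=1\) and \(\beta(|s'|-R/2)=0\) simultaneously, uniformly in \(a\). Hence on this collar \(\oplus(a,u)\) agrees identically with \(u_x\), with no dependence on \(a\) whatsoever (and symmetrically \(\oplus(a,u)\) agrees with \(u_y\) on the corresponding collar at the negative end, which is what the maps \(p^x\) and \(p^y\) record). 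I would therefore factor \(p^x\circ\oplus\) as the product projection \({\mathbb B}\times E\to E\), followed by the coordinate projection \((u_x,u_y)\mapsto u_x\), followed by the restriction \(H^{3,\delta}_c({\mathbb R}^+\times S^1)\to A^+\) onto the collar.

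Each of these three maps is a bounded linear operator respecting the sc-filtration level by level, hence an sc-operator, and a linear sc-operator is automatically sc-smooth; the sc chain rule (Theorem \ref{thm_sc_chain_rule}) then yields sc-smoothness of \(p^x\circ\oplus\), and the argument for \(p^y\circ\oplus\) is identical. The one estimate I would still carry out is the boundedness of the restriction operator at each level: writing \(u_x=c^++r^+\), the affine constant \(c^+\) restricts to a constant function on the \emph{compact} cylinder \(\Sigma^+\) (and so lies in every \(H^{3+m}(\Sigma^+)\)), while \(r^+\) restricts via the obvious bounded restriction to the subdomain \(\Sigma^+\) (the weight \(\delta_m\) only improving the estimate), so the level-\(m\) map \(H^{3+m,\delta_m}_c\to H^{3+m}(\Sigma^+)\) is bounded.

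The main obstacle is conceptual rather than technical: one must ensure the buffer zone persists uniformly as \(a\to 0\), where \(R\to\infty\), and that it is consistent with the degenerate case \(\oplus(0,u)=u\) — both of which follow from the bound \(R>51\) and the explicit collar choice above. Once the restricted glued map has been identified with the \(a\)-independent restriction of \(u_x\) (resp.\ \(u_y\)), no genuine gluing analysis is needed; it is only if one instead restricts over the full \(a\)-dependent cylinder \([0,R]\times S^1\) that one would have to invoke the sc-smoothness of the shift and gluing operations established in the proof of Theorem \ref{THM_oplus_is_imprinting} and \cite{HWZ8.7}.
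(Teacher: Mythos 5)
The paper states this proposition without proof (it is one of the ``building block'' facts deferred to \cite{FH-poly}), so there is no argument of the authors' to compare yours against; judged on its own, your proposal is essentially complete and correct under the natural reading of the setup. The three things to check --- that \(\oplus\) is an imprinting, that \(A^\pm\) are M-polyfolds, and that \(p^\pm\circ\oplus\) is sc-smooth --- are all addressed, and your key observation is the right one: since \(|a|<\tfrac14\) forces \(R=\varphi(|a|)>e^4-e>51\), hence \(R/2>25.5\), on a fixed collar \([0,L]\times S^1\) with \(L\le 24\) one has \(\beta(|s|-R/2)=1\) and \(\beta(|s'|-R/2)=0\), so \(p^+\circ\oplus(a,u)=u_x\big|_{\Sigma^+}\) identically in \(a\) (and consistently with the case \(a=0\)). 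This exhibits \(p^\pm\circ\oplus\) as a composition of level-wise bounded linear sc-operators (projection off \(\mathbb{B}\), projection onto one factor of \(E\), restriction to a compact collar, with the affine constant handled as you describe), and linear sc-operators are sc-smooth, so the chain rule finishes the argument. This is exactly the mechanism that makes the restrictions useful for plumbing later: the fibered-product condition becomes linear in \((u,u')\).

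The one point you should make explicit is that the paper's own definition of \(\Sigma^\pm\) is garbled: it writes \(\Sigma^+=[0,R]\times S^1\), which cannot literally mean \(R=\varphi(|a|)\) since the restriction targets \(A^\pm\) must be fixed sc-Hilbert spaces. Your reading (a fixed collar short enough to sit inside the region where the cut-off is inert, which the bound \(\varphi(|a|)>51\) guarantees) is the sensible one, but it does require \(L\lesssim R_{\min}/2-1\approx 24\); if the intended collar were instead of length close to \(50\) (which the figure \(51\) could also be read as accommodating, merely ensuring the two collars embed disjointly in every \(Z_a\)), then the transition region of \(\beta\) would meet the collar, the factorization through an \(a\)-independent linear map would fail, and one would need the sc-smoothness of the shift and gluing operations from \cite{HWZ8.7}. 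You flag this alternative yourself in your final paragraph, which is the correct fallback, but as written your proof is conditional on the short-collar interpretation and you should state that dependence up front rather than only at the end.
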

%
This can be viewed as one of the building-block pieces.  
Shortly we shall discuss the important fact that \(\bm{p}\) is compatible
  with the submersive \(p_{\mathbb B}\).
The following definition will be crucial for fibered product
  constructions.

\begin{definition}[plumable]
  \label{DEF_plumable}
  \hfill\\
Assume that \((\oplus,\bm{p})\) and \((\oplus',\bm{p}')\) are
  imprintings with restrictions, and \(i_0\in I\) and \(i_0'\in
  I'\) are given so  that \(A_{i_0}=A_{i_0'}'\).
We say that \((\oplus,\bm{p})\) and \((\oplus',\bm{p}')\) are
  \((i_0,i_0')\)-\emph{plumbable} provided the subset
  \begin{align}
    X{_{i_0}\times_{i_0'}} X':=\{(x,x')\in X\times X'\ |\
    p_{i_0}\circ\oplus(x)=p_{i_0}'\circ\oplus'(x')\}
    \end{align}
  of \(X\times X'\) is a sub-M-polyfold.  
\end{definition}
%

\begin{remark}
There is an obvious generalization, where instead of \(A_{i_0}=A_{i_0'}'\)
  we just assume that we are given an sc-diffeomorphism
  \begin{align}
    \sigma:A_{i_0}\rightarrow A'_{i_0'}.
    \end{align}
\end{remark}
%

Define  \(Y{_{i_0}\times_{i_0'}} Y'=\{(y,y')\in Y\times Y'\ |\
  p_{i_0}(y)=p_{i_0}'(y')\}\)  and denote by
  \begin{align}
    \phi: Y{_{i_0}\times_{i_0'}} Y'\rightarrow Y\times Y'
    \end{align}
  the inclusion map.  
Take the product \(\oplus\)-construction \(\oplus\times\oplus'\) and
  observe that
  \begin{align}
    (\oplus\times\oplus')^{-1}(\phi( Y{_{i_0}\times_{i_0'}} Y')) =
    X{_{i_0}\times_{i_0'}} X'
    \end{align}
  which by assumption is a sub-M-polyfold.   
Next we aim to define restrictions \(\mathbf{p}''\) for the disjoint union.
To that end, we first define a new index set \(I\) via the following.
  \begin{align*}                                                          
    I'' = \{1\}\times (I\setminus \{i_0\})\; \; \bigcup \; \; \{2\}\times
    (I'\setminus \{i_0'\})
    \end{align*}
Next, for each \(i'' = (i_1'', i_2'') \in I''\), we define
  \begin{align*}                                                          
    p_{i''}'' = 
    \begin{cases}
    p_{i_2''}\circ \pi_1  &\text{if }i_1'' = 1
    \\
    p_{i_2''}' \circ \pi_2 &\text{if }i_1'' = 2
    \end{cases}
    \end{align*}
  where \(\pi_1\) and \(\pi_2\) are the projections from the fibered
  product onto the first and second factor respectively, and hence we
  can define the indexed set of restrictions as follows.
  \begin{align*}                                                          
    \mathbf{p}'' = \{p_{i''}''\}_{i''\in I''}
    \end{align*}

\begin{definition}[plumbing]
  \label{DEF_plumbing}
  \hfill\\
If \((\oplus,\bm{p})\) and \((\oplus',\bm{p}')\) are
  \((i_0,i_0')\)-{plumbable}\index{plumbable} we define the
  imprinting with restriction
  \((\oplus,\bm{p}){_{i_0}\times_{i_0'}}(\oplus',\bm{p}')\) where
  \begin{align}
    \oplus{_{i_0}\times_{i_0'}}\oplus' : =\phi^{\ast}(\oplus\times\oplus')
    \end{align}
  and call it the \((i_0,i_0')\)-\emph{plumbing}\index{plumbing} of
  \((\oplus,\bm{p})\) and \((\oplus',\bm{p}')\).
\end{definition}
%

\begin{remark}
  \hfill\\
Assume that we have three imprintings with restriction, say
  \((\oplus,\bm{p})\), \((\oplus',\bm{p}')\) and \((\oplus'',\bm{p}'')\).
Let \(i_0\in I\), \(i_0', i_1'\in I'\), \(i'_1\neq i'_0\),  and
  \(i''_1\in I''\).
Suppose that \(A_{i_0}=A'_{i_0'}\), \(A'_{i'_1}=A''_{i_1''}\) so that we
  obtain the following diagram
  \begin{align}
    \begin{CD}
    Y@> p_{i_0}>> A_{i_0}\equiv 'A_{i'_0}@< p'_{i_0'}<< Y' @> p'_{i'_1}>>
    A'_{i_1'}\equiv A''_{i''_1} @< p''_{i''_0} <<  Y''
    \end{CD}
    \end{align}
Assume that \((\oplus,\bm{p})\) and \((\oplus',\bm{p}')\) are
  \((i_0,i'_0)\)-plumbable and \((\oplus',\bm{p}')\) and
  \((\oplus'',\bm{p}'')\) are \((i'_1,i_1'')\)-plumbable.
In this case  one can show that 
  \begin{itemize}
  \item[]
  \((\oplus,\bm{p}){_{i_0}\times_{i_0'}}(\oplus',\bm{p}')\) and
  \((\oplus'',\bm{p}'')\) are \((i'_1,i_1'')\)-plumbable.
  \item[]  
  \((\oplus,\bm{p})\) and
  \((\oplus',\bm{p}'){_{i'_1}\times_{i_1''}}(\oplus'',\bm{p}'')\) are
  \((i_0,i_0')\)-plumbable.
  \end{itemize}
Considering either way we obtain the same sets and in fact the same spaces
  due to the fact that the fibered product operation is well-behaved, see
  the next subsection.
\end{remark}
%
For imprintings with restrictions \((\oplus,\bm{p})\) and
  \((\oplus',\bm{p}')\) we can define first the disjoint union
  \(\oplus\sqcup \oplus'\).
For a pair \((i,i')\in I\times I'\) we define \(p''_{(i,i')}: Y\sqcup Y'\)
  by \(p''_{(i,i')}(y)=p_i(y)\) for \(y\in Y\) and
  \(p''_{(i,i')}(y')=p_{i'}'(y')\). 
Then we call \((\oplus \sqcup,\oplus',\bm{p}'')\) the disjoint union of
  \((\oplus,\bm{p})\) and \((\oplus',\bm{p}')\) and write it as
  \((\oplus,\bm{p})\sqcup(\oplus',\bm{p}')\).
The following is obvious.

\begin{theorem}[disjoint union of imprintings]
  \label{THM_disjoint_union_of_imprintings}
  \hfill\\
The disjoint union of two imprintings with restrictions is
  an imprinting with restrictions.
\end{theorem}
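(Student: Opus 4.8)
The plan is to verify directly that the pair \((\oplus\sqcup\oplus',\bm{p}'')\) satisfies the two requirements of Definition \ref{DEF_imprinting_with_restrictions}. Writing \(\oplus:X\to Y\) and \(\oplus':X'\to Y'\) for the two given imprintings with restrictions \((\oplus,\bm{p})\) and \((\oplus',\bm{p}')\), the first requirement is that \(\oplus\sqcup\oplus':X\sqcup X'\to Y\sqcup Y'\) be an imprinting; but this is precisely the content of Theorem \ref{THM_mpoly_coproduct}, so nothing new is needed there.

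It therefore remains only to check that each of the restriction maps \(p''_{(i,i')}\), indexed by \((i,i')\in I\times I'\), has an M-polyfold target and composes with \(\oplus\sqcup\oplus'\) to give an sc-smooth map. For the target, observe that \(p''_{(i,i')}\) takes values in \(A_i\) on \(Y\) and in \(A'_{i'}\) on \(Y'\), so its natural codomain is the disjoint union \(A_i\sqcup A'_{i'}\); since a disjoint union of M-polyfolds is again an M-polyfold (its atlas being the union of the two atlases), this codomain is an M-polyfold as required.

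For the sc-smoothness of the composition \(p''_{(i,i')}\circ(\oplus\sqcup\oplus')\), I would use the fact that sc-smoothness is a purely local property together with the coproduct structure of \(X\sqcup X'\). Concretely, \(X\) and \(X'\) sit inside \(X\sqcup X'\) as disjoint open-and-closed pieces, so a map out of \(X\sqcup X'\) is sc-smooth if and only if its restrictions to \(X\) and to \(X'\) are each sc-smooth. Restricting \(p''_{(i,i')}\circ(\oplus\sqcup\oplus')\) to \(X\) recovers exactly \(p_i\circ\oplus\), which is sc-smooth because \((\oplus,\bm{p})\) is an imprinting with restriction; restricting to \(X'\) recovers \(p'_{i'}\circ\oplus'\), which is sc-smooth for the analogous reason. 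Hence the composition is sc-smooth on all of \(X\sqcup X'\).

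Putting these two verifications together shows that \((\oplus\sqcup\oplus',\bm{p}'')\) is an imprinting with restriction, which is the assertion. I do not expect any genuine obstacle here: the entire argument is the invocation of Theorem \ref{THM_mpoly_coproduct} for the underlying imprinting, plus the elementary observation that both sc-smoothness and the M-polyfold property pass through finite disjoint unions. The only point requiring a moment's care is the identification of the correct codomain \(A_i\sqcup A'_{i'}\) for each compound restriction map, after which everything reduces to the hypotheses already in hand.
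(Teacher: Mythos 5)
Your proof is correct and is precisely the direct verification the paper has in mind (the paper omits a proof entirely, declaring the result obvious after defining the compound restrictions \(p''_{(i,i')}\)): the underlying imprinting is Theorem \ref{THM_mpoly_coproduct}, the index set \(I\times I'\) is finite, the targets \(A_i\sqcup A'_{i'}\) are M-polyfolds, and sc-smoothness of \(p''_{(i,i')}\circ(\oplus\sqcup\oplus')\) is checked on the two clopen pieces where it reduces to the hypotheses.
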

%

%
\subsection{Imprinting-Submersions with Restrictions}
  \label{SEC_imprinting_submersions_w_restr}
In our upcoming constructions we shall frequently see
  imprinting-submersions of the form
  \begin{align}
    X\xrightarrow{\oplus} Y\xrightarrow{f} Z
    \end{align}
In order to plumb such triples, these objects will also come with
  restrictions, i.e.
  \begin{align}
    p_i:Y\rightarrow A_I,\ \ i\in I.
    \end{align}
When we carry out a plumbing of two such gadgets we would like to make
  sure that the plumbed \(\oplus\)-construction is again submersive.
To be able to conclude this we need some compatibility conditions.  
This is formalized in the following definition

\begin{definition}[imprinting-submersion with restrictions]
  \label{DEF_imprinting-submersion_w_rest}
  \hfill\\
Consider a triple \((\oplus,\bm{p},f)\), where \((\oplus,\bm{p})\) is an
  imprinting with restrictions \((\oplus,\bm{p})\),
  say \(\oplus :X\rightarrow Y\) and \(p_i:Y\rightarrow A_i\) for \(i\in
  I\), and \(f:Y\rightarrow Z\) a surjective map onto a M-polyfold such that
  \(f\circ\oplus :X\rightarrow Z\) is submersive.
We shall say that \((\oplus,\bm{M},f)\) is a \emph{imprinting-submersion
   with restrictions}\index{imprinting-submersion with restrictions}
   provided with \(\bar{f}=f\circ\oplus\), \(\bar{p}_i=p_i\circ \oplus\)
   the following additional compatibility condition holds.
For every \(x_0\in X\) with \(z_0=\bar{f}(x_0)\) there exists an open
  neighborhood \(W\) of \((x_0,z_0)\)  in \(X\times Z\) and a sc-smooth
  retraction \(\rho:W\rightarrow \rho\) of the form
  \(\rho(x,z)=(\bar{\rho}(x,z),z)\) it holds
  \begin{align}
    \bar{p}_i\circ \bar{\rho}(x,z) = \bar{p_i}(x),\ \ i\in I,\ (x,z)\in W.
    \end{align}
\end{definition}
%

Going back to our gluing example one can show the following.

\begin{proposition}[gluing is an imprinting-submersion with restrictions]
  \label{PROP_gluing_is imprinting_submersion_w_restr}
The triple \((\oplus,\bm{p},p_{\mathbb B})\), where \(\oplus:{\mathbb
  B}\times E\rightarrow X^{3,\delta}_{\varphi}\) is the previously given
  imprinting, \(\bm{p}=\{p^x,p^y\}\) and \(p_{\mathbb
  B}:X^{3,\delta_0}_{\varphi}\rightarrow {\mathbb B}\) is the domain
  parameter extraction,  is an imprinting-submersion  with restrictions.
\end{proposition}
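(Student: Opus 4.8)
The claim is that the triple $(\oplus, \bm{p}, p_{\mathbb B})$ coming from cylinder gluing is an imprinting-submersion with restrictions in the sense of Definition \ref{DEF_imprinting-submersion_w_rest}. I must verify three things: that $(\oplus, \bm{p})$ is an imprinting with restrictions; that $p_{\mathbb B} \circ \oplus : {\mathbb B}\times E \to {\mathbb B}$ is sc-smooth and submersive; and — the genuinely new content — the compatibility condition relating the restrictions $\bar p^x, \bar p^y$ to the retraction $\bar\rho$ coming from the submersion property of $\bar f = p_{\mathbb B}\circ\oplus$.

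**First two conditions.** The first point is exactly Proposition \ref{PROP_cylinder_gluing_is_imprinting_w_rest}, which I may invoke directly; it identifies the restrictions as $\bm{p} = \{p^x, p^y\}$, the boundary-restriction maps $p^\pm : X^{3,\delta}_\varphi \to A^\pm = H^3(\Sigma^\pm,{\mathbb R}^N)$. The second point is Proposition \ref{PROP_properties_of_pB}, which states that $p_{\mathbb B}$ is sc-smooth, surjective, and submersive; composing with the imprinting $\oplus$ (which is sc-smooth) gives sc-smoothness of $\bar f = p_{\mathbb B}\circ\oplus$, and submersivity of $\bar f$ follows because $\bar f = P$, the projection $P(a,u)=a$ onto the ${\mathbb B}$-factor of ${\mathbb B}\times E$, which is submersive by Proposition \ref{PROP_natural_projection_submersive}. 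So the first plan is to dispatch these two conditions by citation and the observation $p_{\mathbb B}\circ\oplus = P$.

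**The compatibility condition — the main point.** The crux is to exhibit, for each $(a_0,u_0)\in {\mathbb B}\times E$ with $z_0 = \bar f(a_0,u_0) = a_0$, a neighborhood $W$ of $((a_0,u_0),a_0)$ and an sc-smooth retraction $\rho(x,z)=(\bar\rho(x,z),z)$ onto $\mathrm{Gr}(\bar f)$ for which $\bar p^\pm(\bar\rho(x,z)) = \bar p^\pm(x)$. Here the key structural fact is that $\bar f = P$ is the coordinate projection, so the retraction produced in Proposition \ref{PROP_natural_projection_submersive} is \emph{completely explicit}: writing $x=(a,u)$ and $z=a'$, it is $\bar\rho((a,u),a') = (a',u)$, i.e. it overwrites the gluing parameter with the target value $a'$ while leaving the map-component $u\in E$ untouched. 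Thus the plan is to take precisely this $\bar\rho$. The compatibility condition then reduces to showing that the restriction maps $\bar p^\pm = p^\pm\circ\oplus$ depend only on $u$ and not on the gluing parameter $a$ — more precisely, that $\bar p^\pm(a,u) = \bar p^\pm(a',u)$ whenever $(a,u)$ and $(a',u)$ both lie in the relevant chart. I expect this to be the one place requiring an honest check rather than a citation.

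**Why I expect compatibility to hold, and where the work lives.** The restriction $p^\pm$ records the boundary trace of a glued map on $\Sigma^\pm = [0,R]\times S^1$ (resp. $[-R,0]\times S^1$), and I would unpack the gluing formula $\oplus(a,u)(\{(s,t),(s',t')\}_a) = \beta(|s|-R/2)u_x(s,t)+\beta(|s'|-R/2)u_y(s',t')$. On the relevant end, say near $s=0$ where one computes $\bar p^+$, the cut-off $\beta$ and the parameter $R=\varphi(|a|)$ conspire so that the trace recorded by $p^+$ sees only the $u_x$-component of $u$ and is independent of the glued-in tail governed by $a$ — this is the content that makes $\bar p^+(a,u)$ factor through $u_x$ alone; symmetrically for $p^-$ and $u_y$. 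I would make this precise by choosing $W$ small enough that all $a,a'$ appearing have $R$ large (recall $\varphi(|a|)>51$ for $a\in{\mathbb B}\setminus\{0\}$), so that the restriction to $\Sigma^\pm$ lands in the region where $\beta(|s|-R/2)=1$ and the opposite contribution is supported away. The main obstacle is therefore a careful-but-routine unwinding of the gluing profile and cut-off support to confirm that $\bar p^\pm(a',u)=\bar p^\pm(a,u)$; once this is in hand, $\bar p^\pm(\bar\rho((a,u),a')) = \bar p^\pm(a',u) = \bar p^\pm(a,u)$ is immediate, which is exactly the required identity, and the proof is complete.
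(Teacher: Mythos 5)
The paper states this proposition without proof (``one can show the following''), so there is nothing to compare against line by line; judged on its own, your proposal is structurally correct and surely matches the intended argument. The decomposition is right: conditions (1) and (2) of Definition \ref{DEF_imprinting-submersion_w_rest} are indeed dispatched by Proposition \ref{PROP_cylinder_gluing_is_imprinting_w_rest} and by the observation that \(p_{\mathbb B}\circ\oplus\) is literally the coordinate projection \(P(a,u)=a\), whose graph retraction \(\bar{\rho}((a,u),a')=(a',u)\) is explicit; and you correctly reduce the compatibility condition to the single substantive claim that \(\bar{p}^{\pm}(a,u)\) is independent of \(a\).

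One point deserves sharper treatment than your sketch gives it. The \(a\)-independence of \(\bar{p}^{\pm}\) is \emph{not} obtained by shrinking \(W\) or by taking \(R\) large near a given point: for a fixed nonzero \(a_0\), varying \(a'\) in any neighborhood changes \(R=\varphi(|a'|)\) and the twist \(\theta\), so wherever the cut-off is not locally constant on \(\Sigma^{+}\) the trace genuinely depends on \(a'\). What makes the argument work is that \(\Sigma^{\pm}\) is a \emph{fixed} collar of the free boundary circle whose length \(\ell\) satisfies \(\ell\le R/2-1\) for \emph{every} admissible \(a\) (this is exactly what the uniform bound \(\varphi(|a|)>51\) is for); on such a collar \(\beta(|s|-R/2)\equiv 1\) and \(\beta(|s'|-R/2)\equiv 0\), so \(\bar{p}^{+}(a,u)=u_x|_{\Sigma^{+}}\) identically, and symmetrically for \(\bar p^{-}\). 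Note that if one reads the paper's ``\(\Sigma^{+}=[0,R]\times S^1\)'' with a collar length comparable to \(R\) itself (e.g.\ \(50\)), the claim fails, since the glued map on \([R/2-1,R/2+1]\times S^1\) mixes in \(u_y(\cdot-R,\cdot-\theta)\). So the heart of your ``routine unwinding'' is really the correct choice of the restriction domain, and the independence is then global and exact rather than a local limiting statement; with that fixed, \(\bar p^{\pm}(\bar\rho((a,u),a'))=\bar p^{\pm}(a',u)=\bar p^{\pm}(a,u)\) holds on all of \(({\mathbb B}\times E)\times{\mathbb B}\) and the proof closes as you describe.
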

%
Assume that \((\oplus,\bm{p},f)\) and \((\oplus',\bm{p}',f')\) are given
  and \((\oplus,\bm{p})\) and \((\oplus',\bm{p}')\) are
  \((i_0,i_0')\)-plumbable.
Then we obtain the new \(\oplus\)-construction with restrictions
  \((\oplus,\bm{p}){_{i_0}\times_{i_0'}}(\oplus',\bm{p}')\) as already
  shown.
We can define the map \(f'':  Y{_{i_0}\times_{i_0'}}Y'\rightarrow Z\times
  Z'\) by
  \begin{align}
    f''(y,y')=(f(y),f'(y')) = f{_{i_0}\times_{i_0'}}f'.
    \end{align}
The compatibility condition from the previous definition implies that
  \(f''\) is submersive for the induced M-polyfold structure or
  alternatively \(\bar{f}'':= f''\circ
  (\oplus{_{i_0}\times_{i_0'}}\oplus')\) is submersive. 
In fact, it also satisfies the compatibility condition.  
Hence we can make the following definition.

\begin{definition}[fiber product of imprinting-submersions with restrictions]
  \label{DEF_fiber_product_of_imp_submer}
Given two imprinting-submersions with restrictions
  \((\oplus,\bm{p},f)\) and \((\oplus',\bm{p}',f')\) and \(i_0\in I\),
  \(i_0'\in I'\) the \((i_0,i_0')\)-fibered product is defined by
  \begin{align}
     (\oplus,\bm{p},f){_{i_0}\times_{i_0'}}(\oplus',\bm{p}',f'
     :=(\oplus{_{i_0}\times_{i_0'}}\oplus',
     \bm{p}{_{i_0}\sqcup_{i_0'}}\bm{p}',f{_{i_0}\times_{i_0'}}f').
    \end{align}
It is again an imprinting-submersion  with restrictions.
\end{definition}
%
This result allows one to plug imprinting-submersions together. 
Another important piece for SFT is concerned with splitting of cylinders
  along periodic orbits.

%
\subsection{Some Results About Fibered Products}
\label{qsec3.3}\label{SEC_fiber_products}
Given a diagram \(X\xrightarrow{q} A\xleftarrow{p} Y\) of sc-smooth maps
  between M-polyfolds,  we can build (as a set)  the fibered product
  \(X{_{q}\times_p}Y\subset X\times Y\), which consists of all pairs
  \((x,y)\) with \(q(x)=p(y)\).

\begin{definition}[M-polyfold fiber product]
  \label{DEF_m_polyfold_fiber_product}
  \hfill\\
We say \(X{_{q}\times_p}X'\) is a M-polyfold provided the subset
  \(X{_{q}\times_p}X'\) of the product M-polyfold \(X\times X'\)
  is a sub-M-polyfold and hence has an induced M-polyfold structure, i.e.
  the one we take.
\end{definition}
%
We sometimes write \(X\times_A X'\) instead of \(X{_{q}\times_p}X'\).
The following is the expected result about iterated fibered product. 
For example given
  \begin{align}
    X\xrightarrow{q} A\xleftarrow{p} X'\xrightarrow{q'}
    A'\xleftarrow{p'}X''
    \end{align}
  we obtain the subset \(X{_{q}\times_p} X'{_{q'}\times_{p'}} X''\subset
  X\times X'\times X''\) and this may or may not be a M-polyfold.
There are two interesting situations in the case of inductive
  constructions.
For example one might show first that \(X{_{q}\times_p} X'\) is a
  M-polyfold and view the set \(X{_{q}\times_p} X'{_{q'}\times_{p'}} X''\)
  as a subset of the M-polyfold \((X{_{q}\times_p} X')\times X''\).
A priori it seems possible our subset might be a sub-M-polyfold of the
  latter but not of the first and even if it is, the induced structures
  might be different. 
However, this is not case as the following result shows.  
The other case is that we consider our fibered product set to be a subset
  of \(X\times(X'{_{q'}\times_{p'}}X'')\).  

\begin{proposition}[fiber products of fiber products]
  \label{propu3.10}\label{PROP_fiber_products_of_fiber_products}
  \hfill\\
Assume that \(X,X',X''\) and \(A,A'\) are M-polyfolds and
  \begin{align}
    X\xrightarrow{q} A\xleftarrow{p} X'\xrightarrow{q'}
    A'\xleftarrow{p'}X''
    \end{align}
  is a diagram of sc-smooth maps.  
\begin{itemize}
  \item[(1)] 
  If the fibered product \(X\times_A X'\) is a M-polyfold and \((X\times_A
  X') {_{q'\circ \pi_2}\times}_{p'} X''\) has an induced M-polyfold
  structure from \((X\times_A X')\times X''\) then \(X\times_A
  X'\times_{A'} X''\) has an induced M-polyfold structure from \(X\times
  X'\times X''\) and the two M-polyfold structures coincide.
  \item[(2)]   
  If the fibered product \(X'\times_{A'} X''\) is a M-polyfold and the
  associated diagram \(X{_{q}\times_{p\circ\pi_1}} (X'\times_{A'} X'') \)
  has an induced M-polyfold structure from \((X\times_A X')\times X''\)
  then \(X\times_A X'\times_{A'} X''\) has an induced M-polyfold structure
  from \(X\times X'\times X''\) and the two M-polyfold structures coincide.
  \end{itemize}
\end{proposition}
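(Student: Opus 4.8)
The plan is to prove case (1) in detail and to note that case (2) follows by the mirror-image argument. Write $W := X\times_A X'$ for the first fibered product, which by hypothesis is a sub-M-polyfold of $X\times X'$ carrying its inherited structure, and let $S := X\times_A X'\times_{A'} X''$ denote the triple set sitting inside $X\times X'\times X''$. Under the canonical identification $(X\times X')\times X''\cong X\times X'\times X''$, the set $S$ is carried precisely onto $W{}_{q'\circ\pi_2}\times_{p'} X''$, which by the hypothesis of (1) is a sub-M-polyfold of $W\times X''$. The two goals are therefore: (a) exhibit $S$ as a sub-M-polyfold of $X\times X'\times X''$, so that it acquires an induced structure there; and (b) show that this structure agrees with the one transported from $W{}_{q'\circ\pi_2}\times_{p'}X''$.

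For (a) I would build a local retraction onto $S$ by composing the two given ones. Fix $\sigma_0=(x_0,x_0',x_0'')\in S$. Since $W$ is a sub-M-polyfold of $X\times X'$, choose a neighborhood $U_1$ of $(x_0,x_0')$ and an sc-smooth retraction $r_1:U_1\to U_1$ with $r_1(U_1)=U_1\cap W$, and set $R_1:=r_1\times\mathrm{id}_{X''}$, an sc-smooth retraction onto $(U_1\cap W)\times X''$. Since $W{}_{q'\circ\pi_2}\times_{p'}X''$ is a sub-M-polyfold of $W\times X''$, choose a neighborhood $V_2$ of $\sigma_0$ in $W\times X''$ and an sc-smooth retraction $r_2:V_2\to V_2$ onto $V_2\cap S$. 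On $U:=R_1^{-1}(V_2)$ define $r:=r_2\circ R_1$. I expect the identities $r\circ r=r$ and $r(U)=U\cap S$ to be routine: both $R_1$ and $r_2$ fix every point of $S$ (after the standard shrinking of $U$ so that $r(U)\subset U$), and any point of $U\cap S$ already lies in $W\times X''$ and in $S$, hence is fixed by both maps. The one point requiring care is sc-smoothness of $r$ as a map into $X\times X'\times X''$: here I would invoke Lemma~\ref{LEM_structures_sub_poly_inherit}, which gives that the corestriction $R_1:U\to W\times X''$ is sc-smooth and that the inclusion $W\times X''\hookrightarrow X\times X'\times X''$ is sc-smooth, so that $r$ is a composition of sc-smooth maps and the chain rule (Theorem~\ref{thm_sc_chain_rule}) applies.

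The main obstacle, and the part deserving the most attention, is (b). For this I would first record the characterization that for any sub-M-polyfold $A$ of a M-polyfold $M$ with inclusion $j$, a map $g:Z\to A$ is sc-smooth if and only if $j\circ g:Z\to M$ is sc-smooth; the nontrivial direction follows from the local identity $g=(j^{-1}\circ r)\circ(j\circ g)$ together with Lemma~\ref{LEM_structures_sub_poly_inherit}. Writing $S_1$ for $S$ with the structure induced from $X\times X'\times X''$ and $S_2$ for $S$ with the structure induced from $W\times X''$, it suffices to show $\mathrm{id}:S_1\to S_2$ and $\mathrm{id}:S_2\to S_1$ are both sc-smooth. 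Applying the characterization to $S_2\subset W\times X''$ reduces sc-smoothness of $\mathrm{id}:S_1\to S_2$ to sc-smoothness of the inclusion $S_1\hookrightarrow W\times X''$, which in turn, since $W\times X''$ is itself a sub-M-polyfold of $X\times X'\times X''$ via $R_1$, reduces to sc-smoothness of the inclusion $S_1\hookrightarrow X\times X'\times X''$ — true because $S_1$ is by construction a sub-M-polyfold. The reverse direction is the mirror image: the inclusion $S_2\hookrightarrow X\times X'\times X''$ factors as $S_2\hookrightarrow W\times X''\hookrightarrow X\times X'\times X''$, a composition of sc-smooth inclusions, whence $\mathrm{id}:S_2\to S_1$ is sc-smooth. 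I anticipate the only delicate bookkeeping is confirming that $W\times X''$, with its product structure, really is the sub-M-polyfold structure induced inside $X\times X'\times X''$ by $R_1=r_1\times\mathrm{id}$; this I would check directly from the definitions, using that the retraction splits as a product so that the local retract model of $W\times X''$ is the product of the local model of $W$ with $X''$.
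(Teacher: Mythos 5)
Your proposal is correct and follows essentially the same route as the paper's proof: compose the retraction onto $X\times_A X'$ (crossed with $\mathrm{id}_{X''}$) with the retraction onto the iterated fibered product to obtain a local sc-smooth retraction in $X\times X'\times X''$, then compare the two induced structures via sc-smoothness of the inclusion maps, using Lemma \ref{LEM_structures_sub_poly_inherit} and the chain rule. If anything, you are more explicit than the paper about verifying both directions of the identity map and about the bookkeeping for the product structure on $(X\times_A X')\times X''$.
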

%
\begin{proof}
Since the proofs of (1) and (2) are essentially the same, we just prove
  (1).
Let \((x,x')\in X\times_A X'\). 
By assumption there exists an open neighborhood \(U\) of \((x,x')\) in
  \(X\times X'\) and an sc-smooth \(r:U\rightarrow U\) with \(r\circ r=r\)
  and \(r(U)=U\cap (X\times_A X')\).
Moreover, by assumption, there exists for \(((x,x'),x'')\in (X\times_A
  X'){_{q'\circ\pi_2}\times_{p'}} X''\) an open neighborhood \(W\) in \(
  (X\times_A X')\times X''\) and an sc-smooth retraction \(s:W\rightarrow
  W\) with \(s(W)= W\cap ((X\times_A X'){_{q'\circ\pi_2}\times_{p'}}
  X'')\). 
We define an sc-smooth map \(\tau\) near \((x,x',x'')\) by
  \begin{align}
    (z,z',z'')\rightarrow (r(z,z'),z'')\rightarrow s(r(z,z'),z'').
    \end{align}
Then \(\tau\) is an sc-smooth retraction defined near \((x,x',x'')\). 
Consider the inclusion map
  \begin{eqnarray}\label{p:"}\label{EQ_double_product}
   (X\times_A X')\times X''\rightarrow X\times X'\times X''.
\end{eqnarray}
It the product of the inclusion \(X\times_A X'\rightarrow X\times X'\) and
  \(Id_{X''}\).
The first map is sc-smooth since it is the restriction of \(Id_{X\times
  X'}\) to a sub-M-polyfold.
This shows that (\ref{p:"}) is sc-smooth.
 
Next consider the M-polyfold \(X\times_A X'\times_{A'} X''\) of \(X\times
  X'\times X''\).
Consider the inclusion \(X\times_A X'\times_{A'} X''\rightarrow (X\times_A
  X') {_{q'\circ \pi_2}\times}_{p'} X''\).
The target is a sub-M-polyfold of \((X\times X')\times X''=X\times
  X'\times X''\) and therefore this map is sc-smooth.
This competes the proof.
\end{proof}

The discussion in this subsection applies equally well to strong bundles
  over M-polyfolds \(q:V\rightarrow X\).
Starting with a commutative diagram of strong bundle maps
  \begin{align}
    \begin{CD}
    V @> \Phi >> W @ <\Psi <<V'\\
    @V q VV @V p VV   @V q' VV\\
    X @>\phi>> Y @<\psi << X'\,
    \end{CD}
    \end{align}
  which we sometimes just write as
  \begin{align}
    q\xrightarrow{\Phi} p \xleftarrow{\Psi} q',
    \end{align}
  we can build  he fibered products (as just sets) and obtain
  \begin{align}
    V\times_W V'\xrightarrow {\Phi\times_W\Psi} X\times_Y X'.
    \end{align}
Clearly \(V\times_W V'\) is a subset of the strong bundle \(V\times V'\)
  and we say that the fibered product is defined provided \(V\times_W V'\)
  is a strong sub-bundle.
This also will imply that \(X\times_Y X'\) is a sub-M-polyfold of
  \(X\times X'\) and we obtain the strong bundle written as \(V\times_W
  V'\xrightarrow {\Phi\times_W\Psi} X\times_Y X'\).
We also note that taking the fibered product involving more strong
bundles, the procedure is associative similarly as described in the
M-polyfold case treated in Proposition \ref{propu3.10}.

%
\section{Strong Bundles and the Imprinting Method}
The results in Section \ref{SEC_imprinting_method} and Section
  \ref{SEC_operations_and_imprinting} can be easily generalized to strong
bundle constructions following a few basic principles.
We note however, that because the objects of interest have more structure
  (they are strong (vector) bundles with projections to the base, etc), it
  will be convenient to move to a more categorical framework.

%
\subsection{The category \texorpdfstring{\(
  \mathbf{VBL}_{\mathbb{K}}\)}{VBL} and the Imprinting Method}
  \label{qsec4.1}\label{SEC_vbl_imprinting}

Here and throughout, we let \({\mathbb K}\) denote either the real or
  complex numbers.
That is, all the following results are to be understood with the same
  choice of either \(\mathbb{R}\) or \(\mathbb{C}\) for \(\mathbb{K}\). 
Our next task is to define the category \(\mathbf{VBL}_{\mathbb{K}}\),
  which is meant to be an acronym for ``vector bundle like'' sets; see
  Definitions \ref{DEF_vbl_object}, \ref{DEF_vbl_morphism}, and
  \ref{DEF_category_vbl}.


\begin{definition}[vector-bundle-like set]
  \label{DEF_vbl_set}\label{DEF_vbl_object}
  \hfill\\
A \emph{vector-bundle-like set}\index{vector-bundle-like set}
  \(\underline{\mathsf{Y}}\) consists of a triple \(\underline{\mathsf{Y}} =
  (\mathsf{Y}, Y, {\rm pr}_{\mathsf{Y}})\), where \(\mathsf{Y}\) and \(Y\)
  are sets, \({\rm pr}_{\mathsf{Y}}: \mathsf{Y}\to Y\) is a surjective map,
  and every fiber \({\rm pr}_{\mathsf{Y}}^{-1}(y)\) is equipped with the
  structure of a vector space over \({\mathbb K}\).
\end{definition}
%


\begin{remark}
A strong M-polyfold bundle over an M-polyfold, for example \({\rm
  pr}_{\mathsf{X}}:\mathsf{X}\rightarrow X\), has an underlying structure of
  a vector-bundle-like set as \((\mathsf{X}, X, {\rm pr}_{\mathsf{X}})\).  
\end{remark}
%




With the objects established, we now aim to define the morphisms.

\begin{definition}[morphism between vector-bundle-like sets]
  \label{DEF_morphism_btwn_vbl_sets}\label{DEF_vbl_morphism}
  \hfill\\
  \index{morphisms between vector-bundle-like sets}
Let \(\underline{\mathsf{Y}}= (\mathsf{Y}, Y, {\rm pr}_{\mathsf{Y}})\) and
  \(\underline{\mathsf{Y}}= (\mathsf{Y}', Y', {\rm pr}_{\mathsf{Y}'})\)
  denote vector-bundle-like sets over \(\mathbb{K}\).
A \emph{morphism between} \(\underline{\mathsf{Y}}\) and
  \(\underline{\mathsf{Y}}'\) is a map \(\Phi:\mathsf{Y}\rightarrow
  \mathsf{Y}'\) which \(\mathbb{K}\)-linearly maps fibers to
  fibers.
As such, it uniquely determines a map, denoted \(\phi:Y\to Y'\), which
  fits into the commutative diagram below
  \begin{eqnarray}\label{DIAgram1}\label{EQ_diagram_a}
    \begin{CD}
    \mathsf{Y} @>\Phi>> \mathsf{Y}'\\
    @V{\rm pr}_{\mathsf{Y}} VV   @V {\rm pr}_{\mathsf{Y}'} VV\\
    Y @>\phi>>  Y'.
    \end{CD}
    \end{eqnarray}
\end{definition}
%

With the objects and morphisms defined as above, we provide the natural
  definition of the associated category.
\begin{definition}[the category \(\mathbf{VBL}_{\mathbb{K}}\)]
  \label{DEF_category_vbl}
  \hfill\\
  \index{\(\mathbf{VBL}_{\mathbb{K}}\), the category}
The category with vector-bundle-like sets as objects, as provided in
  Definition \ref{DEF_vbl_object}, and with morphisms as fiber-wise
  \(\mathbb{K}\)-linear maps as in Definition
  \ref{DEF_vbl_morphism}, we shall call \(\mathbf{VBL}_{\mathbb{K}}\).
\end{definition}
%


Next we aim to define a type of product and coproduct structure on
  \(\mathbf{VBL}_{\mathbb{K}}\); we make these notions rigorous with the
  following definition.

\begin{definition}[product and coproduct in \(\mathbf{VBL}_{\mathbb{K}}\)]
  \label{DEF_product_coproduct_vbl}
  \hfill \\
Consider \(\underline{\mathsf{Y}}, \underline{\mathsf{Y}}'\in
  {\rm Ob}(\mathbf{VBL}_{\mathbb{K}})\), and define the triple 
  \begin{align*}                                                          
    (\mathsf{Y}\times \mathsf{Y}', Y\times Y' , {\rm
    pr}_{\mathsf{Y}}\times {\rm pr}_{\mathsf{Y}'})
    \end{align*}
  where \({\rm pr}_{\mathsf{Y}}\times {\rm pr}_{\mathsf{Y}'}
  \mathsf{Y}\times \mathsf{Y}'\to  Y\times Y' \) is the natural
  projection.
Observe that for each \((y,y')\in Y\times Y'\) the set 
  \(({\rm pr}_{\mathsf{Y}}\times {\rm pr}_{\mathsf{Y}'})^{-1}(y, y') \) has
  the structure of a vector space over \(\mathbb{K}\).
Consequently, the above triple is in \({\rm
  Ob}(\mathbf{VBL}_{\mathbb{K}})\), and hence we define the
  \(\mathbf{VBL}_{\mathbb{K}}\)-product via the following
  \begin{align}\label{EQ_vbl_product}                                     
    \underline{\mathsf{Y}}\times \underline{\mathsf{Y}}':=
    (\mathsf{Y}\times \mathsf{Y}', Y\times Y' , {\rm
    pr}_{\mathsf{Y}}\times {\rm pr}_{\mathsf{Y}'})\in {\rm
    Ob}(\mathbf{VBL}_{\mathbb{K}}).
    \end{align}
Similarly we define a \(\mathbf{VBL}_{\mathbb{K}}\)-coproduct structure via
  \begin{align}\label{EQ_vbl_coproduct}                                   
    \underline{\mathsf{Y}}\sqcup \underline{\mathsf{Y}}':=
    (\mathsf{Y}\sqcup \mathsf{Y}', Y\sqcup Y' , {\rm
    pr}_{\mathsf{Y}}\sqcup {\rm pr}_{\mathsf{Y}'})\in {\rm
    Ob}(\mathbf{VBL}_{\mathbb{K}});
    \end{align}
  here \(\sqcup\) denotes the disjoint union.
\end{definition}
%

Similarly, we define a fiber product.

\begin{definition}[fiber products in \(\mathbf{VBL}_{\mathbb{K}}\)]
  \label{DEF_fiber_product_vbl}
  \hfill\\
Given \(\underline{\mathsf{Y}}, \underline{\mathsf{Y}}',
  \underline{\mathsf{B}} \in \mathbf{VBL}_{\mathbb{K}}\) and \(\Phi,
  \Phi'\in {\rm Mor}(\mathbf{VBL}_{\mathbb{K}})\) with
  \(\Phi:\underline{\mathsf{Y}}\to \underline{\mathsf{B}}\) and
  \(\Phi':\underline{\mathsf{Y}}'\to \underline{\mathsf{B}}\), we define
  the associated fiber product to be the following triple
  \begin{align*}                                                          
    \underline{\mathsf{Y}}{_\Phi \times_{\Phi'}} \underline{\mathsf{Y}}' =
    ( \mathsf{Y}{_\Phi \times_{\Phi'}} \mathsf{Y}', Y{_\phi
    \times_{\phi'}} Y' , {\rm pr}_{\mathsf{Y}}{_\Phi \times_{\Phi'}} {\rm
    pr}_{\mathsf{Y}'})
    \end{align*}
  where
  \begin{align*}                                                          
    \mathsf{Y}{_\Phi \times_{\Phi'}} \mathsf{Y}'&= \big\{ (\mathsf{y},
    \mathsf{y}')\in \mathsf{Y}\times \mathsf{Y}' : \Phi(\mathsf{y})=
    \Phi'(\mathsf{y}')\big\}
    \\
    Y{_\phi \times_{\phi'}} Y' &=\big\{ (y, y')\in Y\times Y' : \phi(y)=
    \phi'(y')\big\}
    \\
    {\rm pr}_{\mathsf{Y}}{_\Phi \times_{\Phi'}} {\rm pr}_{\mathsf{Y}'}&=
    {\rm pr}_{\mathsf{Y}} \times {\rm pr}_{\mathsf{Y}'}\big|_{\mathsf{Y}
    {_\Phi \times_{\Phi'}} \mathsf{Y}'} : \mathsf{Y} {_\Phi
    \times_{\Phi'}} \mathsf{Y}' \to Y {_\phi \times_{\phi'}} Y'.
    \end{align*}
Here, as our notation suggests, we have 
\begin{align*}                                                            
  \underline{\mathsf{Y}} &= (\mathsf{Y}, Y, {\rm pr}_{\mathsf{Y}})\\
  \underline{\mathsf{Y}}' &= (\mathsf{Y}', Y', {\rm pr}_{\mathsf{Y}'})\\
  \underline{\mathsf{B}} &= (\mathsf{B}, B, {\rm pr}_{\mathsf{B}})
  \end{align*}
  and 
  \(\phi:Y\to B\) and \(\phi':Y'\to B'\) are the maps induced from
  \(\Phi:\mathsf{Y}\to \mathsf{B}\) and \(\Phi':\mathsf{Y}'\to
  \mathsf{B}'\) respectively.
We note that for each \((y, y')\in Y{_\phi \times_{\phi'}} Y'\), the set
  \(({\rm pr}_{\mathsf{Y}}{_\Phi \times_{\Phi'}} {\rm
  pr}_{\mathsf{Y}'})^{-1}(y, y')\)  has the structure of a vector space over
  \(\mathbb{K}\) and hence we have
  \begin{align*}                                                          
    \underline{\mathsf{Y}}{_\Phi \times_{\Phi'}} \underline{\mathsf{Y}}'
    \in {\rm Mor}(\mathbf{VBL}_{\mathbb{K}}).
    \end{align*}
\end{definition}
%

Suppose \(\Phi\in {\rm Mor}(\mathbf{VBL}_{\mathbb{K}})\) with \(\Phi:
  \underline{\mathsf{Y}} \to \underline{\mathsf{Y}}'\) .
In the following we shall say \(\Phi\) is {\bf surjective} provided the map
  \(\Phi:\mathsf{Y} \rightarrow \mathsf{Y}'\) is surjective, and we call it
  {\bf injective} provided \(\Phi:\mathsf{Y}\rightarrow \mathsf{Y}'\) is
  injective.
Given a vector-bundle-like set \(\underline{\mathsf{Y}} = (\mathsf{Y}, Y,
  {\rm pr}_{\mathsf{Y}}) \in {\rm Ob}(\mathbf{VBL}_{\mathbb{K}})\),
  and a subset \(V\subset Y\) we obtain a new vector-bundle-like set 
  \(\underline{\mathsf{V}} = (\mathsf{V}, V, {\rm pr}_{\mathsf{V}})\)
  defined by 
  \begin{align*}                                                          
    \mathsf{V}:={\rm pr}_{\mathsf{Y}}^{-1}(V)\qquad\text{and}\qquad {\rm
    pr}_{\mathsf{V}}:= {\rm pr}_{\mathsf{Y}}\big|_{\mathsf{V}}.
    \end{align*}
Finally, we note that given a vector-bundle-like set \((\mathsf{Y}, Y,
  {\rm pr}_{\mathsf{Y}})\), and a map \(f:Y'\rightarrow Y\), the associated
  {\bf pull-back}  \((f^*\mathsf{Y}, Y', f^*{\rm pr}_{\mathsf{Y}})\) is a
  well-defined vector-bundle-like set.

\begin{remark}
We note that terminology can get rather heavy with descriptions, so for
  concision, we establish two abbreviations which we shall henceforth use.
\begin{enumerate}                                                         
  \item VBL shall be short for ``vector-bundle-like,'' and
  \item SB shall be short for ``strong M-polyfold bundle.''
  \end{enumerate}
In this way, the objects of of \(\mathbf{VBL}_{\mathbb{K}}\) are VBL sets,
  which will sometimes have a SB structure.
\end{remark}
%


Next we define the imprinting method for strong M-polyfold bundles, which
  is a means to equip a VBL-set with a SB structure. 
There are two items to note, the first of which is that much of what
  follows is a straightfoward generalization of the imprinting method for
  M-polyfolds as provided in Section \ref{SEC_imprinting_method}.
Second is that, although we do not carry it out here, it may be natural to
  consider a category \(\mathbf{SB}\) whose objects are elements of
  \({\rm Ob}(\mathbf{VBL}_{\mathbb{K}})\) that have been equipped with a
  strong M-polyfold bundle (SB) structure, and whose morphisms are
  elements of \({\rm Mor}(\mathbf{VBL}_{\mathbb{K}})\) which are strong
  M-polyfold bundle maps.
In this way, one easily sees that there is a forgetful functor
  \(\mathbf{SB}\to \mathbf{VBL}_{\mathbb{K}}\).
Moreover, given \(\underline{\mathsf{X}}\in {\rm
  Ob}(\mathbf{VBL}_{\mathbb{K}})\cap {\rm Ob}(\mathbf{SB})\), and
  \(\underline{\mathsf{Y}}\in {\rm Ob}(\mathbf{VBL}_{\mathbb{K}})\), an SB
  imprinting (see Definition \ref{DEF_smpb_imprinting} below) is a morphism
  \(\oplus_\triangleleft\in {\rm Mor}(\mathbf{VBL}_{\mathbb{K}})\) of the
  form \(\oplus_\triangleleft: \underline{\mathsf{X}}\to
  \underline{\mathsf{Y}}\) which induces induces an SB structure on the
  target \(\underline{\mathsf{Y}}\) in such a way that
  \(\oplus_\triangleleft \in {\rm Mor}(\mathbf{SB})\).

\begin{definition}[SB imprinting]
  \label{DEF_smpb_imprinting}\label{DEFFF4.3}
  \hfill\\
Let \(\underline{\mathsf{X}}, \underline{\mathsf{Y}}\in {\rm
  Ob}(\mathbf{VBL}_{\mathbb{K}})\), and suppose that
  \(\underline{\mathsf{X}}\)
  has the structure of a strong M-polyfold bundle (SB).
Suppose further that \(\oplus_\triangleleft\in {\rm
  Mor}(\mathbf{VBL}_{\mathbb{K}})\) with \(\oplus_\triangleleft:
  \underline{\mathsf{X}}\to \underline{\mathsf{Y}} \).
We say \(\oplus_\triangleleft\) is an \emph{SB-imprinting}
  provided the following hold.
\begin{enumerate}                                                         
  \item
  \(\oplus_{\triangleleft}\) is surjective
  \item 
  The quotient topology \(\mathcal{T}_{\mathsf{Y}}\) on \(\mathsf{Y}\)
  with respect to \(\oplus_\triangleleft : \mathsf{X}\to \mathsf{Y}\) is
  metrizable.
  \item
  For each \(y\in Y\), there exists \(V\subset Y\) containing \(y\), such
  that \(\mathsf{V}:={\rm pr}_{\mathsf{Y}} ^{-1}(V)\in
  \mathcal{T}_{\mathsf{Y}}\), and there exists \(H_\triangleleft\in {\rm
  Mor}(\mathbf{VBL}_{\mathbb{K}})\) with \(H_\triangleleft:
  \underline{\mathsf{V}}\to \underline{\mathsf{X}}\)
  \begin{enumerate}                                                         
    \item 
    \(\oplus_{\triangleleft}\circ H_{\triangleleft}(v)=v\) for \(v\in
    \mathsf{V}={\rm pr}_{\mathsf{Y}} ^{-1}(V)\),
    \item 
    \(H_{\triangleleft}\circ\oplus_{\triangleleft}:
    \oplus_{\triangleleft}^{-1}(\mathsf{V})\rightarrow
    \mathsf{X}\) is an sc-smooth SB-map (sometimes called
    sc\(_\triangleleft\)-smooth map);
    \end{enumerate}
  \end{enumerate}
  here, as our notation suggests, \(\underline{\mathsf{V}} = (\mathsf{V},
  V, {\rm pr}_{\mathsf{V}}) \) where \({\rm pr}_{\mathsf{V}} := {\rm
  pr}_{\mathsf{Y}}\big|_{\mathsf{V}}\).
\end{definition}
%

Observe that for \(\oplus_\triangleleft\in {\rm
  Mor}(\mathbf{VBL}_{\mathbb{K}})\) with
  \(\oplus_\triangleleft:\underline{\mathsf{X}}\to
  \underline{\mathsf{Y}}\), we immediately obtain an underlying map
  \(\oplus:X\rightarrow Y\), which necessarily fits into the following
  commutative diagram:
  \begin{align*}                                                          
    \begin{CD}
      \mathsf{X} @>\oplus_\triangleleft >> \mathsf{Y}\\
      @V{\rm pr}_{\mathsf{X}} VV   @V {\rm pr}_{\mathsf{Y}} VV\\
      X @>\oplus >>  Y.
      \end{CD}
    \end{align*}
Denote by \({\mathcal T}_\oplus\) the quotient topology on \(Y\) with respect
  to \(\oplus:X\rightarrow Y\).
Note that \(V\in {\mathcal T}_\oplus\) if and only if \({\rm
  pr}_{\mathsf{Y}}^{-1}(V)\in {\mathcal T}_{\mathsf{Y}}\).
For the equivalence one needs to use the fact that \({\rm
  pr}_{\mathsf{X}}:\mathsf{X}\rightarrow X\) is a continuous and open map.
\\

\begin{exercise} 
  \label{EXRC10}\label{EX_10a}
  \hfill\\
Assume that \(\oplus_{\triangleleft}:\underline{\mathsf{X}}\rightarrow
  \underline{\mathsf{Y}}\) is a SB imprinting in the sense of
  Definition \ref{DEF_smpb_imprinting}.
  \begin{enumerate}                                                         
    \item 
    Show that the quotient topology \({\mathcal T}_\oplus\) on \(Y\)  with
      respect to the map \(\oplus:X\rightarrow Y\) is metrizable.
    Also show that \({\mathcal T}_\oplus\) equals the quotient topology
      associated to the map \({\rm pr}_{\mathsf{Y}} :\mathsf{Y}\rightarrow Y\).
    \item 
    Show further that  \({\rm pr}_{\mathsf{Y}}:(\mathsf{Y},{\mathcal
      T}_{\oplus_{_\triangleleft}})\rightarrow (Y,{\mathcal T}_\oplus)\)
      is continuous and open.
    \item
    Prove that \(\oplus:X\to Y\) is an M-polyfold imprinting.
    \end{enumerate}
\end{exercise}
%

The next result generalizes Theorem \ref{oplusmethod} to the \({\rm
  VBL}\)-set context.
Some of the conclusions follows from the Exercise \ref{EX_10a}.

\begin{theorem}[consequences of strong bundle imprinting]
  \label{THMVV4.4}\label{THM_consequences_strong_bundle_imprinting}
  \hfill\\
Let \(\oplus_{\triangleleft}:\underline{\mathsf{X}}\rightarrow
  \underline{\mathsf{Y}}\) be an SB-imprinting in the sense of
  Definition \ref{DEF_smpb_imprinting}, and let \(\oplus:X\to
  Y\) be the induced map.
Then the following hold.
\begin{enumerate}                                                         
  \item 
  The quotient topology \({\mathcal T}_\oplus\) on \(Y\) associated to
    \(\oplus\) is precisely the finest topology on \(Y\) for which \({\rm
    pr}_{\mathsf{Y}}:(\mathsf{Y},{\mathcal
    T}_{\oplus_{_\triangleleft}})\rightarrow (Y, \mathcal{T}_{\oplus})\)
    is continuous.  
  Moreover, the map \({\rm pr}_{\mathsf{Y}}:\mathsf{Y}\rightarrow
    Y\) for the quotient topologies is open.
  \item 
  The quotient topology \({\mathcal T}_\oplus\) is metrizable and
    \(\oplus:X\rightarrow Y\) is an M-polyfold imprinting in the sense of
    Definition \ref{DEF_imprinting_m_poly}.
  \item 
  The triple \(\underline{\mathsf{Y}} = (\mathsf{Y}, Y, {\rm
    pr}_{\mathsf{Y}}) \in {\rm Ob}(\mathbf{VBL}_{\mathbb{K}})\) has the
    structure of an SB over an M-polyfold which is
    uniquely determined by the properies that \(\oplus_{\triangleleft} :
    \mathsf{X}\to \mathsf{Y}\) is a SB-map, and that all the
    maps \(H_\triangleleft : {\rm pr}_{\mathsf{Y}}^{-1}(V)\to \mathsf{X}\)
    are sc-smooth SB-maps.
  \item 
  Let \(\underline{X}' =( \mathsf{X}', X', {\rm pr}_{\mathsf{X}'})\in {\rm
    Ob}(\mathbf{VBL}_{\mathbb{K}})\) be a strong M-polyfold bundle over an
    M-polyfold, and suppose \(\Phi\in {\rm Mor}(\mathbf{VBL}_{\mathbb{K}})\)
    with \(\Phi: \underline{\mathsf{Y}}\to \underline{\mathsf{X}}'\). 
  Then \(\Phi\) is an sc-smooth SB-map if and only if
    \(\Phi\circ \oplus_{\triangleleft}: \underline{\mathsf{X}}\to
    \underline{\mathsf{X}}'\) is an sc-smooth SB-map.
  \item 
  Let \(\underline{X}' =( \mathsf{X}', X', {\rm pr}_{\mathsf{X}'})\in {\rm
    Ob}(\mathbf{VBL}_{\mathbb{K}})\) be an SB over an
    M-polyfold, and suppose \(\Phi\in {\rm Mor}(\mathbf{VBL}_{\mathbb{K}})\)
    with \(\Phi': \underline{\mathsf{X}}'\to \underline{\mathsf{Y}}\).
  Then \(\Phi'\) is a sc-smooth SB-map provided that the
    following hold.
  \begin{enumerate}                                                       
    \item 
    \(\Phi'\) is continuous
    \item 
    for each \(x'\in X'\) and \(V\in \mathcal{T}_{Y}\) with
    \(\phi'(x')=y\in V\), and for each \(H_{\triangleleft}: {\rm
    pr}_{\mathsf{Y}}^{-1}(V)\to \mathsf{X}\) associated to \(y\) as above,
    the map
    \begin{align*}                                                        
      H_\triangleleft \circ \Phi':( \Phi' \circ {\rm
      pr}_{\mathsf{Y}})^{-1}(V) \to \mathsf{X}
      \end{align*}
    is an sc-smooth SB-map.
    \end{enumerate}
  \end{enumerate}
\end{theorem}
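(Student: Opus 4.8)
The plan is to transcribe the proof of Theorem~\ref{THM_imprinting_method} one level up, working with the total spaces \(\mathsf{X}, \mathsf{Y}\) and the \(\mathbf{VBL}_{\mathbb{K}}\)-morphisms \(\oplus_\triangleleft, H_\triangleleft\) in place of the base maps \(\oplus, H\), while tracking the extra fiber-linearity and the second (\(F^1\)) filtration throughout. Parts (1) and (2) are not genuinely new: they are exactly the content of Exercise~\ref{EX_10a}, which already supplies that \(\mathcal{T}_\oplus\) is metrizable, that it equals the quotient topology of \({\rm pr}_{\mathsf{Y}}:\mathsf{Y}\to Y\), that \({\rm pr}_{\mathsf{Y}}\) is continuous and open, and that \(\oplus:X\to Y\) is an M-polyfold imprinting. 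I would simply invoke these and then concentrate on (3), (4), and (5).

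For (3), for each local datum \(H_\triangleleft:\underline{\mathsf{V}}\to\underline{\mathsf{X}}\) I would set \(R:=H_\triangleleft\circ\oplus_\triangleleft\) on \(\oplus_\triangleleft^{-1}(\mathsf{V})\). Since \(\oplus_\triangleleft\circ H_\triangleleft=\mathrm{id}_{\mathsf{V}}\), one gets \(R\circ R=R\); and since both factors are fiberwise \(\mathbb{K}\)-linear sc-smooth SB-maps, \(R\) is an idempotent sc\(_\triangleleft\)-smooth strong bundle retraction covering the base retraction \(r=H\circ\oplus\) from Theorem~\ref{THM_imprinting_method}. Its image \(\mathsf{O}:=H_\triangleleft(\mathsf{V})=R(\oplus_\triangleleft^{-1}(\mathsf{V}))\) is then a local strong bundle model \(p:\mathsf{O}\to O\) in the sense of Section~\ref{qsec1.3}, and \(H_\triangleleft:\mathsf{V}\to\mathsf{O}\) is a bijection with inverse \(\oplus_\triangleleft|_{\mathsf{O}}\). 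That this bijection is a homeomorphism (for \(\mathcal{T}_{\oplus_\triangleleft}|_{\mathsf{V}}\) and the subspace topology on \(\mathsf{O}\)) is proved verbatim as the corresponding step in Theorem~\ref{THM_imprinting_method}. These charts cover \(\mathsf{Y}\), and their transition maps compute as
\begin{equation*}
H'_\triangleleft\circ H_\triangleleft^{-1}
= H'_\triangleleft\circ\oplus_\triangleleft
= R',
\end{equation*}
a restriction of the global sc\(_\triangleleft\)-smooth map \(R'\), hence sc\(_\triangleleft\)-smooth; so the charts are compatible and furnish an SB atlas on \(\underline{\mathsf{Y}}\). Uniqueness is immediate, since any SB structure making \(\oplus_\triangleleft\) an SB-map and each \(H_\triangleleft\) an sc-smooth SB-map forces the \(H_\triangleleft\) to be sc-diffeomorphisms onto the models \(\mathsf{O}\), pinning down the atlas up to equivalence.

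Parts (4) and (5) are the bundle analogues of Theorem~\ref{THM_imprinting_method}(b) and (a). For (4), if \(\Phi\) is sc-smooth SB then \(\Phi\circ\oplus_\triangleleft\) is sc-smooth SB because \(\oplus_\triangleleft\) is (by (3)) and sc\(_\triangleleft\)-smoothness is closed under composition; conversely, if \(\Phi\circ\oplus_\triangleleft\) is sc-smooth SB then on each \(\mathsf{V}\) one has \(\Phi|_{\mathsf{V}}=(\Phi\circ\oplus_\triangleleft)\circ H_\triangleleft\), a composition of sc-smooth SB-maps, and the \(\mathsf{V}\) cover \(\mathsf{Y}\). For (5), sc-smoothness SB of \(\Phi'\) into \(\underline{\mathsf{Y}}\) is tested through the charts \(H_\triangleleft\): continuity of \(\Phi'\) guarantees the relevant preimages are open, so that the local representatives \(H_\triangleleft\circ\Phi'\) are defined on open sets, and the hypothesis that each such representative is an sc-smooth SB-map is precisely the statement that \(\Phi'\) is sc-smooth SB read in the chart.

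The main obstacle, and the only genuinely new bookkeeping beyond transcription, is to verify that the sc\(_\triangleleft\) (strong-bundle) calculus supports all of these operations. Concretely, one must know that compositions, restrictions to relatively open subsets and to images of idempotents, and the identification of such an image with a local strong bundle model all preserve the sc\(_\triangleleft\)-smooth structure, and that this holds simultaneously across both filtration offsets \(i\in\{0,1\}\). I expect this to reduce to applying the ordinary sc chain rule, Theorem~\ref{thm_sc_chain_rule}, to the two induced families \(E\oplus F^i\to G\oplus H^i\); once that toolkit is recorded, each step above is a faithful copy of the M-polyfold argument, with the openness of \({\rm pr}_{\mathsf{Y}}\) from Exercise~\ref{EX_10a} used to reconcile the bundle atlas on \(\mathsf{Y}\) with the base M-polyfold structure on \(Y\).
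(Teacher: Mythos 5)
Your proposal is correct and follows essentially the route the paper intends: parts (1) and (2) are delegated to Exercise \ref{EX_10a} exactly as the text does, and parts (3)--(5) are obtained by transcribing the proof of Theorem \ref{THM_imprinting_method} to the total spaces with fiber-linearity and the double filtration tracked via the sc chain rule at both offsets \(i\in\{0,1\}\) (the paper itself leaves (3)--(5) as an exercise, with precisely this generalization in mind). No gaps; the retraction \(R=H_\triangleleft\circ\oplus_\triangleleft\), the homeomorphism step, and the transition-map computation \(H'_\triangleleft\circ H_\triangleleft^{-1}=H'_\triangleleft\circ\oplus_\triangleleft\) all carry over as you describe.
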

%

The points (1) and (2) already follow from Exercise \ref{EX_10a}. 
The verification of the remaining claims are  left to the reader in the
  next exercise.\\

\begin{exercise}\hfill\\
Prove Theorem \ref{THM_consequences_strong_bundle_imprinting} (3)--(5).
\end{exercise}
%

The process of equipping a VBL-set with the structure of a
  SB via the imprinting method above, will be referred to as {\bf
  strong M-polyfold bundle construction}\index{strong M-polyfold bundle
  construction} or simply an SB-construction.
  \index{SB-construction}.

%
\subsection{Additional Results}\label{qsec4.2}\label{SEC_additional_results}
We list results which generalize previously proved theorems for sets to
  \({\rm vbl}\)-sets.
The proofs are left to the reader.
\begin{theorem}[composition of SB-constructions]
  \label{THMVV4.6}\label{THM_composition_of_SB_cons}
  \hfill\\
Let \(\underline{\mathsf{X}}, \underline{\mathsf{Y}},
  \underline{\mathsf{Y}}'\in {\rm Ob}(\mathbf{VBL}_{\mathbb{K}})\), and let
  \(\underline{\mathsf{X}}\) be a strong M-polyfold bundle.
Let \(\oplus_\triangleleft, \oplus_\triangleleft' \in {\rm
  Mor}(\mathbf{VBL}_{\mathbb{K}})\) with \(\oplus_\triangleleft :
  \underline{\mathsf{X}}\to \underline{\mathsf{Y}}\) and
  \(\oplus_\triangleleft' : \underline{\mathsf{Y}}\to
  \underline{\mathsf{Y}}'\).  
Suppose that \(\oplus_\triangleleft\) is a SB-imprinting, and suppose
  \(\oplus_\triangleleft'\) is surjective.  
Then the following two statements are equivalent.
\begin{enumerate}                                                         
  \item 
  When \(\underline{\mathsf{Y}}\) is equipped with the SB structure
  associated to the imprinting \(\oplus_\triangleleft\), the morphism
  \(\oplus_\triangleleft':\underline{\mathsf{Y}}\to
  \underline{\mathsf{Y}}'\) is an SB-imprinting.
  \item 
  \(\oplus_{\triangleleft}'\circ
  \oplus_{\triangleleft}:\underline{\mathsf{X}} \rightarrow
  \underline{\mathsf{Y}}'\) is an SB-imprinting.
  \end{enumerate}
Moreover, if either (1) or (2) hold, then the so does the other, and the
  two SB structures on \(\underline{\mathsf{Y}}'\) are in fact the same.
\end{theorem}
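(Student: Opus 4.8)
The plan is to reduce Theorem \ref{THM_composition_of_SB_cons} to its underlying M-polyfold analogue, namely Theorem \ref{THM_equiv_composing_imprintings}, by carefully threading the strong-bundle structure through the argument. Recall that every SB-imprinting \(\oplus_\triangleleft:\underline{\mathsf{X}}\to\underline{\mathsf{Y}}\) has, by Exercise \ref{EX_10a}(3), an induced M-polyfold imprinting \(\oplus:X\to Y\) on the base, and by Theorem \ref{THM_consequences_strong_bundle_imprinting}(2) the quotient topology \({\mathcal T}_\oplus\) is metrizable. So the first step is to pass to bases: from the data \(\oplus_\triangleleft:\underline{\mathsf{X}}\to\underline{\mathsf{Y}}\) and \(\oplus'_\triangleleft:\underline{\mathsf{Y}}\to\underline{\mathsf{Y}}'\), extract the induced maps \(\oplus:X\to Y\), \(\oplus':Y\to Y'\), and \(\oplus'\circ\oplus:X\to Y'\) on the bases. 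Here one must first check that the M-polyfold imprinting hypotheses of Theorem \ref{THM_equiv_composing_imprintings} apply, and in particular that when \(\underline{\mathsf{Y}}\) carries the SB structure from \(\oplus_\triangleleft\), the induced base \(Y\) carries precisely the M-polyfold structure determined by \(\oplus\); this follows from the uniqueness clause of Theorem \ref{THM_consequences_strong_bundle_imprinting}(3).

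Next I would lift the equivalence. The core of the argument is that an SB-imprinting is exactly a base M-polyfold imprinting together with coherent bundle charts. For the implication (2) \(\Rightarrow\) (1), assuming \(\oplus'_\triangleleft\circ\oplus_\triangleleft\) is an SB-imprinting, one obtains for each \(y'\in Y'\) a local section \(\mathsf{H}_\triangleleft:\mathsf{W}\to\mathsf{X}\) with the SB-smoothness properties, and one defines \(\mathsf{H}'_\triangleleft:=\oplus_\triangleleft\circ\mathsf{H}_\triangleleft\) just as in the proof of Theorem \ref{THM_equiv_composing_imprintings} (where \(H_2=\oplus_1\circ H\)). The verifications \(\oplus'_\triangleleft\circ\mathsf{H}'_\triangleleft=\mathrm{Id}\) and sc\(_\triangleleft\)-smoothness of \(\mathsf{H}'_\triangleleft\circ\oplus'_\triangleleft\) are the bundle analogues of the chain-rule computations in that proof, now invoking sc\(_\triangleleft\)-smoothness of the SB-maps \(\oplus_\triangleleft\) and \(\mathsf{H}_\triangleleft\circ(\oplus'_\triangleleft\circ\oplus_\triangleleft)\). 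For (1) \(\Rightarrow\) (2), one composes local sections \(\mathsf{H}_\triangleleft\circ\mathsf{H}'_\triangleleft\) exactly as \(H=H_1\circ H_2\) in the earlier proof, again after shrinking neighborhoods so the composition is defined, and checks the two defining properties by the sc\(_\triangleleft\)-chain rule.

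For the final "moreover" clause, I would show that the two SB structures on \(\underline{\mathsf{Y}}'\) coincide by verifying that the two identity maps \(\mathrm{Id}:\underline{\mathsf{Y}}'_{\oplus'\circ\oplus}\to\underline{\mathsf{Y}}'_{\oplus'}\) and its inverse are both sc-smooth SB-maps, using Theorem \ref{THM_consequences_strong_bundle_imprinting}(4): an SB-map out of an SB-imprinting target is sc-smooth iff its precomposition with \(\oplus'_\triangleleft\) (resp. \(\oplus'_\triangleleft\circ\oplus_\triangleleft\)) is. This mirrors the closing paragraph of the proof of Theorem \ref{THM_equiv_composing_imprintings} verbatim, only with SB-maps replacing ordinary sc-smooth maps. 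The underlying base M-polyfold structures already agree by Theorem \ref{THM_equiv_composing_imprintings}, so what remains is the coherence of the fiberwise-linear bundle data, which is automatic once the defining SB-maps are identified.

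The main obstacle I anticipate is \emph{not} conceptual but bookkeeping: each M-polyfold manipulation in the proof of Theorem \ref{THM_equiv_composing_imprintings} must be upgraded to respect the double filtration \(\triangleleft\), so that every instance of "sc-smooth" becomes "sc\(_\triangleleft\)-smooth," and one must confirm the sc\(_\triangleleft\)-chain rule is available for these compositions (which it is, since SB-maps induce sc-smooth maps \(E\oplus F^i\to G\oplus H^i\) for \(i\in\{0,1\}\), to which the ordinary sc-chain rule of Theorem \ref{thm_sc_chain_rule} applies levelwise). Care is needed that the local sections \(H_\triangleleft\) and their composites genuinely land in the bundle-total-space neighborhoods \(\mathsf{V}={\rm pr}_{\mathsf{Y}}^{-1}(V)\) rather than merely over the base, but since \(H_\triangleleft\in{\rm Mor}(\mathbf{VBL}_{\mathbb{K}})\) covers the base section by construction, this is guaranteed by the commutativity of the VBL diagrams.
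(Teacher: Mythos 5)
Your proposal is correct and is precisely the argument the paper intends: the paper states this result without its own written proof (it is left as an exercise, introduced by the remark that these results generalize previously proved theorems from sets to vbl-sets), and the expected solution is exactly your upgrade of the proof of Theorem \ref{THM_equiv_composing_imprintings} to the sc$_\triangleleft$-setting, with the local sections $\oplus_\triangleleft\circ \mathsf{H}_\triangleleft$ and $\mathsf{H}_\triangleleft\circ \mathsf{H}'_\triangleleft$ playing the roles of $H_2=\oplus_1\circ H$ and $H=H_1\circ H_2$, and with Theorem \ref{THM_consequences_strong_bundle_imprinting}(4) standing in for Theorem \ref{THM_imprinting_method}(b). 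The only step worth making fully explicit in a write-up is the identification of the two quotient topologies on $\mathsf{Y}'$ (the analogue of the first paragraph of that earlier proof, via $(\oplus'_\triangleleft\circ\oplus_\triangleleft)^{-1}(U)=\oplus_\triangleleft^{-1}\bigl((\oplus'_\triangleleft)^{-1}(U)\bigr)$), which your plan covers only implicitly.
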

%

\begin{exercise}
\hfill\\
Prove Theorem \ref{THM_composition_of_SB_cons}.
\end{exercise}
%

The next result is a generalization of Theorem \ref{THMOP10.2}.

\begin{theorem}[strong bundle embeddings and imprintings]
  \label{THMVV4.7}\label{THM_sb_embeddings_imprintings}
  \hfill\\
Let \(\underline{\mathsf{X}}, \underline{\mathsf{X}}',
  \underline{\mathsf{Y}}, \underline{\mathsf{Y}}'\in {\rm
  Ob}(\mathbf{VBL}_{\mathbb{K}})\), and let 
  \(\oplus_\triangleleft, \oplus_\triangleleft',
  E_\triangleleft, \Phi_\triangleleft \in {\rm
  Mor}(\mathbf{VBL}_{\mathbb{K}})\) so that the following commutative
  diagram holds.
\begin{align*}                                                            
  \begin{CD}
  \underline{\mathsf{X}} @>\oplus_{\triangleleft}>> \underline{\mathsf{Y}}\\
  @A E_\triangleleft AA     @A \Phi_\triangleleft AA\\
  \underline{\mathsf{X}}' @>\oplus'_{\triangleleft} >> \underline{\mathsf{Y}}'
  \end{CD}
  \end{align*}
Additionally, suppose the following hold.
\begin{enumerate}                                                         
  \item 
  \(\underline{\mathsf{X}}\) and \(\underline{\mathsf{X}}'\) each have an
    SB-structure
  \item 
  \(\oplus_{\triangleleft}:\underline{\mathsf{X}} \to
    \underline{\mathsf{Y}}\) an SB imprinting
  \item 
  \(E_\triangleleft:\underline{\mathsf{X}}'\rightarrow
    \underline{\mathsf{X}}\) is a SB isomorphism onto a strong M-polyfold
    sub-bundle of \(\underline{\mathsf{X}}\)
  \item 
  \(\Phi_\triangleleft:\underline{\mathsf{Y}}'\rightarrow
    \underline{\mathsf{Y}}\) is injective
  \item 
  \(\oplus'_{\triangleleft}:\underline{\mathsf{X}}'\rightarrow
    \underline{\mathsf{Y}}'\) is surjective
  \end{enumerate}
If the identity
  \begin{align*}                                                          
    E_\triangleleft(\mathsf{X}') =
    \oplus_\triangleleft^{-1}(\Phi_\triangleleft(\mathsf{Y}'))
    \end{align*}
  holds, then \(\oplus_\triangleleft'\) is an SB-imprinting, and for the
  induced structure on \(\underline{\mathsf{Y}}'\) the morphism
  \(\Phi_\triangleleft\) defines a SB-isomorphism onto a strong
  M-polyfold sub-bundle of \(\underline{\mathsf{Y}}\).
\end{theorem}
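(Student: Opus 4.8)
The plan is to lift the proof of Theorem~\ref{THM_embeddings_induced_imprintings} from the M-polyfold setting to the strong-bundle (total-space) setting, exploiting that all four maps are \(\mathbf{VBL}_{\mathbb{K}}\)-morphisms and hence fiber-wise \(\mathbb{K}\)-linear. First I would record the underlying base diagram: each of \(\oplus_\triangleleft\), \(\oplus'_\triangleleft\), \(E_\triangleleft\), \(\Phi_\triangleleft\) covers a base map \(\oplus, \oplus', e, \phi\), compatibly with the bundle projections. From the hypotheses one reads off that \(\oplus\) is an M-polyfold imprinting (Theorem~\ref{THM_consequences_strong_bundle_imprinting}(2)), that \(e\) is an sc-diffeomorphism onto a sub-M-polyfold (the base of the SB-isomorphism \(E_\triangleleft\)), and that \(\phi\) is injective, the last because \(\Phi_\triangleleft\) injective forces injectivity on the zero sections, which are exactly the base. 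The identity \(e(X') = \oplus^{-1}(\phi(Y'))\) is obtained by projecting the total-space identity \(E_\triangleleft(\mathsf{X}') = \oplus_\triangleleft^{-1}(\Phi_\triangleleft(\mathsf{Y}'))\) to the base: surjectivity of the projections gives one inclusion, and chasing zero sections over a point of \(\oplus^{-1}(\phi(Y'))\) gives the reverse. Theorem~\ref{THM_embeddings_induced_imprintings} then yields that \(\oplus':X'\to Y'\) is an M-polyfold imprinting and \(\phi\) an sc-diffeomorphism onto a sub-M-polyfold of \(Y\).

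Next I would build the local SB-sections. Fixing \(y'\in Y'\) and \(y=\phi(y')\), the SB-imprinting \(\oplus_\triangleleft\) supplies \(V\ni y\) and an SB-morphism \(H_\triangleleft:\underline{\mathsf{V}}\to\underline{\mathsf{X}}\) with \(\oplus_\triangleleft\circ H_\triangleleft=\mathrm{Id}\) and \(H_\triangleleft\circ\oplus_\triangleleft\) an sc-smooth SB-map. Setting \(V'=\phi^{-1}(V)\) and \(\mathsf{V}'={\rm pr}_{\mathsf{Y}'}^{-1}(V')\), one has \(\Phi_\triangleleft(\mathsf{V}')\subset\mathsf{V}\), and the computation \(\oplus_\triangleleft\circ H_\triangleleft\circ\Phi_\triangleleft = \Phi_\triangleleft\) shows \(H_\triangleleft\circ\Phi_\triangleleft(\mathsf{V}')\subset \oplus_\triangleleft^{-1}(\Phi_\triangleleft(\mathsf{Y}'))=E_\triangleleft(\mathsf{X}')\). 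Hence I may define the VBL-morphism \(H'_\triangleleft := E_\triangleleft^{-1}\circ H_\triangleleft\circ\Phi_\triangleleft:\underline{\mathsf{V}'}\to\underline{\mathsf{X}}'\). Using \(\oplus_\triangleleft\circ E_\triangleleft = \Phi_\triangleleft\circ\oplus'_\triangleleft\) inverted on the relevant images, one checks \(\oplus'_\triangleleft\circ H'_\triangleleft = \mathrm{Id}_{\mathsf{V}'}\), while \(H'_\triangleleft\circ\oplus'_\triangleleft = E_\triangleleft^{-1}\circ(H_\triangleleft\circ\oplus_\triangleleft)\circ E_\triangleleft\) is a composition of sc-smooth SB-maps. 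To finish Definition~\ref{DEF_smpb_imprinting} I would prove \(\mathcal{T}_{\mathsf{Y}'}\) metrizable by showing \(\Phi_\triangleleft:(\mathsf{Y}',\mathcal{T}_{\mathsf{Y}'})\to(\mathsf{Y},\mathcal{T}_{\mathsf{Y}})\) is a homeomorphism onto its image, with continuity and openness argued exactly as at the base level of Theorem~\ref{THM_embeddings_induced_imprintings} via the identity \(E_\triangleleft(\mathsf{X}')=\oplus_\triangleleft^{-1}(\Phi_\triangleleft(\mathsf{Y}'))\), so that \(\mathsf{Y}'\) embeds in the metrizable \(\mathsf{Y}\). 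This establishes that \(\oplus'_\triangleleft\) is an SB-imprinting, and Theorem~\ref{THM_consequences_strong_bundle_imprinting}(3) equips \(\underline{\mathsf{Y}}'\) with its SB structure.

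For the final assertion I would equip \(\underline{\mathsf{Y}}'\) with this structure and show \(\Phi_\triangleleft\) is an SB-isomorphism onto a strong M-polyfold sub-bundle. Sc-smoothness of \(\Phi_\triangleleft\) is immediate from Theorem~\ref{THM_consequences_strong_bundle_imprinting}(4), since \(\Phi_\triangleleft\circ\oplus'_\triangleleft = \oplus_\triangleleft\circ E_\triangleleft\) is a composition of sc-smooth SB-maps. To see \(\Phi_\triangleleft(\mathsf{Y}')\) is a strong sub-bundle and \(\Phi_\triangleleft^{-1}\) sc-smooth, I would imitate the last claim of Theorem~\ref{THM_embeddings_induced_imprintings}: near a point of \(\Phi_\triangleleft(\mathsf{Y}')\) take the local strong-bundle retraction \(r_\triangleleft\) of \(\underline{\mathsf{X}}\) onto \(E_\triangleleft(\mathsf{X}')\) and form \(R_\triangleleft := \oplus_\triangleleft\circ r_\triangleleft\circ H_\triangleleft\), a strong-bundle retraction with image \(\Phi_\triangleleft(\mathsf{Y}')\); the identity \(\Phi_\triangleleft^{-1}\circ R_\triangleleft = \oplus'_\triangleleft\circ E_\triangleleft^{-1}\circ r_\triangleleft\circ H_\triangleleft\) then exhibits \(\Phi_\triangleleft^{-1}\) as sc\(_\triangleleft\)-smooth.

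The main obstacle I expect lies not in the diagram chase, which is formally identical to the M-polyfold case, but in the bookkeeping confirming that each constructed map (\(H'_\triangleleft\), \(R_\triangleleft\), \(\Phi_\triangleleft^{-1}\)) is a genuine SB-morphism that is sc\(_\triangleleft\)-smooth, i.e.\ respects the full double filtration \(E\triangleleft F\) (both the diagonal filtration \(E\oplus F^0\) and the shifted \(E\oplus F^1\)) while remaining fiber-wise linear. Fiber-wise linearity is automatic since every factor is a \(\mathbf{VBL}_{\mathbb{K}}\)-morphism, and sc\(_\triangleleft\)-smoothness is stable under composition, so the real point is to verify that the retraction onto the strong sub-bundle \(E_\triangleleft(\mathsf{X}')\) exists as a strong-bundle retraction (not merely an M-polyfold one) and that \(R_\triangleleft\) inherits this; once that is secured, each smoothness claim reduces to its M-polyfold analogue applied compatibly across the two filtrations.
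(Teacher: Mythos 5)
Your proposal is correct and follows exactly the route the paper intends: the paper leaves this theorem as an exercise, describing it as the strong-bundle generalization of Theorem \ref{THM_embeddings_induced_imprintings}, and your argument is a faithful lift of that proof, with \(H'_\triangleleft = E_\triangleleft^{-1}\circ H_\triangleleft\circ \Phi_\triangleleft\) and \(R_\triangleleft = \oplus_\triangleleft\circ r_\triangleleft\circ H_\triangleleft\) mirroring the maps \(H'=e^{-1}\circ H\circ\phi\) and \(R=\oplus\circ r\circ H\) used there. Your preliminary reduction to the base-level statement via the zero-section chase, and your closing remark that the only genuinely new content is verifying sc\(_\triangleleft\)-smoothness across the double filtration, are both sound and consistent with the paper's framework.
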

%

\begin{corollary}[SB-imprintings and sub-M-polyfold bundles]
  \label{COR_sb_imprtintings_sub_polyfold_bundles}
  \hfill\\
Suppose \(\underline{\mathsf{X}}, \underline{\mathsf{X}}',
  \underline{\mathsf{Y}}, \underline{\mathsf{Y}}' \in {\rm
  Ob}(\mathbf{VBL}_{\mathbb{K}})\) and \(\oplus_\triangleleft:
  \underline{\mathsf{X}}\to \underline{\mathsf{Y}}\) is an SB-imprinting.
Suppose further that \(\mathsf{Y}'\subset \mathsf{Y}\), \(Y'\subset Y\),
  \(\oplus_\triangleleft^{-1}(\mathsf{Y}') = \mathsf{X}'\), and
  \(\oplus^{-1}(Y') = X'\).
Finally, suppose that \(\underline{\mathsf{X}}'\) has an SB-structure
  induced from \(\mathsf{\mathsf{X}}\).
Then \(\oplus_\triangleleft'\in {\rm Mor}(\mathbf{VBL}_{\mathbb{K}})\) is
  an SB-imprinting, where \(\oplus_\triangleleft':
  \underline{\mathsf{X}}'\to \underline{\mathsf{Y}}'\), is determined by
  \begin{align*}                                                            
    &\oplus_\triangleleft':\mathsf{X}'\to \mathsf{Y}'\\
    &\oplus_\triangleleft':= \oplus_\triangleleft \big|_{\mathsf{X}'}.
    \end{align*}
Additionally, \(\underline{\mathsf{Y}}'\) has an SB-structure induced from
  both \(\oplus_\triangleleft'\) and separately from the ambient space
  \(\mathsf{Y}\), however these two SB-structures are in fact the same.
\end{corollary}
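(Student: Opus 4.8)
The plan is to deduce this corollary directly from Theorem \ref{THM_sb_embeddings_imprintings}, exactly as Corollary \ref{COR_imprtintings_sub_polyfolds} is deduced from Theorem \ref{THM_embeddings_induced_imprintings} at the M-polyfold level. First I would fix the data to which the theorem is applied: take \(\underline{\mathsf{X}}\), \(\underline{\mathsf{Y}}\), and \(\oplus_\triangleleft\) as given; let \(E_\triangleleft:\underline{\mathsf{X}}'\to \underline{\mathsf{X}}\) be the inclusion of \(\underline{\mathsf{X}}'\) equipped with its induced SB-structure; let \(\Phi_\triangleleft:\underline{\mathsf{Y}}'\to \underline{\mathsf{Y}}\) be the inclusion; and set \(\oplus_\triangleleft' := \oplus_\triangleleft\big|_{\mathsf{X}'}\). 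Before invoking the theorem I would check that \(\underline{\mathsf{Y}}' = (\mathsf{Y}', Y', {\rm pr}_{\mathsf{Y}'})\) is a genuine object of \(\mathbf{VBL}_{\mathbb{K}}\); this is the sub-object construction recorded in the discussion preceding the corollary, applied to the subset \(Y'\subset Y\), so that \(\mathsf{Y}' = {\rm pr}_{\mathsf{Y}}^{-1}(Y')\) carries full fibers of \(\mathsf{Y}\), and likewise for \(\underline{\mathsf{X}}'\). With this in place, the inclusions \(E_\triangleleft\) and \(\Phi_\triangleleft\) are fiber-wise \(\mathbb{K}\)-linear, hence lie in \({\rm Mor}(\mathbf{VBL}_{\mathbb{K}})\), and the required square commutes since \(\oplus_\triangleleft\) restricts to \(\oplus_\triangleleft'\).

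Next I would verify the five hypotheses of Theorem \ref{THM_sb_embeddings_imprintings}. Condition (1) holds because \(\underline{\mathsf{X}}\) carries an SB-structure by assumption and \(\underline{\mathsf{X}}'\) the induced one; condition (2) is the standing hypothesis that \(\oplus_\triangleleft\) is an SB-imprinting; condition (3) is precisely the assumption that \(\underline{\mathsf{X}}'\) is a strong M-polyfold sub-bundle of \(\underline{\mathsf{X}}\) via its induced structure, so the inclusion \(E_\triangleleft\) is an SB-isomorphism onto its image; condition (4) holds since an inclusion is injective; and condition (5) holds because \(\oplus_\triangleleft\) is surjective and \(\mathsf{X}' = \oplus_\triangleleft^{-1}(\mathsf{Y}')\), whence \(\oplus_\triangleleft'\) surjects onto \(\mathsf{Y}'\). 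The key identity required by the theorem, \(E_\triangleleft(\mathsf{X}') = \oplus_\triangleleft^{-1}(\Phi_\triangleleft(\mathsf{Y}'))\), collapses under these identifications to \(\mathsf{X}' = \oplus_\triangleleft^{-1}(\mathsf{Y}')\), which is one of the standing assumptions. Theorem \ref{THM_sb_embeddings_imprintings} then yields that \(\oplus_\triangleleft'\) is an SB-imprinting and that \(\Phi_\triangleleft\) is an SB-isomorphism of \(\underline{\mathsf{Y}}'\) (with its \(\oplus_\triangleleft'\)-structure) onto a strong M-polyfold sub-bundle of \(\underline{\mathsf{Y}}\).

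For the final assertion, that the two SB-structures on \(\underline{\mathsf{Y}}'\) coincide, I would argue that the conclusion of the theorem already identifies them. The statement that \(\Phi_\triangleleft\) is an SB-isomorphism onto a strong sub-bundle of \(\underline{\mathsf{Y}}\) means exactly that \(\mathsf{Y}'\) is a strong sub-bundle of \(\mathsf{Y}\) and that the \(\oplus_\triangleleft'\)-structure is carried isomorphically, by the identity map on the underlying set, onto the sub-bundle structure inherited from \(\underline{\mathsf{Y}}\). Since the underlying map is the identity on the fixed set \(\mathsf{Y}'\), the two SB-structures are equal, which is the desired agreement.

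I expect the only genuine obstacle to be bookkeeping rather than mathematics: one must be careful that \(\mathsf{Y}'\) is taken with full fibers (so \(\mathsf{Y}' = {\rm pr}_{\mathsf{Y}}^{-1}(Y')\)), since otherwise \(\underline{\mathsf{Y}}'\) would not be the sub-VBL object to which the theorem applies, and that the induced SB-structure on \(\underline{\mathsf{X}}'\) is exactly the one making the inclusion a strong sub-bundle embedding. Once these identifications are pinned down, the corollary is an immediate specialization of Theorem \ref{THM_sb_embeddings_imprintings}, with every hypothesis matching a standing assumption.
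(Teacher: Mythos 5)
Your proposal is correct and follows precisely the route the paper intends: the corollary is the specialization of Theorem \ref{THM_sb_embeddings_imprintings} to the case where \(E_\triangleleft\) and \(\Phi_\triangleleft\) are inclusions (the paper leaves this as an exercise, mirroring how Corollary \ref{COR_imprtintings_sub_polyfolds} is called ``an obvious corollary'' of Theorem \ref{THM_embeddings_induced_imprintings}). Your verification of the five hypotheses, the reduction of the key identity to \(\mathsf{X}'=\oplus_\triangleleft^{-1}(\mathsf{Y}')\), and the observation that the two SB-structures on \(\underline{\mathsf{Y}}'\) coincide because the identifying isomorphism is the identity on the underlying set are all exactly as intended, and your care about taking \(\mathsf{Y}'={\rm pr}_{\mathsf{Y}}^{-1}(Y')\) per the sub-VBL convention is the right bookkeeping point.
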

%

\begin{exercise}\hfill\\
Prove Theorem \ref{THM_sb_embeddings_imprintings} and Corollary
  \ref{COR_sb_imprtintings_sub_polyfold_bundles}.\\
\end{exercise}
%

As a consequence of Theorem \ref{THM_sb_embeddings_imprintings} there is a
  well-defined pull-back operation.

\begin{definition}[SB-admissible morphism]
  \label{DEF_sb_admissible_morphism} 
  \hfill\\
Let \(\oplus_{\triangleleft}:\underline{\mathsf{X}}\rightarrow
  \underline{\mathsf{Y}}\) be an SB-imprinting, and let \(\Phi\in {\rm
  Mor}(\mathbf{VBL}_{\mathbb{K}})\) with
  \(\Phi:\underline{\mathsf{Y}}'\rightarrow \underline{\mathsf{Y}}\)
  injective.
Define \(\underline{\mathsf{X}} = (\mathsf{X}', X', {\rm
  pr}_{\mathsf{X}'})\in {\rm Ob}(\mathbf{VBL}_{\mathbb{K}})\) by the
  following:
  \begin{align*}                                                          
    \mathsf{X}':=\{x \in X\, :\,  \oplus_{\triangleleft}(x)\in
    \Phi(\mathsf{Y}')\},
    \qquad
    X' := X,
    \qquad
    {\rm pr}_{\mathsf{X}'}:= {\rm pr}_{\mathsf{X}}\big|_{\mathsf{X}'}.
    \end{align*}
We say that \(\Phi\) is an \emph{SB-admissible morphism}\index{SB-admissible
  morphism} provided the subset \(\underline{\mathsf{X}}'\) is a strong
  M-polyfold sub-bundle of \(\underline{\mathsf{X}}\).
\end{definition}
%

If \(\oplus_{\triangleleft}:\underline{\mathsf{X}}\rightarrow
  \underline{\mathsf{Y}}\) is an SB-imprinting and \(\Phi\in {\rm
  Mor}(\mathbf{VBL}_{\mathbb{K}})\) with
  \(\Phi:\underline{\mathsf{Y}}'\rightarrow \underline{\mathsf{Y}}\) is an
  SB-admissible morphism, we can pull-back an SB structure to
  \(\underline{\mathsf{Y}}'\) as follows.
First,  we define \(\underline{\mathsf{X}}'\in {\rm
  Ob}(\mathbf{VBL}_{\mathbb{K}})\) as in Definition
  \ref{DEF_sb_admissible_morphism} by the following:
  \begin{align*}                                                          
    \mathsf{X}':=\{x \in X\, :\,  \oplus_{\triangleleft}(x)\in
    \Phi(\mathsf{Y}')\},
    \qquad
    X' := X,
    \qquad
    {\rm pr}_{\mathsf{X}'}:= {\rm pr}_{\mathsf{X}}\big|_{\mathsf{X}'},
    \end{align*}
  which, by assumption of \(\Phi\) being an SB-admissible morphism, is a
  strong M-polyfold sub-bundle of \(\underline{\mathsf{X}}\).
We then define \(\oplus_\triangleleft'\in {\rm
  Mor}(\mathbf{VBL}_{\mathbb{K}})\) by
  \begin{align*}                                                          
    &\oplus_\triangleleft': \mathsf{X}'\to \mathsf{Y}',\\
    &\oplus_\triangleleft':=\Phi^{-1}\circ
    \oplus_{\triangleleft}\big|_{\mathsf{X}'}
    \end{align*}
  which has naturally underlying \(\oplus': X'\to Y'\) given by
  \(\oplus':=\phi^{-1}\circ \oplus\).
In view of Theorem \ref{THM_sb_embeddings_imprintings}, it follows that
  \(\oplus_\triangleleft': \underline{\mathsf{X}}'\to
  \underline{\mathsf{Y}}'\) is an SB-imprinting.
We say \(\underline{\mathsf{Y}}'\) has a pulled-back SB structure from
  \(\underline{\mathsf{X}}\) via \(
  \oplus_\triangleleft' = \Phi^{-1}\circ
  \oplus_\triangleleft\big|_{\mathsf{X}'}\).


Recall that \(\mathbf{VBL}_{\mathbb{K}}\) has a product and coproduct
  structure as given in equations (\ref{EQ_vbl_product}) and
  (\ref{EQ_vbl_coproduct}).
Consequently, we generalize Theorem \ref{THM_mpoly_product} and Theorem
  \ref{THM_mpoly_coproduct} via the following result.

\begin{theorem}[SB-product and SB-coproduct imprintings]
  \label{THMBB4.8} \label{THM_sb_product_coproduct_imprintings}
  \hfill\\
Let \(\underline{\mathsf{X}}, \underline{\mathsf{X}}',
  \underline{\mathsf{Y}}, \underline{\mathsf{Y}}'\in {\rm
  Ob}(\mathbf{VBL}_{\mathbb{K}})\), and let \(\oplus_\triangleleft,
  \oplus_\triangleleft'\in {\rm Mor}(\mathbf{VBL}_{\mathbb{K}})\) so that
  \(\oplus_\triangleleft: \underline{\mathsf{X}}\to \underline{
  \mathsf{Y}}\) and \(\oplus_\triangleleft': \underline{\mathsf{X}}'\to
  \underline{ \mathsf{Y}}'\) are SB-imprintings.
Then \(\underline{\mathsf{X}}\times \underline{\mathsf{X}}'\) and
  \(\underline{\mathsf{X}}\sqcup \underline{\mathsf{X}}'\) each have an
  SB-structure, and moreover each of
  \begin{align*}                                                          
    \oplus_\triangleleft \times \oplus_\triangleleft ': 
    \underline{\mathsf{X}}\times \underline{\mathsf{X}}' \to 
    \underline{\mathsf{Y}}\times \underline{\mathsf{Y}}'
    \end{align*}
  and 
  \begin{align*}                                                          
    \oplus_\triangleleft \sqcup \oplus_\triangleleft ': 
    \underline{\mathsf{X}}\sqcup \underline{\mathsf{X}}' \to 
    \underline{\mathsf{Y}}\sqcup \underline{\mathsf{Y}}'
    \end{align*}
  are SB-imprintings.
\end{theorem}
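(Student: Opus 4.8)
The plan is to reduce the product assertion to the already-established M-polyfold product theorem (Theorem \ref{THM_mpoly_product}) and then upgrade along the fibers, treating the coproduct analogously but more cheaply. First I would record the base-level input: by Theorem \ref{THM_consequences_strong_bundle_imprinting}(2) the underlying maps \(\oplus:X\to Y\) and \(\oplus':X'\to Y'\) are M-polyfold imprintings, so by Theorem \ref{THM_mpoly_product} the map \(\oplus\times\oplus':X\times X'\to Y\times Y'\) is an M-polyfold imprinting, its induced M-polyfold structure is the product structure, and its quotient topology is the product topology \(\mathcal{T}_\oplus\times\mathcal{T}_{\oplus'}\). This settles every statement that lives on the base of the bundles.

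Next I would put the SB-structure on the total space. The VBL-product \(\underline{\mathsf{X}}\times \underline{\mathsf{X}}' = (\mathsf{X}\times \mathsf{X}', X\times X', {\rm pr}_{\mathsf{X}}\times {\rm pr}_{\mathsf{X}'})\) from Definition \ref{DEF_product_coproduct_vbl} carries a canonical strong M-polyfold bundle structure, namely the product bundle over \(X\times X'\) whose double filtration is the level-wise product \((\mathsf{X}\times \mathsf{X}')_{m,k}:=\mathsf{X}_{m,k}\times \mathsf{X}'_{m,k}\); this is just the product of two strong bundles, and the only thing to check is that the fiber-product of sc-Banach fibers carrying the \(\triangleleft\)-filtration is again such a fiber, which is immediate. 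I would then show that the quotient topology \(\mathcal{T}_{\mathsf{Y}\times \mathsf{Y}'}\) on \(\mathsf{Y}\times \mathsf{Y}'\) determined by \(\oplus_\triangleleft\times \oplus'_\triangleleft\) equals the product topology \(\mathcal{T}_{\mathsf{Y}}\times \mathcal{T}_{\mathsf{Y}'}\), by exactly the argument used for the base in Theorem \ref{THM_mpoly_product}: one inclusion is formal, and for the reverse, given a quotient-open \(W\) and a point of \(W\), I pull back through the local SB-sections \(H_\triangleleft, H'_\triangleleft\) to exhibit a product neighborhood inside \(W\). Since a product of two metrizable spaces is metrizable, condition (2) of Definition \ref{DEF_smpb_imprinting} follows.

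For condition (3), given \((y,y')\in Y\times Y'\) I would take the SB-sections \(H_\triangleleft:\underline{\mathsf{V}}\to \underline{\mathsf{X}}\) and \(H'_\triangleleft:\underline{\mathsf{V}}'\to \underline{\mathsf{X}}'\) supplied by the two SB-imprintings and form \(H_\triangleleft\times H'_\triangleleft\) over \(V\times V'\). Surjectivity of \(\oplus_\triangleleft\times \oplus'_\triangleleft\) and the identity \((\oplus_\triangleleft\times \oplus'_\triangleleft)\circ(H_\triangleleft\times H'_\triangleleft)=\mathrm{id}\) are immediate, while the retraction \((H_\triangleleft\times H'_\triangleleft)\circ(\oplus_\triangleleft\times \oplus'_\triangleleft)\) factors as the product \((H_\triangleleft\circ \oplus_\triangleleft)\times (H'_\triangleleft\circ \oplus'_\triangleleft)\) of two sc-smooth SB-maps, hence is itself sc-smooth and fiberwise linear, i.e.\ an sc-smooth SB-map. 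This verifies that \(\oplus_\triangleleft\times \oplus'_\triangleleft\) is an SB-imprinting, and the uniqueness clause in Theorem \ref{THM_consequences_strong_bundle_imprinting}(3) then identifies the induced SB-structure with the product SB-structure above. The coproduct case runs in parallel, invoking Theorem \ref{THM_mpoly_coproduct} on the base and using that the disjoint-union bundle and its sections are defined summand-by-summand, so that the quotient topology is the disjoint-union topology (metrizable since each summand is) and conditions (2)--(3) reduce to the corresponding conditions for \(\oplus_\triangleleft\) and \(\oplus'_\triangleleft\) separately.

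The step I expect to be the real work is the verification, hidden in the third paragraph, that a product of sc-smooth SB-maps is again an sc-smooth SB-map: one must check that the product respects the double filtration \((E\triangleleft F)_{m,k}\) and remains linear on the fiber factor. Unwinding the definition of an sc\(_\triangleleft\)-smooth map, this amounts to showing that the induced maps on \(E\oplus F^i\) for \(i\in\{0,1\}\) are sc-smooth for the product, which follows level-wise from the hypotheses together with the sc-chain rule and the product behaviour already available on the bases. It is bookkeeping rather than a genuinely new difficulty, but it is the place where all of the \(\triangleleft\)-structure has to be tracked carefully.
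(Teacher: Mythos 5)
Your proof is correct and follows exactly the route the paper intends: the paper leaves this theorem as an exercise and explicitly frames it as the generalization of Theorems \ref{THM_mpoly_product} and \ref{THM_mpoly_coproduct}, which is precisely how you argue (base case via the M-polyfold product/coproduct theorems, total space via the product topology argument and the product sections \(H_\triangleleft\times H_\triangleleft'\)). You also correctly isolate the only genuinely new checkpoint, namely that a product of sc\(_\triangleleft\)-smooth maps respects the double filtration and remains fiberwise linear, which reduces level-wise to the sc-smoothness of products on the diagonal filtrations \(E\oplus F^i\), \(i\in\{0,1\}\).
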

%

\begin{exercise}\hfill\\
Prove Theorem \ref{THM_sb_product_coproduct_imprintings}.\\
\end{exercise}
%

\begin{definition}[SB-restrictions]
  \label{DFE_sb_restrictions}
  \hfill\\
Let \(\oplus_\triangleleft\in {\rm Mor}(\mathbf{VBL}_{\mathbb{K}})\) with
  \(\oplus_\triangleleft: \underline{\mathsf{X}}\to \underline{\mathsf{Y}}\)
  an SB-imprinting.
Suppose further that \(I\) is a finite set, and
  \(\{\underline{\mathsf{B}}_i\}_{i\in I} \subset {\rm
  Ob}(\mathbf{VBL}_{\mathbb{K}})\) so that each \(\underline{\mathsf{B}}_i\)
  has an SB-structure.
Finally, suppose \(\mathbf{p} = \{p_i\}_{i\in I}\subset {\rm
  Mor}(\mathbf{VBL}_{\mathbb{K}})\) are morphisms of the form \(p_i:
  \underline{\mathsf{Y}}\to \underline{\mathsf{B}}_i\) and have the property
  that the compositions
  \begin{align*}                                                          
    p_i\circ \oplus_\triangleleft :\underline{\mathsf{X}}\to
    \underline{\mathsf{B}}_i
    \end{align*}
  yield sc-smooth SB-maps \(p_i \circ \oplus_\triangleleft: \mathsf{X}\to
  \mathsf{B}_i\) for each \(i\in I\).  
In this case, we call \(\oplus_\triangleleft\) an SB-imprinting with
  SB-restrictions \(\mathbf{p}\), or the pair \((\oplus_\triangleleft,
  \mathbf{p})\)\index{\((\oplus_{\triangleleft},\bm{p}_{\triangleleft})\)}
  an SB-imprinting with restrictions.
\end{definition}
%

The following definition will be crucial for fibered product constructions.

\begin{definition}[SB-plumbable]
  \label{DEF_sb_plum}\index{SB-plumbable}
  \hfill\\
Let \((\oplus_\triangleleft, \mathbf{p})\) and \((\oplus_\triangleleft',
  \mathbf{p}')\) be SB-imprintings with restrictions.
Here \(\oplus_\triangleleft:\underline{\mathsf{X}}\to
  \underline{\mathsf{Y}}\) and
  \(\oplus_\triangleleft':\underline{\mathsf{X}}'\to
  \underline{\mathsf{Y}}'\), and \(\mathbf{p} = \{p_i\}_{i\in I}\) and
  \(\mathbf{p}' = \{p_{i'}\}_{i'\in I'}\)  with \(p_i:
  \underline{\mathsf{Y}}\to \underline{\mathsf{B}}_i\) and \(p_i':
  \underline{\mathsf{Y}}'\to \underline{\mathsf{B}}_i'\).
For a fixed pair \((i_0, i_0')\in I\times I'\), we say that
  \((\oplus_\triangleleft, \mathbf{p})\) and \((\oplus_\triangleleft',
  \mathbf{p}')\) are \((i_0, i_0')\)-plumbable provided that
  \(\underline{\mathsf{B}}_{i_0} = \underline{\mathsf{B}}_{i_0'}'\) and
  \begin{align*}                                                          
    \mathsf{X}{_{i_0}\times_{i_0'}}
    \mathsf{X}':=\{(\mathsf{x},\mathsf{x}')\in \mathsf{X}\times \mathsf{X}':
    p_{i_0} \circ \oplus_\triangleleft(\mathsf{x}) = p_{i_0'}'\circ
    \oplus_\triangleleft'(\mathsf{x}')\}
    \end{align*}
  is a strong M-polyfold sub-bundle of \(\mathsf{X}\times \mathsf{X}'\).
\end{definition}
%

\begin{definition}[\((i_0,i_0')\)-plumbing]
  \label{DEF_ii_plumbing}
  \hfill\\
Let \((\oplus_{\triangleleft},\bm{p})\) and
  \((\oplus'_{\triangleleft},\bm{p}')\) be SB-imprintings with
  restrictions, which are also are \((i_0,i_0')\)-{plumbable}.
Here \(\oplus_\triangleleft:\underline{\mathsf{X}}\to
  \underline{\mathsf{Y}}\) and
  \(\oplus_\triangleleft':\underline{\mathsf{X}}'\to
  \underline{\mathsf{Y}}'\), and \(\mathbf{p} = \{p_i\}_{i\in I}\) and
  \(\mathbf{p}' = \{p_{i'}\}_{i'\in I'}\)  with \(p_i:
  \underline{\mathsf{Y}}\to \underline{\mathsf{B}}_i\) and \(p_i':
  \underline{\mathsf{Y}}'\to \underline{\mathsf{B}}_i'\).
We define the \((i_0, i_0')\)-plumbing of this pair via
  \begin{align*}                                                          
    (\oplus_\triangleleft'', \mathbf{p}''):= (\oplus_\triangleleft,
    \mathbf{p}) \; {_{i_0}\times_{i_0'}} \; (\oplus_\triangleleft',
    \mathbf{p}')
    \end{align*}
  where
  \begin{align*}                                                          
    &\oplus_\triangleleft'': \underline{\mathsf{X}}{_{i_0}\times_{i_0'}}
    \underline{\mathsf{X}}'\to
    \underline{\mathsf{Y}}{_{i_0}\times_{i_0'}} \underline{\mathsf{Y}}'
    \\
    &\oplus_\triangleleft'':= \oplus_\triangleleft\times
    \oplus_\triangleleft'\big|_{\mathsf{X}{_{i_0}\times_{i_0'}}
    \mathsf{X}'}
    \end{align*}
  and we have abused notation a bit by abbreviating 
  \begin{align*}                                                          
    \underline{\mathsf{Y}}{_{i_0}\times_{i_0'}} \underline{\mathsf{Y}}' :=
    \underline{\mathsf{Y}}{_{p_{i_0}}\times_{p_{i_0'}'}}
    \underline{\mathsf{Y}}'. 
    \end{align*}
Because \((\oplus_{\triangleleft},\bm{p})\) and
  \((\oplus'_{\triangleleft},\bm{p}')\) are \((i_0, i_0')\)-plumbable, it
  follows that the fiber-priduct \(
  \underline{\mathsf{X}}{_{i_0}\times_{i_0'}} \underline{\mathsf{X}}'\)
  has an SB-structure induced from \(\underline{\mathsf{X}}\).
Consequently, by Corollary \ref{COR_sb_imprtintings_sub_polyfold_bundles},
  it follows that \(\oplus_\triangleleft'':
  \underline{\mathsf{X}}{_{i_0}\times_{i_0'}}\underline{\mathsf{X}}\to
  \underline{\mathsf{Y}}{_{i_0}\times_{i_0'}}\underline{\mathsf{Y}}'\) is an
  SB-imprinting, and the SB-structure on 
  \(\underline{\mathsf{Y}}{_{i_0}\times_{i_0'}}\underline{\mathsf{Y}}'\)
  induced from \(\underline{\mathsf{Y}}\times\underline{\mathsf{Y}}'\)
  agrees with the SB-structure induced from \(\oplus_\triangleleft''\).

Next we aim to define restrictions \(\mathbf{p}''\).
To that end, we first define a new index set \(I\) via the following.
  \begin{align*}                                                          
    I'' = \{1\}\times (I\setminus \{i_0\})\; \; \bigcup \; \; \{2\}\times
    (I'\setminus \{i_0'\})
    \end{align*}
Next, for each \(i'' = (i_1'', i_2'') \in I''\), we define
  \begin{align*}                                                          
    p_{i''}'' = 
    \begin{cases}
    p_{i_2''}\circ \pi_1  &\text{if }i_1'' = 1
    \\
    p_{i_2''}' \circ \pi_2 &\text{if }i_1'' = 2
    \end{cases}
    \end{align*}
  where \(\pi_1\) and \(\pi_2\) are the projections from the fibered
  product onto the first and second factor respectively, and hence we
  can define the indexed set of restrictions as follows.
  \begin{align*}                                                          
    \mathbf{p}'' = \{p_{i''}''\}_{i''\in I''}
    \end{align*}
We then refer to the pair \((\oplus_\triangleleft'', \mathbf{p}'')\) as
  the \((i_0, i_0')\)-plumbing of \((\oplus_\triangleleft, \mathbf{p})\)
  with \((\oplus_\triangleleft', \mathbf{p}')\).

\end{definition}
%

We leave it to the reader to work out results and statements 
dealing with restrictions.

%
\section{Functorial Method for  M-Polyfold Constructions}
  \label{CONST-F}\label{SEC_functorial_method}
The following results are very useful for the construction of M-polyfolds
  and strong bundles over them. 
In particular, the results show that certain constructions which can be
  carried out in a sufficiently functorial way for the standard vector
  spaces \({\mathbb R}^n\) have natural extensions to smooth manifolds.
We shall refer to the method presented below in the space as well as
  strong bundle context as the {\bf embedding method}.

%
\subsection{M-Polyfold Construction Functors}
  \label{qsec5.1}\label{SEC_construction_functors}
We consider the category \({\mathcal N}\)\index{\({\mathcal N}\)}, which
  has non-negative integers as objects, and has morphisms given as follows.
For \(N, L\in {\rm Ob}(\mathcal{N})\), we define the morphisms
  \({\rm Mor}(\mathcal{N})\) to consist of maps \(f:\mathbb{R}^N\to
  \mathbb{R}^L\); to be clear, here \(N\) is the
  source of \(f\) and \(L\) is the target of \(f\).

\begin{definition}[M-polyfold construction functor]
  \label{def:poly construction}
  \label{DEF_poly_construction_functor}
  \hfill\\
An \emph{M-polyfold construction functor}\index{M-polyfold construction
  functor} over \({\mathcal N}\) consists of a covariant functor \(X\),
  which does the following.
To each each natural number \(N\) it associates an M-polyfold which we
  denote \(X(N)\), and to each morphism \(f:N\rightarrow L\) it associates
  an sc-smooth map \(X(f):X(N)\rightarrow X(L)\).
Moreover we require that the M-polyfolds are equipped with an additional
  structure, namely that for each object \(N\) we have a map which
  associates to a point \(u\in X(N)\) a subset \({\rm im}(u)\subset
  {\mathbb R}^N\), which we call the image of \(u\).
The following is assumed to hold:
  \begin{itemize}
    \item[(1)] 
      Given an open subset \(U\) of \({\mathbb R}^N\) the subset of
      \(X(N)\) which consists of all \(u\) with \({\rm im}(u)\subset U\)
      is open in \(X(N)\).
    \item[(2)] 
      We have \({\rm im}(X(f)(u))=f({\rm im}(u))\).
    \item[(3)] 
      If \(f,g:N\rightarrow L\) are morphisms and \(u\in X(N)\), then 
      \begin{equation*}                                                   
	f\big|_{{\rm im}(u)}=g	\big|_{{\rm
	im}(u)}\qquad\text{implies}\qquad X(f)(u)=X(g)(u)
	\end{equation*}
    \end{itemize}
\end{definition}
%

\begin{remark} 
  \hfill\\
A typical situation is where \(X(N)\) consists of certain maps
  \(u:\Omega_u\rightarrow {\mathbb R}^N\), where the domain \(\Omega_u\)
  depends on \(u\).
Then we can take \({\rm im}(u)=u(\Omega_u)\) and \(X(f)(u)=f\circ u\).
However in the generality of our definition an element \(u\) does not
  need to be a map and \({\rm im}(u)\) can be an abstract construction
  which cannot be identified with an image of a map.
\end{remark}

%

We shall let \(\mathbf{MPoly}\)\index{\(\mathbf{MPoly}\)} denote the category of
  M-polyfolds with the sc-smooth maps between them.
We shall also let \({\mathcal P}\) denote the category consisting of
  smooth manifolds (without boundaries) as objects, and with morphisms given
  by smooth maps between them.
Given an object in \({\mathcal P}\), each connected component
  admits a proper smooth embedding into some \({\mathbb R}^N\). 
The first observation is given by the following proposition.

\begin{proposition}[extension of M-polyfold construction functors]
  \label{EXTEND}\label{PROP_extension_m_poly_construction_functors}
  \hfill\\
Given an M-polyfold construction functor \(X:{\mathcal N}\rightarrow
  \mathbf{MPoly} \), there exists a natural extension to a functor \(X:{\mathcal
  P}\rightarrow \mathbf{MPoly}\), which associates to a each smooth manifold
  \(M\in {\rm Ob}(\mathcal{P})\) an M-Polyfold \(X(M)\in {\rm
  Ob}(\mathbf{MPoly})\), and to each smooth map \(f:M\to M'\) it associates
  an sc-smooth map between M-polyfolds:
  \begin{equation*}                                                       
    X(f): X(M)\rightarrow X(M').
    \end{equation*}
Furthermore we have a natural sc-diffeomorphism \(X(N)\rightarrow
  X({\mathbb R}^N)\).
\end{proposition}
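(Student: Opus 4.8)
The plan is to realize \(X(M)\) as a sub-M-polyfold of a suitable \(X(N)\), extracting everything from a proper embedding of \(M\) together with the three functoriality axioms of Definition \ref{DEF_poly_construction_functor}. First I would fix, for each smooth manifold \(M\), a proper smooth embedding \(e:M\to{\mathbb R}^N\) (Whitney), set \(A:=e(M)\), a closed submanifold, and choose a tubular neighbourhood: an open \(U\supset A\) carrying a smooth retraction \(\pi:U\to A\). As a set I then define
\[
  X(M):=\{u\in X(N)\ :\ {\rm im}(u)\subset A\}.
\]
The key technical move is to promote \(\pi\) to a \emph{globally} defined smooth map, i.e.\ a morphism in \({\mathcal N}\): choose \(\chi:{\mathbb R}^N\to[0,1]\) smooth with \(\supp\chi\subset U\) and \(\chi\equiv 1\) on a smaller open \(U'\) with \(A\subset U'\), and set \(\widehat\pi(x):=\chi(x)\pi(x)+(1-\chi(x))x\). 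This is smooth on all of \({\mathbb R}^N\); although it is \emph{not} idempotent, it fixes \(A\) pointwise and agrees with \(\pi\) on \(U'\), which is all I need.

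Next I would verify that \(r:=X(\widehat\pi)\), restricted to the open set \({\mathcal W}:=\{u\in X(N):{\rm im}(u)\subset U'\}\) (open by axiom (1), and containing \(X(M)\) since \(A\subset U'\)), is an sc-smooth retraction onto \(X(M)\). Indeed, for \(u\in{\mathcal W}\) axiom (2) gives \({\rm im}(r(u))=\widehat\pi({\rm im}(u))=\pi({\rm im}(u))\subset A\), so \(r({\mathcal W})\subset X(M)\subset{\mathcal W}\); the idempotency \(r\circ r=r\) and the fixed-point property \(r|_{X(M)}={\rm id}\) both follow from axiom (3), comparing \(\widehat\pi\) with \({\rm id}_{{\mathbb R}^N}\) on images contained in \(A\) (where \(\widehat\pi={\rm id}\)). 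By Definition \ref{DEF_sub_m_polyfold} this exhibits \(X(M)\) as a sub-M-polyfold of the open subset \({\mathcal W}\subset X(N)\), hence of \(X(N)\), and by Lemma \ref{LEM_structures_sub_poly_inherit} it acquires a canonical induced M-polyfold structure. Taking \(e={\rm id}\) for \(M={\mathbb R}^N\) gives \(A={\mathbb R}^N\), \({\mathcal W}=X(N)\) and \(X({\mathbb R}^N)=X(N)\) on the nose, which (after the naturality step below) yields the asserted natural sc-diffeomorphism \(X(N)\to X({\mathbb R}^N)\).

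For morphisms, given smooth \(f:M\to M'\) with chosen embeddings \(e,e'\) into \({\mathbb R}^N,{\mathbb R}^{N'}\), I would extend \(\bar f:=e'\circ f\circ e^{-1}:A\to A'\) to a global smooth \(F:{\mathbb R}^N\to{\mathbb R}^{N'}\) (set \(F=\chi\cdot(\bar f\circ\pi)\) via the same cutoff) and define \(X(f):=X(F)|_{X(M)}\). Axiom (3) makes this independent of the extension \(F\); axiom (2) shows \({\rm im}(X(F)(u))=\bar f({\rm im}(u))\subset A'\), so the image lies in \(X(M')\); and sc-smoothness as a map into the sub-M-polyfold \(X(M')\) follows from Lemma \ref{LEM_structures_sub_poly_inherit} by writing \(X(f)=j'^{-1}\circ r'\circ X(F)|_{X(M)}\), a composition of sc-smooth maps (here \(r'\) fixes \(X(M')\), so the formula reproduces \(X(f)\)). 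Functoriality \(X({\rm id}_M)={\rm id}\) and \(X(g\circ f)=X(g)\circ X(f)\) is then immediate, since \({\rm id}_{{\mathbb R}^N}\) extends \(\overline{{\rm id}_M}\) and \(G\circ F\) extends \(\overline{g\circ f}\), so both reduce to functoriality of the original \(X\) through axiom (3).

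Finally I would address naturality, i.e.\ independence of all choices. Two embeddings of the same \(M\) are related by a diffeomorphism \(A_1\to A_2\), which I extend in both directions to global smooth maps; the induced \(X\)-maps are mutually inverse by the functor axioms, giving a canonical sc-diffeomorphism between the two models of \(X(M)\), compatible with morphisms. The main obstacle I anticipate is exactly this interface between topology and the axioms: a closed manifold such as \(S^k\) is in general \emph{not} a smooth retract of the ambient \({\mathbb R}^N\), so no global idempotent retraction onto \(A\) exists, and one is forced to work on the open neighbourhood \({\mathcal W}\) and to extract idempotency purely from the image-locality axiom (3) rather than from \(\widehat\pi\) itself — all the while keeping strict track of the fact that \({\rm im}(u)\) need not be compact, so every argument must be phrased pointwise on images and must never invoke compactness.
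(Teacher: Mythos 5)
Your proposal is correct and follows essentially the same route as the paper's proof: fix a proper embedding, extend the tubular-neighbourhood retraction $\pi$ to a globally defined morphism, and use axioms (1)--(3) to turn $X(\widehat\pi)$ into an sc-smooth retraction on the open set of elements with image in the tube, with morphisms induced by global extensions of $e'\circ f\circ e^{-1}$ and well-definedness coming from axiom (3). The only difference is presentational: the paper defines $X(M)$ as equivalence classes $[u,\phi]$ over all proper embeddings so that the object is choice-free from the start, whereas you fix one embedding and supply the canonical identifications afterwards, which amounts to the same thing.
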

%
\begin{proof}
We assume that the manifold \(M\) in \({\mathcal P}\) is connected so that
  it has a proper embedding into some \({\mathbb R}^N\).
We shall first construct \(X(M)\) for such \(M\). 
In the case \(M\) has several connected components \(M=\coprod
  M_\lambda\) we shall define \(X(M)=\coprod X(M_\lambda)\).

Consider a smooth proper embedding \(\phi:M\rightarrow {\mathbb R}^N\). 
Then define the following set of functions.
\begin{align*}                                                            
  X(\phi) := \big\{ (u, \phi) \in X(N)\times \{\phi\}: {\rm im}(u)\subset
  \phi(M)\big\}
  \end{align*}
Given another proper embedding \(\psi:M\rightarrow {\mathbb R}^L\)
  we will likewise consider \(X(\psi)\).
We say \((u,\phi)\) is related to \((v,\psi)\) provided there exists a
  morphism \(f:N\rightarrow L\) such that
  \begin{equation}\label{eq:equiv rel X}                                  
    X(f)(u) = v\quad \text{and}\quad  f\big|_{\phi(M)}=\psi\circ
    \phi^{-1}:\phi(M)\rightarrow \psi(M).
    \end{equation}
We claim this is an equivalence relation.
To see this, we first must show reflexivity. 
Observe that by taking \(f=Id\), the second part of equation
  (\ref{eq:equiv rel X}) is trivially true, and the first part, namely
  that \(X(Id)(u)=u\), follows from the fact that \(X\) is a functor.
Next we show symmetry, and to that end we suppose \((u, \phi)\) is
  related to \((v, \psi)\) so that there exists a morphism \(f:N\to L\)
  such that equation (\ref{eq:equiv rel X}) holds.
We must establish the existence of a morphism \(h:L \to N\) for which
  \(X(h)(v) = u\) and \(h\big|_{\psi(M)}=\phi\circ \psi^{-1}\).
Note that since \(h\) need only be a smooth map from \(\mathbb{R}^L\to
  \mathbb{R}^N\) instead of an embedding, it is elementary to find an \(h\)
  which satisfies the latter condition.
Thus we must show this \(h\) satisfies \(X(h)(v) = u\).
To that end, we recall that since we assumed (\ref{eq:equiv rel X}),
  that means \(v=X(f)(u)\).
Applying \(X(h)\) to each side of this equation, we then compute
  \begin{align*}                                                          
    X(h)(v) 
    &=X(h)X(f)(u)\\
    &=X(h\circ f)(u)\\
    &=X(Id)(u)\\
    &=u
    \end{align*}
  where to obtain the third inequality we have made use of property (3) of
  Definition \ref{DEF_poly_construction_functor} applied to the morphism \(h\circ
  f\) and \(Id\); to apply property (3) we also make use of the fact that
  \begin{equation*}                                                       
    f\big|_{\phi(M)} = \psi\circ\phi^{-1}\qquad \text{and}\qquad
    h\big|_{\psi(M)}= \phi\circ\psi^{-1},
    \end{equation*}
  so that \(h\circ f\big|_{\phi(M)} = Id\), and hence \(h\circ
  f\big|_{{\rm im}(u)} = Id\) since \({\rm im}(u)\subset \phi(M)\).
Finally, transitivity is straightforward to verify, and hence the
  relation above is indeed an equivalence relation.
Denote an equivalence class by \([u,\phi]\) and denote by \(X(M)\) the
  collection of all these.

Next we observe that given smooth proper embeddings \mbox{\(\phi:M
  \to \mathbb{R}^N\)} and \(\psi:M \to \mathbb{R}^L\) and a morphism
  \(f:N\to L\) for which \mbox{\(f\big|_{\phi(M)}=\psi\circ\phi^{-1}\)},
  we obtain a bijection
  \begin{equation*}                                                       
    X(f)\big|_{X(\phi)}:X(\phi)\rightarrow X(\psi).
    \end{equation*}
Consequently, given any equivalence class \([v,\psi]\) and a proper
  embedding \(\phi:M\rightarrow {\mathbb R}^N\) there is a unique
  representative of the form \([u,\phi]\), and thus we have
  natural bijections
  \begin{equation*}                                                       
    \phi_\sharp:X(M)\rightarrow X(\phi):[u,\phi]\rightarrow (u,\phi).
    \end{equation*}
Observe that \(\psi_\sharp\circ \phi_\sharp^{-1}:X(\phi)\rightarrow
  X(\psi)\) is given by
  \begin{equation*}                                                       
    \psi_\sharp\circ \phi_\sharp^{-1}(u) = X(f)(u)
    \end{equation*}
  where \(X(f)\) is the restriction of an sc-smooth map.  
Finally, it remains to show that \(X(\phi)\) has a natural M-polyfold
  structure.
Note that once this is established, it immediately follows that the
  transition maps \(\psi_\sharp\circ \phi_\sharp^{-1}\) are sc-smooth, and
  hence \(X(M)\) has an M-polyfold structure which is sc-diffeomorphic to
  the \(X(\phi)\), and this M-polyfold structure is independent of the
  choices involved.
Thus we focus our attention on showing that \(X(\phi)\) has a natural
  M-polyfold structure.

We have a proper embedding \(\phi:M\rightarrow {\mathbb R}^N\) and can
  take a tubular neighborhood \(U\) around the image \(\phi(M)\) with a
  bundle projection \(r:U\rightarrow \phi(M)\subset U\).
Note that \(r\circ r=r\), i.e. \(r\) is a smooth retraction.
Perhaps making \(U\) somewhat smaller we may assume that \(r\) is the
  restriction of a morphism \(R:N\rightarrow N\).
Of course \(R\circ R=R\) will not hold in general.

By property (1) of Definition \ref{DEF_poly_construction_functor} we have that 
  \begin{equation*}                                                       
    X(U)=\{u\in X(N) : {\rm im}(u)\subset U\}  
    \end{equation*}
  is an open subset of \(X(N)\) and therefore has a M-polyfold structure.
The morphism \(R\) induces an sc-smooth map
  \begin{equation*}                                                       
    X(R)\big|_{X(U)}:X(U)\rightarrow X(U).   
    \end{equation*}
Since \(R\big|_{U}=r\) and \(r\circ r = r\), it follows that on \(U\)
  we have \mbox{\(R\circ R = R\)}.
Consequently, since for each \(u\in X(U)\) we have \({\rm im}
  (u)\subset U\), it trivially follows that \(R\circ R\big|_{{\rm im}(u)}
  = R\big|_{{\rm im}(u)}\), and hence by property (3) of Definition
  \ref{DEF_poly_construction_functor} we have
  \begin{equation*}                                                       
    \big(X(R)\big|_{X(U)}\big) \circ
    \big(X(R)\big|_{X(U)}\big)=X(R)\big|_{X(U)}.
    \end{equation*}
Hence \(X(R)\big|_{U}:X(U)\rightarrow X(U)\) is an sc-smooth retraction.
This implies that the image of \(X(R)\big|_{X(U)}\) has a M-polyfold
  structure.
By property (2) of Definition \ref{DEF_poly_construction_functor} we have that
  for \(u\in X(U)\)
  \begin{equation*}                                                       
    \text{im}\big(X(R)(u)\big) = R(\text{im}(u)) = r(\text{im}(u)) \subset
    \phi(M).
    \end{equation*}
Hence for all \(u\in X(U)\) we have
  \begin{equation*}                                                       
    \big(X(R)(u),\phi\big)\in X(\phi).
    \end{equation*}
Observe that for each \((u,\phi)\in X(\phi)\) we then immediately
  have \mbox{\(X(R)(u)=u\)}, since by property (3) of Definition
  \ref{DEF_poly_construction_functor} we have
  \begin{equation*}                                                       
    X(R)(u) = X(Id)(u) =u,
    \end{equation*}
  where we have used the fact that \(R=Id\) on \(\phi(M)\). 
Thus we have shown that  \(X(\phi)\) has a natural M-polyfold structure,
  and therefore \(X(M)\) has a well-defined M-polyfold structure, which
  is natural in the sense that it does not depend on choices involved.

Also observe that \(X(N)\) and \(X({\mathbb R}^N)\) are naturally
  sc-diffeomorphic via the map
  \begin{equation*}                                                       
    X({\mathbb R}^N)\rightarrow X(N):[u,Id_{{\mathbb R}^N}]\rightarrow u.
    \end{equation*}

Assume next \(f:M\rightarrow W\) is a smooth map between two manifolds
  in \({\mathcal P}\).
Then we obtain a natural sc-smooth map
  \begin{equation*}                                                       
    X(f):X(M)\rightarrow X(W)
    \end{equation*}
  defined by
  \begin{equation*}                                                       
    [u,\phi] \rightarrow [X(F)(u),\psi],
    \end{equation*}
  where we take any morphism \(F:N\rightarrow L\) such that
  \mbox{\(F\big|_{\phi(M)}= \psi\circ f\circ \phi^{-1}\)}.
The choice of \(F\) is irrelevant by property  (3) as long as it
  satisfies the this property over \(\phi(M)\).
\end{proof}

We note that in practice, it is generally easier to build M-polyfolds of
  maps into \(\mathbb{R}^N\) rather than a more general finite dimensional
  manifold.
Furthermore, often such a construction can be identified as an M-polyfold
  construction over \({\mathcal N}\), in which case the above abstract
  result guarantees that these constructions extend naturally to smooth
  manifolds without boundary.
We also note that there is an analogous result for the construction of
  strong bundles, and similar versions for sc-manifolds, ssc-manifolds,
  and their strong bundles, however we leave the sc-manifold and
  ssc-manifold formulations to the reader.
  
Let us view our gluing construction \(X^{3,\delta}_{\varphi}\), made
  precise in equation (\ref{EQ_example_construction}), as a
  functor which associates to \({\mathbb R}^N\) the M-polyfold
  \(X^{3,\delta}_{\varphi}({\mathbb R}^N)\) with its M-polyfold structure
  induced from the imprinting method associated to the pre-gluing map
  \(\oplus\).
Then to a smooth map \(f:{\mathbb R}^N\rightarrow {\mathbb R}^L\), we
  define the associated map
  \begin{align}
    f_\sharp: X^{3,\delta}_{\varphi}({\mathbb R}^N)\rightarrow
    X^{3,\delta}_{\varphi}({\mathbb R}^L)\qquad\text{by} \qquad
    f_\sharp(u) = f\circ u.
    \end{align}
By factoring \jwf{[How does this factor like this?]}
  \begin{align}
    f_{\sharp}(u) =     \oplus_L \circ (\text{Id}_{\mathbb B}\times
    (f\times f))\circ H_N(u)
    \end{align}
  for a suitable \(H_N\) with \(\oplus_N\circ H_N=Id\) and
  \(H_N\circ\oplus_N\) being sc-smooth the sc-smoothness of \(f_\sharp\)
  follows from the fact, proven in \cite{El}, that \(f\times
  f:E_N\rightarrow E_L:(u^+,u^-)\rightarrow (f\circ u^+,f\circ u^-)\) is
  ssc-smooth.

Hence \(X^{3,\delta}_{\varphi}\) is an M-polyfold construction functor in
  the sense of Definition \ref{DEF_poly_construction_functor}, and hence by
  Proposition \ref{PROP_extension_m_poly_construction_functors} admits an
  extension from \({\mathcal N}\) to \(\mathbf{MPoly}\).
One can verify that the previous discussion about construction functors
  applies and we have a natural extension of our functor to the category of
  smooth manifolds with smooth maps between them.

One can generalize this discussion also to cover the construction of
  strong bundles.

%
\subsection{Strong Bundle Constructions}
  \label{qsec5.2}\label{SEC_strong_bundle_constructions}

The above construction can be regarded as a certain prototype for
  M-polyfold constructions in general.
There will be variations that we will make use of later, like when the
  target manifold is changing, however we will address these scenarios
  as needed.
For now we focus on a similar construction for strong bundles.
In this case we first define a category \(\mathcal{N}^2\) which has
  objects given by pairs \((N,L)\) of non-negative integers, and the
  morphisms \((N,L)\rightarrow (K,M)\) are smooth maps
  \begin{equation*}                                                       
    F:{\mathbb N}^N\times {\mathbb R}^L\rightarrow {\mathbb R}^K\times
    {\mathbb R}^M
    \end{equation*}
  of the form
  \begin{equation*}                                                       
    F(u,h) = (f(u),A(u)h),
    \end{equation*}
  where \(f:{\mathbb R}^N\rightarrow {\mathbb R}^K\) and \(A:{\mathbb
  R}^N\rightarrow \mathcal{L}({\mathbb R}^L,{\mathbb R}^M)\) are smooth maps.

\begin{definition}[strong bundle (SB) construction funtor]
  \label{DEF_sb_construction_functor}
  \hfill\\
A strong bundle (SB) construction functor over \({\mathcal N}^2\) consists
  of a covariant functor \(Z\) which associates to a pair of natural
  numbers a strong bundle \(Z(N,L)\) with projection
  \begin{equation*}                                                       
    p:Z(N,L)\rightarrow X(N)
    \end{equation*}
  to the underlying base M-polyfold \(X(N)\), where \(X\) is a M-polyfold
  construction over \({\mathcal N}\).
The strong bundles come with an additional structure, namely for each
  \(z\in Z(N,L)\) we have an associated subset \({\rm im}(z)\subset
  \mathbb{R}^N\times \mathbb{R}^L \) so that \({\rm im}(p(z)) ={\rm
  pr}({\rm im}(z))\), where \({\rm pr}:{\mathbb R}^N\times {\mathbb
  R}^L\rightarrow {\mathbb R}^N\) is the canonical projection.
In addition the following holds.
\begin{itemize}
  \item[(1)]  
    Given an open subset \(U\subset {\mathbb R}^N\) the subset of \(Z(N,L)\)
    consisting of all \(z\) with \({\rm im}(p(z))\subset U\) is open.
  \item[(2)] 
    For each morphism \(F\in {\rm Mor}(\mathcal{N}^2)\) we have 
  \begin{equation*}                                                       
    {\rm im}(Z(F)(z))=F({\rm im}(z)).
    \end{equation*}
  \item[(3)]
    If \(F,G:(N,L)\rightarrow (K,M)\) are morphisms \(F, G\in {\rm
    Mor}(\mathcal{N}^2)\), and \(z\in Z(N,L)\), then
  \begin{equation*}                                                       
    F\big|_{{\rm im}(z)} =  G\big|_{{\rm
    im}(z)}\qquad\text{implies}\qquad Z(F)(z)=Z(G)(z).
    \end{equation*}
\end{itemize}
\end{definition}
%

As before we have the following extension result. 
Let \({\mathcal {SP}}\) be the category whose objects are smooth vector
  bundles over manifolds (without  boundary), \(E\rightarrow M\). 
If we consider \(E\) over a connected component of \(M\) there exists a
  smooth proper bundle embedding
  \begin{equation*}                                                       
    \Phi:E\rightarrow {\mathbb R}^N\times {\mathbb R}^P
    \end{equation*}
  for suitable \((N,P)\), which is linear on the fibers and covers a
  smooth proper embedding \mbox{\(\phi:M \to \mathbb{R}^N\)}.
For such a bundle embedding \(\Phi\) covering \(\phi\) we find a tubular
  neighborhood \(U\) around \(\phi(M)\), and a smooth map \(r:U\to U\) for
  which \(r\circ r = r\) and \(r(U) = \phi(M)\) given by local
  projection.
Then we can extend \(\Phi(E)\rightarrow \phi(M)\) to a bundle over \(U\)
  so that the fiber over \(p\in U\) is the fiber over \(r(p)\).
We then find a smooth map \(F:{\mathbb R}^N\times {\mathbb R}^P\rightarrow
  {\mathbb R}^N\times {\mathbb R}^P\) with the property that the
  restriction
  \begin{equation*}                                                       
    F:U'\times \mathbb{R}^P\to U'\times \mathbb{R}^P
    \end{equation*}
  satisfies
  \begin{equation*}                                                       
    F(p, h) = \big(r(p), \pi_{r(p)}(h)\big),    
    \end{equation*}
  where \(\phi(M)\subset U'\subset \bar{U}'\subset U\), and where \(\pi_u\)
  is the orthogonal projection onto the fiber over \(u\in \phi(M)\).
Then by construction, for each \((p,h)\in U'\times {\mathbb R}^P\) we have
  \(F\circ F=F\). 
Noting that our smooth map \(F\) can be regarded as the restriction of a
  morphism \(F\in {\rm Mor}(\mathcal{N}^2)\), we then claim the following.

\begin{proposition}[extension of SB construction functors]
  \label{PORP_extension_sb_construction_functor}
  \hfill\\
The functor \(Z\) from a strong bundle construction over  \({\mathcal N}^2\)
  has a natural associated functor \(Z\) defined on \({\mathcal {SP}}\), which
  associates to a vector bundle  \(E\rightarrow M\) in \({\mathcal {SP}}\) a
  strong bundle \(Z(E)\) and to a smooth vector bundle map \(F:E\rightarrow
  E'\) covering \mbox{\(f:M\rightarrow M'\)} an sc-smooth strong bundle map
  between  \(Z(E)\rightarrow Z(E')\) covering \(X(f):X(M)\rightarrow X(M')\).
Further we have a natural strong bundle isomorphism \(Z(N,P)\rightarrow
  Z({\mathbb R}^N\times {\mathbb R}^P)\).
\end{proposition}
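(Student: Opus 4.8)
The plan is to mirror the proof of Proposition \ref{PROP_extension_m_poly_construction_functors} almost line-for-line, replacing the M-polyfold construction functor $X$ over $\mathcal{N}$ by the strong bundle construction functor $Z$ over $\mathcal{N}^2$, and keeping track of the bundle structure throughout. First I would reduce to the case where $M$ is connected, defining $Z(E)$ as a disjoint union over connected components just as before. For connected $M$, fix a proper bundle embedding $\Phi:E\to \mathbb{R}^N\times\mathbb{R}^P$ covering a proper embedding $\phi:M\to\mathbb{R}^N$, and define
\begin{align*}
Z(\Phi) := \big\{(z,\Phi)\in Z(N,P)\times\{\Phi\} : {\rm im}(p(z))\subset \phi(M)\big\}.
\end{align*}
I would then declare $(z,\Phi)$ related to $(w,\Psi)$ (where $\Psi:E\to\mathbb{R}^L\times\mathbb{R}^Q$ covers $\psi:M\to\mathbb{R}^L$) provided there exists a morphism $F\in{\rm Mor}(\mathcal{N}^2)$ with $Z(F)(z)=w$ and $F|_{\Phi(E)}=\Psi\circ\Phi^{-1}$. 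The verification that this is an equivalence relation is verbatim the argument in Proposition \ref{PROP_extension_m_poly_construction_functors}: reflexivity from functoriality and $Z(\mathrm{Id})=\mathrm{Id}$, symmetry by choosing a (non-embedding) morphism $H$ with $H|_{\Psi(E)}=\Phi\circ\Psi^{-1}$ and invoking property (3) of Definition \ref{DEF_sb_construction_functor} applied to $H\circ F$ and $\mathrm{Id}$, and transitivity directly. One subtlety to flag here is that $F$ must be fiber-linear in the second variable (it is a morphism of $\mathcal{N}^2$), so the ``easy to find such $H$'' step in the symmetry argument requires extending $\Phi\circ\Psi^{-1}$ to a globally defined fiber-linear map $\mathbb{R}^L\times\mathbb{R}^Q\to\mathbb{R}^N\times\mathbb{R}^P$; this is routine since $h\mapsto A(u)h$ need only be linear, not invertible, off the image.

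Next I would equip each chart model $Z(\Phi)$ with a strong bundle structure, which is the technical heart. Using the tubular neighborhood $U$ of $\phi(M)$, the retraction $r:U\to U$, and the induced smooth map $F:U'\times\mathbb{R}^P\to U'\times\mathbb{R}^P$ of the form $F(p,h)=(r(p),\pi_{r(p)}(h))$ with $F\circ F=F$ already constructed in the excerpt, I would take $Z(U)=\{z\in Z(N,P):{\rm im}(p(z))\subset U\}$, which is open by property (1). The morphism $F$ induces $Z(F)|_{Z(U)}:Z(U)\to Z(U)$, and since $F\circ F|_{{\rm im}(z)}=F|_{{\rm im}(z)}$ for every $z\in Z(U)$, property (3) gives $\big(Z(F)|_{Z(U)}\big)\circ\big(Z(F)|_{Z(U)}\big)=Z(F)|_{Z(U)}$, i.e.\ an sc-smooth retraction. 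The key point is that $Z(F)$ is a \emph{strong bundle} retraction — it covers the sc-smooth retraction $X(R)|_{X(U)}$ from the base construction and is fiber-linear — so its image is a local strong bundle model in the sense of Section \ref{qsec1.3}, not merely a sub-M-polyfold. I would then check, via property (2), that ${\rm im}(Z(F)(z))\subset\phi(M)\times\mathbb{R}^P$ so that $(Z(F)(z),\Phi)\in Z(\Phi)$, and that $Z(F)(z)=z$ for $z\in Z(\Phi)$ by property (3) using $F=\mathrm{Id}$ on $\Phi(E)$. This exhibits $Z(\Phi)$ as the image of a strong bundle retraction, hence a strong bundle.

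Finally I would assemble the charts. The natural bijections $\Phi_\sharp:Z(E)\to Z(\Phi)$, $[z,\Phi]\mapsto(z,\Phi)$, have transition maps $\Psi_\sharp\circ\Phi_\sharp^{-1}=Z(F)|_{Z(\Phi)}$, which are restrictions of sc-smooth strong bundle maps and hence sc-smooth SB-maps; this gives $Z(E)$ a well-defined strong bundle structure independent of the choices, covering the M-polyfold $X(M)$ supplied by the underlying functor $X$. For a smooth bundle map $F:E\to E'$ covering $f:M\to M'$, I would define $Z(F)[z,\Phi]=[Z(\mathcal{F})(z),\Psi]$ for any morphism $\mathcal{F}\in{\rm Mor}(\mathcal{N}^2)$ with $\mathcal{F}|_{\Phi(E)}=\Psi\circ F\circ\Phi^{-1}$, with well-definedness and sc-smoothness as an SB-map following from property (3) and the fact that $Z(\mathcal{F})$ restricts an sc-smooth strong bundle map; one checks it covers $X(f):X(M)\to X(M')$ by construction. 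The natural isomorphism $Z(N,P)\to Z(\mathbb{R}^N\times\mathbb{R}^P)$ is given by $[z,\mathrm{Id}]\mapsto z$ exactly as in the M-polyfold case. I expect the main obstacle to be verifying carefully that every retraction and transition map produced respects the \emph{double} filtration — i.e.\ that they are genuinely sc$_\triangleleft$-smooth and fiber-linear rather than merely sc-smooth on the total space — since this is the one structural feature absent from the M-polyfold prototype; however, this is guaranteed by the requirement that all of $F$, $\mathcal{F}$, and their induced maps are morphisms in $\mathcal{N}^2$, which are fiber-linear by definition, so the strong bundle structure propagates automatically.
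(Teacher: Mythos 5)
Your overall route is exactly the one the paper takes: the paper's own proof is only a sketch which defines $Z(E)$ as equivalence classes $[u,\Phi]$ over proper bundle embeddings, records as the key observation that $\Phi\circ\Psi^{-1}$ is the restriction of a global morphism of $\mathcal{N}^2$, and then declares that the rest closely parallels the proof of Proposition \ref{EXTEND}. Your write-up supplies those parallel details, including the fiber-linearity point you correctly flag at the end as the only genuinely new structural feature.

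There is, however, one concrete slip that makes a step fail as written. You define $Z(\Phi)=\{(z,\Phi): {\rm im}(p(z))\subset\phi(M)\}$, i.e.\ you constrain only the base part of the image. The paper's definition (and the one your own argument needs) is ${\rm im}(z)\subset\Phi(E)$, a condition on the full image inside $\mathbb{R}^N\times\mathbb{R}^P$. The difference matters: the map $F(p,h)=\big(r(p),\pi_{r(p)}(h)\big)$ fixes $(p,h)$ only when $p\in\phi(M)$ \emph{and} $h$ lies in the embedded fiber $\Phi\big(E_{\phi^{-1}(p)}\big)$; off that subbundle the orthogonal projection $\pi_{r(p)}$ moves $h$. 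Consequently your verification that $Z(F)(z)=z$ for $(z,\Phi)\in Z(\Phi)$ via property (3) and ``$F=\mathrm{Id}$ on $\Phi(E)$'' does not go through, because property (3) requires $F$ to agree with the identity on ${\rm im}(z)$, and with your weaker membership condition ${\rm im}(z)$ need not be contained in $\Phi(E)$. The actual image of $Z(F)\big|_{Z(U)}$ is $\{z: {\rm im}(z)\subset\Phi(E)\}$, which is strictly smaller than your $Z(\Phi)$ whenever $\Phi(E)$ is a proper subbundle of $\phi(M)\times\mathbb{R}^P$; with your definition you would instead be modelling the pullback of $Z(N,P)$ over $X(\phi)$ rather than a bundle whose fibers see only $E$. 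Replacing the membership condition by ${\rm im}(z)\subset\Phi(E)$ (keeping the requirement $F\big|_{\Phi(E)}=\Psi\circ\Phi^{-1}$ on the relating morphisms, which you already impose) repairs the argument, and the remainder of your proposal then matches the intended proof.
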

%
\begin{proof}
We only sketch the setup.
The basic observation is that for two proper bundle embeddings \(\Phi\)
  and \(\Psi\) the composition \(\Phi\circ \Psi^{-1}\) is the restriction of
  a global morphism, \(\Phi\circ \Psi^{-1}= F\big|_{\Psi(E)}\).
Then \(Z(E)\) is  defined as the collection of
  equivalence classes \([u, \Phi]\) where \(\Phi\) is a smooth proper
  bundle embedding, and \(u\in Z(N, P)\) for which \({\rm im }(u)\subset
  \Phi(E)\).
The proof then closely parallels the proof of Proposition \ref{EXTEND};
  we leave the details to the reader.
\end{proof}

\vspace{10cm}

\bibliographystyle{\alpha}


\printindex
\end{document}